\documentclass[12pt,leqno]{amsart}
\usepackage{pifont}
\usepackage{stmaryrd}
\usepackage{dsfont}
\usepackage{color}
\usepackage{mathrsfs}
\usepackage{amsmath}
\usepackage{amssymb}
\usepackage[hidelinks]{hyperref}
\usepackage{bookmark}
\usepackage[bottom]{footmisc}
\usepackage{verbatim}
\usepackage{orcidlink}

\numberwithin{equation}{section}
\newtheorem{thm}{Theorem}[section]
\newtheorem{lem}[thm]{Lemma}

\newtheorem{prop}[thm]{Proposition}             
\newtheorem{cor}[thm]{Corollary}
\newtheorem{defi}[thm]{Definition}
\newtheorem{rmk}[thm]{Remark}
\newtheorem{notation}[thm]{Notation}

\newcommand{\Res}{\operatorname{Res}}

\def\Z{\mathbb{Z}}
\def\N{\mathbb{N}}

\allowdisplaybreaks

\begin{document}

\title{Twisted associative algebras and intertwining operators}

\author{Shun Xu~\orcidlink{0009-0006-8080-8107}}

\address{School of Mathematical Sciences, Anhui University, Anhui, Hefei, 230601, China}

\email{shunxu@ahu.edu.cn}

\subjclass[2020]{17B69}

\keywords{Vertex operator algebra, Zhu algebra, Twisted module, Fusion rule}

\begin{abstract}
For a vertex algebra $V$ with a finite-order automorphism $g$ satisfying $g^T = 1$ for some $T \in \mathbb{N}$, we construct an associative algebra $\tilde{\mathbf{A}}^{g,\infty}(V)$ and prove that the category of $\frac{1}{T}\mathbb{N}$-graded $g$-twisted $\phi$-coordinated $V$-modules is isomorphic to the category of graded $\tilde{\mathbf{A}}^{g,\infty}(V)$-modules. Furthermore, when $V$ is a vertex operator algebra, we construct associative algebras $\mathbf{A}^{g,\infty}(V)$ and $A^{g,\infty}(V)$, and establish that the categories of admissible $g$-twisted $V$-modules and ordinary $g$-twisted $V$-modules are isomorphic to the categories of graded $\mathbf{A}^{g,\infty}(V)$-modules and graded $A^{g,\infty}(V)$-modules, respectively.
By proving that $\tilde{\mathbf{A}}^{g,\infty}(V)$ is isomorphic to $\mathbf{A}^{g,\infty}(V)$, we obtain the equivalence between the category of $\frac{1}{T}\mathbb{N}$-graded $g$-twisted $\phi$-coordinated $V$-modules and the category of admissible $g$-twisted $V$-modules.
Let $g_1, g_2, g_3$ be three commuting automorphisms of $V$ of finite order such that $g_1 g_2 = g_3$ and $g_i^T = 1$ for $i = 1, 2, 3$ and some $T \in \mathbb{N}$. Suppose that $W_i$ is a $g_i$-twisted $V$-module for each $i = 1, 2, 3$. We then construct an $A^{g_3,\infty}(V)$-$A^{g_2,\infty}(V)$-bimodule ${A}^{g_3,g_2,\infty}(W_1)$, and prove that the space of intertwining operators of type $\binom{W_3}{W_1 \; W_2}$ is isomorphic to
$
  \operatorname{Hom}_{A^{g_3,\infty}(V)}\!\left(
    {A}^{g_3,g_2,\infty}(W_1) \otimes_{A^{g_2,\infty}(V)} W_2, \, W_3
  \right).
$
\end{abstract}

\maketitle

\tableofcontents

\section{Introduction}
Let $V$ be a vertex operator algebra. Zhu \cite{ZYC1} introduced an associative algebra $A(V)$ and established a bijection between irreducible admissible $V$-modules and irreducible $A(V)$-modules. In the same work, it was shown that if $V$ is both rational and $C_2$-cofinite, then $A(V)$ is finite-dimensional and semisimple. Subsequent studies \cite{DLM1,MM1} demonstrated that this conclusion remains valid under either condition alone---namely, when $V$ is rational or merely $C_2$-cofinite. This result provides a fundamental tool for classifying irreducible $V$-modules, as $V$ itself is typically infinite-dimensional.

Pursuing the paradigm of associating algebras to vertex operator algebras, Dong, Li, and Mason \cite{DLM1} constructed a sequence of higher-level Zhu algebras $A_n(V)$ for $n \in \mathbb{N}$, with $A_0(V) = A(V)$. They subsequently extended this framework to the theory of twisted representations. For an automorphism $g$ of $V$ of finite order $T$, they constructed in \cite{DLM2,DLM3} the twisted Zhu algebra $A_g(V)$ and its higher-level analogues $A_{g,n}(V)$ for $n \in \frac{1}{T}\mathbb{N}$. These algebras satisfy the compatibility conditions:
$
    A_{\operatorname{id}_V}(V) = A(V), 
    A_{\operatorname{id}_V,n}(V) = A_n(V), 
    A_{g,0}(V) = A_g(V).
$
Moreover, they established a bijection between irreducible $A_{g,n}(V)$-modules that do not factor through $A_{g,n-\frac{1}{T}}(V)$ and irreducible admissible $g$-twisted $V$-modules.

For the development of Zhu algebras and their correspondence theorems in the context of vertex operator superalgebras, we refer the reader to \cite{KW1,DZ1,AM1}.

All the algebraic structures discussed above---both vertex operator algebras and vertex operator superalgebras---are equipped with a grading. Indeed, the construction of Zhu algebras and their various generalizations relies fundamentally on this graded structure. This observation naturally leads to the following question: how can one formulate an analogue of Zhu's algebra and the corresponding representation-theoretic correspondence for vertex algebras, which lack an intrinsic grading? 

To address this, Huang \cite{HYZ1} constructed an associative algebra $\tilde{A}(V)$ for a vertex operator algebra $V$ without invoking the grading structure; notably, his construction does not involve the Virasoro operator $L_V(0)$. He further proved that ${\tilde{A}}(V)$ is isomorphic to the classical Zhu algebra $A(V)$. Subsequently, Li \cite{LHS1} constructed a family of associative algebras $\tilde{A}_n(V)$ for weak quantum vertex algebras $V$ equipped with a constant $\mathcal{S}$-map, a class that includes ordinary vertex algebras as special cases. He introduced the notion of $\mathbb{N}$-graded $\phi$-coordinated $V$-modules and, under suitable assumptions, established a bijection between equivalence classes of irreducible $\mathbb{N}$-graded $\phi$-coordinated $V$-modules and isomorphism classes of irreducible $\tilde{A}_0(V)$-modules. Moreover, he showed that when $V$ is an ordinary vertex algebra, $\tilde{A}_0(V)$ coincides with $\tilde{A}(V)$. Subsequently, this framework was extended to the theory of twisted representations of vertex algebras \cite{Shun1}. Let $V$ be a vertex algebra and let $g$ be an automorphism of $V$ of finite order $T$. We construct an associative algebra $\tilde{A}_g(V)$ and establish a bijection between equivalence classes of irreducible $\frac{1}{T}\mathbb{N}$-graded $g$-twisted $\phi$-coordinated $V$-modules and isomorphism classes of irreducible $\tilde{A}_g(V)$-modules.

The correspondence theorems discussed above invariably establish bijections between certain classes of irreducible modules for the associated algebras and irreducible $V$-modules. However, it is more desirable to obtain a correspondence without the irreducibility restriction, analogous to the equivalence between the module categories of a Lie algebra and its universal enveloping algebra. Although vertex operator algebras admit an analogous notion of universal enveloping algebra, introduced by Frenkel and Zhu~\cite{FZ1}, and every weak $V$-module is naturally a module over this enveloping algebra, the converse unfortunately does not hold.

To overcome this limitation, Huang \cite{HYZ2} introduced an associative algebra $A^{\infty}(V)$ constructed from infinite matrices with entries in a grading-restricted vertex algebra $V$, utilizing the associated graded spaces of all lower-bounded generalized $V$-modules. Notably, both $A(V)$ and $A_n(V)$ embed as subalgebras of $A^{\infty}(V)$. In subsequent work \cite{HYZ3}, Huang provided an alternative characterization of $A^{\infty}(V)$ and proved that every lower-bounded generalized $V$-module carries a natural graded $A^{\infty}(V)$-module structure, and conversely, every graded $A^{\infty}(V)$-module admits a compatible lower-bounded generalized $V$-module structure. This yields a full categorical isomorphic without any irreducibility assumption.

Motivated by Huang's work, we construct associative algebras corresponding to $g$-twisted $\phi$-coordinated modules for vertex algebras and to admissible $g$-twisted modules for vertex operator algebras, thereby realizing isomorphisms or equivalences between the respective module categories. The motivation for considering $\phi$-coordinated modules stems from our previous result \cite{Shun2}, which established a bijection between irreducible $\frac{1}{T}\mathbb{N}$-graded $g$-twisted $\phi$-coordinated $V$-modules and irreducible admissible $g$-twisted $V$-modules. In the present work, we aim to remove the irreducibility assumption.

Let $V$ be a vertex algebra equipped with an automorphism $g$ of finite order $T$. We construct an associative algebra $\tilde{\mathbf{A}}^{g,\infty}(V)$ and prove that the category of $\frac{1}{T}\mathbb{N}$-graded $g$-twisted $\phi$-coordinated $V$-modules is isomorphic to the category of graded $\tilde{\mathbf{A}}^{g,\infty}(V)$-modules. Furthermore, when $V$ is a vertex operator algebra, we construct two additional associative algebras, $\mathbf{A}^{g,\infty}(V)$ and $A^{g,\infty}(V)$, and establish that the categories of admissible $g$-twisted $V$-modules and ordinary $g$-twisted $V$-modules are isomorphic to the categories of graded $\mathbf{A}^{g,\infty}(V)$-modules and graded $A^{g,\infty}(V)$-modules, respectively. By proving that $\tilde{\mathbf{A}}^{g,\infty}(V) \cong \mathbf{A}^{g,\infty}(V)$, we obtain an equivalence between the category of $\frac{1}{T}\mathbb{N}$-graded $g$-twisted $\phi$-coordinated $V$-modules and the category of admissible $g$-twisted $V$-modules.

Significant progress has been made in understanding the relationship among intertwining operators, associative algebras, and bimodules. A fundamental theorem of Frenkel and Zhu \cite{FZ1} states that for irreducible $V$-modules $W_1$, $W_2$, and $W_3$, the space of intertwining operators of type $\binom{W_3}{W_1\; W_2}$ is linearly isomorphic to
$
    \operatorname{Hom}_{A(V)}\!\left(A(W_1) \otimes_{A(V)} \Omega(W_2),\; \Omega(W_3)\right),
$
where $\Omega(W_2)$ and $\Omega(W_3)$ denote the lowest weight spaces of $W_2$ and $W_3$, respectively, and $A(W_1)$ is the $A(V)$-bimodule introduced in \cite{FZ1}. Li \cite{LHS2} provided a proof of this theorem under the assumption that every lower-bounded generalized $V$-module (equivalently, every $\mathbb{N}$-gradable weak $V$-module) is completely reducible. In the same work, Li also constructed a counterexample demonstrating that the theorem fails without this semisimplicity assumption. Consequently, this result cannot be applied to study intertwining operators in settings where lower-bounded generalized $V$-modules are not necessarily completely reducible. Subsequently, Zhu \cite{ZYY1} established a twisted analogue of this theorem under stronger hypotheses.

In \cite{HY1,HY2}, Huang and Yang proved that, under certain strong assumptions on $W_2$ and $W_3'$ but without the semisimplicity condition, the space of intertwining operators of type $\binom{W_3}{W_1\; W_2}$ is linearly isomorphic to
$
    \operatorname{Hom}_{A_N(V)}\!\left(A_N(W_1) \otimes_{A_N(V)} \Omega_N^0(W_2),\; \Omega_N^0(W_3)\right),
$
where $\Omega_N^0(W_2)$ and $\Omega_N^0(W_3)$ are suitable subspaces of $W_2$ and $W_3$, respectively, and $A_N(W_1)$ is a bimodule for the algebra $A_N(V)$, which is a generalization of $A(V)$ due to Dong, Li, and Mason \cite{DLM1}.

In \cite{HYZ3}, for each lower-bounded generalized $V$-module $W$, Huang introduced an $A^{\infty}(V)$-bimodule $A^{\infty}(W)$ as well as an $A^N(V)$-bimodule $A^N(W)$ for every $N \in \mathbb{N}$, where $A^N(V)$ is a subalgebra of $A^{\infty}(V)$ \cite{HYZ2}. Huang proved that the space of logarithmic intertwining operators of type $\binom{W_3}{W_1\; W_2}$ for lower-bounded generalized $V$-modules $W_1$, $W_2$, and $W_3$ is isomorphic to
$
    \operatorname{Hom}_{A^{\infty}(V)}\!\left(A^{\infty}(W_1) \otimes_{A^{\infty}(V)} W_2,\; W_3\right).
$
Furthermore, assuming that $W_2$ and $W_3'$ are universal lower-bounded generalized $V$-modules generated by their $A^N(V)$-submodules consisting of elements of level at most $N$, Huang also established that the space of logarithmic intertwining operators of type $\binom{W_3}{W_1\; W_2}$ is isomorphic to
$
    \operatorname{Hom}_{A^N(V)}\!\left(A^N(W_1) \otimes_{A^N(V)} W_2,\; W_3\right).
$

In this paper, we extend part of Huang's work \cite{HYZ3} to the setting of twisted intertwining operators. Let $g_1$, $g_2$, and $g_3$ be three commuting automorphisms of $V$ of finite order such that $g_1 g_2 = g_3$ and $g_i^T = 1$ for $i = 1, 2, 3$ and some $T \in \mathbb{N}$. Suppose that $W_i$ is a $g_i$-twisted $V$-module for each $i = 1, 2, 3$. We construct an $A^{g_3,\infty}(V)$-$A^{g_2,\infty}(V)$-bimodule ${A}^{g_3,g_2,\infty}(W_1)$ and prove that the space of intertwining operators of type $\binom{W_3}{W_1 \; W_2}$ is isomorphic to
$
    \operatorname{Hom}_{A^{g_3,\infty}(V)}\!\left(
        {A}^{g_3,g_2,\infty}(W_1) \otimes_{A^{g_2,\infty}(V)} W_2, \, W_3
    \right).
$

This paper is organized as follows. In Section 2, we recall basic definitions and properties of twisted $\phi$-coordinated modules for vertex algebras, as well as twisted modules and intertwining operators for vertex operator algebras. We also review the constructions of the bimodules $\tilde{A}_{g,n,m}(V)$ and $A_{g,n,m}(V)$ from previous work. In Section 3, we use the bimodule $\tilde{A}_{g,n,m}(V)$ to construct $\frac{1}{T}\mathbb{N}$-graded $g$-twisted $\phi$-coordinated $V$-modules and establish a simplified characterization of the subspaces $\tilde{O}_{g,n,m}(V)$. In Section 4, we construct the associative algebra $\tilde{\mathbf{A}}^{g,\infty}(V)$ via infinite matrices and prove that the category of $\frac{1}{T}\mathbb{N}$-graded $g$-twisted $\phi$-coordinated $V$-modules is isomorphic to the category of graded $\tilde{\mathbf{A}}^{g,\infty}(V)$-modules. In Section 5, we construct the associative algebras ${\tilde{\mathbf{A}}}^{g,\infty}(V)$ and $\tilde{A}^{g,\infty}(V)$ and establish analogous category isomorphisms for admissible and ordinary $g$-twisted $V$-modules, respectively. Furthermore, we prove that $\tilde{A}^{g,\infty}(V) \cong \mathbf{A}^{g,\infty}(V)$, thereby obtaining an equivalence between the category of $\frac{1}{T}\mathbb{N}$-graded $g$-twisted $\phi$-coordinated $V$-modules and that of admissible $g$-twisted $V$-modules. In Section 6, for three finite-order-commuting automorphisms $g_1$, $g_2$, and $g_3$ satisfying $g_1 g_2 = g_3$, we construct an $A^{g_3,\infty}(V)$-$A^{g_2,\infty}(V)$-bimodule $A^{g_3,g_2,\infty}(W)$ associated with a $g_1$-twisted $V$-module $W$ by means of twisted intertwining operators. Finally, in Section 7, we prove that the space of intertwining operators of type $\binom{W_3}{W_1 \; W_2}$ is linearly isomorphic to
$
    \operatorname{Hom}_{A^{g_3,\infty}(V)}\!\left(
        A^{g_3,g_2,\infty}(W_1) \otimes_{A^{g_2,\infty}(V)} W_2, \, W_3
    \right).
$
\section{Preliminaries}

In this section, we recall basic definitions and properties of twisted $\phi$-coordinated modules for vertex algebras, as well as twisted modules and intertwining operators for vertex operator algebras. We also review the constructions of the bimodules $\tilde{A}_{g,n,m}(V)$ and $A_{g,n,m}(V)$ from previous work.

\subsection{Twisted Modules}\label{subsec2.1}

\begin{defi}
A \emph{vertex algebra} is a vector space $V$ equipped with a linear map
\begin{align*}
Y(\cdot, x) \colon V &\to (\operatorname{End} V)[[x, x^{-1}]] \\
v &\mapsto Y(v, x) = \sum_{n \in \mathbb{Z}} v_n x^{-n-1},
\end{align*}
and a distinguished vector $\mathbf{1} \in V$, called the \emph{vacuum vector}, satisfying the following axioms for all $u, v \in V$:

{\rm(1)} $Y(\mathbf{1}, z) = \operatorname{id}_V$, and $u_n \mathbf{1} = \delta_{n,-1} u$ for all $n \geq -1$;
    
{\rm(2)} $u_n v = 0$ for all sufficiently large $n$;
    
{\rm(3)} the \emph{Jacobi identity}:
    $$
    \begin{gathered}
    z_0^{-1} \delta\!\left(\frac{z_1 - z_2}{z_0}\right) Y(u, z_1) Y(v, z_2)
    - z_0^{-1} \delta\!\left(\frac{z_2 - z_1}{-z_0}\right) Y(v, z_2) Y(u, z_1) \\
    = z_2^{-1} \delta\!\left(\frac{z_1 - z_0}{z_2}\right) Y(Y(u, z_0)v, z_2).
    \end{gathered}
    $$
\end{defi}
\begin{defi}
Let $(V, Y, \mathbf{1})$ be a vertex algebra. A linear isomorphism $g$ of $V$ is called an \emph{automorphism} of $V$ if it satisfies
\[
g(\mathbf{1}) = \mathbf{1} \quad \text{and} \quad g(Y(u, z)v) = Y(g(u), z)g(v) \quad \text{for all } u, v \in V.
\]
\end{defi}

Let $V$ be a vertex algebra, and fix $g$ to be an automorphism of $V$ of finite order $T$. Denote the imaginary unit by $\sqrt{-1}$. Then $V$ admits the eigenspace decomposition with respect to the action of $g$:
\[
V = \bigoplus_{r=0}^{T-1} V^r, \quad \text{where } V^r = \left\{ v \in V \mid gv = e^{ \frac{2\pi\sqrt{-1} r}{T}} v \right\}.
\]
In what follows, whenever we write $V^r$, we always assume $r \in \{0, 1, \ldots, T-1\}$.

\begin{defi}\label{defi_coor_module}
A \emph{$g$-twisted $\phi$-coordinated $V$-module} is a vector space $W$ equipped with a linear map
\[
Y_W(\cdot, x): V \rightarrow \operatorname{Hom}\!\left(W, W\!\left(\left(x^{\frac{1}{T}}\right)\right)\right) \subset (\operatorname{End} W)\!\left[\left[x^{\frac{1}{T}}, x^{-\frac{1}{T}}\right]\right]
\]
satisfying the following conditions:

{\rm(1)} $Y_W(\mathbf{1}, x)=\operatorname{id}_W$;
    
{\rm(2)}
    $
    Y_W(g v, x)=\lim _{x^{\frac{1}{T}} \rightarrow \omega_T^{-1} x^{\frac{1}{T}}} Y_W(v, x) 
    $ for any $v \in V$,
    where $\omega_T=e^{ \frac{2\pi\sqrt{-1}}{T}}$;
    
{\rm(3)}
    for any $u, v \in V$, there exists a nonnegative integer $k$ such that
    \begin{align*}
        \left(x_1-x_2\right)^k Y_W\left(u, x_1\right) Y_W\left(v, x_2\right) &\in \operatorname{Hom}\!\left(W, W\!\left(\left(x_1^{\frac{1}{T}}, x_2^{\frac{1}{T}}\right)\right)\right),\\
        x_2^k\left(e^{x_0}-1\right)^k Y_W\left(Y(u, {x_0}) v, x_2\right)&=\left.\left(\left(x_1-x_2\right)^k Y_W\left(u, x_1\right) Y_W\left(v, x_2\right)\right)\right|_{x_1^{\frac{1}{T}}=\left(x_2 e^{x_0}\right)^{\frac{1}{T}}}.
    \end{align*}
\end{defi}

\begin{rmk}
In the above definition, Axiom (2) is equivalent to the condition that $Y_W(u, x)w \in x^{-\frac{r}{T}} W((x))$ for any $u \in V^r$ and $w \in W$.
\end{rmk}

\begin{thm}\label{jacobi}
{\rm\cite{Shun2}}
In the above definition, Axiom (3) is equivalent to the \emph{$g$-twisted $\phi$-Jacobi identity}:
\begin{align}
    (x_2 z)^{-1} \delta\!\left(\frac{x_1 - x_2}{x_2 z}\right) &Y_W(u, x_1) Y_W(v, x_2) 
    - (x_2 z)^{-1} \delta\!\left(\frac{x_2 - x_1}{-x_2 z}\right) Y_W(v, x_2) Y_W(u, x_1) \nonumber \\
    &= x_1^{-1} \delta\!\left(\frac{x_2(1+z)}{x_1}\right) \left(\frac{x_2(1+z)}{x_1}\right)^{\frac{r}{T}} Y_W\left(Y(u, \log(1+z)) v, x_2\right). \label{phi_jacobi}
\end{align}
for any $u\in V^r,v\in V$.
\end{thm}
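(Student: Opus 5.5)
The plan is to prove both implications through the standard dictionary between weak associativity with locality and the Jacobi identity, as in Li's treatment of $\phi$-coordinated modules with $\phi(x,z)=xe^{z}$, adapted to the fractional powers coming from $g$. Fix $u\in V^r$ and $v\in V$. By the remark after Definition \ref{defi_coor_module}, $Y_W(u,x_1)w\in x_1^{-r/T}W((x_1))$. So it is natural to work with $x_1^{r/T}Y_W(u,x_1)$, which involves only integral powers of $x_1$. The factor $\left(x_2(1+z)/x_1\right)^{r/T}$ on the right of (\ref{phi_jacobi}) exactly compensates for this shift under the substitution $x_1=x_2(1+z)$, that is, $x_1^{1/T}=(x_2e^{x_0})^{1/T}$ with $z=e^{x_0}-1$. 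Throughout, the change of variables $z=e^{x_0}-1$, $x_0=\log(1+z)$, is a legitimate automorphism of formal power series, so statements in $x_0$ and in $z$ translate into each other.

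\emph{Jacobi implies (3).} Multiply (\ref{phi_jacobi}) by $(x_2z)^k$ with $k$ so large that $z^kY(u,\log(1+z))v$ has no negative powers of $z$. Then take $\Res_z$. Using $x_1-x_2=x_2z$ on the support of the delta functions, the left side becomes
\[
(x_1-x_2)^k\bigl[Y_W(u,x_1)Y_W(v,x_2)-Y_W(v,x_2)Y_W(u,x_1)\bigr].
\]
The right side vanishes because its $z$-expansion has no $z^{-1}$ term. This gives weak commutativity, so $(x_1-x_2)^kY_W(u,x_1)Y_W(v,x_2)$ equals the other ordering and hence lies in $\operatorname{Hom}(W,W((x_1^{1/T},x_2^{1/T})))$. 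For weak associativity, multiply instead by $(x_2z)^k$ and take $\Res_{x_1}$, after writing $x_1^{-r/T}$ times a power series in integral powers of $x_1$. The delta-function substitution property for $x_1^{-1}\delta(x_2(1+z)/x_1)$ then evaluates the left side at $x_1=x_2(1+z)$. The factor $(x_2(1+z)/x_1)^{r/T}$ turns into $1$ after this substitution. Setting $z=e^{x_0}-1$ yields the second formula in Axiom (3).

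\emph{(3) implies Jacobi.} Set $F(x_1,x_2)=(x_1-x_2)^kY_W(u,x_1)Y_W(v,x_2)w$. Then $x_1^{r/T}F$ lies in $W((x_1,x_2))$ multiplied by a fixed power of $x_2^{1/T}$. By the two expansion identities of the delta function,
\[
(x_2z)^{-1}\delta\!\left(\tfrac{x_1-x_2}{x_2z}\right)-(x_2z)^{-1}\delta\!\left(\tfrac{x_2-x_1}{-x_2z}\right)=x_1^{-1}\delta\!\left(\tfrac{x_2(1+z)}{x_1}\right).
\]
We then get that $(x_2z)^k$ times the left side of (\ref{phi_jacobi}) equals $x_1^{-1}\delta\!\left(x_2(1+z)/x_1\right)F(x_1,x_2)$. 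This requires knowing that both orderings equal the same element $F$. That symmetry is the locality content of (3), and we obtain it from the first half of (3) for the pair $(v,u)$ together with the standard argument, as in Li, that a common rational-type expansion forces equality.

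Next, substitute $x_1=x_2(1+z)$ using the $x_1^{-r/T}$-twisted delta substitution; this is where the factor $\left(x_2(1+z)/x_1\right)^{r/T}$ appears. The result is
\[
x_1^{-1}\delta\!\left(\tfrac{x_2(1+z)}{x_1}\right)\left(\tfrac{x_2(1+z)}{x_1}\right)^{r/T}F\big|_{x_1=x_2(1+z)}.
\]
By the associativity half of (3), this last expression equals $(x_2z)^kY_W(Y(u,\log(1+z))v,x_2)$. Finally, cancel $(x_2z)^k$. This is legitimate because every term involved lies in a space where multiplication by $z^k$ is injective: the terms are lower truncated in $z$, or delta-type distributions handled by the standard cancellation lemma.

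\emph{Main obstacle.} The delicate point is the symmetry step in the converse direction. Axiom (3) as stated gives only one ordering, so weak commutativity must be extracted before the delta-function identity can be applied. I expect to derive it by using the associativity formula for both $(u,v)$ and $(v,u)$ and the skew symmetry $Y(u,x_0)v=e^{x_0D}Y(v,-x_0)u$. Here one also needs $Y_W(e^{x_0D}a,x_2)=Y_W(a,x_2e^{x_0})$, and I expect this to follow from (3) applied with $v=\mathbf{1}$. If this route becomes too messy, the fallback is to cite the corresponding untwisted argument of Li \cite{LHS1} and check that the $x_1^{r/T}$ normalization commutes with each step.
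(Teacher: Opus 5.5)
The paper gives no proof of this statement; it imports it from \cite{Shun2}, so there is no in-paper argument to compare against. Judged on its own, your outline is the standard Li-type derivation adapted to the $x_1^{-r/T}$ normalization, and it goes through. A few points should be tightened.

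\textbf{Weak commutativity in the converse direction.} Your sentence deriving it from ``the first half of (3) for the pair $(v,u)$ together with'' a common-expansion argument does not suffice. Let $v\in V^s$ and set $F_1=(x_1-x_2)^kY_W(u,x_1)Y_W(v,x_2)w$ and $F_2=(x_1-x_2)^kY_W(v,x_2)Y_W(u,x_1)w$. Knowing that both lie in $x_1^{-r/T}x_2^{-s/T}W((x_1,x_2))$ says nothing by itself about whether $F_1=F_2$. The mechanism is the one in your last paragraph, and it works as follows.
\begin{itemize}
\item Apply (3) to $(a,\mathbf{1})$ and use $Y_W(\mathbf{1},x)=\operatorname{id}_W$. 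Cancelling $x_2^k(e^{x_0}-1)^k$ in the $\mathbb{C}[[x_0]]$-module $W[[x_2^{\pm 1/T}]][[x_0]]$ gives $Y_W(e^{x_0\mathcal{D}}a,x_2)=Y_W(a,x_2e^{x_0})$.
\item Skew symmetry $Y(u,x_0)v=e^{x_0\mathcal{D}}Y(v,-x_0)u$ then rewrites $x_2^k(e^{x_0}-1)^kY_W(Y(u,x_0)v,x_2)w$ as $x_2^k(e^{x_0}-1)^kY_W(Y(v,-x_0)u,x_2e^{x_0})w$.
\item Take associativity for $(v,u)$, $F_2\big|_{x_2^{1/T}=(x_1e^{y})^{1/T}}=(-1)^kx_1^k(e^{y}-1)^kY_W(Y(v,y)u,x_1)w$, and substitute $x_1=x_2e^{x_0}$, $y=-x_0$. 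This identifies the previous expression with $F_2\big|_{x_1^{1/T}=(x_2e^{x_0})^{1/T}}$.
\item Hence $F_1\big|_{x_1^{1/T}=(x_2e^{x_0})^{1/T}}=F_2\big|_{x_1^{1/T}=(x_2e^{x_0})^{1/T}}$.
\end{itemize}
Only at this point does the first half of (3) enter. The map $x_1^ax_2^b\mapsto x_2^{a+b}e^{ax_0}$ is injective on $x_1^{-r/T}x_2^{-s/T}W((x_1,x_2))$: for fixed $a+b$ only finitely many $a$ occur, and distinct exponentials $e^{ax_0}$ are linearly independent. This yields $F_1=F_2$.

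\textbf{Cancelling $(x_2z)^k$.} This needs no special lemma. Multiplication by a monomial is a bijection on $W[[x_1^{\pm1/T},x_2^{\pm1/T},z^{\pm1}]]$. What you actually need is that products such as $(x_2z)^{-1}\delta\bigl(\frac{x_1-x_2}{x_2z}\bigr)F$ and $x_1^{-1}\delta\bigl(\frac{x_2(1+z)}{x_1}\bigr)F$ exist. They do, because $F$ is truncated below in both variables.

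\textbf{Bookkeeping in the forward direction.}
\begin{itemize}
\item $\operatorname{Res}_z$ of $(x_2z)^{k}(x_2z)^{-1}\delta\bigl(\frac{x_1-x_2}{x_2z}\bigr)$ equals $x_2^{-1}(x_1-x_2)^k$, not $(x_1-x_2)^k$. The extra factor is harmless.
\item To extract associativity, multiply by $x_1^{r/T}$ before taking $\operatorname{Res}_{x_1}$. Indeed, $\operatorname{Res}_{x_1}x_1^{r/T}x_1^{-1}\delta\bigl(\frac{x_2(1+z)}{x_1}\bigr)\bigl(\frac{x_2(1+z)}{x_1}\bigr)^{r/T}=(x_2(1+z))^{r/T}$, which then cancels against the same factor coming from writing $F=x_1^{-r/T}G$.
\end{itemize}
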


\begin{lem}\label{phi_weak_asso}
{\rm\cite{Shun2}}
In the definition of a $g$-twisted $\phi$-coordinated $V$-module, the third axiom is equivalent to the following \emph{weak $g$-twisted $\phi$-associativity}:  
for any  $u \in V^r$,  $v \in V$ and  $w \in W$, there exists an integer $l \in \mathbb{N}$ such that
\[
(z+1)^{\,l + \frac{r}{T}} Y_W\!\big(u, (z+1)x_2\big) Y_W(v, x_2) w 
= (1+z)^{\,l + \frac{r}{T}} Y_W\!\big(Y(u, \log(1+z)) v, x_2\big) w.
\]
\end{lem}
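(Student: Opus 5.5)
The plan is to prove the two implications separately. For the direction from axiom (3), we substitute and then cancel a power of $z$. For the converse, we go through the $g$-twisted $\phi$-Jacobi identity of Theorem \ref{jacobi}. Throughout, fix $u\in V^r$, $v\in V$, $w\in W$, and write $z=e^{x_0}-1$, so that $x_0=\log(1+z)$ and $x_2e^{x_0}=x_2(1+z)$. By the Remark, $Y_W(u,x_1)$ has $x_1$-exponents in $-\frac{r}{T}+\mathbb{Z}$.

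\emph{Axiom (3) $\Rightarrow$ weak associativity.} Take $k$ as in axiom (3) and set
\[
F(x_1,x_2)=(x_1-x_2)^kY_W(u,x_1)Y_W(v,x_2)w\in W\!\left(\left(x_1^{\frac1T},x_2^{\frac1T}\right)\right).
\]
The $x_1$-exponents of $F$ lie in $-\frac{r}{T}+\mathbb{Z}$ and are bounded below. Hence there is $l\in\mathbb{N}$ with
\[
x_1^{l+\frac rT}F\in W\!\left(\left(x_2^{\frac1T}\right)\right)[[x_1]].
\]
Multiply the second identity of axiom (3) by $x_2^{l+\frac rT}(1+z)^{l+\frac rT}$. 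This factor is exactly $x_1^{l+\frac rT}$ evaluated at $x_1^{\frac1T}=(x_2(1+z))^{\frac1T}$. We obtain
\[
x_2^{k+l+\frac rT}z^k(1+z)^{l+\frac rT}Y_W\big(Y(u,\log(1+z))v,x_2\big)w=\big(x_1^{l+\frac rT}F\big)\big|_{x_1=x_2(1+z)}.
\]
Because only nonnegative integral powers of $x_1$ occur on the right, the substitution is an honest expansion in $W((x_2^{1/T}))[[z]]$. It equals
\[
x_2^kz^k\,(z+1)^{l+\frac rT}x_2^{l+\frac rT}\,Y_W\big(u,(z+1)x_2\big)Y_W(v,x_2)w,
\]
where $(z+1)^{l+\frac rT}Y_W(u,(z+1)x_2)$ is expanded in nonnegative powers of $z$. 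Multiplication by $x_2^{k+l+\frac rT}z^k$ is injective on $W((x_2^{1/T}))[[z]]$, so we may cancel it and obtain the weak associativity of Lemma \ref{phi_weak_asso}.

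The one delicate point in this direction is the justification that the formal substitution commutes with multiplication by $(x_1-x_2)^k$. I would settle it by checking that every expression involved lies in $W((x_2^{1/T}))[[z]]$, where all the manipulations are legitimate.

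\emph{Weak associativity $\Rightarrow$ axiom (3).} By Theorem \ref{jacobi}, it suffices to derive the $g$-twisted $\phi$-Jacobi identity \eqref{phi_jacobi}. I would follow Li's argument for ($\phi$-coordinated) weak associativity implying the Jacobi identity. The steps are:
\begin{enumerate}
\item Use the skew-symmetry $Y(u,x_0)v=e^{x_0\mathcal D}Y(v,-x_0)u$ in $V$, together with weak associativity applied in both orders.
\item Deduce a weak commutativity: there is $k$ with $(x_1-x_2)^kY_W(u,x_1)Y_W(v,x_2)=(x_1-x_2)^kY_W(v,x_2)Y_W(u,x_1)$, and this common value lies in $\operatorname{Hom}(W,W((x_1^{1/T},x_2^{1/T})))$.
\item Combine this commutativity with the associativity relation, using standard delta-function identities and the factor $\big(\frac{x_2(1+z)}{x_1}\big)^{\frac rT}$ to account for the twisting. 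This yields \eqref{phi_jacobi}.
\end{enumerate}
The main obstacle is step 2, namely extracting commutativity from associativity in the $\phi$-coordinated setting. The shift $x_1=x_2(1+z)$ replaces the ordinary translation, so the relevant identity to verify is that $\log(1+z)$ composes correctly under the interchange of $u$ and $v$. I would first attempt this by the twisted version of Li's ``associativity plus skew symmetry implies Jacobi'' lemma, checking the exponents $\frac rT$ carefully.
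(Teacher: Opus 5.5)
The paper does not prove this lemma; it quotes it from \cite{Shun2}. Judged on its own, your proposal has a genuine gap in the first direction and is only an outline in the second.

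\emph{Axiom (3) $\Rightarrow$ weak associativity.} You identify $\bigl(x_1^{l+\frac rT}F\bigr)\big|_{x_1=x_2(1+z)}$ with $x_2^{k+l+\frac rT}z^k(z+1)^{l+\frac rT}Y_W(u,(z+1)x_2)Y_W(v,x_2)w$ ``expanded in nonnegative powers of $z$'', and then cancel inside $W((x_2^{1/T}))[[z]]$. This step fails. Under that reading the left side of the lemma is not even defined: the coefficient of $z^0x_2^{c}$ in $Y_W(u,(1+z)x_2)Y_W(v,x_2)w$ is the infinite sum $\sum_n u_nv_{-c-2-n}w$. For instance, take a Heisenberg field with $h_0h=0$, $h_1h=\mathbf 1$ and a nonzero lowest-degree vector $w$; then \eqref{eq2.2} gives $h_nh_{-2-n}w=(n+1)w$ for all $n\ge 0$. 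In the statement, $(z+1)^{\alpha}$ is expanded in nonnegative powers of the second summand, i.e.\ in descending powers of $z$. That is why the two sides are written $(z+1)$ and $(1+z)$, and it is the convention used in the proof of Lemma~\ref{shun3lemma}(3). The lemma asserts that two \emph{different} expansions agree, and your argument never produces this.

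The repair is short. Let $\sigma$ be the substitution $x_1^ax_2^\beta\mapsto x_2^{a+\beta}(z+1)^a$ for $a\in\mathbb Z$, with the descending expansion. It has three properties:
\begin{itemize}
\item it is well defined on $G=x_1^{l+\frac rT}Y_W(u,x_1)Y_W(v,x_2)w$, because $v_mw=0$ for $m\gg0$;
\item it satisfies $\sigma\bigl((x_1-x_2)^kG\bigr)=x_2^kz^k\sigma(G)$;
\item $\sigma(G)$ is $x_2^{l+\frac rT}$ times the left side of the lemma.
\end{itemize}
Now $x_1^{l+\frac rT}F=(x_1-x_2)^kG$ has only nonnegative integral $x_1$-powers and a uniform lower bound in $x_2$. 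So each $x_2$-coefficient involves only finitely many $x_1^a$ with $a\ge 0$, and for these $(z+1)^a=(1+z)^a$. Hence $\sigma(x_1^{l+\frac rT}F)$ equals the ascending substitution, which axiom (3) evaluates. Finally cancel the monomial $x_2^{k+l+\frac rT}z^k$; a monomial shift is injective on all formal series, so no restriction to $W((x_2^{1/T}))[[z]]$ is needed.

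Alternatively, choose $l$ with $x_1^{l+\frac rT}Y_W(u,x_1)w\in W[[x_1]]$ and apply $\operatorname{Res}_{x_1}x_1^{l+\frac rT}$ to \eqref{phi_jacobi}. The second term vanishes, and the delta function produces the descending expansion directly.

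\emph{Weak associativity $\Rightarrow$ axiom (3).} Here you only name a strategy and leave its central step, weak commutativity, open, so this direction is not proved. Your outline also omits the ingredient that makes skew symmetry usable inside $Y_W$. Take $v=\mathbf 1$ in weak associativity, using $Y(a,x_0)\mathbf 1=e^{x_0\mathcal D}a$ and, again, the agreement of the two expansions once $l$ is large. This gives $Y_W((1+z)^{\mathcal D}a,x_2)=Y_W(a,(1+z)x_2)$. Only with this can skew symmetry be moved into $Y_W$ and compared with associativity for the pair $(v,u)$ at the shifted point $(1+z)x_2$, while tracking the exponents $\frac rT,\frac sT$.

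Once commutativity with $F\in W((x_1^{1/T},x_2^{1/T}))$ is established, the second half of axiom (3) follows by running the corrected argument above backwards. First enlarge $l$ until $x_1^{l+\frac rT}F\in W((x_2^{1/T}))[[x_1]]$. This is allowed because multiplying both sides by the polynomial $(z+1)^{l'-l}=(1+z)^{l'-l}$ preserves the identity.
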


 We require the component form of the $g$-twisted $\phi$-Jacobi identity.  
For $u \in V^r$, $v \in V^s$, $l \in \mathbb{Z}$, $m \in  \frac{r}{T} + \mathbb{Z}$, and $n \in \frac{s}{T} + \mathbb{Z}$, applying the operator
$
\operatorname{Res}_z \operatorname{Res}_{x_1} \operatorname{Res}_{x_2}\, (z x_2)^l x_1^{\,m} x_2^{\,n+1}
$
to the $g$-twisted $\phi$-Jacobi identity~\eqref{phi_jacobi} yields
\begin{equation}\label{eq2.2}
	\begin{split}
			&\sum_{i \geq 0} (-1)^i \binom{l}{i} \Bigl( u_{\,l + m - i}\, v_{\,n + i} - (-1)^l\, v_{\,l + n - i}\, u_{\,m + i} \Bigr) \\
	&\qquad = \sum_{i,j \geq 0} (-1)^i \binom{l}{i} \frac{(l + m - i + 1)^j}{j!} \,(u_j v)_{\,l + m + n + 1}.
	\end{split}
\end{equation}

\begin{defi}
A \emph{$\frac{1}{T}\mathbb{N}$-graded $g$-twisted $\phi$-coordinated $V$-module} is a $g$-twisted $\phi$-coordinated $V$-module $W$ equipped with a grading
$
W = \bigoplus_{n \in \frac{1}{T}\mathbb{N}} W(n)
$
such that for all $v \in V$, $m \in \frac{1}{T}\mathbb{Z}$, and $n \in\frac{1}{T}\mathbb{N}$,
$
v_m \, W(n) \subseteq W(n - m - 1).
$
\end{defi}

\begin{defi}
A vertex operator algebra is a quadruple $(V, Y, \mathbf{1}, \omega)$ satisfying the following conditions:
\begin{enumerate}
  \item $V = \bigoplus_{n \in \mathbb{Z}} V_n$ is a $\mathbb{Z}$-graded vector space such that $\dim V_n < \infty$ for all $n \in \mathbb{Z}$ and $V_n = 0$ for $n \ll 0$.
  \item $(V, Y, \mathbf{1})$ is a vertex algebra.
  \item $\mathbf{1} \in V_0$ is the vacuum vector and $\omega \in V_2$ is the conformal vector. Let $L(m) = \omega_{m+1}$ for $m \in \mathbb{Z}$, the operators $L(m)$ satisfy the Virasoro algebra relations
    \[
      [L(m), L(n)] = (m-n) L(m+n) + \frac{m^3 - m}{12} \, c_V \, \delta_{m+n, 0}
      \quad \text{for all } m, n \in \mathbb{Z},
    \]
    where $c_V \in \mathbb{C}$ is the central charge. $L(0)|_{V_n} = n \, \operatorname{id}_{V_n}$ for all $n \in \mathbb{Z}$. For any $w \in V$,
    $
      Y(L(-1) w, z) = \frac{d}{dz} Y(w, z).
    $
\end{enumerate}
\end{defi}

\begin{defi}
Let $(V, Y, \mathbf{1}, \omega)$ be a vertex operator algebra. A linear automorphism $g$ of $V$ is called an \emph{automorphism of $V$} if $g$ is a vertex algebra automorphism of $(V, Y, \mathbf{1})$ and satisfies $g(\omega) = \omega$.
\end{defi}

Let $(V, Y, \mathbf{1}, \omega)$ be a vertex operator algebra, $g$ a finite automorphism of $V$ such that $g^T=1$ for some $T \in \mathbb{N}$. 

\begin{defi}
A \textit{weak $g$-twisted $V$-module} is a vector space $M$ equipped with a linear map 
\[
    Y_M(\cdot, z) : V \to (\operatorname{End} M)[[z^{\frac{1}{T}}, z^{-\frac{1}{T}}]], \quad v \mapsto Y_M(v, z) = \sum_{n \in \frac{1}{T}\mathbb{Z}} v_n z^{-n-1},
\]
such that for any $v \in V^r,u\in M, Y_M(v, z) =\sum_{n \in \frac{r}{T}+\mathbb{Z}} v_{n}  z^{-n-1}$, $v_n u = 0$ for sufficiently large $n$, and the following twisted Jacobi identity holds for all $u\in V^r, v \in V$:
\begin{align*}
    &z_0^{-1} \delta\left(\frac{z_1 - z_2}{z_0}\right) Y_M(u, z_1)Y_M(v, z_2) 
    - z_0^{-1} \delta\left(\frac{z_2 - z_1}{-z_0}\right) Y_M(v, z_2)Y_M(u, z_1) \\
    &= z_2^{-1} \left(\frac{z_1 - z_0}{z_2}\right)^{-\frac{r}{T}} \delta\left(\frac{z_1 - z_0}{z_2}\right) Y_M(Y(u, z_0)v, z_2).
\end{align*}
\end{defi}

We obtain the component form of the twisted Jacobi identity for a weak $g$-twisted $V$-module:
\begin{equation}\label{eq2.3}
  \sum_{i \geq 0} (-1)^i \binom{l}{i} 
  \left( u_{m+l-i} v_{n+i} - (-1)^l v_{n+l-i} u_{m+i} \right)
  = \sum_{i \geq 0} \binom{m}{i} (u_{l+i} v)_{m+n-i},
\end{equation}
where $u \in V^r$, $v \in V^s$, $l \in \mathbb{Z}$, $m \in \frac{r}{T} + \mathbb{Z}$, and $n \in \frac{s}{T} + \mathbb{Z}$.

\begin{defi}
An \textit{admissible $g$-twisted $V$-module} is a weak $g$-twisted module $M$ equipped with a $\frac{1}{T}\mathbb{N}$-grading 
$
    M = \bigoplus_{n \in \frac{1}{T}\mathbb{N}} M(n),
$
such that for any homogeneous element $v \in V$ and any integer $k \in \frac{1}{T}\mathbb{Z}$, 
$
    v_k M(n) \subseteq M(n + \operatorname{wt}v - k - 1).
$
\end{defi}

\begin{defi}
An \textit{(ordinary) $g$-twisted $V$-module} is an weak $g$-twisted module $M = \bigoplus_{n \in \mathbb{C}} M_n$ with a $\mathbb{C}$-grading such that:
\begin{enumerate}
    \item For any $n \in \mathbb{C}$, $\operatorname{dim}M_n<\infty$ and $M_{n + \frac{k}{T}} = 0$ for all sufficiently
negative integers $k$.
    \item The Virasoro operator $L_M(0)$ acts semisimply on $M$, where $L_M(0)$ is defined by the expansion $Y_M(\omega, z) = \sum_{n \in \mathbb{Z}} L_M(n) z^{-n-2}$, and $L_M(0)|_{M_n} = n \cdot \operatorname{id}_{M_n}$.
\end{enumerate}
\end{defi}

For any $n \in \mathbb{C}$, elements of $M_n$ are called \emph{homogeneous}, and for homogeneous $w \in M_n$, we define $\operatorname{wt} w := n$. Whenever $\operatorname{wt} w$ appears, $w$ is assumed to be homogeneous.

Let $W$ be a $g$-twisted $V$-module. Let $W^\mu$ for $\mu \in \mathbb{C} / \frac{1}{T}\mathbb{Z}$ be the $g$-twisted $V$-submodule of $W$ spanned by homogeneous elements of weights in $\mu$. Let
$
\Gamma(W)=\left\{\mu \in \mathbb{C} / \frac{1}{T}\mathbb{Z} \mid W^\mu \neq 0\right\} .
$
We call $\Gamma(W)$ the sets of congruence classes of weights of $W$.
For $\mu \in \Gamma(W)$, there exists $h^\mu \in \mathbb{C}$ such that
$
W^\mu=\bigoplus_{n \in \frac{1}{T}\mathbb{N}} W_{ h^\mu+n }
$
and $W_{ h^\mu } \neq 0$. Then
$
W=\bigoplus_{\mu \in \Gamma(W)} W^\mu=\bigoplus_{\mu \in \Gamma(W)} \bigoplus_{n \in \frac{1}{T}\mathbb{N}} W_{ h^\mu+n } .
$
For $n \in\frac{1}{T} \mathbb{N}$, let
$
W(n)=\bigoplus_{\mu \in \Gamma(W)} W_{h^\mu+n} .
$
Then $W$ has a canonical $\frac{1}{T}\mathbb{N}$-grading
$
W=\bigoplus_{n \in \frac{1}{T}\mathbb{N}} W(n).
$
Then $W$ is an admissible $g$-twisted $V$-module.

Let $g_k$ ($k=1,2,3$) be three commuting automorphisms of a vertex operator algebra $V$ such that $g_k^T = \mathrm{id}_V$ for some fixed $T \in \mathbb{N}$. Since the $g_k$ commute pairwise, $V$ admits the following common eigenspace decomposition:
\[
  V = \bigoplus_{0 \leq j_1, j_2 < T} V^{(j_1, j_2)},
\]
where
\[
  V^{(j_1, j_2)} = \left\{ v \in V \;\middle|\; g_k v = e^{\frac{2\pi \sqrt{-1} j_k}{T}} v,\; k=1,2 \right\}.
\]
Throughout this paper, we fix a branch for complex powers as follows: for a formal variable $z$ and $\alpha \in \mathbb{C}$, we set
$
  (-1)^\alpha = e^{\pi \alpha \sqrt{-1}},
$
and define $(-z)^\alpha = (-1)^\alpha z^\alpha$. More generally, for any ring $R$ and any $f(z) \in R\{z\}$, the expression $f(-z)$ is understood via the substitution $y^\alpha \mapsto (-1)^\alpha z^\alpha$ in each homogeneous component of $f(y)$.
For the definition of twisted intertwining operators, we refer the reader to \cite{DJ1,LS1,XXP1}. An analytic formulation applicable to not-necessarily-commuting automorphisms was given in \cite{HYZ4}; however, in this paper we adopt the formal-variable approach developed in \cite{LS1}.

\begin{defi}
Let  $(W_k, Y_{W_k})$  be a weak $g_k$-twisted $V$-module for $k=1,2,3$. An intertwining operator of type $\binom{W_3}{W_1 W_2}$ is a linear map
\begin{align*}
\mathcal{Y}(\cdot, x): & W_1 \rightarrow\left(\operatorname{Hom}\left(W_2, W_3\right)\right)\{x\} \\
& w \mapsto \mathcal{Y}(w, x)=\sum_{h \in \mathbb{C}} w_h x^{-h-1}
\end{align*}
such that for any $w^{(i)} \in W_i(i=1,2)$ and for any $h \in \mathbb{C}$,
$$
w_{h+n}^{(1)} w^{(2)}=0 \text { for } n \in \mathbb{Q} \text { sufficiently large},
$$
for $v \in V^{\left(j_1, j_2\right)}$ with $j_1, j_2 \in \mathbb{Z}$ and for $w^{(1)} \in W_1$,
\begin{align*}
& x_0^{-1}\left(\frac{x_1-x_2}{x_0}\right)^{\frac{j_1}{T}} \delta\left(\frac{x_1-x_2}{x_0}\right) Y_{W_3}\left(v, x_1\right) \mathcal{Y}\left(w^{(1)}, x_2\right) \\
& \quad- x_0^{-1}\left(\frac{-x_2+x_1}{x_0}\right)^{\frac{j_1}{T}} \delta\left(\frac{-x_2+x_1}{x_0}\right) \mathcal{Y}\left(w^{(1)}, x_2\right) Y_{W_2}\left(v, x_1\right) \\
= & x_1^{-1}\left(\frac{x_2+x_0}{x_1}\right)^{\frac{j_2}{T}} \delta\left(\frac{x_2+x_0}{x_1}\right) \mathcal{Y}\left(Y_{W_1}\left(v, x_0\right) w^{(1)}, x_2\right)
\end{align*}
holds on $W_2$, and
$$
\mathcal{Y}\left(L_{W_1}(-1) w^{(1)}, x\right)=\frac{d}{d x} \mathcal{Y}\left(w^{(1)}, x\right) \quad \text { for } w^{(1)} \in W_1 .
$$
Denote by $\mathcal{V}_{W_1 W_2}^{W_3}$ the space of intertwining operators of the indicated type.
\end{defi}

Let $(W_k, Y_{W_k})$ be a $g_k$-twisted $V$-module for $k = 1, 2, 3$. Then $W_2$ and $W_3$ admit the following gradings:
\[
  W_2 = \bigoplus_{n \in \frac{1}{T}\mathbb{N}} \bigoplus_{\mu \in \Gamma(W_2)} (W_2)_{h_2^\mu + n},
  \qquad
  W_3 = \bigoplus_{n \in \frac{1}{T}\mathbb{N}} \bigoplus_{\nu \in \Gamma(W_3)} (W_3)_{h_3^\nu + n}.
\]
The component form of the generalized Jacobi identity for an intertwining operator of type $\binom{W_3}{W_1\; W_2}$ is given by
\begin{equation} \label{eq2.4}
\begin{split}
  &\sum_{i \geq 0} (-1)^i \binom{l}{i}
    \left( v_{m+l-i} \, w^{(1)}_{n_1+i}
         - (-1)^l \, w^{(1)}_{n_1+l-i} \, v_{m+i} \right) w^{(2)} \\
  &= \sum_{i \geq 0} \binom{m}{i}
    \left( v_{l+i} \, w^{(1)} \right)_{m+n_1-i} w^{(2)},
\end{split}
\end{equation}
where $v \in V^{(j_1, j_2)}$, $w^{(1)} \in (W_1)_{h_1}$ with $h_1 \in \mathbb{C}$, and $w^{(2)} \in (W_2)_{h_2^\mu + q}$ with $\mu \in \Gamma(W_2)$, $q \in \frac{1}{T}\mathbb{N}$. The indices satisfy
\[
  l \in \frac{j_1}{T} + \mathbb{Z},
  \quad
  m \in \frac{j_2}{T} + \mathbb{Z},
  \quad
  n_1 = h_1 + h_2^\mu - h_3^\nu + n,
\]
with $n \in \frac{1}{T}\mathbb{Z}$ and $\nu \in \Gamma(W_3)$.

\subsection{$\tilde{A}_{g,n}(V)$-$\tilde{A}_{g,m}(V)$-Bimodule $\tilde{A}_{g,n,m}(V)$}

Let $(V, Y, \mathbf{1})$ be a vertex algebra, $g$ a finite automorphism of $V$ such that $g^T=1$ for some $T \in \mathbb{N}$. Define a linear operator $\mathcal{D} \colon V \to V$ by
$\mathcal{D}(v) = v_{-2}\mathbf{1}$
for all $v \in V$.
For $k, l \in \{0, 1, \ldots, T-1\}$, define
\[
\delta_k(l) =
\begin{cases}
	1, & \text{if } k \geq l, \\
	0, & \text{if } k < l,
\end{cases}
\qquad\text{and set }\delta_k(T) = 0.
\]
Every $n \in \frac{1}{T}\mathbb{Z}$ admits a unique decomposition
$
n = \lfloor n \rfloor + \frac{\bar{n}}{T},
$
where $\bar{n} \in \{0,1,\dots,T-1\}$ and $\lfloor \cdot \rfloor$ denotes the floor function.
Let $u \in V^r$, $v \in V$, and $m,n,p \in \frac{1}{T}\mathbb{Z}$. The product $\bullet_{g,m,p}^{\,n}$ on $V$ is defined by
\begin{align*}
	u \bullet_{g,m,p}^{\,n} v
	&= \sum_{i=0}^{\lfloor p \rfloor} (-1)^i 
	\binom{\lfloor m \rfloor + \lfloor n \rfloor - \lfloor p \rfloor - 1 
		+ \delta_{\bar{m}}(r) + \delta_{\bar{n}}(T - r) + i}{i} \\
	&\quad \cdot \operatorname{Res}_{y} 
	\frac{(1+y)^{-1 + \lfloor m \rfloor + \delta_{\bar{m}}(r) + \frac{r}{T}}}
	{y^{\lfloor m \rfloor + \lfloor n \rfloor - \lfloor p \rfloor 
			+ \delta_{\bar{m}}(r) + \delta_{\bar{n}}(T - r) + i}} 
	Y\bigl(u,\log(1+y)\bigr)v,
\end{align*}
if $\bar{p} - \bar{n} \equiv r \pmod{T}$ and $m,n,p \geq 0$; otherwise we set $u \bullet_{g,m,p}^{\,n} v = 0$.
Set
$
\bar{\bullet}_{g,m}^{\,n} := \bullet_{g,m,n}^{\,n},$ 
$\bullet_{g,m}^{\,n} := \bullet_{g,m,m}^{\,n}$, 
$\bullet_{g,n} := \bullet_{g,n}^{\,n} = \bar{\bullet}_{g,n}^{\,n}
$. 
For $m,n \in \frac{1}{T}\mathbb{N}$, define
\[
\tilde{O}_{g,n,m}'(V)
:= \operatorname{span}\{ u \diamond_{g,m}^{\,n} v \mid u,v \in V \} + \tilde{L}_{n,m}(V),
\]
where
$
\tilde{L}_{n,m}(V) := \operatorname{span}\{ (\mathcal{D} + m - n)u \mid u \in V \},
$
and for $u \in V^r$, $v \in V$,
\begin{align*}
	u \diamond_{g,m}^{\,n} v= \operatorname{Res}_{y} 
	\frac{(1+y)^{-1 + \delta_{\bar{m}}(r) + \lfloor m \rfloor + \frac{r}{T}}}
	{y^{\lfloor m \rfloor + \lfloor n \rfloor 
			+ \delta_{\bar{m}}(r) + \delta_{\bar{n}}(T - r) + 1}} 
	Y\bigl(u,\log(1+y)\bigr)v.
\end{align*}
Set $\tilde{O}_{g,n}(V) := \tilde{O}_{g,n,n}'(V)$ and 
$
\tilde{A}_{g,n}(V) := V / \tilde{O}_{g,n}(V).
$

\begin{thm} \label{thm:associative_algebra_structure}
{\rm\cite{Shun1}}
	The product $\bullet_{g,n}$ induces the structure of an associative algebra on $\tilde{A}_{g,n}(V) = V / \tilde{O}_{g,n}(V)$, with identity element $\mathbf{1} + \tilde{O}_{g,n}(V)$.
\end{thm}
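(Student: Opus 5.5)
The plan is to prove associativity and the two-sided ideal property indirectly, through modules, rather than by grinding through residue identities. The key tool is the operator $o(v)$ acting on the degree-$\le n$ part of $\frac{1}{T}\mathbb{N}$-graded $g$-twisted $\phi$-coordinated modules. Two steps are direct computations and come first.

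\emph{Step 1: the unit.} We check directly that $\mathbf{1}$ is a two-sided identity modulo $\tilde{O}_{g,n}(V)$. For $\mathbf{1}\in V^0$ we have $Y(\mathbf{1},\log(1+y))v=v$. The residue in $\mathbf{1}\bullet_{g,n}v$ then reduces to a finite alternating binomial sum, which should equal $v$; this is a standard identity. For $v\bullet_{g,n}\mathbf{1}$ we use $Y(v,\log(1+y))\mathbf{1}=e^{\log(1+y)\mathcal{D}}v=(1+y)^{\mathcal{D}}v$. The residue becomes a polynomial in $\mathcal{D}$ applied to $v$. Using $\mathcal{D}u\equiv 0$ modulo $\tilde{L}_{n,n}(V)$ (here $m=n$), this polynomial collapses to its constant term, which is $1$.

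\emph{Step 2: two-sided ideal.} We show $\tilde{O}_{g,n}(V)$ is a two-sided ideal for $\bullet_{g,n}$. This follows the Zhu/DLM pattern, transported to the $\phi$-coordinated setting via the substitution $x=\log(1+y)$. The two estimates needed are
\[
u\bullet_{g,n}v-v\bullet_{g,n}u\equiv \operatorname{Res}_y(\cdots)\,Y(u,\log(1+y))v,
\qquad
\operatorname{Res}_y \frac{(1+y)^{a+\frac{r}{T}}}{y^{b}}Y(u,\log(1+y))v\in\tilde O_{g,n}(V)\ \text{for } b\ge a+\text{const}.
\]
The second follows by induction from the generators $u\diamond_{g,n}^n v$, since multiplying by $(1+y)$ shifts the exponent. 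We also need the skew symmetry $Y(u,x)v=e^{x\mathcal{D}}Y(v,-x)u$, which in $\phi$-coordinates reads $Y(u,\log(1+y))v=(1+y)^{\mathcal{D}}Y(v,\log\frac{1}{1+y})u$, together with the substitution $y\mapsto -y/(1+y)$. Combining these gives $\tilde O\bullet V\subset\tilde O$ and $V\bullet\tilde O\subset\tilde O$.

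\emph{Step 3: associativity.} We avoid a direct residue computation. Instead, we construct enough modules on which $\tilde A_{g,n}(V)$ acts faithfully and on which $\bullet_{g,n}$ corresponds to composition of zero-mode-type operators. Concretely, for a $\frac{1}{T}\mathbb{N}$-graded $g$-twisted $\phi$-coordinated module $W$, define $o(v)$ on $\Omega_n(W)=\bigoplus_{k\le n}W(k)$ as the component of $Y_W(v,x)$ preserving degree. The component Jacobi identity \eqref{eq2.2}, used with a suitable choice of $l$, should yield
\[
o(u\bullet_{g,n}v)=o(u)o(v)\quad\text{on }\Omega_n(W),
\qquad
o(\tilde O_{g,n}(V))=0 .
\]
The main obstacle is faithfulness: we need that $o(a)=0$ on all such $W$ forces $a\in\tilde O_{g,n}(V)$. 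To get it, I would construct, as in DLM/Li, a universal induced module $\bar M_n(U)$ from the regular $\tilde A_{g,n}(V)$-bimodule, in which $\Omega_n$ contains a copy of $V/\tilde O_{g,n}(V)$. This is circular, because it needs the algebra structure. So I would first try the direct route: establish associativity modulo $\tilde O_{g,n}(V)$ by expanding
\[
(u\bullet v)\bullet w-u\bullet(v\bullet w)
\]
with the $\phi$-associativity of Lemma \ref{phi_weak_asso} transferred to $V$ itself, namely the identity
\[
Y(Y(u,x_0)v,x_2)=Y(u,x_0+x_2)Y(v,x_2)
\]
in the variables $x=\log(1+y)$. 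The difference should reduce to a finite sum of residues of the form $\operatorname{Res}_y\frac{(1+y)^{a+r/T}}{y^{b}}Y(\cdot,\log(1+y))(\cdot)$ with large $b$, and these lie in $\tilde O_{g,n}(V)$ by Step 2. Matching the $\delta_{\bar n}(\cdot)$ case distinctions across the three eigenvalue indices $r,s,r+s \bmod T$ is the expected bottleneck, and I would treat the cases $\bar n\ge r$ and $\bar n<r$ separately.
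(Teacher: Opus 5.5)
The paper does not prove this statement; it quotes it from \cite{Shun1}. Judged on its own, your proposal has a genuine gap: associativity of $\bullet_{g,n}$ modulo $\tilde O_{g,n}(V)$, which is the substance of the theorem, is never established.

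\emph{Associativity.} You correctly see that the module route is circular. Even the module $\bigoplus_k V/\tilde O''_{g,k,n}(V)$ of Proposition~\ref{shunprop-exta--} only gives associativity modulo $\tilde O''_{g,n,n}(V)$, and that holds by definition. But your fallback in Step~3 is only the expectation that the associator ``should reduce'' to high-pole residues. For $\lfloor n\rfloor\ge 1$, each factor $\bullet_{g,n}$ is a sum of residues with poles of order $\lfloor n\rfloor+1+i$, $0\le i\le\lfloor n\rfloor$. All of these lie below the order $2\lfloor n\rfloor+2$ of the generators. Showing that these low-pole contributions recombine into elements of $\tilde O_{g,n}(V)$ is the entire difficulty. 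You neither perform the expansion nor say which identity would make it work.

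\emph{Step~2.} This is likewise only a slogan, and its high-pole lemma is misstated. The condition ``$b\ge a+\mathrm{const}$'' fails without a lower bound on the $(1+y)$-exponent: for $u=v=\mathbf 1$ and exponent $-1$, the residue is $(-1)^{b-1}\mathbf 1$ for every $b$, and this is not in $\tilde O_{g,n}(V)$ unless $\tilde A_{g,n}(V)=0$. The correct form is
$\Res_y(1+y)^{\lambda+k}y^{-B-j}Y(u,\log(1+y))v\in\tilde O_{g,n}(V)$ for $j\ge k\ge 0$, where $u\in V^r$, $\lambda=\lfloor n\rfloor-1+\delta_{\bar n}(r)+\frac rT$ and $B=2\lfloor n\rfloor+1+\delta_{\bar n}(r)+\delta_{\bar n}(T-r)$. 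The induction in $j$ is not just multiplication by $(1+y)$. It uses $Y(\mathcal Du,\log(1+y))=(1+y)\frac{d}{dy}Y(u,\log(1+y))$ applied to the generator $(\mathcal Du)\diamond_{g,n}^{n}v$.

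\emph{The anticipated bottleneck does not exist, and seeing why locates the real work.} With $m=p=n$, the defining condition $\bar p-\bar n\equiv r\pmod T$ forces $r=0$. So $u\bullet_{g,n}v=0$ for $u\in V^r$ with $r\ne 0$. The same computation as in your Step~1 gives, for $u\in V^r$, $u\diamond_{g,n}^{n}\mathbf 1\equiv\binom{\lambda}{B-1}u$ modulo $\mathcal DV$, with $\lambda\notin\Z$. Hence $V^r\subseteq\tilde O_{g,n}(V)$ for $r\ne 0$. You need this anyway for the right unit, since your $\mathcal D$-argument covers only $v\in V^0$.

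On $V^0$ one has $\delta_{\bar n}(0)=1$ and $\delta_{\bar n}(T)=0$. So $\bullet_{g,n}$ and $\diamond_{g,n}^{n}$ restricted to $V^0$ are exactly the untwisted level-$\lfloor n\rfloor$ product and relations of the vertex algebra $V^0$; this is the case $g=1$ of the statement, i.e.\ Li's algebra. Those relations lie in $\tilde O_{g,n}(V)$. Consequences:
\begin{itemize}
\item Once $\tilde O_{g,n}(V)$ is known to be a two-sided ideal, associativity is just the untwisted statement for $V^0$, with no case analysis in $r,s,r+s$.
\item All generators of $\tilde O_{g,n}(V)$ are $g$-homogeneous. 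So the ideal property reduces to the untwisted one together with two mixed checks: $c\bullet_{g,n}(a\diamond_{g,n}^{n}b)\in\tilde O_{g,n}(V)$ and $(a\diamond_{g,n}^{n}b)\bullet_{g,n}c\in\tilde O_{g,n}(V)$, for $c\in V^0$, $a\in V^r$, $b\in V^{T-r}$, $r\ne 0$.
\end{itemize}
A complete proof must supply the untwisted associativity, by citation or by a DLM-type residue computation, and these genuinely twisted checks. Your proposal supplies neither.
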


Let $\tilde{O}_{g, n, m}^{\prime\prime}(V)$ be the linear span of all elements of the form
\[
u \bullet_{g, m, p_3}^{\,n} \left( 
\bigl(a \bullet_{g, p_1, p_2}^{\,p_3} b\bigr) \bullet_{g, m, p_1}^{\,p_3} c 
- a \bullet_{g, m, p_2}^{\,p_3} \bigl(b \bullet_{g, m, p_1}^{\,p_2} c\bigr)
\right),
\]
where $a, b, c, u \in V$ and $p_1, p_2, p_3 \in \frac{1}{T}\mathbb{N}$. By definition,
\begin{equation}\label{eq2.5}
\bigl(a \bullet_{g, p_1, p_2}^{\,n} b\bigr) \bullet_{g, m, p_1}^{\,n} c 
- a \bullet_{g, m, p_2}^{\,n} \bigl(b \bullet_{g, m, p_1}^{\,p_2} c\bigr) 
\in \tilde{O}_{g, n, m}^{\prime\prime}(V).
\end{equation} 
Define
$
\tilde{O}_{g, n, m}^{\prime\prime\prime}(V) 
:= \sum_{p_1, p_2 \in \frac{1}{T}\mathbb{N}} 
\left( V \bullet_{g, p_1, p_2}^{\,n} \tilde{O}_{g, p_2, p_1}^{\prime}(V) \right) \bullet_{g, m, p_1}^{\,n} V.
$
Set
$
\tilde{O}_{g, n, m}(V) 
:= \tilde{O}_{g, n, m}^{\prime}(V) + \tilde{O}_{g, n, m}^{\prime\prime}(V) + \tilde{O}_{g, n, m}^{\prime\prime\prime}(V)
$
and
$
\tilde{A}_{g, n, m}(V) := V / \tilde{O}_{g, n, m}(V).
$

\begin{thm}\label{thm2.15}
{\rm\cite{Shun2}}
	$\tilde{A}_{g, n, m}(V)$ carries the structure of an 
	$\tilde{A}_{g, n}(V)$-$\tilde{A}_{g, m}(V)$-bimodule, where the left action of 
	$\tilde{A}_{g, n}(V)$ is given by $\bar{\bullet}_{g, m}^{\,n}$ and the right action of 
	$\tilde{A}_{g, m}(V)$ is given by $\bullet_{g, m}^{\,n}$.
\end{thm}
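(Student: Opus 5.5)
Theorem~\ref{thm2.15} says that $\tilde{O}_{g,n,m}(V)$ is stable under the two actions and that the bimodule axioms hold modulo it. We check four things, in this order:
\begin{itemize}
\item[(a)] $\tilde{O}_{g,n}(V)\,\bar{\bullet}_{g,m}^{\,n}\,V\subseteq \tilde{O}_{g,n,m}(V)$ and $V\,\bullet_{g,m}^{\,n}\,\tilde{O}_{g,m}(V)\subseteq \tilde{O}_{g,n,m}(V)$, so that the actions descend to $\tilde{A}_{g,n}(V)$ and $\tilde{A}_{g,m}(V)$;
\item[(b)] $V\,\bar{\bullet}_{g,m}^{\,n}\,\tilde{O}_{g,n,m}(V)\subseteq\tilde{O}_{g,n,m}(V)$ and $\tilde{O}_{g,n,m}(V)\,\bullet_{g,m}^{\,n}\,V\subseteq\tilde{O}_{g,n,m}(V)$, so that the actions are defined on the quotient $\tilde{A}_{g,n,m}(V)$;
\item[(c)] the associativity identities
\[
(a\bullet_{g,n} b)\,\bar{\bullet}_{g,m}^{\,n}\, c\equiv a\,\bar{\bullet}_{g,m}^{\,n}\,(b\,\bar{\bullet}_{g,m}^{\,n}\, c),\qquad
(a\,\bullet_{g,m}^{\,n}\, b)\,\bullet_{g,m}^{\,n}\, c\equiv a\,\bullet_{g,m}^{\,n}\,(b\bullet_{g,m} c),
\]
together with the compatibility $(a\,\bar{\bullet}_{g,m}^{\,n}\, b)\,\bullet_{g,m}^{\,n}\, c\equiv a\,\bar{\bullet}_{g,m}^{\,n}\,(b\,\bullet_{g,m}^{\,n}\, c)$, all modulo $\tilde{O}_{g,n,m}(V)$;
\item[(d)] the unit axioms for $\mathbf 1$.
\end{itemize}
Theorem~\ref{thm:associative_algebra_structure} is used to identify the products $\bullet_{g,n}$ and $\bullet_{g,m}$ as those of the algebras.

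The key observation is that $\tilde{O}_{g,n,m}(V)$ is built precisely so that (c) holds. Every identity in (c) is a special case of \eqref{eq2.5} with suitable $p_1,p_2,p_3\in\{m,n\}$; for instance, $\bar{\bullet}_{g,m}^{\,n}=\bullet_{g,m,n}^{\,n}$ and $\bullet_{g,n}=\bullet_{g,n,n}^{\,n}$.
\begin{itemize}
\item Taking $p_1=m$ and $p_2=n$ in \eqref{eq2.5} gives $(a\bullet_{g,m,n}^{\,n}b)\bullet_{g,m,m}^{\,n}c-a\bullet_{g,m,n}^{\,n}(b\bullet_{g,m,m}^{\,n}c)\in\tilde{O}''_{g,n,m}(V)$. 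This is the left/right compatibility.
\item Taking $p_1=m$ and $p_2=m$ gives the right-module associativity.
\item For left associativity we need the first factor to be $\bullet_{g,n,n}^{\,n}$, which forces $p_1=p_2=n$. The outer product is then $\bullet_{g,m,n}^{\,n}$ and the inner product is $\bullet_{g,m,n}^{\,n}$, matching $\bar\bullet$.
\end{itemize}
So (c) is immediate from \eqref{eq2.5}, provided $\tilde{O}''_{g,n,m}(V)$ is taken with the outer index $p_3=n$ and $u=\mathbf 1$, using $\mathbf 1\bullet_{g,m,n}^{\,n}x=x$.

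Part (a) follows in the same way from $\tilde{O}'''$. Its summand with $p_1=m$, $p_2=n$ is $(V\bullet_{g,m,n}^{\,n}\tilde{O}'_{g,n,m}(V))\bullet_{g,m,m}^{\,n}V$. Since $\mathbf 1\bullet x=x$, the term $\tilde{O}'_{g,n,m}(V)\,\bullet_{g,m}^{\,n}\,V$ lies in $\tilde{O}_{g,n,m}(V)$. To obtain $\tilde{O}_{g,n}(V)\,\bar\bullet\,V$ we instead take $p_1=p_2=n$ and right-multiply by $\bullet_{g,m,n}^{\,n}$. For the right action, $V\bullet_{g,m}^{\,n}\tilde{O}_{g,m}(V)$ is handled by writing it as $(V\bullet_{g,m,m}^{\,n}\tilde{O}'_{g,m,m}(V))\bullet_{g,m,m}^{\,n}\mathbf 1$, using $x\bullet_{g,m}^{\,n}\mathbf 1\equiv x$. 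This last congruence is a residue computation with $Y(\mathbf 1,z)=\mathrm{id}$ and $Y(u,z)\mathbf 1=e^{z\mathcal D}u$, followed by reduction modulo $\tilde L_{n,m}(V)$.

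Part (d) is the same kind of residue computation. Since $Y(\mathbf 1,\log(1+y))=\mathrm{id}$, only the $i=0$ term of $\mathbf 1\,\bar\bullet\,v$ survives, and it equals $v$. For $v\bullet_{g,m}^{\,n}\mathbf 1$ one expands $e^{\log(1+y)\mathcal D}=(1+y)^{\mathcal D}$; the result reduces to $v$ modulo $\tilde L_{n,m}(V)$ and the $\diamond_{g,m}^{\,n}$ elements.

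The main obstacle is (b), specifically closing $\tilde{O}_{g,n,m}(V)$ under the actions. Two pieces need work.
\begin{itemize}
\item \emph{Left action on the $\tilde{O}''$ generators.} The element $a\,\bar\bullet\,\bigl(u\bullet^{\,n}_{g,m,p_3}X\bigr)$ must be rewritten by \eqref{eq2.5} as $(a\bullet_{g,p_3,n}^{\,n}u)\bullet_{g,m,p_3}^{\,n}X$ plus an element of $\tilde{O}''$. This works because \eqref{eq2.5} holds for arbitrary indices.
\item \emph{Right action on the $\tilde{O}''$ generators.} Here one uses \eqref{eq2.5} twice to move the outer right factor inside, reducing to associativity relations with shifted indices, which again lie in $\tilde{O}''$.
\end{itemize}
The $\tilde{O}'$ and $\tilde{O}'''$ generators are treated the same way, using the $\tilde{O}'''$ shape for $\tilde{O}'$ generators. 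The index bookkeeping, in particular the conditions $\bar p-\bar n\equiv r$ and the vanishing conventions for $\bullet$, is where I expect errors. I would first check the case $T=1$, where this reduces to Li's construction of $\tilde A_{n,m}(V)$, and then verify that the eigenspace conditions are preserved under each rewriting.
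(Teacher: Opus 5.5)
\textbf{Verdict.} The paper gives no proof of this statement; it is quoted from \cite{Shun2}, so there is no in-paper argument to compare against. Your plan is the standard direct verification, in the style of Dong--Jiang for $A_{g,n,m}(V)$, for which $\tilde{O}''_{g,n,m}(V)$ and $\tilde{O}'''_{g,n,m}(V)$ are built, and it is correct. It is also the appropriate route in this paper's logic. Theorem~\ref{shunthm7.7} is proved using $M_g^{(m)}(U)$, a tensor product over $\tilde{A}_{g,m}(V)$ that already presupposes this theorem, so a module-theoretic shortcut would be circular.

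\textbf{Points to make explicit in (b).}

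First, the closure facts you actually need hold for arbitrary intermediate indices: $V\bullet^{\,n}_{g,m,p}\tilde{O}''_{g,p,m}(V)\subseteq\tilde{O}''_{g,n,m}(V)$ and $\tilde{O}''_{g,n,p}(V)\bullet^{\,n}_{g,m,p}V\subseteq\tilde{O}''_{g,n,m}(V)$ for all $p\in\frac{1}{T}\mathbb{N}$. Two places require them.
\begin{itemize}
\item The left action of $x$ on a $\tilde{O}'''$ generator $(a\bullet^{\,n}_{g,p_1,p_2}o)\bullet^{\,n}_{g,m,p_1}c$ gives, after two uses of \eqref{eq2.5}, the generator $((x\bullet^{\,n}_{g,p_2,n}a)\bullet^{\,n}_{g,p_1,p_2}o)\bullet^{\,n}_{g,m,p_1}c$. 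It also leaves an error term in $\tilde{O}''_{g,n,p_1}(V)\bullet^{\,n}_{g,m,p_1}V$.
\item The right action on a $\tilde{O}''$ generator needs the left closure at the inner levels $p_2$ and $p_3$. So it takes more than the two applications of \eqref{eq2.5} you mention.
\end{itemize}
Both closures do follow from \eqref{eq2.5}, because that relation holds for all indices.

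Second, for the left action on $o\in\tilde{O}'_{g,n,m}(V)$, apply the right-unit congruence to $x\,\bar{\bullet}^{\,n}_{g,m}\,o$ itself. Then $(x\,\bar{\bullet}^{\,n}_{g,m}\,o)\bullet^{\,n}_{g,m}\mathbf{1}$ lies directly in the $p_1=m$, $p_2=n$ summand of $\tilde{O}'''_{g,n,m}(V)$. Applying the congruence to $o$ instead would be circular.

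Third, $y\bullet^{\,n}_{g,m}\mathbf{1}\equiv y$ is a $\tilde{L}_{n,m}(V)$-computation only for $y\in V^s$ with $s\equiv\bar{m}-\bar{n}\pmod{T}$. For the other eigenspaces, $y\bullet^{\,n}_{g,m}\mathbf{1}=0$, and you need $V^s\subseteq\tilde{O}'_{g,n,m}(V)$ (Lemma~\ref{lem3.9}(1)). This holds because, modulo $\tilde{L}_{n,m}(V)$, $y\diamond^{\,n}_{g,m}\mathbf{1}$ equals a binomial coefficient with non-integral upper entry times $y$, hence a nonzero multiple of $y$.
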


\subsection{${A}_{g,n}(V)$-${A}_{g,m}(V)$-Bimodule ${A}_{g,n,m}(V)$}\label{subsec2.3}
Let $(V, Y, \mathbf{1}, \omega)$ be a vertex operator algebra, $g$ a finite automorphism of $V$ such that $g^T=1$ for some $T \in \mathbb{N}$. 
For $u \in V^r$, $v \in V$, and $m, n, p \in \frac{1}{T}\mathbb{Z}$, define a bilinear $*_{g,m,p}^{\,n}$ on $V$ as follows:
\begin{align*}
u *_{g,m,p}^{\,n} v
&= \sum_{i=0}^{\lfloor p \rfloor} (-1)^i 
\binom{\lfloor m \rfloor + \lfloor n \rfloor - \lfloor p \rfloor - 1 + \delta_{\bar{m}}(r) + \delta_{\bar{n}}(T - r) + i}{i} \\
&\quad \cdot \Res_z \frac{(1+z)^{- 1 + \lfloor m \rfloor + \delta_{\bar{m}}(r) + \frac{r}{T}}}{z^{\lfloor m \rfloor + \lfloor n \rfloor - \lfloor p \rfloor + \delta_{\bar{m}}(r) + \delta_{\bar{n}}(T - r) + i}} Y((1+z)^{L_V(0)}u, z) v,
\end{align*}
if $m, n, p \in\frac{1}{T} \mathbb{N}$ and $\bar{p}-\bar{n} \equiv r \bmod T ;$ and $u *_{g, m, p}^n v=0$ otherwise. Denote $*_{g, m, p}^n$ by $\bar{*}_{g, m}^n$ if $p=n$ and by $*_{g, m}^n$ if $p=m$. Set $*_{g,n}=\bar{*}^{n}_{g,n}={*}^{n}_{g,n}$. For $m, n \in\frac{1}{T} \mathbb{N}$, let
$$
O_{g, n, m}^{\prime}(V)=\operatorname{span}\left\{u \circ_{g, m}^n v \mid u, v \in V\right\}+L_{n, m}(V)
$$
where $L_{n, m}(V)=\operatorname{span}\{(L(-1)+L(0)+m-n) u \mid u \in V\}$ and for $u \in V^r, v \in V$,
$$
u \circ_{g, m}^n v=\operatorname{Res}_z \frac{(1+z)^{ -1+\delta_{\bar{m}}(r)+\lfloor m\rfloor+r / T}}{z^{\lfloor m\rfloor+\lfloor n\rfloor+\delta_{\bar{m}}(r)+\delta_{\bar{n}}(T-r)+1}} Y((1+z)^{L_V(0)}u, z) v.
$$
Set $O_{g, n}(V)=O_{g, n, n}^{\prime}(V)$ and $A_{g, n}(V)=V / O_{g, n}(V)$.
For any $m,n\in\frac{1}{T}\mathbb{N},v\in V^r$, if
$\bar{m}-\bar{n}\not\equiv r\bmod T$, from \cite{DJ1}, we have
\begin{equation}\label{eq2.6}
  V^r\subseteq O_{g,n,m}^{\prime}(V). 
\end{equation}
For any $a, b, c, u \in V$ and any $p_1, p_2, p_3 \in\frac{1}{T} \mathbb{N}$, let $O_{g, n, m}^{\prime \prime}(V)$ be the linear span of
$$
u *_{g, m, p_3}^n\left(\left(a *_{g, p_1, p_2}^{p_3} b\right) *_{g, m, p_1}^{p_3} c-a *_{g, m, p_2}^{p_3}\left(b *_{g, m, p_1}^{p_2} c\right)\right) .
$$
Define
$$
O_{g, n, m}^{\prime \prime \prime}(V)=\sum_{p_1, p_2 \in\frac{1}{T} \mathbb{N}}\left(V *_{g, p_1, p_2}^n O_{g, p_2, p_1}^{\prime}(V)\right) *_{g, m, p_1}^n V.
$$
Set
$
O_{g, n, m}(V)=O_{g, n, m}^{\prime}(V)+O_{g, n, m}^{\prime \prime}(V)+O_{g, n, m}^{\prime \prime \prime}(V)
$
and
$
A_{g, n, m}(V)=V / O_{g, n, m}(V) .
$
\begin{thm}\label{thm2.16}
	{\rm\cite{DLM3,DJ1}}
	Let $V$ be a vertex operator algebra and $m, n \in\frac{1}{T} \mathbb{N}$. 
	The product $ *_{g, n} $ induces the structure of an associative algebra on $A_{g, n}(V)$ with identity $\mathbf{1}+O_{g, n}(V)$. And $A_{g, n, m}(V)$ is an $A_{g, n}(V)$-$A_{g, m}(V)$-bimodule such that the left and right actions of $A_{g, n}(V)$ and $A_{g, m}(V)$ are induced by $\bar{*}_{g, m}^n$ and $*_{g, m}^n$, respectively.
\end{thm}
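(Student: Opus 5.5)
The statement has two parts: the algebra structure on $A_{g,n}(V)$, and the bimodule structure on $A_{g,n,m}(V)$. My plan is to prove them in that order, following the strategy of \cite{DLM3} for the first and of \cite{DJ1} for the second. All computations will be done on homogeneous $u\in V^r$, extended by linearity. The basic tools are the Jacobi identity for $V$, the $L(-1)$-derivative property, and the identity $Y((1+z)^{L_V(0)}u,z)=(1+z)^{\operatorname{wt}u}Y(u,z)$ for homogeneous $u$. With the last identity every product $*^n_{g,m,p}$ and $\circ^n_{g,m}$ becomes a finite combination of residues $\operatorname{Res}_z (1+z)^{\alpha}z^{-\beta}Y(u,z)v$.

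\textbf{Algebra part.} I would first prove three auxiliary facts.

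\begin{enumerate}
\item Because $(L(-1)+L(0))u\in O_{g,n}(V)$, the residues $\operatorname{Res}_z\frac{(1+z)^{\operatorname{wt}u-1+\delta_{\bar n}(r)+\lfloor n\rfloor+r/T+k}}{z^{2\lfloor n\rfloor+\delta_{\bar n}(r)+\delta_{\bar n}(T-r)+1+j}}Y(u,z)v$ lie in $O_{g,n}(V)$ for all $j,k\geq 0$. I would show this by induction on $j$, writing $(1+z)=z+1$ and using $Y(L(-1)u,z)=\frac{d}{dz}Y(u,z)$ together with integration by parts in $\operatorname{Res}_z$.
\item For $r\neq 0$ one has $V^r\subseteq O_{g,n}(V)$. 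This is the case $m=n$ of \eqref{eq2.6}, so the product only needs to be analysed on $V^0$.
\item The commutator formula: for $u,v\in V^0$, $u*_{g,n}v-v*_{g,n}u\equiv\operatorname{Res}_z(1+z)^{\operatorname{wt}u-1}Y(u,z)v \pmod{O_{g,n}(V)}$. I would derive it via skew-symmetry $Y(v,z)u=e^{zL(-1)}Y(u,-z)v$ and the change of variable $z\mapsto -z/(1+z)$.
\end{enumerate}

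With these, I would show that $O_{g,n}(V)$ is a two-sided ideal, i.e.\ $V*_{g,n}O_{g,n}(V)\subseteq O_{g,n}(V)$ and $O_{g,n}(V)*_{g,n}V\subseteq O_{g,n}(V)$. For the generators $u\circ_{g,n}^n v$ this is done by expanding $Y(a,z_1)Y(u,z_2)v$ through the Jacobi identity. Next comes associativity: I would write $(a*u)*v$ and $a*(u*v)$ as iterated residues in $z_1,z_2$, convert $Y(Y(a,z_0)u,z_2)$ into $Y(a,z_0+z_2)Y(u,z_2)$ by the associator form of the Jacobi identity, and compare; the difference should land in $O_{g,n}(V)$ by the first auxiliary fact. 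Finally, $\mathbf 1*_{g,n}v=v$ is immediate from $Y(\mathbf 1,z)=\operatorname{id}$. The identity $v*_{g,n}\mathbf 1\equiv v$ follows from $Y(v,z)\mathbf 1=e^{zL(-1)}v$ together with the first auxiliary fact.

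\textbf{Bimodule part.} Here the ideal $O_{g,n,m}(V)$ has been enlarged precisely so that the bimodule axioms hold by construction. By \eqref{eq2.5}-type relations coming from $O''_{g,n,m}(V)$ (taking various $p_i\in\{n,m\}$), one gets:
\begin{itemize}
\item associativity of the left action, $(a\,\bar*^n_{g,m}\,b)$ against $a*_{g,n}$;
\item associativity of the right action;
\item compatibility of the two actions.
\end{itemize}
Moreover $O'''_{g,n,m}(V)$ with $(p_1,p_2)=(m,n)$ contains $(V\bar*O'_{g,n})*V$. It then remains to check that the operations descend to the quotients:
\begin{itemize}
\item $O_{g,n}(V)\,\bar*^n_{g,m}\,V\subseteq O_{g,n,m}(V)$ and $V*^n_{g,m}O_{g,m}(V)\subseteq O_{g,n,m}(V)$;
\item $O_{g,n,m}(V)$ is stable under both actions.
\end{itemize}
Stability of $O''$ and $O'''$ under both actions follows again from the $O''$-associativity relations. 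Stability of $O'$ needs a direct residue computation paralleling the two-sided ideal argument of the algebra part. Unitality follows as in the algebra case.

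\textbf{Main obstacle.} The hard step is the associativity computation modulo $O_{g,n}(V)$, along with the analogous stability of $O'_{g,n,m}(V)$. The sums over $i$ with binomial coefficients, the exponents involving $\delta_{\bar n}(r)$, and the fractional powers $(1+z)^{r/T}$ must all be matched exactly. I would first try to reduce everything to the untwisted computations of \cite{DLM1} by treating the $V^0$ component separately from the $V^r$ components, and handle the fractional exponents by the substitution $z_1=(1+z_2)(1+z_0)-1$, which makes all powers of $1+z$ multiplicative.
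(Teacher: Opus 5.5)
Your outline follows the structure of the sources the paper relies on here. The paper gives no proof of this theorem; it only cites \cite{DLM3,DJ1}. Those sources use a residue lemma, $V^r\subseteq O_{g,n}(V)$ for $r\neq 0$, and the commutator formula on $V^0$, then prove the ideal property and associativity. For $A_{g,n,m}(V)$ they observe that $O''_{g,n,m}(V)$ and $O'''_{g,n,m}(V)$ are built in to force the bimodule axioms. Your derivation of the left, right and mixed associativity relations from \eqref{eq2.5} with $p_1,p_2\in\{n,m\}$ is fine, and so is your treatment of the stability of $O''$ and $O'''$.

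\textbf{The first auxiliary fact is false as stated.} Your ideal and associativity steps rest on it, and it fails for arbitrary $j,k\ge 0$. Take $u=\mathbf{1}$ (so $r=0$, $\operatorname{wt}u=0$, $\delta_{\bar n}(0)=1$, $\delta_{\bar n}(T)=0$), $j=0$ and $k=\lfloor n\rfloor+1$. Your element becomes $\operatorname{Res}_z(1+z)^{2\lfloor n\rfloor+1}z^{-2\lfloor n\rfloor-2}\,v=v$, so the claim would force $O_{g,n}(V)=V$. More generally, for $n=0$ and any $u\in V^0$, the case $j=0$, $k=1$ is $u\circ^{0}_{g,0}v+u*_{g,0}v$.

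The correct lemma, as in \cite{DLM1} and its twisted version in \cite{DLM3}, requires $j\ge k\ge 0$. Your induction on $j$ gives the case $k=0$. Expanding $(1+z)^k$ then lowers the pole order by at most $k$, which stays in range only when $k\le j$. You must check that every term you discard in the associativity and ideal computations has this restricted shape. That is exactly where the binomial sums over $i$ and the shifts $\delta_{\bar n}(r)$, $\delta_{\bar n}(T-r)$ have to be matched.

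Your ideal argument also only addresses the generators $u\circ^n_{g,n}v$. The generators $(L(-1)+L(0))u$ need their own argument: integration by parts on one side, and the commutator formula on the other.

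\textbf{The bimodule part has an indexing slip that leads you to plan an unnecessary computation.} The choice $(p_1,p_2)=(m,n)$ in $O'''_{g,n,m}(V)$ gives $\bigl(V\,\bar{*}^{n}_{g,m}\,O'_{g,n,m}(V)\bigr)*^n_{g,m}V$, not a term involving $O'_{g,n}$. Combine this with two facts:
\begin{itemize}
\item the exact identity $\mathbf{1}\,\bar{*}^n_{g,m}\,x=x$;
\item the congruence $x*^n_{g,m}\mathbf{1}\equiv x \pmod{O'_{g,n,m}(V)}$, which uses \eqref{eq2.6} for $x\in V^r$ with $r\not\equiv\bar m-\bar n$.
\end{itemize}
Together these already put both $O'_{g,n,m}(V)*^n_{g,m}V$ and $V\,\bar{*}^n_{g,m}\,O'_{g,n,m}(V)$ inside $O_{g,n,m}(V)$. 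Likewise, $(p_1,p_2)=(n,n)$ gives $O_{g,n}(V)\,\bar{*}^n_{g,m}\,V\subseteq O_{g,n,m}(V)$, and $(p_1,p_2)=(m,m)$ gives $V*^n_{g,m}O_{g,m}(V)\subseteq O_{g,n,m}(V)$.

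So the only genuine residue computation in the bimodule part is the right-unit congruence. A direct computation placing $V\,\bar{*}^n_{g,m}\,O'_{g,n,m}(V)$ inside $O'_{g,n,m}(V)$ would amount to a large part of the later equality $O_{g,n,m}(V)=O'_{g,n,m}(V)$ of Remark~\ref{rmk2.17}. That is precisely what the enlargement by $O''$ and $O'''$ in \cite{DJ1} was designed to bypass.
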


\begin{rmk}\label{rmk2.17}
	From \cite{ZYY2,XH1}, we have $O_{g, n, m}(V)=O_{g, n, m}^{\prime}(V)$ for any $n,m\in\frac{1}{T}\mathbb{N}$.
\end{rmk}

Let $U$ be an $A_{g, m}(V)$-module. Set
$$
M(U)=\bigoplus_{n \in\frac{1}{T} \mathbb{N}} A_{g, n, m}(V) \otimes_{A_{g, m}(V)} U
$$
Then, $M(U)$ is $\frac{1}{T} \mathbb{N}$-graded such that $M(U)(n)=A_{g, n, m}(V) \otimes_{A_{g, m}(V)} U$. For $u \in V^r, p \in \frac{r}{T}+\mathbb{Z}$ and $n \in\frac{1}{T} \mathbb{Z}$, define an operator $u_p$ from $M(U)(n)$ to $M(U)(n+$ wt $u-p-1$ ) (with the convention that $M(U)(i)=0$ if $i<0$ ) by
\begin{align*}
	& u_p\left(\left(v+O_{g, n, m}(V) \otimes w\right)\right. \\
	= & \left\{\begin{array}{ll}
		\left(u *_{g, m, n}^{\operatorname{wt} u-p-1+n} v+O_{g, \operatorname{wt}u-p-1+n, m} (V)\right)\otimes w,&  \text { if }  \operatorname{wt} u-1-p+n < 0,\\
		0, & \text { if }  \operatorname{wt} u-1-p+n \geq 0,
	\end{array}\right.
\end{align*}
for $v \in V$ and $w \in U$.
Then
$
M(U)
$
is an admissible $g$-twisted $V$-module with $M(U)(n)=A_{g, n, m}(V) \otimes_{A_{g, m}(V)} U$ from \cite{DJ1}.
\section{From Bimodule $\tilde{A}_{g,n,m}(V)$ to Twisted $\phi$-Representation Theory}\label{sec3}
In this section, we use the bimodule $\tilde{A}_{g,n,m}(V)$ to construct $\frac{1}{T}\mathbb{N}$-graded $g$-twisted $\phi$-coordinated $V$-modules and establish a simplified characterization of the subspaces $\tilde{O}_{g,n,m}(V)$.

\begin{notation}\label{shunnota}
Until further notice, we adopt the following conventions.
\begin{itemize}
    \item[(1)] For $m \in \frac{1}{T}\Z$ and $i \in \Z$, define
    \[
    \binom{m}{i} =
    \begin{cases}
        1 & \text{if } i = 0, \\
        0 & \text{if } i < 0.
    \end{cases}
    \]

    \item[(2)] For $k, l \in \frac{1}{T}\Z$, set
    \[
    \sum_{i = k}^{l} a_i := \sum_{i \in \Z_{k,l}} a_i,
    \quad \text{where} \quad
    \Z_{k,l} =
    \begin{cases}
        \Z \cap [l, k] & \text{if } l \leq k, \\
        \Z \cap [k, l] & \text{if } l > k.
    \end{cases}
    \]

    \item[(3)] For $n \in \frac{1}{T}\N$, $a \in V^r$, and $b \in V$, define the formal series
    \[
    f_i(a, b) := \frac{(1 + z)^q}{z^i} \, Y(a, \log(1 + z))\, b \quad \text{for } i \in \Z,
    \]
    where $q = -1 + \lfloor n \rfloor + \delta_{\bar{n}}(r) + \frac{r}{T}$.
\end{itemize}
\end{notation}

The following combinatorial identities are needed.

\begin{lem}[{\cite[Lemma~6.2]{HXX1}}] \label{shuncomb lemm}
Let $n \in \frac{1}{T}\Z$ and $l \in \Z$. Then
\[
\sum_{j=0}^{n+1+l} (-1)^j \binom{l}{j}
\sum_{i=0}^{n+1+l-j} (-1)^i \binom{-l + i + j - 1}{i} \frac{1}{z^{i+j}} = 1.
\]
\end{lem}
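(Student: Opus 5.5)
The plan is to compare coefficients of $z^{-k}$ and reduce everything to the Chu--Vandermonde convolution. First I fix how the summation convention of Notation~\ref{shunnota}(2) applies when $n+1+l$ is not an integer. Put $N=\lfloor n+1+l\rfloor$. Since $l$ and $j$ are integers, the inner range for a given $j\ge 0$ consists of the integers $i$ with $0\le i\le N-j$, at least when $N-j\ge 0$.

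\emph{Degenerate case $n+1+l<0$.} The outer sum runs over integers $j$ in $[n+1+l,0]$. By Notation~\ref{shunnota}(1), $\binom{l}{j}=0$ for $j<0$, so only $j=0$ survives. Its inner sum runs over integers $i\in[n+1+l,0]$, and again only $i=0$ contributes, because $\binom{-l+i-1}{i}=0$ for $i<0$. That term equals $1$, which gives the claim.

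\emph{Main case $n+1+l\ge 0$, so $N\ge 0$.} Here the double sum runs exactly over pairs $(i,j)$ of nonnegative integers with $i+j\le N$. I will group the terms by $k=i+j$, which ranges over $0\le k\le N$. For fixed $k$, every $j$ with $0\le j\le k$ occurs, with $i=k-j$. Hence the coefficient of $z^{-k}$ is
\[
(-1)^k\sum_{j=0}^{k}\binom{l}{j}\binom{k-l-1}{k-j}.
\]
The sign comes from $(-1)^{i+j}=(-1)^k$, and the second binomial is $\binom{-l+i+j-1}{i}=\binom{k-l-1}{k-j}$. By the Vandermonde identity, which is a polynomial identity in the upper argument and so holds for arbitrary $l\in\Z$, the sum equals $\binom{k-1}{k}$. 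For $k\ge1$ this is $\frac{(k-1)(k-2)\cdots 0}{k!}=0$, and for $k=0$ it is $1$. So only the constant term $1$ survives, as required.

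The only real subtlety, which I would check carefully, is the bookkeeping of the truncated ranges. For each $k\le N$, the full range $0\le j\le k$ must appear, and no pair with $i+j>N$ may appear. Both facts follow from the inner bound being $N-j$ with $N$ an integer. Beyond that, the proof is just Vandermonde with generalized binomial coefficients.
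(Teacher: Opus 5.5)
Your proof is correct. The case split on the sign of $n+1+l$ matches the conventions of Notation~\ref{shunnota}(1)--(2), and the truncated double sum is exactly the set of pairs $(i,j)\in\N^2$ with $i+j\le\lfloor n+1+l\rfloor$. Chu--Vandermonde then gives the coefficient of $z^{-k}$ as $(-1)^k\binom{k-1}{k}=\delta_{k,0}$, so only the constant term $1$ survives.

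The paper gives no proof of this lemma; it is imported from \cite[Lemma~6.2]{HXX1}. Your argument therefore serves as a self-contained verification rather than an alternative to a proof in the text.

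An equivalent way to see it: with $x=z^{-1}$, you have $\sum_{i\ge0}(-1)^i\binom{-l+i+j-1}{i}x^i=(1+x)^{l-j}$. Your double sum is then the degree-$\le\lfloor n+1+l\rfloor$ truncation of the formal identity $\sum_{j\ge0}\binom{l}{j}(-x)^j(1+x)^{l-j}=1$ in $\mathbb{C}[[x]]$, which holds for every $l\in\Z$.
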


\begin{lem} \label{shuncomb lemm2}
For $k, n \in \frac{1}{T}\N$, $a \in V^r$, $b \in V$, and $j, l \in \Z$, we have
\[
a \bullet_{g,n,\, k+1+q+l-j}^{\,k} b
= \sum_{i=0}^{k+1+q+l-j} (-1)^i \binom{-l + i + j - 1}{i} \Res_z f_{i + j - l}(a, b),
\]
where $q = -1 + \lfloor n \rfloor + \delta_{\bar{n}}(r) + r/T$.
\end{lem}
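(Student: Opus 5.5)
The plan is to unfold the definition of $\bullet_{g,m,p}^{\,n}$ from Section 2.2 with $m=n$, upper index $k$, and $p = k+1+q+l-j$, and then match it term by term with the right-hand side. Since $q = -1+\lfloor n\rfloor+\delta_{\bar n}(r)+\frac{r}{T}$, we have
\[
p = \lfloor k\rfloor + \frac{\bar k + r}{T} + \lfloor n\rfloor + \delta_{\bar n}(r) + l - j .
\]
This gives $\bar p \equiv \bar k + r \pmod T$ automatically. So the congruence condition $\bar p-\bar k\equiv r \pmod T$ in the definition always holds, and the nonvanishing branch of the definition applies whenever $p\geq 0$.

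The key step is computing $\lfloor p\rfloor$. Since $0\le \bar k, r\le T-1$, we have $\lfloor (\bar k+r)/T\rfloor\in\{0,1\}$. It equals $1$ exactly when $\bar k\geq T-r$, i.e. when $\delta_{\bar k}(T-r)=1$. The case $r=0$ is covered by the convention $\delta_{\bar k}(T)=0$, since $\bar k<T$. Hence
\[
\lfloor p\rfloor=\lfloor k\rfloor+\lfloor n\rfloor+\delta_{\bar n}(r)+\delta_{\bar k}(T-r)+l-j .
\]
Substituting this into the definition turns the exponent of $y$ in the denominator, $\lfloor n\rfloor+\lfloor k\rfloor-\lfloor p\rfloor+\delta_{\bar n}(r)+\delta_{\bar k}(T-r)+i$, into $i+j-l$. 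The top entry of the binomial coefficient becomes $-l+j-1+i$. The numerator $(1+y)^{-1+\lfloor n\rfloor+\delta_{\bar n}(r)+r/T}$ is exactly $(1+y)^q$. Therefore the $i$-th summand is $(-1)^i\binom{-l+i+j-1}{i}\Res_z f_{i+j-l}(a,b)$, with $f$ as in Notation~\ref{shunnota}(3).

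It remains to compare the summation ranges. The definition sums over $0\le i\le\lfloor p\rfloor$. By Notation~\ref{shunnota}(2), the right-hand side sums over integers in $[0,p]$, which is the same set when $p\ge 0$. I expect the main obstacle to be the boundary case $p<0$, where the left side vanishes by definition. Here I would check how Notation~\ref{shunnota}(1),(2) are meant to be used, reading the range convention so that for negative upper limit the sum is empty, consistent with the definition's $\sum_{i=0}^{\lfloor p\rfloor}$ being empty. With that reading, both sides are $0$.

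Once the floor computation is verified, the identity holds termwise; no generating-function manipulation is needed.
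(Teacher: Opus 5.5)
Your computation for $p:=k+1+q+l-j\ge 0$ is exactly the paper's proof. The paper's two identities, $\lfloor p\rfloor=\lfloor n\rfloor+\lfloor k+\frac{r}{T}\rfloor+\delta_{\bar n}(r)+l-j$ and $\lfloor k+\frac{r}{T}\rfloor=\lfloor k\rfloor+\delta_{\bar k}(T-r)$, together give your formula for $\lfloor p\rfloor$. The binomial, the power of $z$ and the factor $(1+z)^q$ are then read off from the definition of $\bullet_{g,m,p}^{\,n}$ just as you do.

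The gap is your treatment of $p<0$. Notation~\ref{shunnota}(2) does not make $\sum_{i=0}^{p}$ empty when $p<0$. It makes it the sum over $\mathbb{Z}\cap[p,0]$, and Notation~\ref{shunnota}(1) then kills the terms with $i<0$. So for $p<0$ the right-hand side is
\[
\sum_{i\in\mathbb{Z}\cap[p,0]}(-1)^i\binom{-l+i+j-1}{i}\Res_z f_{i+j-l}(a,b)=\Res_z f_{j-l}(a,b),
\]
while the left-hand side is $0$ by definition. This need not vanish. Take $T=1$, $k=n=0$ (so $q=0$), $l=0$, $j=2$ and $b=\mathbf{1}$. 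Then $p=-1$, and the right-hand side is $\Res_z z^{-2}Y(a,\log(1+z))\mathbf{1}=\mathcal{D}a$, which is nonzero as soon as $\mathcal{D}a\neq 0$. So for $p<0$ the identity is false, not trivially true.

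You also cannot rescue it by reading the range convention as an empty sum. The reversed range in Notation~\ref{shunnota}(2) is exactly what makes Lemma~\ref{shuncomb lemm} hold when $n+1+l<0$: there it collapses to $1=1$, whereas an empty sum would give $0$. The two lemmas are used back to back in the proof of Lemma~\ref{shun3lemma}(3), so the convention must be the same in both.

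The correct statement carries the hypothesis $k+1+q+l-j\ge 0$. That is all your argument proves, and all the paper's one-line argument proves. Be aware that in Lemma~\ref{shun3lemma}(3) the lemma is invoked for every $k$. The following line then discards the $k$ with $k+1+q+l<0$, and the vanishing of those terms modulo $\tilde{O}_{g,k,m}(V)$ does not follow from this lemma; it needs its own argument.
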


\begin{proof}
Note that
$
\lfloor k + 1 + q + l - j \rfloor
= \lfloor n \rfloor + \lfloor k +\frac{r}{T} \rfloor + \delta_{\bar{n}}(r) + l - j,
$
and that
$
\lfloor k \rfloor - \lfloor k + \frac{r}{T} \rfloor + \delta_{\bar{k}}(T - r) = 0.
$
The claim then follows directly from the definition of the product $\bullet_{g,m,p}^{\,n}$ together with Notation~\ref{shunnota}(2)--(3).
\end{proof}

Let $m \in \frac{1}{T}\N$ and $U$ be an $\tilde{A}_{g,m}(V)$-module. Define
\[
M_g^{(m)}(U) := \bigoplus_{n \in \frac{1}{T}\N} \tilde{A}_{g,n,m}(V) \otimes_{\tilde{A}_{g,m}(V)} U,
\]
which is naturally $\frac{1}{T}\N$-graded with homogeneous component
\[
M_g^{(m)}(U)(n) = \tilde{A}_{g,n,m}(V) \otimes_{\tilde{A}_{g,m}(V)} U.
\]
For $u \in V$, $w \in U$, and $p \in \frac{1}{T}\Z$, define a linear operator $u_p$ on $M_g^{(m)}(U)$ by
\[
u_p\bigl((v + \tilde{O}_{g,n,m}(V)) \otimes w\bigr)
:=
\begin{cases}
\bigl(u \bullet_{g,m,n}^{\,n - p - 1} v + \tilde{O}_{g,n - p - 1,m}(V)\bigr) \otimes w,
& \text{if } n - p - 1 \geq 0, \\
0, & \text{otherwise}.
\end{cases}
\]
We then form the generating function
$
Y_{M_g^{(m)}(U)}(u, z) := \sum_{p \in \frac{1}{T}\Z} u_p\, z^{-p - 1}.
$

\begin{lem}\label{shun3lemma}
Let $m, n \in \frac{1}{T}\N$. Then the following hold:

{\rm(1)} For any $u \in V^r$ and $p \in \frac{r}{T} + \Z$, we have
    \[
    u_p\bigl(M_g^{(m)}(U)(n)\bigr) = 0 \quad \text{whenever } p > n - 1.
    \]

{\rm(2)} $Y_{M_g^{(m)}(U)}(\mathbf{1}, z) = \mathrm{id}$.

{\rm(3)} For any $a \in V^r$ and $b \in V^s$,
    \begin{align*}
          & (1 + z_0)^q \, Y_{M_g^{(m)}(U)}\!\bigl(Y(a, \log(1 + z_0))\, b, z_2\bigr)\\
    &\quad= (z_0 + 1)^q \, Y_{M_g^{(m)}(U)}\!\bigl(a, (z_0 + 1) z_2\bigr)\, Y_{M_g^{(m)}(U)}(b, z_2),
    \end{align*}
    or equivalently, for any $l \in \Z$,
    \begin{align*}
    &\Res_{z_0}\, z_0^{\,l} (1 + z_0)^q z_2^{-q}\,
      Y_{M_g^{(m)}(U)}\!\bigl(Y(a, \log(1 + z_0))\, b, z_2\bigr) \\
    &\quad = \Res_{z_0}\, z_0^{\,l} (z_0 + 1)^q z_2^{-q}\,
      Y_{M_g^{(m)}(U)}\!\bigl(a, (z_0 + 1) z_2\bigr)\,
      Y_{M_g^{(m)}(U)}(b, z_2)
    \end{align*}
    as operators on $M_g^{(m)}(U)(n)$, where $q = -1 + \lfloor n \rfloor + \delta_{\bar{n}}(r) + \frac{r}{T}$.
 
\end{lem}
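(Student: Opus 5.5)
Parts (1) and (2) come straight from the definitions, so the plan puts almost all the work into (3).

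For (1), take $u\in V^r$ and $p>n-1$. Then $n-p-1<0$, and by definition $u_p$ kills $M_g^{(m)}(U)(n)$. The only check is consistency when $p\in\frac{r}{T}+\Z$. If $n-p-1\ge 0$ but $\overline{n}-\overline{n-p-1}\not\equiv r$, the product $\bullet_{g,m,n}^{\,n-p-1}$ is zero by convention. So only the modes $p\in \frac rT+\Z$ with $n-p-1\ge0$ contribute, and $Y_{M_g^{(m)}(U)}(u,z)w\in z^{-r/T}W((z))$.

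For (2), take $u=\mathbf 1$, so $r=0$. Since $Y(\mathbf 1,\log(1+y))=\mathrm{id}$, the residue in $\mathbf 1\bullet_{g,m,n}^{\,n-p-1}v$ is $\Res_y (1+y)^{a}y^{-b}v$ with explicit exponents. For $p\neq -1$ the binomial sum should collapse to $0$, and for $p=-1$ it should collapse to $v$. One way to see this is to compare with the identity element $\mathbf 1+\tilde O_{g,n}(V)$ of Theorem~\ref{thm:associative_algebra_structure}. The other is to evaluate directly: the sum $\sum_i(-1)^i\binom{N+i}{i}\Res_y(1+y)^a y^{-N-1-i}$ is a truncated expansion of $((1+y)/y)^{\dots}$ and yields $\delta_{p,-1}$.

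For (3), apply both sides to $(c+\tilde O_{g,k,m}(V))\otimes w$ with $c\in V$ and $k\in\frac1T\N$, and take the residue $\Res_{z_0}z_0^{l}$ for arbitrary $l\in\Z$. Rather than a fixed $n$, the plan uses the graded piece on which the left operator acts. The steps are as follows.

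1. Expand the right side. Expanding $(z_0+1)^q z_2^{-q}Y(a,(z_0+1)z_2)Y(b,z_2)$ in nonnegative powers of $z_0$ turns the coefficient of $z_0^{-l-1}$ into a finite sum $\sum_j\binom{q}{\cdot}a_{\cdot}b_{\cdot}$. Each term is a composite $a\bullet_{g,m,\cdot}^{\,\cdot}(b\bullet_{g,m,k}^{\,\cdot}c)$, and by (1) the sum truncates at level $0$.

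2. Reassociate. Relation \eqref{eq2.5} holds modulo $\tilde O''_{g,\cdot,m}(V)$, so each composite can be rewritten as $(a\bullet_{g,p_1,p_2}^{\,p_3}b)\bullet_{g,m,k}^{\,p_3}c$, with $p_1=k$ and $p_2$ the intermediate level.

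3. Expand the left side. The modes of $Y(a,\log(1+z_0))b$ acting on $c$ are given by the same products $\bullet_{g,m,k}^{\,\cdot}$, now applied to $\Res_{z_0} z_0^l(1+z_0)^q Y(a,\log(1+z_0))b=\Res f_{-l}(a,b)$ in Notation~\ref{shunnota}(3).

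4. Compare. It remains to identify $\Res_z f_{-l}(a,b)$ with the combination $\sum_j(-1)^j\binom{l}{j}\,a\bullet_{g,n,k+1+q+l-j}^{\,k}b$ arising in step 2. Lemma~\ref{shuncomb lemm2} writes each such product as a sum of $\Res_z f_{i+j-l}(a,b)$, and Lemma~\ref{shuncomb lemm} says that the total coefficient matrix collapses to the identity, giving exactly $f_{-l}$.

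The main obstacle is the bookkeeping in step 4 and at the truncation boundaries. One must check that the indices $\lfloor\cdot\rfloor$ and $\delta$ appearing in the definition of $\bullet$ agree with $q=-1+\lfloor n\rfloor+\delta_{\bar n}(r)+\frac rT$; this is the computation inside Lemma~\ref{shuncomb lemm2}. One must also check that terms with negative level vanish on both sides simultaneously, matching the upper summation limits $n+1+l$ in Lemma~\ref{shuncomb lemm}. I would first handle $l\ge0$, where all sums are finite and the terms $\binom{l}{j}$ are honest. For $l<0$ the plan is to use the convention of Notation~\ref{shunnota}(1)--(2) to interpret the reversed sums, and then check that the same identity goes through.
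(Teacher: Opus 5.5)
Your route for (3) is the paper's. On $M_g^{(m)}(U)(n)$ the left side becomes $\sum_k z_2^{k-n}\bigl(\Res_z f_{-l}(a,b)\bigr)\bullet_{g,m,n}^{\,k}v$. Lemma~\ref{shuncomb lemm} inserts $1$, and Lemma~\ref{shuncomb lemm2} turns this into $\sum_j(-1)^j\binom{l}{j}\bigl(a\bullet_{g,n,k+1+q+l-j}^{\,k}b\bigr)\bullet_{g,m,n}^{\,k}v$. Then \eqref{eq2.5} reassociates modulo $\tilde{O}''_{g,k,m}(V)$, and one reads off $a_{q+l-j}b_{-1-i}$. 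The paper runs this as one chain; you meet in the middle. Keep $n$ as the input level and $k$ as the output level throughout, as in Lemma~\ref{shuncomb lemm2}; your step~2 uses $k$ for the input level.

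Parts (1) and (2) also match, but for (2) only your direct evaluation works. The identity of $\tilde{A}_{g,n}(V)$ says nothing about the cross-level products $\mathbf{1}\bullet_{g,m,n}^{\,n-p-1}v$. The collapse comes from $\binom{\lfloor m\rfloor}{\lfloor m\rfloor-p-1+i}\binom{\lfloor m\rfloor-p-1+i}{i}=\binom{\lfloor m\rfloor}{p+1}\binom{p+1}{i}$ together with $p+1\le\lfloor n\rfloor$.

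Two steps of (3) fail as written. The first is the expansion in step~1. The factor $(z_0+1)^q$, unlike $(1+z_0)^q$ on the left, must be expanded in nonnegative powers of the second summand $1$, i.e.\ in descending powers of $z_0$. If you expand in nonnegative powers of $z_0$, then $\Res_{z_0}z_0^{l}(\cdots)$ vanishes for every $l\ge 0$, although the left side generally does not (it sees the singular part of $Y(a,\log(1+z_0))b$). For $l<0$ it gives an infinite sum in each graded component. With the correct convention, for $p\in\frac{r}{T}+\Z$ one has $\Res_{z_0}z_0^{l}(z_0+1)^{q-p-1}=\binom{q-p-1}{q+l-p}$. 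For $p=q+l-j$ this equals $\binom{j-l-1}{j}=(-1)^j\binom{l}{j}$. So the right side is $z_2^{-q}\sum_{j\ge0}(-1)^j\binom{l}{j}a_{q+l-j}z_2^{-q-l+j-1}Y(b,z_2)$, with the coefficients you use in step~4, not $\binom{q}{\cdot}$.

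The second is the boundary you flagged, and matching summation limits does not settle it. The right side only reaches output levels $k=n+i+j-q-l-1$ with $n+i\ge0$ and $j\ge0$. So it has no component at levels with $k+1+q+l<0$, while the left side has $\bigl(\Res_z f_{-l}(a,b)\bigr)\bullet_{g,m,n}^{\,k}v$ there. At such $k$, Lemma~\ref{shuncomb lemm} degenerates to $1=1$, and Lemma~\ref{shuncomb lemm2} does not hold. Its left side $a\bullet_{g,n,k+1+q+l}^{\,k}b$ is $0$ by definition, while under Notation~\ref{shunnota}(2) its right side is $\Res_z f_{-l}(a,b)$, which is nonzero in general. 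The paper's chain passes these levels by exactly this use of Lemma~\ref{shuncomb lemm2}, so an argument is needed.

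What is true is that these components lie in $\tilde{O}_{g,k,m}(V)$. One checks that $k+1+q+l<0$ if and only if $-l\ge N:=\lfloor n\rfloor+\lfloor k\rfloor+\delta_{\bar{n}}(r)+\delta_{\bar{k}}(T-r)+1$. Also $\Res_z f_{-N}(a,b)=a\diamond_{g,n}^{\,k}b\in\tilde{O}'_{g,k,n}(V)$. Using $Y(\mathcal{D}a,\log(1+y))=(1+y)\frac{d}{dy}Y(a,\log(1+y))$ and integrating by parts, for $s>N$ you get
\begin{align*}
(s-1)\Res_y\frac{(1+y)^q}{y^{s}}\,Y(a,\log(1+y))b
&=\Res_y\frac{(1+y)^q}{y^{s-1}}\,Y(\mathcal{D}a,\log(1+y))b\\
&\quad-(s-q-2)\Res_y\frac{(1+y)^q}{y^{s-1}}\,Y(a,\log(1+y))b.
\end{align*}
By induction on $s$, applied to both $a$ and $\mathcal{D}a\in V^r$, this gives $\Res_z f_{-l}(a,b)\in\tilde{O}'_{g,k,n}(V)$ for all $-l\ge N$. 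Finally, the computation in (2) with $(m,n)$ replaced by $(n,k)$ and $p=-1$ gives $x=\mathbf{1}\bullet_{g,n,k}^{\,k}x$. Hence for $x\in\tilde{O}'_{g,k,n}(V)$, $x\bullet_{g,m,n}^{\,k}v\in\bigl(V\bullet_{g,n,k}^{\,k}\tilde{O}'_{g,k,n}(V)\bigr)\bullet_{g,m,n}^{\,k}V\subseteq\tilde{O}'''_{g,k,m}(V)$. So these components vanish in $M_g^{(m)}(U)(k)$, and with these two repairs your plan is a complete proof.
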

\begin{proof}
	(1) follows immediately from the  definition of $u_p$. And for (2), it is sufficient to show $\mathbf{1}_{p}=\delta_{p,-1} \operatorname{id}$ on $M_{g}^{(m)}(U)(n)$ for any $n \in \frac{1}{T}\N$.  By (1),  $\mathbf{1}_{p}=0$ on $M_{g}^{(m)}(n)$ if $p>n-1$. Now considering $\Z\ni p \leq n-1$, then for any $v\in V,w\in U$, we have
	\begin{eqnarray*}
	&&\mathbf{1}_{p}((v +\tilde{O}_{g,n,m}(V))\otimes w)=(\mathbf{1} \bullet_{g,m, n}^{n-p-1} v+ \tilde{O}_{g,n-p-1,m}(V))\otimes w \\
	&=&\sum_{i=0}^{\lfloor n\rfloor}(-1)^i \dbinom{\lfloor m\rfloor +\lfloor n-p-1\rfloor-\lfloor n\rfloor +i }{i} \\
	&&\quad\quad\quad\quad\quad\cdot \operatorname{Res}_z \frac{(1+z)^{\lfloor m\rfloor}}{z^{\lfloor m\rfloor+\lfloor n-p-1\rfloor-\lfloor n\rfloor+i+1}}(Y({\bf 1},\log(1+z))v+ \tilde{O}_{g,n-p-1,m}(V))\otimes w\\
	&=&\sum_{i=0}^{\lfloor n\rfloor}(-1)^i \dbinom{\lfloor m\rfloor +\lfloor n-p-1\rfloor-\lfloor n\rfloor +i }{i} \\
	&&\quad\quad\quad\quad\quad\cdot \dbinom{\lfloor m\rfloor}{\lfloor m\rfloor +\lfloor n-p-1\rfloor-\lfloor n\rfloor +i}(v+ \tilde{O}_{g,n-p-1,m}(V))\otimes w\\
	&=&\sum_{i=0}^{\lfloor n\rfloor}(-1)^i \dbinom{\lfloor m\rfloor-p+i{-1}}{i} \dbinom{\lfloor m\rfloor }{\lfloor m\rfloor-p+i{-1}}(v+ \tilde{O}_{g,n-p-1,m}(V))\otimes w\\
	&=&{\sum_{i=0}^{\lfloor n\rfloor}(-1)^i \dbinom{\lfloor m\rfloor}{p+1}\dbinom{p+1}{i}(v +\tilde{O}_{g,n-p-1,m}(V)})\otimes w\\ &=&(\delta_{p,-1} v +\tilde{O}_{g,n-p-1,m}(V) )\otimes w\quad\quad\quad\quad\quad\quad\quad\quad\quad\quad(\text{by Notation \ref{shunnota}}\,(1))\\
&=& \delta_{p,-1}(v +\tilde{O}_{g,n,m}(V))\otimes w.
	\end{eqnarray*}
	Thus, (2) holds.
	
	The idea of the proof of the third statement comes essentially from \cite[Lemma 5.10]{DJ1} (see also \cite[Lemma 3.10]{HJZ1}).  For ${(v+\tilde{O}_{g,n,m}(V))\otimes w \in M_{g}^{(m)}(U)(n)}$, $q= -1+\lfloor n\rfloor +\delta_{\bar{n}}(r) + \frac{r}{T}$ and let  $\alpha\in\{0,\ldots, T-1\}$ be such that $\alpha\equiv \bar{n}-r-s\ \bmod T $, we have
	\begin{align*}
	& \operatorname{Res}_{z_{0}} z_{0}^{l}\left(1+z_{0}\right)^{  q} Y_{M_{g}^{(m)}(U)}\left(Y\left(a, \log(1+z_{0})\right) b, z_{2}\right)(v+\tilde{O}_{g,n,m}(V))\otimes w \\
    =& \operatorname{Res}_{z_{0}} z_{0}^{l}\left(1+z_{0}\right)^{  q} \sum_{k\in\frac{\alpha}{T}+\N}z_2^{k-n}((Y\left(a, \log(1+z_{0})\right) b)\bullet_{g,m,n}^kv+\tilde{O}_{g,k,m}(V))\otimes w \\
    =& \sum_{k\in\frac{\alpha}{T}+\N}z_2^{k-n}\operatorname{Res}_{z_{0}} z_{0}^{l}\left(1+z_{0}\right)^{  q} ((Y\left(a, \log(1+z_{0})\right) b)\bullet_{g,m,n}^kv+\tilde{O}_{g,k,m}(V))\otimes w \\
    =& \sum_{k\in\frac{\alpha}{T}+\N}z_2^{k-n}\operatorname{Res}_{z}  (f_{-l}(a,b)\bullet_{g,m,n}^kv+\tilde{O}_{g,k,m}(V))\otimes w \quad\quad\quad\quad\quad(\text{by Notation \ref{shunnota}(3)})\\
	=&\sum_{k \in \frac{\alpha}{T}+\N} z_{2}^{k-n}\sum_{j=0}^{k+1+q+l}(-1)^{j} \dbinom{l}{j}\sum_{i=0}^{k+1+q+l-j}(-1)^{i} \dbinom{-l+i+j-1}{i}\\
	&\times{\operatorname{Res}_z}(\left(f_{i+j-l}(a, b) \bullet_{g,m, n}^{k} v\right)+\tilde{O}_{g,k,m}(V) )\otimes w\quad\quad (\text{by Notation \ref{shunnota}\,\text{(2)-(3)} and Lemma \ref{shuncomb lemm}})\\
	=&\sum_{k \in \frac{\alpha}{T}+\N} z_{2}^{k-n} \sum_{j=0}^{k+1+q+l} (-1)^j \dbinom{l}{j}(
	\left(\left(a \bullet_{g,n, k+1+q+l-j}^{k}b \right)\bullet_{g,m, n}^{k} v\right)+\tilde{O}_{g,k,m}(V))\otimes w \\
	&\quad\quad\quad\quad\quad\quad\quad\quad\quad\quad\quad\quad\quad\quad\quad\quad\quad\quad\quad\quad\quad\quad\quad\quad\quad \quad\quad\quad\quad(\text{by Lemma \ref{shuncomb lemm2}})&\\
	=&\sum_{\substack{k \in \frac{\alpha}{T}+\N \\k+1+l+q\geq 0}} z_{2}^{k-n} \sum_{j \in \N} (-1)^j \dbinom{l}{j} (a \bullet_{g,m, k+1+q+l-j}^{k}\left(b \bullet_{g,m, n}^{k+1+q+l-j} v\right)+\tilde{O}_{g,k,m}(V))\otimes w\\
	&\quad\quad\quad\quad\quad\quad\quad\quad\quad\quad\quad\quad\quad\quad\quad\quad\quad\quad\quad\quad\quad{(\text{by \eqref{eq2.5} and Notation \ref{shunnota}\,(1)-(2)} )}&\\
	=&\sum_{j \in \mathbb{N} } \sum_{\substack{ -n\le i \in -\frac{s}{T}+\mathbb{Z}\\ {-}l+i+j\geq 1+q-n}} \dbinom{l}{j} (-1)^j z_2^{-l+i+j-1-q}(a \bullet_{g,m, n+i}^{-l+i+j-1-q+n}\left(b \bullet_{g,m, n}^{n+i} v\right)+\tilde{O}_{g,-l+i+j-1-q+n,m}(V))\otimes w\\
	=&\sum_{j \in \N}\dbinom{l}{j}(-1)^{j} a_{q+l-j} \sum_{-n\le i \in -\frac{s}{T}+\mathbb{Z}} z_2^{-l+i+j-1-q}b_{-1-i}(v+\tilde{O}_{g,n,m}(V))\otimes w\\
	=&\sum_{j \in \N}\dbinom{l}{j}(-1)^{j} a_{q+l-j} z_2^{-l+j-1-q} Y_{M_{g}^{(m)}(U)}\left(b, z_{2}\right)(v+\tilde{O}_{g,n,m}(V))\otimes w \\
	=&\,\operatorname{Res}_{z_{0}} z_{0}^{l}\left(z_{0}+1\right)^{q} Y_{M_{g}^{(m)}(U)}\left(a, (z_{0}+1)z_{2}\right) Y_{M_{g}^{(m)}(U)}\left(b, z_{2}\right)(v+\tilde{O}_{g,n,m}(V))\otimes w,
	\end{align*}
	proving (3).
\end{proof}

As an immediate consequence of Lemma \ref{shun3lemma} and  Lemma \ref{phi_weak_asso}, we have:
\begin{thm}\label{shunthm7.5}
    
Let $U$ be an $\tilde{A}_{g, m}(V)$-module. Then $M_g^{(m)}(U)=\bigoplus_{n \in \frac{1}{T} \N} \tilde{A}_{g, n, m}(V) \otimes_{\tilde{A}_{g, m}(V)} U$ is a $\frac{1}{T}\N$-graded $g$-twisted $\phi$-coordinated $V$-module with $M_g^{(m)}(U)(n)=\tilde{A}_{g, n, m}(V) \otimes_{\tilde{A}_{g, m}}(V) U$.
\end{thm}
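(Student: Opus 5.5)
We verify the axioms of a $\frac{1}{T}\N$-graded $g$-twisted $\phi$-coordinated $V$-module in Definition~\ref{defi_coor_module} directly, using Lemma~\ref{shun3lemma} for the analytic axioms and Lemma~\ref{phi_weak_asso} to turn weak associativity into Axiom~(3).

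\textbf{Well-definedness of $u_p$.} The operator $u_p$ is defined on representatives, so we first check that it descends to $\tilde{A}_{g,n,m}(V)\otimes_{\tilde{A}_{g,m}(V)}U$. This requires three facts.
\begin{itemize}
\item[(i)] $u\bullet_{g,m,n}^{\,n-p-1}\tilde{O}_{g,n,m}(V)\subseteq \tilde{O}_{g,n-p-1,m}(V)$.
\item[(ii)] $u\bullet_{g,m,n}^{\,k}(v\bullet_{g,m}^{\,n} a)-(u\bullet_{g,m,n}^{\,k}v)\bullet_{g,m}^{\,k}a\in\tilde{O}_{g,k,m}(V)$, so $u_p$ is compatible with the balanced tensor product over $\tilde{A}_{g,m}(V)$.
\item[(iii)] The degree shift is correct.
\end{itemize}
Fact (ii) is an instance of \eqref{eq2.5} with $p_1=m$. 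Fact (i) follows from how $\tilde{O}_{g,n,m}(V)$ is built. The pieces $\tilde{O}''$ and $\tilde{O}'''$ are stable under $u\bullet_{g,m,n}^{\,k}$, again by \eqref{eq2.5}, since an extra left factor can be absorbed by associativity modulo $\tilde{O}''$. The piece $\tilde{O}'$ lies in $\tilde{O}'''$-type terms: use $u\bullet_{g,m,n}^{\,k}x=(u\bullet_{g,n,n}^{\,k}x)\bullet_{g,m,n}^{\,k}\mathbf 1$ modulo $\tilde{O}''$, with $\mathbf 1$ acting as the identity. This is presumably what the definition of $\tilde{O}_{g,n,m}(V)$ was designed for, and I would cite \cite{Shun2} where possible. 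For (iii), $u_p$ maps degree $n$ to degree $n-p-1$ by construction. Moreover, $u\bullet_{g,m,n}^{\,n-p-1}v=0$ unless the congruence $\overline{n}-\overline{n-p-1}\equiv r$ holds, which is exactly $p\in\frac rT+\Z$. This gives $Y_{M}(u,z)w\in z^{-r/T}W((z))$, i.e.\ Axiom~(2) by the Remark following Definition~\ref{defi_coor_module}. Truncation from below, i.e.\ $Y_M(u,z)w\in W((z^{1/T}))$, follows from Lemma~\ref{shun3lemma}(1).

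\textbf{Vacuum and associativity.} Axiom~(1) is Lemma~\ref{shun3lemma}(2). For Axiom~(3), fix $a\in V^r$, $b\in V$ and $w$ homogeneous of degree $n$, and set $q=-1+\lfloor n\rfloor+\delta_{\bar n}(r)+\frac rT$. Lemma~\ref{shun3lemma}(3) gives
\[
(z+1)^{q}Y(a,(z+1)x_2)Y(b,x_2)w=(1+z)^{q}Y(Y(a,\log(1+z))b,x_2)w .
\]
Write $q=l+\frac rT$ with $l=-1+\lfloor n\rfloor+\delta_{\bar n}(r)\in\Z$. If $l\ge 0$, this is exactly the weak $g$-twisted $\phi$-associativity of Lemma~\ref{phi_weak_asso}. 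If $l<0$, multiply both sides by $(1+z)^{-l}$; this is legitimate because both sides are formal series in $z$ with nonnegative powers (binomial expansions), so the identity persists with exponent $0+\frac rT$. Every $w\in M_g^{(m)}(U)$ is a finite sum of homogeneous pieces, and we take the maximal $l$ over these pieces. Hence weak associativity holds, and Lemma~\ref{phi_weak_asso} yields Axiom~(3). The grading condition $v_pW(n)\subseteq W(n-p-1)$ holds by definition, which completes the proof.

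\textbf{Main obstacle.} The delicate point is the well-definedness in (i): that $u\bullet^{\,n-p-1}_{g,m,n}$ maps $\tilde{O}'_{g,n,m}(V)$ into $\tilde{O}_{g,n-p-1,m}(V)$. I would first try to reduce it to membership in $\tilde{O}'''$ via the identity-insertion trick above. If that fails, I would prove it directly using the $\phi$-Jacobi component form \eqref{eq2.2}, in the style of \cite{DJ1}.
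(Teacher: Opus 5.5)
Your main line is the paper's proof. The paper obtains the theorem immediately from Lemma~\ref{shun3lemma} and Lemma~\ref{phi_weak_asso}, and leaves the well-definedness of $u_p$ implicit (it rests on the bimodule constructions recalled from \cite{Shun2}).

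Your extra checks are correct additions:
\begin{itemize}
\item Axiom~(2) via the congruence $\bar n-\overline{n-p-1}\equiv r$.
\item Balancedness over $\tilde A_{g,m}(V)$ via \eqref{eq2.5} with $p_1=m$.
\item The normalization when $-1+\lfloor n\rfloor+\delta_{\bar n}(r)=-1$.
\end{itemize}
In the last one, your stated reason is not the right one. The left side, built from $(z+1)^q$ and $Y(a,(z+1)x_2)$, is expanded in descending powers of $z$, so it does not contain only nonnegative powers. Multiplying by $(1+z)^{-l}$ is legitimate because it is a polynomial, and $(1+z)^{N}(z+1)^{q}=(z+1)^{q+N}$ for $N\in\N$.

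The step that fails as written is your identity-insertion argument for $u\bullet^{k}_{g,m,n}\tilde O'_{g,n,m}(V)\subseteq\tilde O_{g,k,m}(V)$.
\begin{itemize}
\item By \eqref{eq2.5} with $p_1=p_2=n$, your congruence $u\bullet^{k}_{g,m,n}x\equiv(u\bullet^{k}_{g,n,n}x)\bullet^{k}_{g,m,n}\mathbf 1$ amounts to $u\bullet^{k}_{g,m,n}\bigl(x-x\bullet^{n}_{g,m,n}\mathbf 1\bigr)\in\tilde O_{g,k,m}(V)$. That is no easier than the original claim.
\item The product $\bullet^{n}_{g,m,n}$ is the left action on $\tilde A_{g,n,m}(V)$, and there $\mathbf 1$ vanishes unless $n=m$. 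For example, $x\bullet^{n}_{g,m,n}\mathbf 1=0$ for every $x\in V^r$ with $r\neq 0$.
\item The element $(u\bullet^{k}_{g,n,n}x)\bullet^{k}_{g,m,n}\mathbf 1$ would be a generator of $\tilde O'''_{g,k,m}(V)$ only if $x\in\tilde O'_{g,n,n}(V)$, not $\tilde O'_{g,n,m}(V)$.
\end{itemize}

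The repair is to insert $\mathbf 1$ on the right of $y:=u\bullet^{k}_{g,m,n}x$ itself.
\begin{itemize}
\item Applying Lemma~\ref{lem3.9} to the $g$-eigencomponents of $y$ gives $y-y\bullet^{k}_{g,m}\mathbf 1\in\tilde L_{k,m}(V)+\bigoplus_{s\not\equiv\bar m-\bar k}V^s\subseteq\tilde O'_{g,k,m}(V)$.
\item Taking $p_1=m$, $p_2=n$ in the definition of $\tilde O'''$ gives $y\bullet^{k}_{g,m,m}\mathbf 1\in\bigl(V\bullet^{k}_{g,m,n}\tilde O'_{g,n,m}(V)\bigr)\bullet^{k}_{g,m,m}V\subseteq\tilde O'''_{g,k,m}(V)$.
\end{itemize}
This is exactly what $\tilde O'''$ is designed for.

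Your one-line claim that $\tilde O'''$ is stable also hides a step. After re-associating with \eqref{eq2.5}, you are left with terms in $\tilde O''_{g,k,p_1}(V)\bullet^{k}_{g,m,p_1}V$. So you also need right stability of $\tilde O''$. This follows from further applications of \eqref{eq2.5}: re-associating $\bigl((ab)d\bigr)c$ and $\bigl(a(bd)\bigr)c$ shows that an associator multiplied on the right again lies in $\tilde O''$.

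With these repairs, or by citing \cite{Shun2} as the paper implicitly does, your proof is complete.
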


Let $W$ be a $g$-twisted $\phi$-coordinated $V$-module, for any \( n, m \in \frac{1}{T}\N \), define
\begin{align*}
    &\tilde{\Omega}_n(W)=\left\{w \in W \mid v_k w=0 \text { for } v \in V, -k-1<-n\right\},\\
 &\tilde{\mathcal{O}}_{g,n, m}(V) = \left\{ u \in V \mid (\operatorname{Res}_xx^{m-n-1}Y_W(u,x))|_{\tilde{\Omega}_m(M)} = 0 \right.\\
 &\quad\quad\quad\quad\quad\quad\left.\text{ for all } g\text{-twisted } \phi\text{-coordinated } V\text{-module }M \right\}.
\end{align*}

\begin{lem}\label{lem3.6}
{\rm\cite{Shun2}}
	For any $k, l \in\frac{1}{T} \mathbb{N}$ and $v \in \tilde{O}_{g, k, l}(V)$, we have $\operatorname{Res}_xx^{l-k-1}Y_W(v,x)=0$ on $\tilde{\Omega}_l(W)$.
\end{lem}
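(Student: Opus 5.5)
The plan is to treat the three summands of $\tilde{O}_{g,k,l}(V)=\tilde{O}'_{g,k,l}(V)+\tilde{O}''_{g,k,l}(V)+\tilde{O}'''_{g,k,l}(V)$ separately. Throughout, write $o_{k,l}(v):=\operatorname{Res}_x x^{l-k-1}Y_W(v,x)=v_{l-k-1}$, regarded as an operator on $\tilde{\Omega}_l(W)$.

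The first step is a \emph{product formula}: for $u\in V^r$, $v\in V$, $p\in\frac{1}{T}\mathbb{N}$ and $w\in\tilde{\Omega}_l(W)$, I would show
\[
o_{k,l}\bigl(u\bullet_{g,l,p}^{\,k}v\bigr)w=o_{k,p}(u)\,o_{p,l}(v)\,w,
\]
together with the fact that $o_{p,l}(v)w\in\tilde{\Omega}_p(W)$. For this I would use the component Jacobi identity \eqref{eq2.2} with $m$ and $n$ chosen so that the right-hand side reproduces the residue defining $\bullet_{g,l,p}^{\,k}$. On the left-hand side, every term $v_{l+n-i}u_{m+i}w$ vanishes, because $u_{m+i}$ has index $>l-1$ and $w\in\tilde{\Omega}_l(W)$. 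Among the terms $u_{l+m-i}v_{n+i}w$, only the one with $v$-mode $l-p-1$ survives. The sum $\sum_{i=0}^{\lfloor p\rfloor}$ with its binomial coefficients in the definition of $\bullet$ is exactly what cancels the other surviving terms; the bookkeeping for this cancellation is Lemma~\ref{shuncomb lemm} together with the computation in Lemma~\ref{shuncomb lemm2}. The membership $o_{p,l}(v)w\in\tilde{\Omega}_p(W)$ would come from applying \eqref{eq2.2} once more with $u$ replaced by an arbitrary $a\in V$: the terms produced involve only modes that kill $w$, or modes of $a_jv$ whose indices force zero.

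Next I would treat $\tilde{O}'_{g,k,l}(V)$.
\begin{itemize}
\item For the generators of $\tilde{L}_{k,l}(V)$: in a $\phi$-coordinated module one has $Y_W(\mathcal{D}u,x)=x\frac{d}{dx}Y_W(u,x)$. This follows from Lemma~\ref{phi_weak_asso} with $v=\mathbf{1}$, comparing coefficients of $z$. It gives $(\mathcal{D}u)_{p}=-(p+1)u_p$. At $p=l-k-1$ this becomes $-(l-k)u_{l-k-1}$, which is cancelled by $(l-k)u_{l-k-1}$. Hence $o_{k,l}((\mathcal{D}+l-k)u)=0$.
\item For $u\diamond_{g,l}^{\,k}v$: the same Jacobi computation as in the product formula applies, but the power of $y$ in the denominator is one higher. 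After the same cancellations, this pushes the surviving $v$-mode into the range that annihilates $w\in\tilde{\Omega}_l(W)$, so the whole expression is $0$.
\end{itemize}

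With the product formula available, the other two summands follow formally.
\begin{itemize}
\item For $\tilde{O}''_{g,k,l}(V)$: both bracketed terms act on $\tilde{\Omega}_l(W)$ as the same composite $o_{p_3,p_2}(a)\,o_{p_2,p_1}(b)\,o_{p_1,l}(c)$, using the membership statements so that each intermediate vector lies in the correct $\tilde{\Omega}_{p_i}(W)$. Their difference therefore acts as zero, and the outer factor $u\bullet_{g,l,p_3}^{\,k}(\cdot)$ preserves this vanishing by the product formula once more.
\item For $\tilde{O}'''_{g,k,l}(V)$: an element $(x\bullet_{g,p_1,p_2}^{\,k}y)\bullet_{g,l,p_1}^{\,k}c$ with $y\in\tilde{O}'_{g,p_2,p_1}(V)$ acts as $o_{k,p_2}(x)\,o_{p_2,p_1}(y)\,o_{p_1,l}(c)$. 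Here $o_{p_1,l}(c)w\in\tilde{\Omega}_{p_1}(W)$, and the $\tilde{O}'$ case just proved shows that $o_{p_2,p_1}(y)$ vanishes on $\tilde{\Omega}_{p_1}(W)$.
\end{itemize}

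The main obstacle is the product formula itself. The difficulty is matching the floor and $\delta$ exponents in $\bullet_{g,m,p}^{\,n}$, which depend on the residues $\bar{m}$, $\bar{n}$, $\bar{p}$ and $r$, with the twisted mode indices in \eqref{eq2.2}, and then verifying that the finite alternating sum isolates exactly one term. I would first handle the case $T=1$ (as in Li's untwisted argument) to fix the pattern, and then insert the $\delta$ corrections using the identity $\lfloor k\rfloor-\lfloor k+\frac{r}{T}\rfloor+\delta_{\bar{k}}(T-r)=0$ from the proof of Lemma~\ref{shuncomb lemm2}.
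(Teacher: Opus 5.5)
Your proposal is correct and follows the same route the paper's framework uses for this cited lemma. You kill the generators of $\tilde{O}'_{g,k,l}(V)$ by a direct residue computation, using $Y_W(\mathcal{D}u,x)=x\frac{d}{dx}Y_W(u,x)$ for $\tilde{L}_{k,l}(V)$. You then dispose of $\tilde{O}''_{g,k,l}(V)$ and $\tilde{O}'''_{g,k,l}(V)$ formally via the product formula and the filtration property, which are exactly Lemmas~\ref{lem4.1} and~\ref{lem4.2}.

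Two points need tightening. First, the membership $o_{p,l}(v)w\in\tilde{\Omega}_p(W)$ requires a downward induction on the mode index of $v$, as in the proof of Lemma~\ref{lem4.2}. The Jacobi identity produces terms $a_{M-i}v_{N+i}w$ with $i\geq 1$, and these are not killed outright. Second, you need negative $l$, so you should take residues directly in Theorem~\ref{jacobi}, which gives $\operatorname{Res}_z z^{l}(1+z)^{m}\bigl(Y(u,\log(1+z))v\bigr)_{l+m+n+1}$ on the right-hand side. Do not use the expanded form of \eqref{eq2.2}: as written it holds only for $l\geq 0$ (it fails already for $u=\mathbf{1}$, $l=-1$, $m=0$).
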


   \begin{thm} \label{shunthm7.7}
For any \( n, m \in \frac{1}{T} \N \), $ \tilde{O}_{g,n, m}(V) = \tilde{\mathcal{O}}_{g,n, m}(V).$ 
\end{thm}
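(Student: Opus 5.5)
The equality is proved by two inclusions. The inclusion $\tilde{O}_{g,n,m}(V)\subseteq\tilde{\mathcal{O}}_{g,n,m}(V)$ is exactly Lemma~\ref{lem3.6}. For $v\in\tilde{O}_{g,n,m}(V)$ and any $g$-twisted $\phi$-coordinated $V$-module $M$, that lemma, applied with $k=n$ and $l=m$, gives $\operatorname{Res}_x x^{m-n-1}Y_M(v,x)=0$ on $\tilde{\Omega}_m(M)$. This is the defining condition of $\tilde{\mathcal{O}}_{g,n,m}(V)$.

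For the reverse inclusion, I would test against a universal module. I take $U=\tilde{A}_{g,m}(V)$, the left regular module over itself; by Theorem~\ref{thm:associative_algebra_structure} it is an associative algebra with identity $\mathbf{1}+\tilde{O}_{g,m}(V)$. By Theorem~\ref{shunthm7.5}, $M:=M_g^{(m)}(U)$ is a $\frac{1}{T}\mathbb{N}$-graded $g$-twisted $\phi$-coordinated $V$-module. Moreover $M(n)=\tilde{A}_{g,n,m}(V)\otimes_{\tilde{A}_{g,m}(V)}\tilde{A}_{g,m}(V)\cong\tilde{A}_{g,n,m}(V)$, using the right action $\bullet_{g,m}^{\,n}$ from Theorem~\ref{thm2.15}.

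Consider the vector $w_0=(\mathbf{1}+\tilde{O}_{g,m,m}(V))\otimes(\mathbf{1}+\tilde{O}_{g,m}(V))\in M(m)$. By the grading condition $v_kM(m)\subseteq M(m-k-1)$, we have $v_kw_0=0$ whenever $m-k-1<0$, so $w_0\in\tilde{\Omega}_m(M)$. Now let $u\in\tilde{\mathcal{O}}_{g,n,m}(V)$. Then $\operatorname{Res}_x x^{m-n-1}Y_M(u,x)w_0=u_{m-n-1}w_0=0$. By the definition of the operators $u_p$ on $M_g^{(m)}(U)$, this says
\[
(u\bullet_{g,m,m}^{\,n}\mathbf{1}+\tilde{O}_{g,n,m}(V))\otimes(\mathbf{1}+\tilde{O}_{g,m}(V))=0
\quad\text{in } M(n).
\]
Under the isomorphism $M(n)\cong\tilde{A}_{g,n,m}(V)$, which sends $a\otimes b\mapsto a\bullet_{g,m}^{\,n}b$ and is right unital, this yields $u\bullet_{g,m}^{\,n}\mathbf{1}\in\tilde{O}_{g,n,m}(V)$.

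It remains to check that $u\bullet_{g,m}^{\,n}\mathbf{1}\equiv u$ modulo $\tilde{O}_{g,n,m}(V)$. This is the step I expect to be the main computational point. First, if $\bar m-\bar n\not\equiv r \pmod T$ for $u\in V^r$, the product vanishes by definition. In that case I would show $V^r\subseteq\tilde{O}_{g,n,m}'(V)$ directly, via $u\diamond_{g,m}^{\,n}\mathbf{1}$, in analogy with \eqref{eq2.6}. In the matching case, expanding $Y(u,\log(1+y))\mathbf{1}=e^{\log(1+y)\mathcal{D}}u=(1+y)^{\mathcal{D}}u$, the residue computes $u\bullet_{g,m}^{\,n}\mathbf{1}$ as a finite combination of binomial coefficients in $\mathcal{D}$ applied to $u$. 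Modulo $\tilde{L}_{n,m}(V)$, $\mathcal{D}$ acts as the scalar $n-m$. The claim then reduces to a binomial identity showing the resulting scalar equals $1$, in the same spirit as the computation of $\mathbf{1}_p$ in Lemma~\ref{shun3lemma}(2).

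Should that identity prove awkward, I would instead argue via associativity \eqref{eq2.5} together with the identity of $\tilde{A}_{g,m}(V)$. That reduces the claim to the unit property of the right action in Theorem~\ref{thm2.15}. Either route gives $u\in\tilde{O}_{g,n,m}(V)$, which completes the reverse inclusion.
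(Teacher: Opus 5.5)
Your proof is correct and is essentially the paper's own argument. Lemma~\ref{lem3.6} gives $\tilde{O}_{g,n,m}(V)\subseteq\tilde{\mathcal{O}}_{g,n,m}(V)$. For the reverse inclusion, evaluating $u_{m-n-1}$ on $\mathbf{1}+\tilde{O}_{g,m,m}(V)$, which lies in degree $m$ and hence in $\tilde{\Omega}_m$ of $M_g^{(m)}(\tilde{A}_{g,m}(V))$, gives $u\bullet_{g,m}^{\,n}\mathbf{1}\in\tilde{O}_{g,n,m}(V)$; the final step $u\bullet_{g,m}^{\,n}\mathbf{1}\equiv u$ is the unit property the paper takes from Theorem~\ref{thm2.15}, and your ``direct'' route is exactly Lemma~\ref{lem3.9}. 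Your handling of $M(n)\cong\tilde{A}_{g,n,m}(V)$ is more careful than the paper's, but associativity \eqref{eq2.5} alone does not give right unitality, so that step genuinely rests on Theorem~\ref{thm2.15} (equivalently Lemma~\ref{lem3.9}) rather than on associativity.
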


\begin{proof}
Consider the $\frac{1}{T}\N$-graded $g$-twisted $\phi$-coordinated $V$-module $$M_g^{(m)}(\tilde{A}_{g,m}(V)) = \bigoplus_{k \in \frac{1}{T}\N} \tilde{A}_{g,k,m}(V),$$ then we have
$ \tilde{A}_{g,m, m}(V) = M_g^{(m)}(\tilde{A}_{g,m}(V))(m) \subseteq \tilde{\Omega}_{m}(M_g^{(m)}(\tilde{A}_{g,m}(V))) .$ For any \( u \in \tilde{\mathcal{O}}_{g,n, m}(V) \), by the definition of \( \tilde{\mathcal{O}}_{g,n, m}(V) \) and Theorem \ref{thm2.15}, we have
\[
0 = o_{n-m}(u) \left( \mathbf{1} + \tilde{O}_{g,m, m}(V) \right) = u \bullet_{g,m}^n \mathbf{1} + \tilde{O}_{g,n, m}(V) = u + \tilde{O}_{g,n, m}(V),
\]
which implies \( \tilde{\mathcal{O}}_{g,n, m}(V) \subseteq \tilde{O}_{g,n, m}(V) \). By  Lemma \ref{lem3.6}, $
\tilde{O}_{g,n, m}(V) \subseteq \tilde{\mathcal{O}}_{g,n, m}(V).
$ Thus \( \tilde{\mathcal{O}}_{g,n, m}(V) = \tilde{O}_{g,n, m}(V) \).
\end{proof}

In \cite{Shun2}, we propose the  conjecture: For all $n, m \in \frac{1}{T}\mathbb{N}$, one has $\tilde{O}^{\prime}_{g,n,m}(V) = \tilde{O}_{g,n,m}(V).$ In this present paper we shall approach to the conjecture using method developed by the author, Han and Xiao \cite{HXX1}.  More precisely, we shall show that $\tilde{O}^{\prime\prime\prime}_{g,n,m}(V)$ is  superfluous and $\tilde{O}^{\prime}_{g,n,m}(V)$ can be replaced by its subspace $\bigoplus_{s\,\not\equiv\, {\bar{m}-\bar{n}} \pmod{T}} V^s+\tilde{L}_{n,m}(V)$.

   Let $m\in \frac{1}{T}\N$, set
    $
    M_{g}^{(m)}=\bigoplus_{n\in \frac{1}{T}\N}V/\tilde{O}_{g,n,m}^{\prime\prime}(V),
    $
    which is clearly $\frac{1}{T}\N$-graded with $M_{g}^{(m)}(U)(n)=V/\tilde{O}_{g,n,m}^{\prime\prime}(V).$
    For $u\in V$ and $p\in \frac{1}{T}\mathbb{Z}$, define the linear map
    \[  u_p(v+ \tilde{O}_{g,n,m}^{\prime\prime}(V) )= \left\{\begin{array}{lll}u\bullet_{g,m,n}^{n-p-1} v+\tilde{O}_{g,n-p-1,m}^{\prime\prime}(V), &\mbox{if}\ n-p-1\geq0,\\ 0, &\mbox{otherwise.}\end{array}\right.
    \]   
    Then we form a generating function
    $Y_{M_g^{(m)} }(u,z)=\sum_{p\in\frac{1}{T}\Z} u_pz^{-p-1}.$ 
    
    Similar to Theorem \ref{shunthm7.5}, we have:
   \begin{prop}\label{shunprop-exta--}
   For any $m\in\frac{1}{T}\N$,  $M_{g}^{(m)}$ is a $\frac{1}{T}\mathbb{N}$-graded $g$-twisted $\phi$-coordinated $V$-module.
   \end{prop}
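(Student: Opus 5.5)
The plan is to repeat the argument of Lemma \ref{shun3lemma} and Theorem \ref{shunthm7.5}, replacing $\tilde{O}_{g,n,m}(V)$ by $\tilde{O}^{\prime\prime}_{g,n,m}(V)$ and the tensor product over $\tilde{A}_{g,m}(V)$ by the plain quotient $V/\tilde{O}^{\prime\prime}_{g,n,m}(V)$. The point is to check that the proof of Lemma \ref{shun3lemma} only ever used the relations in $\tilde{O}^{\prime\prime}$, never those in $\tilde{O}'$ or $\tilde{O}'''$.

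\textbf{Well-definedness.} First I check that $u_p$ is well defined. We need
\[
u\bullet_{g,m,n}^{k}\tilde{O}^{\prime\prime}_{g,n,m}(V)\subseteq \tilde{O}^{\prime\prime}_{g,k,m}(V),
\]
with $k=n-p-1$. A generator of $\tilde{O}^{\prime\prime}_{g,n,m}(V)$ has the form $x\bullet_{g,m,p_3}^{n}X$, where $X$ is an associator difference. Applying \eqref{eq2.5} with outer index $k$ (taking $a=u$, $b=x$, $c=X$, and $p_2=n$, $p_1=p_3$), we can rewrite
\[
u\bullet_{g,m,n}^{k}\bigl(x\bullet_{g,m,p_3}^{n}X\bigr)\equiv \bigl(u\bullet_{g,n,p_3}^{k}x\bigr)\bullet_{g,m,p_3}^{k}X \pmod{\tilde{O}^{\prime\prime}_{g,k,m}(V)}.
\]
The right-hand side is again of the form $y\bullet_{g,m,p_3}^{k}X$, so it lies in $\tilde{O}^{\prime\prime}_{g,k,m}(V)$ by definition. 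This shows $\tilde{O}^{\prime\prime}$ is stable under the left actions, so $u_p$ is well defined.

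\textbf{Axioms.} Next I verify the analogues of Lemma \ref{shun3lemma}(1)--(3).
\begin{itemize}
\item Truncation (1) is immediate from the definition of $u_p$.
\item The vacuum property (2) is the same binomial computation as before. It only uses the explicit formula for $\mathbf{1}\bullet_{g,m,n}^{n-p-1}v$, with no quotient relations.
\item For the weak associativity (3), I follow the displayed chain in the proof of Lemma \ref{shun3lemma}(3) line by line. The combinatorial steps (Lemma \ref{shuncomb lemm} and Lemma \ref{shuncomb lemm2}) are identities in $V$. The only step that uses a quotient relation is the passage
\[
\bigl(a\bullet_{g,n,p}^{k}b\bigr)\bullet_{g,m,n}^{k}v \;\longrightarrow\; a\bullet_{g,m,p}^{k}\bigl(b\bullet_{g,m,n}^{p}v\bigr),
\]
which is exactly \eqref{eq2.5} modulo $\tilde{O}^{\prime\prime}_{g,k,m}(V)$.
\end{itemize}
Hence (3) holds on $M_g^{(m)}$. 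Lemma \ref{phi_weak_asso}, together with the remark characterizing Axiom (2) via $Y_W(u,x)w\in x^{-r/T}W((x))$, then shows that $M_g^{(m)}$ is a $g$-twisted $\phi$-coordinated module. The grading condition $u_pM(n)\subseteq M(n-p-1)$ holds by construction. The condition $u\in V^r$ forcing $p\in\frac{r}{T}+\mathbb{Z}$ follows from the vanishing of $\bullet$ unless $\bar p-\bar n\equiv r$.

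\textbf{Main obstacle.} The hard part is confirming that no hidden use of $\tilde{O}'$ occurs in the proof of (3). One place to check is the truncation step: when the sum is cut off using Notation \ref{shunnota}(1)--(2), terms vanish because products with negative indices are zero by definition, not by a quotient relation. Another is the reindexing of $k$ to $n+i$, which might appear to rely on the vanishing of components with incompatible $g$-grading; that vanishing is built into the definition of $\bullet_{g,m,p}^{n}$ and does not require \eqref{eq2.6}. I would go through these two points carefully, and with them settled the proposition follows.
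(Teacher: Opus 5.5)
Your proposal is correct and follows the paper's route. The paper proves this only by saying it is ``similar to Theorem~\ref{shunthm7.5}'', i.e.\ rerun Lemma~\ref{shun3lemma} and then apply Lemma~\ref{phi_weak_asso}. Your observation that the only quotient relation used in Lemma~\ref{shun3lemma}(3) is \eqref{eq2.5}, together with your well-definedness check via \eqref{eq2.5}, supplies exactly the details the paper leaves implicit; the one slip is an index swap in that check, where the new left factor should be $u\bullet_{g,p_3,n}^{k}x$ (from $p_1=p_3$, $p_2=n$) rather than $u\bullet_{g,n,p_3}^{k}x$, which does not affect the argument.
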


   \begin{lem}\label{lem3.9}
    {\rm\cite{Shun2}}
    {\rm(1)} For any $n,m\in\frac{1}{T}\mathbb{N}$, $\bigoplus_{s\,\not\equiv\, {\bar{m}-\bar{n}} \pmod{T}} V^s\subseteq \tilde{O}_{g,n,m}^{\prime}(V).$

    {\rm(2)} For any $n,m\in\frac{1}{T}\mathbb{N}$, let $v \in V^s$,
if $\bar{m} - \bar{n} \equiv s \pmod{T}$, then $v \bullet_{g, m}^{\,n} \mathbf{1} - v \in L_{n,m}(V).
$
   \end{lem}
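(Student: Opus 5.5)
Both parts will follow from one observation. For $v\in V$, the skew-symmetry/creation property of the vacuum gives $Y(v,x)\mathbf{1}=e^{x\mathcal{D}}v$. Hence
\[
Y\bigl(v,\log(1+y)\bigr)\mathbf{1}=(1+y)^{\mathcal{D}}v=\sum_{j\ge 0}\binom{\mathcal{D}}{j}v\,y^{j}.
\]
Consequently, for $v\in V^s$ every product $v\diamond_{g,m}^{\,n}\mathbf{1}$ and $v\bullet_{g,m}^{\,n}\mathbf{1}$ is a polynomial in $\mathcal{D}$ applied to $v$. Since $g$ is an automorphism, $g$ commutes with $\mathcal{D}$, so these elements stay in $V^s$. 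Moreover, for any polynomial $P$ we have $P(\mathcal{D})v-P(n-m)v=(\mathcal{D}+m-n)R(\mathcal{D})v\in\tilde{L}_{n,m}(V)$ for a suitable polynomial $R$. Thus modulo $\tilde{L}_{n,m}(V)$ I may replace $\mathcal{D}$ by the scalar $n-m$ everywhere.

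For (1), let $v\in V^s$ with $s\not\equiv \bar m-\bar n\pmod T$. Put $N=\lfloor m\rfloor+\lfloor n\rfloor+\delta_{\bar m}(s)+\delta_{\bar n}(T-s)$ and $q=-1+\lfloor m\rfloor+\delta_{\bar m}(s)+\frac{s}{T}$. Then
\[
v\diamond_{g,m}^{\,n}\mathbf{1}=\operatorname{Res}_y\frac{(1+y)^{q+\mathcal{D}}}{y^{N+1}}v=\binom{q+\mathcal{D}}{N}v\equiv\binom{q+n-m}{N}v \pmod{\tilde{L}_{n,m}(V)}.
\]
Now $q+n-m=\lfloor n\rfloor-1+\delta_{\bar m}(s)+\frac{\bar n-\bar m+s}{T}$, and by hypothesis $\frac{\bar n-\bar m+s}{T}\notin\mathbb{Z}$. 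A binomial coefficient $\binom{a}{N}$ with $a\notin\mathbb{Z}$ and $N\in\mathbb{N}$ is nonzero. Therefore $v$ is a nonzero multiple of $v\diamond_{g,m}^{\,n}\mathbf{1}$ modulo $\tilde{L}_{n,m}(V)$, so $v\in\tilde{O}'_{g,n,m}(V)$. Summing over all such $s$ gives (1).

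For (2), let $s\equiv\bar m-\bar n$ and set $p=m$ in the definition of $\bullet^{\,n}_{g,m,p}$. Write $N'=\lfloor n\rfloor+\delta_{\bar m}(s)+\delta_{\bar n}(T-s)$. The same replacement gives
\[
v\bullet_{g,m}^{\,n}\mathbf{1}\equiv\Bigl(\sum_{i=0}^{\lfloor m\rfloor}(-1)^i\binom{N'-1+i}{i}\binom{a}{N'-1+i}\Bigr)v,\qquad a=q+n-m,
\]
and now $a\in\mathbb{Z}$. The step I expect to be the main obstacle is showing that this scalar equals $1$. I would do it by generating functions, in the spirit of Lemma~\ref{shuncomb lemm}. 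The sum $\sum_{i\ge0}(-1)^i\binom{N'-1+i}{i}y^{-N'-i}$ is the expansion of $(1+y)^{-N'}$ in nonpositive powers of $y$. So, if the truncation at $i\le\lfloor m\rfloor$ is harmless, the scalar is a residue of $(1+y)^{a}$ against $(y+1)^{-N'}$ with mixed expansion directions.

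Concretely, I would first compute $a-N'+1$ from the congruence $s\equiv\bar m-\bar n$, checking the cases $\bar m\ge s$ and $\bar m<s$ separately; these decide $\delta_{\bar m}(s)$, $\delta_{\bar n}(T-s)$ and the value of $\frac{\bar n-\bar m+s}{T}\in\{0,1\}$. I expect the bookkeeping to give $\binom{a}{N'-1+i}=0$ for $i>\lfloor m\rfloor$, so the truncation is harmless. The sum then becomes the identity $\sum_{i\ge0}(-1)^i\binom{N'-1+i}{i}\binom{a}{N'-1+i}=1$, valid for $0\le N'-1\le a$; it is the coefficient comparison in $(1+y)^{a}(1+y)^{-N'}\cdot(1+y)^{N'}$, essentially the identity of Lemma~\ref{shuncomb lemm} with $l=0$. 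The degenerate case $N'=0$, where the $i=0$ term must be read via Notation~\ref{shunnota}(1), needs a separate check. With the scalar equal to $1$, we get $v\bullet_{g,m}^{\,n}\mathbf{1}-v\in\tilde{L}_{n,m}(V)$, as required.
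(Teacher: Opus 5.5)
The paper gives no proof of this lemma; it imports it from the author's earlier work, so there is no in-text argument to compare against. Your route is the natural one: you use $Y(v,x)\mathbf{1}=e^{x\mathcal{D}}v$ to write $v\diamond_{g,m}^{\,n}\mathbf{1}$ and $v\bullet_{g,m}^{\,n}\mathbf{1}$ as polynomials in $\mathcal{D}$ applied to $v$, and then replace $\mathcal{D}$ by $n-m$ modulo $\tilde{L}_{n,m}(V)$. This is essentially the argument behind \eqref{eq2.6}, with $L(-1)+L(0)$ replaced by $\mathcal{D}$. Part (1) is complete and correct as written. You are also right to read $L_{n,m}(V)$ in (2) as $\tilde{L}_{n,m}(V)$: that is how it is used in the proof of Theorem~\ref{shunref-bim-0}, and $L_{n,m}(V)$ makes no sense for a mere vertex algebra.

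In (2), however, the identity you intend to finish with is false as stated. Put $b=N'-1$ and use $\binom{b+i}{i}\binom{a}{b+i}=\binom{a}{b}\binom{a-b}{i}$. For integers $0\le b\le a$ this gives
\[
\sum_{i\ge 0}(-1)^i\binom{b+i}{i}\binom{a}{b+i}=\binom{a}{b}\sum_{i\ge 0}(-1)^i\binom{a-b}{i}=\delta_{a,b}.
\]
For example, $b=0$, $a=1$ gives $1-1=0$. Your generating-function description, carried out correctly, gives the $y^{-1}$-coefficient of the expansion of $(1+y)^{a-N'}$ in nonpositive powers of $y$, which is again $\delta_{a,N'-1}$.

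So (2) hinges on the \emph{exact} equality $a=N'-1$, which you left as expected bookkeeping. It does hold:
\begin{itemize}
\item If $\bar m\ge s$, then $\bar n=\bar m-s$. Hence $\frac{\bar n-\bar m+s}{T}=0$, $\delta_{\bar m}(s)=1$, and $\delta_{\bar n}(T-s)=0$ (also for $s=0$, by the convention $\delta_k(T)=0$).
\item If $\bar m<s$, then $\bar n=\bar m-s+T$. Hence $\frac{\bar n-\bar m+s}{T}=1$, $\delta_{\bar m}(s)=0$, and $\delta_{\bar n}(T-s)=1$.
\end{itemize}
In both cases $N'=\lfloor n\rfloor+1$ and $a=\lfloor n\rfloor=N'-1$. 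Therefore $\binom{a}{N'-1+i}=0$ for all $i\ge 1$, and the scalar is just the $i=0$ term, namely $1$.

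No truncation argument or generating function is needed. The case $N'=0$ you set aside never arises; had it arisen, the scalar would be $0$, so a separate check could not have rescued it. With this replacement your proof of (2) is complete.
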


   \begin{thm}\label{shunref-bim-0}
   For any $m,n\in\frac{1}{T}\N$,  $$\tilde{O}_{g, n, m}(V)=\bigoplus_{s\,\not\equiv\, {\bar{m}-\bar{n}} \pmod{T}} V^s+
    \tilde{L}_{n,m}(V)+\tilde{O}_{g, n, m}^{\prime\prime}(V).$$ In particular,  $\tilde{O}_{1,n,m}(V)=\tilde{L}_{n,m}(V)+\tilde{O}^{\prime\prime}_{1,n,m}(V).$

   \end{thm}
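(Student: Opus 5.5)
Write $X:=\bigoplus_{s\not\equiv \bar m-\bar n \pmod T} V^s+\tilde L_{n,m}(V)+\tilde O''_{g,n,m}(V)$. The plan is to prove the two inclusions separately, using the module $M_g^{(m)}$ of Proposition~\ref{shunprop-exta--} together with the characterization $\tilde O_{g,n,m}(V)=\tilde{\mathcal O}_{g,n,m}(V)$ from Theorem~\ref{shunthm7.7}.

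\emph{The inclusion $X\subseteq \tilde O_{g,n,m}(V)$.} This is immediate from the definitions:
\begin{itemize}
\item Lemma~\ref{lem3.9}(1) places $\bigoplus_{s\not\equiv\bar m-\bar n}V^s$ inside $\tilde O'_{g,n,m}(V)$;
\item $\tilde L_{n,m}(V)\subseteq \tilde O'_{g,n,m}(V)$ by definition;
\item $\tilde O''_{g,n,m}(V)$ is a summand of $\tilde O_{g,n,m}(V)$.
\end{itemize}

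\emph{The reverse inclusion.} First I would check that $X\supseteq \tilde O''_{g,n,m}(V)$ is compatible with the grading: each summand is spanned by $\bullet$-products that respect the congruence conditions, so the quotient $V/\tilde O''_{g,n,m}(V)$ is the degree-$n$ piece of $M_g^{(m)}$.

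Next, let $u\in \tilde O_{g,n,m}(V)$. By Theorem~\ref{shunthm7.7}, $\operatorname{Res}_x x^{m-n-1}Y_W(u,x)$ vanishes on $\tilde\Omega_m(W)$ for every $g$-twisted $\phi$-coordinated module $W$. Apply this to $W=M_g^{(m)}$. Its degree-$m$ component $V/\tilde O''_{g,m,m}(V)$ lies in $\tilde\Omega_m(M_g^{(m)})$, by the analogue of Lemma~\ref{shun3lemma}(1). Acting on $\mathbf 1+\tilde O''_{g,m,m}(V)$ with the mode $u_{n-m-1}$ gives
\[
0=u\bullet_{g,m,m}^{\,n}\mathbf 1+\tilde O''_{g,n,m}(V),
\]
so $u\bullet_{g,m}^{\,n}\mathbf 1\in \tilde O''_{g,n,m}(V)$.

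Now decompose $u=\sum_s u^s$ with $u^s\in V^s$.
\begin{itemize}
\item Components with $s\not\equiv \bar m-\bar n$ lie in $X$ directly. They also contribute zero to $u\bullet_{g,m}^{\,n}\mathbf 1$, since the product vanishes off the congruence $\bar p-\bar n\equiv s$.
\item For the remaining component $u^{s_0}$, Lemma~\ref{lem3.9}(2) gives $u^{s_0}\bullet_{g,m}^{\,n}\mathbf 1-u^{s_0}\in \tilde L_{n,m}(V)$. Here I read the paper's ``$L_{n,m}$'' in that lemma as $\tilde L_{n,m}$, which is the $\phi$-coordinated version. Hence $u^{s_0}\in \tilde L_{n,m}(V)+\tilde O''_{g,n,m}(V)\subseteq X$.
\end{itemize}

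For the special case $g=1$ we have $T=1$, so the excluded sum is empty and the formula specializes to $\tilde O_{1,n,m}(V)=\tilde L_{n,m}(V)+\tilde O''_{1,n,m}(V)$.

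\emph{Main obstacle.} The delicate step is ensuring that Proposition~\ref{shunprop-exta--} is genuinely available, i.e.\ that the proof of Lemma~\ref{shun3lemma}(3) only used relation~\eqref{eq2.5} modulo $\tilde O''$ and never needed $\tilde O'$ or $\tilde O'''$. Part (2) of that lemma is also at stake: the computation $\mathbf 1_p=\delta_{p,-1}$ must hold on $V/\tilde O''$ without quotienting further. I would re-inspect those two computations to confirm that only $\tilde O''$ relations and binomial identities enter. I would also confirm that $u_p$ is well defined on $V/\tilde O''_{g,n,m}(V)$, i.e.\ that $u\bullet_{g,m,n}^{\,k}\tilde O''_{g,n,m}(V)\subseteq \tilde O''_{g,k,m}(V)$. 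I expect this to follow from associativity-type identities of the form in~\eqref{eq2.5}, again as in \cite{HXX1}.
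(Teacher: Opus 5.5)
Your argument is correct and is essentially the paper's proof. You evaluate $u\in\tilde O_{g,n,m}(V)$ on $\mathbf 1+\tilde O''_{g,m,m}(V)\in\tilde\Omega_m(M_g^{(m)})$ via Lemma~\ref{lem3.6} (the inclusion you quote from Theorem~\ref{shunthm7.7}) to get $u\bullet_{g,m}^{\,n}\mathbf 1\in\tilde O''_{g,n,m}(V)$, and then finish with Lemma~\ref{lem3.9}; your handling of the off-congruence components through the vanishing of $u^s\bullet_{g,m}^{\,n}\mathbf 1$ is a harmless variant of the paper's remark that $\tilde O_{g,n,m}(V)$ is graded. One small slip: the mode sending degree $m$ to degree $n$ is $u_{m-n-1}=\operatorname{Res}_x x^{m-n-1}Y_{M_g^{(m)}}(u,x)$, not $u_{n-m-1}$, although the identity you display is the correct one.
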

  \begin{proof}
Note by  Lemma \ref{lem3.9}(1) that $\tilde{O}_{g,n,m}(V)=\bigoplus_{r=0}^{T-1}(\tilde{O}_{g,n,m}(V)\cap V^r)$. Since $V/\tilde{O}^{\prime\prime}_{g,m,m}(V)\subseteq \tilde{\Omega}_m(M^{(m)}_g)$, for any $u\in \tilde{O}_{g,n,m}(V)\cap V^r$, then by Lemma~\ref{lem3.6},
    \[
    0=o_{m-n}(u)(\mathbf{1}+ \tilde{O}_{g, m, m}^{\prime\prime}(V))=
    u\bullet_{g,m}^n \mathbf{1}+\tilde{O}_{g, n, m}^{\prime\prime}(V),
    \]
    i.e.,
    $
    u\bullet_{g,m}^n \mathbf{1}\in \tilde{O}_{g, n, m}^{\prime\prime}(V).
    $
    If ${\bar{m}-\bar{n}}\equiv r \pmod{T}$, then by  Lemma \ref{lem3.9}(2),
    \[
    u=u-u\bullet_{g,m}^n \mathbf{1}+u\bullet_{g,m}^n \mathbf{1}\in \tilde{L}_{n,m}(V)+\tilde{O}_{g, n, m}^{\prime\prime}(V);
    \]
    otherwise, 
    $
    u\in \bigoplus_{s\,\not\equiv\,{\bar{m}-\bar{n}}  \pmod{T}} V^s.$
    Thus by Lemma \ref{lem3.9}(1),
    \[
    O_{g, n, m}(V)=\bigoplus_{s\,\not\equiv\, {\bar{m}-\bar{n}} \pmod{T}} V^s+
    \tilde{L}_{n,m}(V)+\tilde{O}_{g, n, m}^{\prime\prime}(V).
    \]
    And when $g=1$, it is clear that
    $\tilde{O}_{1,n,m}(V)=\tilde{L}_{n,m}(V)+\tilde{O}^{\prime\prime}_{1,n,m}(V).$\end{proof}

\section{Twisted Associative Algebra and $\phi$-Coordinated $V$-Modules}
In this section, we construct the associative algebra $\tilde{\mathbf{A}}^{g,\infty}(V)$ via infinite matrices and prove that the category of $\frac{1}{T}\mathbb{N}$-graded $g$-twisted $\phi$-coordinated $V$-modules is isomorphic to the category of graded $\tilde{\mathbf{A}}^{g,\infty}(V)$-modules.

Let $V$ be a vertex algebra and let $g$ be a finite-order automorphism of $V$ such that $g^T = \mathrm{id}_V$ for some $T \in \mathbb{N}$. Let $U^{g,\infty}(V)$ denote the space of column-finite infinite matrices with entries in $V$, indexed by $\frac{1}{T}\mathbb{N} \times \frac{1}{T}\mathbb{N}$. Elements of $U^{g,\infty}(V)$ are of the form $\mathfrak{v} = [v_{kl}]$ with $v_{kl} \in V$ for $k, l \in \frac{1}{T}\mathbb{N}$, satisfying the condition that for each fixed $l \in \frac{1}{T}\mathbb{N}$, only finitely many $v_{kl}$ are nonzero. For $k, l \in \frac{1}{T}\mathbb{N}$ and $v \in V$, let $[v]_{kl}$ denote the matrix unit in $U^{g,\infty}(V)$ whose $(k,l)$-entry is $v$ and all other entries are zero.

We define a product $\circledast_g$ on $U^{g,\infty}(V)$ as follows. For $\mathfrak{u} = [u_{kl}], \mathfrak{v} = [v_{kl}] \in U^{g,\infty}(V)$, set
\[
  \mathfrak{u} \circledast_g \mathfrak{v} = \bigl[ (\mathfrak{u} \circledast_g \mathfrak{v})_{kl} \bigr],
\]
where for $k, l \in \frac{1}{T}\mathbb{N}$,
\[
  (\mathfrak{u} \circledast_g \mathfrak{v})_{kl} = \sum_{n=k}^{l} u_{kn} \bullet_{g,l,n}^k v_{nl}.
\]
Then $U^{g,\infty}(V)$ equipped with $\circledast_g$ forms an algebra, which is not associative in general. By definition, for $u, v \in V$ and $k, m, n, l \in \frac{1}{T}\mathbb{N}$, we have
\[
  [u]_{km} \circledast_g [v]_{nl} = 0 \quad \text{if } m \neq n,
\]
and
\[
  [u]_{kn} \circledast_g [v]_{nl} = \bigl[ u \bullet_{g,l,n}^k v \bigr]_{kl}.
\]

Let $\tilde{\mathbf{O}}^{g,\infty}(V)$ be the subspace of $U^{g,\infty}(V)$ spanned by infinite linear combinations of elements of the form $[u]_{kl}$ with $u \in \tilde{O}_{g,k,l}(V)$, subject to the condition that each pair $(k,l)$ appears in any given linear combination only finitely many times.

Let $\mathbf{1}^\infty$ denote the element of $U^{g,\infty}(V)$ whose diagonal entries are $\mathbf{1} \in V$ and all off-diagonal entries are zero.

\subsection{The Associative Algebras $\mathcal{A}_{\tilde{\operatorname{Gr}}}^{g,\infty}(V)$ and $\mathbf{A}_{\tilde{\operatorname{Gr}}}^{g,\infty}(V)$ and Their Modules}

Let $W$ be a $g$-twisted $\phi$-coordinated $V$-module. For $n \in \frac{1}{T}\mathbb{Z}$, define
\[
  \tilde{\Omega}_n(W) = \left\{ w \in W \mid v_k w = 0 \text{ for all } v \in V \text{ with } -k-1 < -n \right\}.
\]
For any $n \in \frac{1}{T}\mathbb{Z}$ with $n < 0$, we have $\tilde{\Omega}_n(W) = 0$, since for any $w \in \tilde{\Omega}_n(W)$ it follows that $w = \mathbf{1}_{-1} w = 0$. Moreover,
\[
  \tilde{\Omega}_{n_1}(W) \subset \tilde{\Omega}_{n_2}(W) \quad \text{for } n_1 \leq n_2.
\]
If $W$ is a $\frac{1}{T}\mathbb{N}$-graded $g$-twisted $\phi$-coordinated $V$-module, then
\[
  W = \bigcup_{n \in \frac{1}{T}\mathbb{N}} \tilde{\Omega}_n(W),
\]
so that $\{ \tilde{\Omega}_n(W) \}_{n \in \frac{1}{T}\mathbb{N}}$ constitutes an ascending filtration of $W$. Let
\[
  \tilde{\operatorname{Gr}}(W) = \bigoplus_{n \in \frac{1}{T}\mathbb{N}} \tilde{\operatorname{Gr}}_n(W)
\]
be the associated graded space, where
\[
  \tilde{\operatorname{Gr}}_n(W) = \tilde{\Omega}_n(W) \big/ \tilde{\Omega}_{n-\frac{1}{T}}(W).
\]
We set $\tilde{\operatorname{Gr}}_n(W) = 0$ for all $n \in \frac{1}{T}\mathbb{Z}$ with $n < 0$. We shall sometimes use $[w]_n$ to denote the coset $w + \tilde{\Omega}_{n-\frac{1}{T}}(W)$ in $\tilde{\operatorname{Gr}}_n(W)$ for $w \in \tilde{\Omega}_n(W)$.

\begin{lem}\label{lem4.1}
	{\rm\cite{Shun2}}
	Let $W$ be a  $g$-twisted $\phi$-coordinated $V$-module, then
	$$\operatorname{Res}_xx^{m-n-1}Y_W\left(u \bullet_{g, m, p}^n v,x\right)=\operatorname{Res}_{x_1}x_1^{p-n-1}Y_W(u,x_1) \operatorname{Res}_{x_2}x_2^{m-p-1}Y_W(v,x_2)$$ on $\tilde{\Omega}_m(W)$ for $u, v \in V$ and $m, n, p \in\frac{1}{T}\mathbb{N}$.
\end{lem}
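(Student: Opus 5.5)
We write $o_k(u)$ for $\operatorname{Res}_x x^{k-1} Y_W(u,x)$. The claim to prove is
\[
o_{m-n}\bigl(u \bullet_{g,m,p}^{\,n} v\bigr) = o_{p-n}(u)\, o_{m-p}(v) \quad \text{on } \tilde{\Omega}_m(W).
\]
We expand both sides using the component form \eqref{eq2.2} of the $g$-twisted $\phi$-Jacobi identity and compare.

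\textbf{Degrees.} Fix $u\in V^r$ and $w\in\tilde{\Omega}_m(W)$. Then $\operatorname{Res}_{x_2}x_2^{m-p-1}Y_W(v,x_2)w = v_{m-p-1}w$. This vector lies in $\tilde{\Omega}_p(W)$, because by \eqref{eq2.2} with $l=0$ the commutator $[a_k, v_{m-p-1}]$ is a combination of modes $(a_jv)_{k+m-p-1}$, and these kill $w$ whenever $-k-1<-p$. Next, $\operatorname{Res}_{x_1}x_1^{p-n-1}Y_W(u,x_1)$ is the mode $u_{p-n-1}$, which is nonzero on such vectors only if $p-n-1\in \frac{r}{T}+\mathbb{Z}$. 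This explains the congruence condition $\bar p-\bar n\equiv r$ in the definition of $\bullet$. When that condition fails, both sides vanish: the right side by the mode-support argument, the left side by the convention $u\bullet_{g,m,p}^{\,n}v=0$. The cases with $m,n,p$ not all nonnegative are similar, since $\tilde{\Omega}_k=0$ for $k<0$.

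\textbf{Main computation.} We apply \eqref{eq2.2} with modes $u_{l+m'-i}$, $v_{n'+i}$, choosing $m' \in \frac{r}{T}+\mathbb{Z}$ and $n'$ so that $m'+n'+l+1$ equals the total degree $n-m$ (up to sign conventions). With this choice the right-hand side of \eqref{eq2.2} becomes a combination of the zero-mode-type operators $o_{m-n}$ applied to $\operatorname{Res}_y y^{j}(1+y)^{\ast}Y(u,\log(1+y))v$. Rewriting $\sum_j \frac{(l+m'-i+1)^j}{j!}u_jv$ as $\operatorname{Res}_y (1+y)^{l+m'-i}\,Y(u,\log(1+y))v$ produces exactly the residue expressions $\operatorname{Res}_y (1+y)^{\cdots} y^{-\cdots} Y(u,\log(1+y))v$ that appear in $\bullet$.

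\textbf{Choice of parameters.} Take $m'$ so that $v_{n'+i}w=0$ for $i$ large, and so that the terms $v_{l+n'-i}u_{m'+i}w$ vanish. The latter requires $u_{m'+i}w=0$ for all $i\ge0$, which holds when $m'>m-1$, i.e.\ $m' = \lfloor m\rfloor+\delta_{\bar m}(r)+\frac{r}{T}-1$ or the next shift. This is the origin of the exponent $-1+\lfloor m\rfloor+\delta_{\bar m}(r)+\frac rT$. Take $l$ negative, namely $l=-(\lfloor m\rfloor+\lfloor n\rfloor-\lfloor p\rfloor+\delta_{\bar m}(r)+\delta_{\bar n}(T-r)+i)$. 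With these choices the left side reduces to the single family $\sum_i (-1)^i\binom{l}{i}u_{l+m'-i}v_{n'+i}w$.

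\textbf{Main obstacle.} A single application of \eqref{eq2.2} leaves not one product $u_{p-n-1}v_{m-p-1}w$ but a tail of terms $u_{p-n-1-t}v_{m-p-1+t}w$ for $t=1,\dots,\lfloor p\rfloor$. These tail terms are nonzero in general, since $v_{m-p-1+t}w\in\tilde{\Omega}_{p-t}(W)$ with $p-t\ge0$. The finite sum $\sum_{i=0}^{\lfloor p\rfloor}(-1)^i\binom{\cdots+i}{i}$ in the definition of $\bullet$ is designed to cancel exactly this tail. The plan is to sum the identities over $i$ with these coefficients and show that the total coefficient of $u_{p-n-1-t}v_{m-p-1+t}$ is $\delta_{t,0}$. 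This is a binomial inversion identity of the same type as Lemma \ref{shuncomb lemm}, and we would derive it via $\sum_i(-1)^i\binom{a+i}{i}\binom{-a-\ldots}{t-i}$-type Vandermonde convolution. This bookkeeping, especially tracking the floor and $\delta$ shifts in the twisted case, is the main difficulty. The rest of the argument, namely the vanishing of boundary terms by the degree argument above, is routine.
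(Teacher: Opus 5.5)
The paper does not prove Lemma~\ref{lem4.1}; it quotes it from \cite{Shun2}, so there is no in-paper argument to compare against. Taken on its own terms, your plan is a correct direct proof, and the step you flag as the main obstacle does close, as follows.

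Set $\lambda=-1+\lfloor m\rfloor+\delta_{\bar m}(r)+\frac{r}{T}$ and $N_0=\lfloor m\rfloor+\lfloor n\rfloor-\lfloor p\rfloor+\delta_{\bar m}(r)+\delta_{\bar n}(T-r)$. Then
\[
u\bullet_{g,m,p}^{\,n}v=\sum_{i=0}^{\lfloor p\rfloor}\binom{-N_0}{i}\operatorname{Res}_y y^{-N_0-i}(1+y)^{\lambda}Y(u,\log(1+y))v .
\]
For the $i$-th term, take $x_1$-exponent $\lambda$, $l=-N_0-i$ and $x_2$-exponent $n'=m-p-1+i$. The equality $l+\lambda+n'+1=m-n-1$ is exactly where $\bar p=\bar n+r-T\delta_{\bar n}(T-r)$ is used. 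Since $\lambda>m-1$, every $u_{\lambda+j}w$ vanishes for $w\in\tilde{\Omega}_m(W)$, and you get
\[
\bigl(\operatorname{Res}_y y^{-N_0-i}(1+y)^{\lambda}Y(u,\log(1+y))v\bigr)_{m-n-1}w=\sum_{j\geq 0}\binom{N_0+i+j-1}{j}\,u_{p-n-1-i-j}\,v_{m-p-1+i+j}\,w .
\]
Write $t=i+j$. The terms with $t>p$ vanish because $v_{m-p-1+t}w=0$. For $0\le t\le\lfloor p\rfloor$, Vandermonde gives the total coefficient
\[
\sum_{i=0}^{t}\binom{-N_0}{i}\binom{N_0+t-1}{t-i}=\binom{t-1}{t}=\delta_{t,0},
\]
which leaves exactly $u_{p-n-1}v_{m-p-1}w$.

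Three points need fixing in the write-up.

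\textbf{The double-sum form of \eqref{eq2.2}.} Do not route the right-hand side through it. That form comes from expanding $z^l=((1+z)-1)^l$ and is valid only for $l\geq 0$. For $l=-1$, its $j=0$ part is an infinite sum of copies of a single mode of $u_0v$, which diverges. Instead, apply $\operatorname{Res}_z\operatorname{Res}_{x_1}\operatorname{Res}_{x_2}(zx_2)^l x_1^{\lambda}x_2^{n'+1}$ directly to \eqref{phi_jacobi}. This gives $\bigl(\operatorname{Res}_z z^l(1+z)^{\lambda}Y(u,\log(1+z))v\bigr)_{l+\lambda+n'+1}$ for every $l\in\mathbb{Z}$. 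You need this generality because $l=-N_0-i$ can have either sign: $N_0<0$ when $p$ is large, so your "take $l$ negative" is not always the case.

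\textbf{The target mode.} It is $m-n-1$, not $n-m$.

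\textbf{The claim $v_{m-p-1}w\in\tilde{\Omega}_p(W)$.} Your justification via the $l=0$ commutator is incomplete when $p<m$. It does not control $v_{m-p-1}a_kw$ for $p-1<k\le m-1$; that requires the induction of Lemma~\ref{lem4.2}. However, the main computation never uses this fact. The incongruent case only needs that the modes of $u\in V^r$ lie in $\frac{r}{T}+\mathbb{Z}$, so you can simply drop the claim.
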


\begin{lem}\label{lem4.2}
	Let $W$ be a  $g$-twisted $\phi$-coordinated $V$-module and $n\in \frac{1}{T}\mathbb{N}$,
	for $w \in \tilde{\Omega}_n(W)$ and $l \in \frac{1}{T}\mathbb{Z}, \operatorname{Res}_x x^{l-1} Y_W\left( v, x\right) w \in \tilde{\Omega}_{n-l}(W)$.
\end{lem}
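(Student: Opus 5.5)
We need to show: for $w\in\tilde\Omega_n(W)$, $l\in\frac1T\mathbb Z$, the vector $w':=\operatorname{Res}_x x^{l-1}Y_W(v,x)w = v_{l-1}w$ lies in $\tilde\Omega_{n-l}(W)$. Concretely, we must check that $u_k w'=0$ for every $u\in V$ and every $k$ with $-k-1<-(n-l)$, i.e. $k>n-l-1$. By linearity we reduce to $u\in V^r$, $v\in V^s$ homogeneous for $g$, and to $k\in\frac rT+\mathbb Z$ and $l-1\in\frac sT+\mathbb Z$; for other indices the modes vanish by the Remark after Definition~\ref{defi_coor_module}. If $n-l<0$ we must show $w'=0$; this is the same statement, because it amounts to showing $\mathbf 1_{-1}w'=0$, which is covered by the general case with $u=\mathbf 1$.

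The plan is to use the component form \eqref{eq2.2} of the $g$-twisted $\phi$-Jacobi identity to rewrite $u_k v_{l-1}$. Take \eqref{eq2.2} with the pair $(u,v)$, an integer parameter $l'\ge 0$, and $m'$ and $n'$ chosen so that $l'+m'=k$ and $n'=l-1$, that is, $m'=k-l'$. The identity then expresses
\[
\sum_{i\ge0}(-1)^i\binom{l'}{i}\, u_{k-i}v_{l-1+i}w
\]
as $(-1)^{l'}\sum_i(-1)^i\binom{l'}{i}v_{l'+l-1-i}u_{k-l'+i}w$ plus a combination of terms $(u_jv)_{k+l}w$ with $j\ge0$. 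Since $k>n-l-1$, we get $k+l>n-1$, so every $(u_jv)_{k+l}w$ vanishes because $w\in\tilde\Omega_n(W)$. For the $v u$ terms, choose $l'\ge0$ large enough that $k-l'+i>n-1$ fails only for irrelevant indices. More usefully, note that $u_{k-l'+i}w=0$ whenever $k-l'+i>n-1$. The second sum therefore only involves indices $i\le n-1-k+l'$. So this choice alone does not kill it; the choice of $l'$ needs more care.

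The cleaner route is an induction, which I expect to be the main obstacle. Take $l'=0$, which gives the commutator formula $u_kv_{l-1}w - v_{l-1}u_kw=\sum_{j}\binom{k}{j}$-type terms. These are the $l=0$ specialization of \eqref{eq2.2}, namely $\sum_j \frac{k^j}{j!}(u_jv)_{k+l}w$. The right side vanishes since $k+l>n-1$. The term $v_{l-1}u_kw$ vanishes when $k>n-1$, but not necessarily otherwise, since $k$ may lie in $(n-l-1,n-1]$ when $l>0$. So I would instead use \eqref{eq2.2} with $l'$ positive and large, exploiting the finiteness of $v_{\cdot}$ on $w$. Write $u_kv_{l-1}w$ as the $i=0$ term, and move the terms $u_{k-i}v_{l-1+i}w$ with $i\ge1$ to the other side. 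Then proceed by descending induction on $k$. For $k$ sufficiently large, $u_kv_{l-1}w=0$ by the truncation axiom, since $v_{l-1}w\in W$. Assuming $u_{k'}v_{l'-1}w=0$ for all $k'>k$ and all relevant $l'$ with $k'+l'=k+l$ fixed, the terms with $i\ge1$ have index $k-i<k$, which goes the wrong direction. So I would instead induct on the pair ordered by the index of $v$. Choose $l'$ with $k-l'\le n-1$ replaced appropriately, and pick $l'$ so large that $u_{k-l'+i}w$ in the $vu$-sum is nonzero only when $i\ge l'-(k-n+1)$. Then use $\binom{l'}{i}$ cancellation via Lemma~\ref{shuncomb lemm}-style sums.

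Since this is delicate, the fallback I would actually commit to is the weak associativity Lemma~\ref{phi_weak_asso}. Apply $(z+1)^{N+r/T}Y_W(u,(z+1)x_2)Y_W(v,x_2)w=(1+z)^{N+r/T}Y_W(Y(u,\log(1+z))v,x_2)w$ for $N$ large. The right side has only powers $x_2^{-p-1}$ with $p>n-1$ excluded, i.e. all of its nonzero $x_2$-modes lie in the range allowed by $w\in\tilde\Omega_n(W)$. Expanding the left side and extracting coefficients of $x_2^{-k-1}x_2^{-(l-1)-1}$, the product $u_kv_{l-1}w$ appears tied to terms of the same total degree $k+l$. These are all zero once $k+l>n-1$, after inverting the invertible factor $(z+1)^{N+r/T}$ in $\mathbb C[[z]]$ and using that the $z$-expansion separates modes of $u$. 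This yields $u_kv_{l-1}w=0$, as required.
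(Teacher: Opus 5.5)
Your committed argument, via the weak associativity of Lemma~\ref{phi_weak_asso}, takes a different route from the paper's and can be made rigorous. As written, however, the decisive step is misjustified.

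On the left-hand side of Lemma~\ref{phi_weak_asso}, the factor $(z+1)^{\beta}$ is the binomial expansion in nonnegative powers of the second summand: $(z+1)^{\beta}=\sum_{i\ge0}\binom{\beta}{i}z^{\beta-i}$. This is what the delta-function substitution $x_1=x_2z+x_2$ in the $\phi$-Jacobi identity produces, and it is also what makes the last step in the proof of Lemma~\ref{shun3lemma}(3) work. It is not an element of $\mathbb{C}[[z]]$. If you read it in $\mathbb{C}[[z]]$, the coefficient of $x_2^{-k-l-1}$ on the left, $\sum_{p+p'=k+l-1}u_pv_{p'}w\,(1+z)^{N+\frac{r}{T}-p-1}$, is an infinite sum over $p'\to-\infty$ in which every term contributes to $z^0$. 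It is then not even defined, and ``the $z$-expansion separates modes of $u$'' has no content.

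The correct version runs as follows. Your ``coefficient of $x_2^{-k-1}x_2^{-(l-1)-1}$'' should be the single coefficient of $x_2^{-k-l-1}$, to which all pairs $u_pv_{p'}$ with $p+p'=k+l-1$ contribute.
\begin{itemize}
\item Since $k+l>n-1$, all $(u_jv)_{k+l}w=0$, so the coefficient of $x_2^{-k-l-1}$ on the right vanishes.
\item On the left, the exponents $\beta=N+\frac{r}{T}-p-1$ are pairwise distinct integers. They are bounded above because $v_{p'}w=0$ for $p'\gg0$.
\item Each $(z+1)^{\beta}$ has top term $z^{\beta}$. Comparing coefficients of $z^{\beta_{\max}},z^{\beta_{\max}-1},\dots$ in turn forces every $u_pv_{p'}w=0$ with $p+p'=k+l-1$, in particular $u_kv_{l-1}w=0$.
\end{itemize}
No inversion of $(z+1)^{N+r/T}$ is needed. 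So repaired, your route proves the lemma in one stroke ($u_pv_{p'}w=0$ whenever $p+p'>n-2$) without any induction. The price is relying on Lemma~\ref{phi_weak_asso} and its expansion conventions.

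The paper works only with the component identity \eqref{eq2.2}. Its proof is essentially the route you abandoned, plus the one idea you missed: the integer parameter in \eqref{eq2.2} may be taken \emph{negative}. Keep $u_k$ as the left mode and choose the parameter $l'<k-n+1$. Then:
\begin{itemize}
\item every $u_{k-l'+i}w$ with $i\ge0$ vanishes, so the whole $v\,u$-sum drops out;
\item the right-hand side is a combination of the vectors $(u_jv)_{k+l}w$, $j\ge0$, which vanish as you noted;
\item the terms $u_{k-i}v_{l-1+i}w$ with $i\ge1$ vanish by descending induction on $l$, with base case $l>n$ where $v_{l-1}w=0$.
\end{itemize}
That induction is the ordering by the index of $v$ that you yourself mention. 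The $i$-sum stays finite because $v_{l-1+i}w=0$ for $i\gg0$, even though $\binom{l'}{i}\neq0$ for all $i$. Restricting to $l'\ge0$ is exactly why your first two attempts could not clear the $v\,u$-terms.
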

\begin{proof}
	Without loss of generality, assume $v \in V^s$. The lemma holds for all $l > n$. We proceed by induction on $l$. Suppose that the lemma holds for all $l > k + \frac{s}{T}$ with $k \in \mathbb{Z}$. We now consider the case where $l = k + \frac{s}{T}$. For any $u \in V^r$, $w \in \tilde{\Omega}_n(W)$, and integers $m, q$ satisfying
\[
  m \in \frac{r}{T} + \mathbb{Z}, \quad n - k - \frac{s}{T} -1< m, \quad \text{and} \quad q < m - n + 1,
\]
the Jacobi identity~\eqref{eq2.2} for $g$-twisted $\phi$-coordinated $V$-modules yields
\begin{align*}
  u_{m} v_{k+\frac{s}{T}-1} w
  &= -\sum_{i \geq 1} (-1)^i \binom{q}{i}  u_{m-i} v_{k+\frac{s}{T}-1+i} w +\sum_{i \geq 0}  (-1)^{i+q} v_{k+\frac{s}{T}-1-i} u_{m+i-q} w  \\
  &\quad + \sum_{i, j \geq 0} (-1)^i \binom{q}{i} \frac{(m-i)^j}{j!} (u_j v)_{m+k+\frac{s}{T}-1} w= 0.
\end{align*}
This completes the proof of the lemma.
\end{proof}

By Lemma \ref{lem4.2}, the operator $\operatorname{Res}_x x^{l-1} Y_W(v, x)$ induces a well-defined operator on $\tilde{\operatorname{Gr}}(W)$, which we denote by the same symbol. This induced operator maps $\tilde{\operatorname{Gr}}_n(W)$ to $\tilde{\operatorname{Gr}}_{n-l}(W)$.

For $\mathfrak{v} = [v_{kl}] \in U^{g,\infty}(V)$ with $v_{kl} \in V$ and $k, l \in \frac{1}{T}\mathbb{N}$, we define an operator $\vartheta_{\tilde{\operatorname{Gr}}(W)}(\mathfrak{v})$ on $\tilde{\operatorname{Gr}}(W)$ as follows. For any $\mathfrak{w} \in \tilde{\operatorname{Gr}}(W)$, set
\[
  \vartheta_{\tilde{\operatorname{Gr}}(W)}(\mathfrak{v}) \, \mathfrak{w}
  = \sum_{k, l \in \frac{1}{T}\mathbb{N}}
    \operatorname{Res}_x x^{l-k-1} Y_W(v_{kl}, x) \,
    \pi_{\tilde{\operatorname{Gr}}_l(W)} \mathfrak{w},
\]
where $\pi_{\tilde{\operatorname{Gr}}_l(W)}$ denotes the projection from $\tilde{\operatorname{Gr}}(W)$ onto $\tilde{\operatorname{Gr}}_l(W)$. Since $\mathfrak{w}$ is a finite sum of homogeneous components and, for each fixed $l$, only finitely many $v_{kl}$ are nonzero (by the column-finiteness of $\mathfrak{v}$), the double sum is finite. Hence $\vartheta_{\tilde{\operatorname{Gr}}(W)}(\mathfrak{v}) \, \mathfrak{w}$ is a well-defined element of $\tilde{\operatorname{Gr}}(W)$.
In the special case where $\mathfrak{v} = [v]_{kl}$ and $\mathfrak{w} = [w]_n$ with $v \in V$, $w \in \tilde{\Omega}_n(W)$, and $k, l, n \in \frac{1}{T}\mathbb{N}$, we have
\[
  \vartheta_{\tilde{\operatorname{Gr}}(W)}\bigl([v]_{kl}\bigr) [w]_n
  = \delta_{l,n} \left[ \operatorname{Res}_x x^{l-k-1} Y_W(v, x) \, w \right]_k.
\]
In particular, when $n = l$, this reduces to
$
  \vartheta_{\tilde{\operatorname{Gr}}(W)}\bigl([v]_{kl}\bigr) [w]_l
  = \bigl[ v_{l-k-1} \, w \bigr]_k.
$
We thus obtain a linear map
\[
  \vartheta_{\tilde{\operatorname{Gr}}(W)} \colon U^{g,\infty}(V) \to \operatorname{End}\bigl(\tilde{\operatorname{Gr}}(W)\bigr),
  \quad
  \mathfrak{v} \mapsto \vartheta_{\tilde{\operatorname{Gr}}(W)}(\mathfrak{v}).
\]
Define
\begin{align*}
  \mathcal{Q}_{\tilde{\operatorname{Gr}}}^{g,\infty}(V)
  &= \bigcap_{W} \ker \vartheta_{\tilde{\operatorname{Gr}}(W)}, \\
  \mathbf{Q}_{\tilde{\operatorname{Gr}}}^{g,\infty}(V)
  &= \bigcap_{W} \ker \vartheta_{\tilde{\operatorname{Gr}}(W)},
\end{align*}
where the first intersection runs over all $g$-twisted $\phi$-coordinated $V$-modules $W$, and the second runs over all $\frac{1}{T}\mathbb{N}$-graded $g$-twisted $\phi$-coordinated $V$-modules $W$. Set
\begin{align*}
  \mathcal{A}_{\tilde{\operatorname{Gr}}}^{g,\infty}(V)
  &= U^{g,\infty}(V) \big/ \mathcal{Q}_{\tilde{\operatorname{Gr}}}^{g,\infty}(V), \\
  \mathbf{A}_{\tilde{\operatorname{Gr}}}^{g,\infty}(V)
  &= U^{g,\infty}(V) \big/ \mathbf{Q}_{\tilde{\operatorname{Gr}}}^{g,\infty}(V).
\end{align*}

\begin{prop} \label{prop4.3}
  $\tilde{\mathbf{O}}^{g,\infty}(V) \subseteq \mathcal{Q}_{\tilde{\operatorname{Gr}}}^{g,\infty}(V) \subseteq \mathbf{Q}_{\tilde{\operatorname{Gr}}}^{g,\infty}(V)$.
\end{prop}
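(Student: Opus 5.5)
The second inclusion is immediate. Every $\frac{1}{T}\mathbb{N}$-graded $g$-twisted $\phi$-coordinated $V$-module is in particular a $g$-twisted $\phi$-coordinated $V$-module. So the intersection defining $\mathcal{Q}_{\tilde{\operatorname{Gr}}}^{g,\infty}(V)$ runs over a larger family of modules than the one defining $\mathbf{Q}_{\tilde{\operatorname{Gr}}}^{g,\infty}(V)$. Intersecting over more kernels gives a smaller space, hence $\mathcal{Q}_{\tilde{\operatorname{Gr}}}^{g,\infty}(V)\subseteq \mathbf{Q}_{\tilde{\operatorname{Gr}}}^{g,\infty}(V)$.

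For the first inclusion, fix an arbitrary $g$-twisted $\phi$-coordinated $V$-module $W$ and an element
\[
\mathfrak{v}=\sum [u^{(i)}]_{k_il_i}\in \tilde{\mathbf{O}}^{g,\infty}(V), \qquad u^{(i)}\in \tilde{O}_{g,k_i,l_i}(V).
\]
In this sum each pair $(k,l)$ occurs only finitely often. So the $(k,l)$-entry of $\mathfrak{v}$ is a finite sum $v_{kl}=\sum_{i:(k_i,l_i)=(k,l)}u^{(i)}$, which lies in $\tilde{O}_{g,k,l}(V)$ because that space is a subspace. Thus $\mathfrak{v}=[v_{kl}]$ with $v_{kl}\in\tilde{O}_{g,k,l}(V)$ for all $k,l$. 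Column-finiteness is inherited from the requirement that $\mathfrak{v}\in U^{g,\infty}(V)$.

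By linearity and the definition of $\vartheta_{\tilde{\operatorname{Gr}}(W)}$, it suffices to show that $\vartheta_{\tilde{\operatorname{Gr}}(W)}([v]_{kl})[w]_n=0$ for $v\in\tilde{O}_{g,k,l}(V)$, $w\in\tilde{\Omega}_n(W)$ and $n\in\frac1T\mathbb{N}$. The left side is $\delta_{l,n}\,[\operatorname{Res}_x x^{l-k-1}Y_W(v,x)w]_k$, so only the case $n=l$ needs attention, where $w\in\tilde{\Omega}_l(W)$. In that case Lemma \ref{lem3.6}, applied with the pair $(k,l)$, gives $\operatorname{Res}_x x^{l-k-1}Y_W(v,x)=0$ on $\tilde{\Omega}_l(W)$. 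Hence the representative itself is $0$, and so is its class in $\tilde{\operatorname{Gr}}_k(W)$. Well-definedness on cosets, meaning independence of the representative of $[w]_l$, is already guaranteed by Lemma \ref{lem4.2}.

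Therefore $\mathfrak{v}\in\ker\vartheta_{\tilde{\operatorname{Gr}}(W)}$ for every $W$, i.e. $\mathfrak{v}\in\mathcal{Q}_{\tilde{\operatorname{Gr}}}^{g,\infty}(V)$. The only point needing care is the bookkeeping above: an infinite combination with each pair $(k,l)$ occurring finitely often collapses to a matrix whose entries lie in the corresponding $\tilde{O}_{g,k,l}(V)$. The analytic content is entirely supplied by Lemma \ref{lem3.6}, so I expect no genuine obstacle.
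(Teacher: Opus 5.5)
Your proposal is correct and follows essentially the same route as the paper. The second inclusion holds because $\mathcal{Q}_{\tilde{\operatorname{Gr}}}^{g,\infty}(V)$ intersects kernels over a larger family of modules, and the first follows from Lemma~\ref{lem3.6} applied to $[v]_{kl}$ with $v\in\tilde{O}_{g,k,l}(V)$ acting on $\tilde{\operatorname{Gr}}_l(W)$. Your entrywise bookkeeping for infinite combinations is a detail the paper leaves implicit.
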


\begin{proof}
	By definition, we have the following inclusion:
	\[
	\mathcal{Q}_{\tilde{\operatorname{Gr}}}^{g,\infty}(V) \subseteq \mathbf{Q}_{\tilde{\operatorname{Gr}}}^{g,\infty}(V) .
	\]
	Furthermore, for any $k, l \in \frac{1}{T}\mathbb{N}$, $v \in \tilde{O}_{g, k, l}(V)$, a $g$-twisted $\phi$-coordinated $V$-module $W$, and $w \in \tilde{\operatorname{Gr}}_l(W)$, it follows from Lemma~\ref{lem3.6} that
	\[
	\vartheta_{\tilde{\operatorname{Gr}}(W)}\left([v]_{kl}\right)[w]_l = \left[\operatorname{Res}_x x^{l-k-1} Y_W\left(v, x\right) w\right]_k = 0.
	\]
	This implies that $\tilde{\mathbf{O}}^{g,\infty}(V) \subseteq \mathcal{Q}_{\tilde{\operatorname{Gr}}}^{g,\infty}(V)$.
\end{proof}

\begin{prop}\label{prop4.4}
  For $v \in V$ and $k, l \in \frac{1}{T}\mathbb{N}$, we have
  \[
    [v]_{k l} \circledast_{g} \mathbf{1}^{\infty} - [v]_{k l} \in \tilde{\mathbf{O}}^{g,\infty}(V).
  \]
\end{prop}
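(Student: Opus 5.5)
Since $\mathbf{1}^\infty$ has $\mathbf{1}$ on the diagonal and zeros elsewhere, the definition of $\circledast_g$ gives
\[
[v]_{kl}\circledast_g \mathbf{1}^\infty = \bigl[v \bullet_{g,l,l}^{k}\mathbf{1}\bigr]_{kl} = \bigl[v\bullet_{g,l}^{k}\mathbf{1}\bigr]_{kl}.
\]
Only the $(l,l)$ entry of $\mathbf{1}^\infty$ pairs with column $l$ of $[v]_{kl}$, and the sum over $n$ collapses to $n=l$. So the claim reduces to showing $v\bullet_{g,l}^{k}\mathbf{1}-v\in \tilde{O}_{g,k,l}(V)$ for every $v\in V$. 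The matrix unit $[\,v\bullet_{g,l}^k\mathbf{1}-v\,]_{kl}$ is then a legitimate element of $\tilde{\mathbf{O}}^{g,\infty}(V)$, a single term with $(k,l)$ appearing once.

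By linearity it suffices to take $v\in V^s$, and I split into two cases.

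If $s\not\equiv \bar l-\bar k \pmod T$, then Lemma~\ref{lem3.9}(1), with $(n,m)=(k,l)$, gives $v\in \tilde{O}'_{g,k,l}(V)\subseteq \tilde{O}_{g,k,l}(V)$. For $v\bullet_{g,l}^k\mathbf{1}$, the definition of $\bullet_{g,m,p}^{\,n}$ with $(m,p,n)=(l,l,k)$ says the product is zero unless $\bar p-\bar n=\bar l-\bar k\equiv s$. Hence $v\bullet_{g,l}^k\mathbf{1}=0$ here, and the difference is $-v\in\tilde{O}_{g,k,l}(V)$.

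If $s\equiv \bar l-\bar k\pmod T$, then Lemma~\ref{lem3.9}(2) gives $v\bullet_{g,l}^{k}\mathbf{1}-v\in L_{k,l}(V)$. I read this as $\tilde{L}_{k,l}(V)$, since we are in the vertex algebra setting with $\mathcal{D}$ in place of $L(-1)+L(0)$; this is exactly how the lemma is used in the proof of Theorem~\ref{shunref-bim-0}. Since $\tilde{L}_{k,l}(V)\subseteq \tilde{O}'_{g,k,l}(V)\subseteq\tilde{O}_{g,k,l}(V)$, we are done.

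The only point needing care is the index bookkeeping: Lemma~\ref{lem3.9} must be applied with $n=k$ and $m=l$, matching the roles of rows and columns in $\bullet_{g,l,n}^k$.

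If Lemma~\ref{lem3.9}(2) needed independent verification, I would compute $v\bullet_{g,l}^k\mathbf{1}$ directly. Using $Y(v,\log(1+y))\mathbf{1}=e^{\log(1+y)\mathcal{D}}v=(1+y)^{\mathcal{D}}v$, the residue becomes a finite combination of binomial coefficients in $\mathcal{D}$. The goal is to show it equals a polynomial in $\mathcal{D}$ taking the value $1$ at $\mathcal{D}=k-l$, so that the difference is divisible by $\mathcal{D}+l-k$. That residue and binomial identity is the step I expect to be the main obstacle. However, citing the lemma as stated suffices.
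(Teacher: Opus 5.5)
Your proof is correct. You reduce the matrix statement to the scalar fact $v\bullet_{g,l}^{k}\mathbf{1}-v\in\tilde{O}_{g,k,l}(V)$ exactly as the paper does; the difference lies only in how that scalar fact is justified.

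The paper gets it in one line from Theorem~\ref{thm2.15}. Since $\tilde{A}_{g,k,l}(V)$ is an $\tilde{A}_{g,k}(V)$-$\tilde{A}_{g,l}(V)$-bimodule with right action induced by $\bullet_{g,l}^{k}$, the identity $\mathbf{1}+\tilde{O}_{g,l}(V)$ acts trivially on the right.

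You instead split by the $g$-eigenvalue of $v$. You use Lemma~\ref{lem3.9}(1)--(2), together with the fact that $v\bullet_{g,l,l}^{k}\mathbf{1}=0$ whenever the congruence condition in the definition of $\bullet$ fails.

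The paper's route is shorter, but it silently assumes that the bimodule structure in Theorem~\ref{thm2.15} is unital, which that theorem does not state. Your route makes this step explicit; in effect it proves right-unitality. It also gives a sharper conclusion: the difference lies in $\bigoplus_{s\not\equiv \bar{l}-\bar{k}}V^{s}+\tilde{L}_{k,l}(V)\subseteq\tilde{O}'_{g,k,l}(V)$, not merely in $\tilde{O}_{g,k,l}(V)$. This is the same mechanism the paper itself uses in the proof of Theorem~\ref{shunref-bim-0}.

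Your reading of $L_{n,m}(V)$ in Lemma~\ref{lem3.9}(2) as $\tilde{L}_{n,m}(V)$ is the intended one.
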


\begin{proof}
  By Theorem~\ref{thm2.15}, it follows that
  \[
    [v]_{k l} \circledast_{g} \mathbf{1}^{\infty} - [v]_{k l}
    = [u \bullet_{g,l}^{k} \mathbf{1} - v]_{k l}
    \in \tilde{\mathbf{O}}^{g,\infty}(V),
  \]
  as desired.
\end{proof}

\begin{thm}\label{thm4.5}
  Let $W$ be a $g$-twisted $\phi$-coordinated $V$-module. Then the linear map
  \[
    \vartheta_{\tilde{\operatorname{Gr}}(W)} \colon U^{g,\infty}(V) \to \operatorname{End} \tilde{\operatorname{Gr}}(W)
  \]
  endows $\tilde{\operatorname{Gr}}(W)$ with a $U^{g,\infty}(V)$-module structure; that is, $\vartheta_{\tilde{\operatorname{Gr}}(W)}$ is a homomorphism of (nonassociative) algebras from $U^{g,\infty}(V)$ to $\operatorname{End} \tilde{\operatorname{Gr}}(W)$. In particular, $U^{g,\infty}(V) / \ker \vartheta_{\tilde{\operatorname{Gr}}(W)}$ is an associative algebra isomorphic to a subalgebra of $\operatorname{End} \tilde{\operatorname{Gr}}(W)$.
\end{thm}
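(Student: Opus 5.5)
The claim is that $\vartheta_{\tilde{\operatorname{Gr}}(W)}(\mathfrak{u}\circledast_g\mathfrak{v})=\vartheta_{\tilde{\operatorname{Gr}}(W)}(\mathfrak{u})\,\vartheta_{\tilde{\operatorname{Gr}}(W)}(\mathfrak{v})$ for all $\mathfrak{u},\mathfrak{v}\in U^{g,\infty}(V)$. Once this holds, the final assertion is formal. The image of $\vartheta_{\tilde{\operatorname{Gr}}(W)}$ is closed under composition, so it is a subalgebra of the associative algebra $\operatorname{End}\tilde{\operatorname{Gr}}(W)$. A multiplicative linear surjection onto it induces an algebra isomorphism from $U^{g,\infty}(V)/\ker\vartheta_{\tilde{\operatorname{Gr}}(W)}$ onto this image. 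The kernel is automatically a two-sided ideal for $\circledast_g$, and associativity of the quotient is inherited from $\operatorname{End}\tilde{\operatorname{Gr}}(W)$.

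I reduce the multiplicativity to matrix units. Both sides are linear in each argument. They also behave well with respect to the column-finite infinite sums defining $\mathfrak{u},\mathfrak{v}$: applied to a homogeneous $[w]_l$, only column $l$ of $\mathfrak{v}$ contributes, and that column has finitely many nonzero entries. Similarly only finitely many columns of $\mathfrak{u}$ are then relevant, and each such column is also finite. So every sum involved is finite, and it suffices to check
\[
\vartheta_{\tilde{\operatorname{Gr}}(W)}\bigl([u]_{km}\circledast_g[v]_{nl}\bigr)[w]_p=\vartheta_{\tilde{\operatorname{Gr}}(W)}([u]_{km})\,\vartheta_{\tilde{\operatorname{Gr}}(W)}([v]_{nl})[w]_p
\]
for $w\in\tilde{\Omega}_p(W)$.

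If $p\neq l$, both sides vanish. If $m\neq n$, the left side is $0$ by the definition of $\circledast_g$. The right side is $\delta_{m,k'}$ applied to an element of $\tilde{\operatorname{Gr}}_n(W)$, where $k'$ is the column index of $[u]_{km}$, namely $m$; so it is also $0$. Note that here one uses Lemma~\ref{lem4.2}: $\vartheta([v]_{nl})[w]_l$ lands in $\tilde{\operatorname{Gr}}_n(W)$, which is exactly what makes the projection bookkeeping correct.

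In the remaining case $m=n$, $p=l$, the left side equals $\bigl[\operatorname{Res}_x x^{l-k-1}Y_W(u\bullet_{g,l,n}^{k}v,x)w\bigr]_k$. The right side equals
\[
\bigl[\operatorname{Res}_{x_1}x_1^{n-k-1}Y_W(u,x_1)\operatorname{Res}_{x_2}x_2^{l-n-1}Y_W(v,x_2)w\bigr]_k.
\]
These agree by Lemma~\ref{lem4.1}, applied with $(m,n,p)$ replaced by $(l,k,n)$ and using $w\in\tilde{\Omega}_l(W)$; the equality there holds already in $W$, hence in the quotient $\tilde{\operatorname{Gr}}_k(W)$.

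The only point needing care is well-definedness on $\tilde{\operatorname{Gr}}(W)$ rather than on $\tilde{\Omega}(W)$. Each $\vartheta([v]_{kl})$ sends $\tilde{\Omega}_{l-\frac1T}(W)$ into $\tilde{\Omega}_{k-\frac1T}(W)$ by Lemma~\ref{lem4.2}, so the operators descend and the composition on representatives matches the composition of the induced maps. Hence I expect the main (mild) obstacle to be this bookkeeping with infinite matrices and cosets, not any new vertex-algebraic identity, since Lemma~\ref{lem4.1} carries all the real content.
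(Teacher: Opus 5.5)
Your proof is correct and follows the paper's argument: reduce to matrix units $[u]_{kn}\circledast_g[v]_{nl}=[u\bullet_{g,l,n}^{k}v]_{kl}$ and apply Lemma~\ref{lem4.1} with $(m,n,p)$ replaced by $(l,k,n)$. Your extra bookkeeping makes explicit what the paper leaves implicit: the finite-sum reduction for column-finite matrices, the vanishing cases $p\neq l$ and $m\neq n$, and the descent to $\tilde{\operatorname{Gr}}(W)$ via Lemma~\ref{lem4.2}. In the $m\neq n$ case, the factor you want is simply $\delta_{m,n}$, not $\delta_{m,k'}$ with $k'=m$.
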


\begin{proof}
  By Lemma~\ref{lem4.1}, for any $u, v \in V$, $k, n, l \in \frac{1}{T}\mathbb{N}$, and $w \in \tilde{\operatorname{Gr}}_l(W)$, we have
  \begin{align*}
    &\vartheta_{\tilde{\operatorname{Gr}}(W)}\bigl([u]_{k n} \circledast_{g} [v]_{n l}\bigr)[w]_l \\
    &= \vartheta_{\tilde{\operatorname{Gr}}(W)}\bigl([u \bullet_{g,l,n}^k v]_{k l}\bigr)[w]_l \\
    &= \bigl[\operatorname{Res}_x x^{l-k-1} Y_W(u \bullet_{g,l,n}^k v, x)\, w\bigr]_l \\
    &= \bigl[\operatorname{Res}_{x_1} x_1^{n-k-1} Y_W(u, x_1) \operatorname{Res}_{x_2} x_2^{l-n-1} Y_W(v, x_2)\, w\bigr]_l \\
    &= \vartheta_{\tilde{\operatorname{Gr}}(W)}\bigl([u]_{k n}\bigr) \vartheta_{\tilde{\operatorname{Gr}}(W)}\bigl([v]_{n l}\bigr)[w]_l.
  \end{align*}
  Thus $\vartheta_{\tilde{\operatorname{Gr}}(W)}$ endows $\tilde{\operatorname{Gr}}(W)$ with a $U^{g,\infty}(V)$-module structure.
\end{proof}

\begin{thm}\label{thm4.6}
{\rm(1)} The product $\circledast_{g}$ on $U^{g,\infty}(V)$ induces a product, still denoted by $\circledast_{g}$, on
          \[
            \mathcal{A}_{\tilde{\operatorname{Gr}}}^{g,\infty}(V) = U^{g,\infty}(V) / \mathcal{Q}^{g,\infty}_{\tilde{\operatorname{Gr}}}(V),
          \]
          such that $\mathcal{A}^{g,\infty}_{\tilde{\operatorname{Gr}}}(V)$ equipped with $\circledast_{g}$ is an associative algebra with identity $\mathbf{1}^{\infty} + \mathcal{Q}^{g,\infty}_{\tilde{\operatorname{Gr}}}(V)$. Moreover, for any $g$-twisted $\phi$-coordinated $V$-module $W$,  $\tilde{\operatorname{Gr}}(W)$ is an $\mathcal{A}^{g,\infty}_{\tilde{\operatorname{Gr}}}(V)$-module.

{\rm(2)} The product $\circledast_{g}$ on $U^{g,\infty}(V)$ induces a product, still denoted by $\circledast_{g}$, on
          \[
            \mathbf{A}_{\tilde{\operatorname{Gr}}}^{g,\infty}(V) = U^{g,\infty}(V) / \mathbf{Q}^{g,\infty}_{\tilde{\operatorname{Gr}}}(V),
          \]
          such that $\mathbf{A}^{g,\infty}_{\tilde{\operatorname{Gr}}}(V)$ equipped with $\circledast_{g}$ is an associative algebra with identity $\mathbf{1}^{\infty} + \mathbf{Q}^{g,\infty}_{\tilde{\operatorname{Gr}}}(V)$. Moreover, for any $\frac{1}{T}\mathbb{N}$-graded $g$-twisted $\phi$-coordinated $V$-module $W$, $\tilde{\operatorname{Gr}}(W)$ is an $\mathbf{A}^{g,\infty}_{\tilde{\operatorname{Gr}}}(V)$-module.
\end{thm}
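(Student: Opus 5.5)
The plan is to deduce both parts from Theorem~\ref{thm4.5}, using the standard fact that the quotient of a (nonassociative) algebra by the intersection of kernels of a family of representations is associative. Part (1) runs over all $g$-twisted $\phi$-coordinated $V$-modules $W$, part (2) over the $\frac{1}{T}\mathbb{N}$-graded ones. The argument is the same in both cases, so we write it once for a family $\mathcal{F}$ of modules. For each $W\in\mathcal{F}$, Theorem~\ref{thm4.5} says $\vartheta_{\tilde{\operatorname{Gr}}(W)}$ is an algebra homomorphism $U^{g,\infty}(V)\to \operatorname{End}\tilde{\operatorname{Gr}}(W)$, i.e.
\[
\vartheta_{\tilde{\operatorname{Gr}}(W)}(\mathfrak{u}\circledast_g\mathfrak{v})=\vartheta_{\tilde{\operatorname{Gr}}(W)}(\mathfrak{u})\vartheta_{\tilde{\operatorname{Gr}}(W)}(\mathfrak{v}).
\]
This identity is first checked on matrix units via Lemma~\ref{lem4.1}. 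It then extends to arbitrary column-finite matrices by bilinearity, because on any fixed $\mathfrak{w}\in\tilde{\operatorname{Gr}}(W)$ all the sums involved are finite.

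\emph{Step 1: the kernel is a two-sided ideal.} If $\mathfrak{q}\in\ker\vartheta_{\tilde{\operatorname{Gr}}(W)}$ for every $W\in\mathcal{F}$, then $\mathfrak{u}\circledast_g\mathfrak{q}$ and $\mathfrak{q}\circledast_g\mathfrak{u}$ also act by zero, by the homomorphism property. Hence $\mathcal{Q}=\bigcap_{W\in\mathcal{F}}\ker\vartheta_{\tilde{\operatorname{Gr}}(W)}$ is a two-sided ideal for $\circledast_g$, and the product descends to the quotient.

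\emph{Step 2: associativity.} Consider the associator
\[
\mathfrak{a}=(\mathfrak{u}\circledast_g\mathfrak{v})\circledast_g\mathfrak{w}-\mathfrak{u}\circledast_g(\mathfrak{v}\circledast_g\mathfrak{w}).
\]
It maps under every $\vartheta_{\tilde{\operatorname{Gr}}(W)}$ to a difference of two compositions in $\operatorname{End}\tilde{\operatorname{Gr}}(W)$, which is zero because composition of endomorphisms is associative. So $\mathfrak{a}\in\mathcal{Q}$, and the quotient is associative.

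\emph{Step 3: identity and module structure.} We must show that $\mathbf{1}^\infty$ acts as the identity on each $\tilde{\operatorname{Gr}}(W)$. For $[w]_l\in\tilde{\operatorname{Gr}}_l(W)$ we have
\[
\vartheta_{\tilde{\operatorname{Gr}}(W)}(\mathbf{1}^\infty)[w]_l=[\mathbf{1}_{-1}w]_l=[w]_l,
\]
using $Y_W(\mathbf{1},x)=\operatorname{id}_W$. Therefore $\mathfrak{v}\circledast_g\mathbf{1}^\infty-\mathfrak{v}$ and $\mathbf{1}^\infty\circledast_g\mathfrak{v}-\mathfrak{v}$ lie in every kernel, so $\mathbf{1}^\infty+\mathcal{Q}$ is a two-sided identity. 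For the right unit one could alternatively cite Proposition~\ref{prop4.4} together with Proposition~\ref{prop4.3}, but this is not needed. Finally, $\vartheta_{\tilde{\operatorname{Gr}}(W)}$ vanishes on $\mathcal{Q}$ by definition, so it factors through the quotient and makes $\tilde{\operatorname{Gr}}(W)$ a unital module over it. Taking $\mathcal{F}$ to be all modules gives (1), and taking $\mathcal{F}$ to be the graded modules gives (2).

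The main obstacle is making sure the homomorphism property of Theorem~\ref{thm4.5} really holds for infinite column-finite matrices, not only for matrix units. This reduces to a finiteness check: for fixed $l$, only finitely many $v_{nl}$ are nonzero, and for each such $n$ only finitely many $u_{kn}$ are nonzero. Hence $(\mathfrak{u}\circledast_g\mathfrak{v})$ is again column-finite, and both sides of the homomorphism identity, applied to an element of $\tilde{\operatorname{Gr}}_l(W)$, are the same finite sums.
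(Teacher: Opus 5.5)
Your proof is correct. Steps 1 and 2, and the final factorization of $\vartheta_{\tilde{\operatorname{Gr}}(W)}$ through the quotient, are the same as in the paper.

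The one genuine divergence is the identity element.
\begin{itemize}
\item The paper first observes that $\mathbf{1}^\infty\circledast_g[v]_{kl}=[v]_{kl}$ holds exactly in $U^{g,\infty}(V)$, since $\mathbf{1}\bullet^{k}_{g,l,k}v=v$ by direct computation.
\item It then gets the right identity from Proposition~\ref{prop4.4} together with Proposition~\ref{prop4.3}. That is, it uses $v\bullet^{k}_{g,l}\mathbf{1}-v\in\tilde{O}_{g,k,l}(V)$, which comes from the bimodule structure in Theorem~\ref{thm2.15}, and then Lemma~\ref{lem3.6} to kill $\tilde{O}_{g,k,l}(V)$ on $\tilde{\Omega}_l(W)$.
\item You instead note that $\vartheta_{\tilde{\operatorname{Gr}}(W)}(\mathbf{1}^\infty)$ acts on each $\tilde{\operatorname{Gr}}_l(W)$ as $\mathbf{1}_{-1}=\operatorname{id}$. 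By the homomorphism property, both $\mathfrak{v}\circledast_g\mathbf{1}^\infty-\mathfrak{v}$ and $\mathbf{1}^\infty\circledast_g\mathfrak{v}-\mathfrak{v}$ then lie in every kernel.
\end{itemize}

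Your route is more elementary and self-contained. It uses only the vacuum axiom and Theorem~\ref{thm4.5}, imports nothing about the spaces $\tilde{O}_{g,k,l}(V)$, and handles arbitrary column-finite $\mathfrak{v}$ at once rather than only matrix units. The paper's route gives a bit more information: an exact left identity already in $U^{g,\infty}(V)$, and a right-unit defect lying in the explicit subspace $\tilde{\mathbf{O}}^{g,\infty}(V)$, which does not depend on the chosen family of modules.

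Your remark that the homomorphism property needs a finiteness check for infinite column-finite matrices, not just matrix units, is a point the paper leaves implicit. Your justification of it is sound.
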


\begin{proof}
  We prove only~(1); the proof of~(2) is analogous.

  For any $g$-twisted $\phi$-coordinated $V$-module $W$, $\ker \vartheta_{\tilde{\operatorname{Gr}}(W)}$ is a two-sided ideal of $U^{g,\infty}(V)$. Since $\mathcal{Q}_{\tilde{\operatorname{Gr}}}^{g,\infty}(V)$ is defined as the intersection of all such ideals, it is itself a two-sided ideal of $U^{g,\infty}(V)$. Hence $\circledast_{g}$ descends to a well-defined product on $\mathcal{A}_{\tilde{\operatorname{Gr}}}^{g,\infty}(V)$.

  To verify associativity, observe that for each $g$-twisted $\phi$-coordinated $V$-module $W$, the quotient algebra $U^{g,\infty}(V) / \ker \vartheta_{\tilde{\operatorname{Gr}}(W)}$ is associative. Therefore, for all $\mathfrak{v}_1, \mathfrak{v}_2, \mathfrak{v}_3 \in U^{g,\infty}(V)$,
  \[
    (\mathfrak{v}_1 \circledast_{g} \mathfrak{v}_2) \circledast_{g} \mathfrak{v}_3 - \mathfrak{v}_1 \circledast_{g} (\mathfrak{v}_2 \circledast_{g} \mathfrak{v}_3) \in \ker \vartheta_{\tilde{\operatorname{Gr}}(W)}.
  \]
  Taking the intersection over all such modules $W$, we obtain
  \[
    (\mathfrak{v}_1 \circledast_{g} \mathfrak{v}_2) \circledast_{g} \mathfrak{v}_3 - \mathfrak{v}_1 \circledast_{g} (\mathfrak{v}_2 \circledast_{g} \mathfrak{v}_3)
    \in \bigcap_{W} \ker \vartheta_{\tilde{\operatorname{Gr}}(W)} = \mathcal{Q}_{\tilde{\operatorname{Gr}}}^{g,\infty}(V),
  \]
  which shows that $\mathcal{A}_{\tilde{\operatorname{Gr}}}^{g,\infty}(V)$ is associative.

  Next we identify the identity element. By definition, $\mathbf{1}^{\infty} \circledast_{g} [v]_{k l} = [v]_{k l}$, so $\mathbf{1}^{\infty}$ is a left identity of $U^{g,\infty}(V)$. By Proposition~\ref{prop4.3}, $\tilde{\mathbf{O}}^{g,\infty}(V) \subseteq \mathcal{Q}_{\tilde{\operatorname{Gr}}}^{g,\infty}(V)$. Then Proposition~\ref{prop4.4} yields
  \[
    \bigl([v]_{k l} + \mathcal{Q}_{\tilde{\operatorname{Gr}}}^{g,\infty}(V)\bigr) \circledast_{g} \bigl(\mathbf{1}^{\infty} + \mathcal{Q}_{\tilde{\operatorname{Gr}}}^{g,\infty}(V)\bigr)
    = [v]_{k l} + \mathcal{Q}_{\tilde{\operatorname{Gr}}}^{g,\infty}(V),
  \]
  showing that $\mathbf{1}^{\infty} + \mathcal{Q}_{\tilde{\operatorname{Gr}}}^{g,\infty}(V)$ is also a right identity, and hence the identity of $\mathcal{A}_{\tilde{\operatorname{Gr}}}^{g,\infty}(V)$.

  Finally, let $W$ be a $g$-twisted $\phi$-coordinated $V$-module. By Theorem~\ref{thm4.5}, $\tilde{\operatorname{Gr}}(W)$ is a module for $U^{g,\infty}(V) / \ker \vartheta_{\tilde{\operatorname{Gr}}(W)}$. Since $\mathcal{Q}_{\tilde{\operatorname{Gr}}}^{g,\infty}(V) \subseteq \ker \vartheta_{\tilde{\operatorname{Gr}}(W)}$, the action factors through the quotient, making $\tilde{\operatorname{Gr}}(W)$ an $\mathcal{A}_{\tilde{\operatorname{Gr}}}^{g,\infty}(V)$-module.
\end{proof}

\subsection{The Associative Algebra $\tilde{\mathbf{A}}^{g,\infty}(V)$ and Its Module}

Let $W$ be a $\frac{1}{T}\mathbb{N}$-graded $g$-twisted $\phi$-coordinated $V$-module.
We define a linear map $\tilde{\vartheta}_W \colon U^{g,\infty}(V) \to \operatorname{End} W$ by
\[
  \tilde{\vartheta}_W\bigl([v]_{k l}\bigr) w = \delta_{l n} \operatorname{Res}_x x^{l-k-1} Y_W(v, x)\, w
\]
for $k, l \in \frac{1}{T}\mathbb{N}$, $v \in V$, and $w \in W(n)$. Define
\[
  \tilde{\mathbf{Q}}^{g,\infty}(V) = \bigcap_{W} \ker \tilde{\vartheta}_{W},
\]
where the intersection runs over all $\frac{1}{T}\mathbb{N}$-graded $g$-twisted $\phi$-coordinated $V$-modules $W$. Set
\[
  \tilde{\mathbf{A}}^{g,\infty}(V) = U^{g,\infty}(V) / \tilde{\mathbf{Q}}^{g,\infty}(V).
\]
\begin{prop}
	$\tilde{\mathbf{O}}^{g,\infty}(V)=\tilde{\mathbf{Q}}^{g,\infty}(V)$.
\end{prop}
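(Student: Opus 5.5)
We prove the two inclusions separately. For $\tilde{\mathbf{O}}^{g,\infty}(V)\subseteq\tilde{\mathbf{Q}}^{g,\infty}(V)$ we argue entrywise. Take $u\in\tilde{O}_{g,k,l}(V)$ and a $\frac{1}{T}\mathbb{N}$-graded $g$-twisted $\phi$-coordinated $V$-module $W$. By definition, $\tilde{\vartheta}_W([u]_{kl})$ vanishes on $W(n)$ for $n\neq l$. On $W(l)$ it acts by $\operatorname{Res}_x x^{l-k-1}Y_W(u,x)$. The grading condition $v_p W(l)\subseteq W(l-p-1)$ gives $W(l)\subseteq\tilde{\Omega}_l(W)$, so Lemma~\ref{lem3.6} shows this operator is zero. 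Now let $\mathfrak{u}$ be an element of $\tilde{\mathbf{O}}^{g,\infty}(V)$, written as an admissible infinite combination of such $[u]_{kl}$. On each $w\in W(n)$, only the finitely many terms with column index $l=n$ contribute, and each pair $(k,l)$ occurs only finitely often. So the action of $\mathfrak{u}$ on $w$ is a finite sum of zeros, and $\tilde{\vartheta}_W(\mathfrak{u})=0$.

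For the reverse inclusion we need test modules that detect a single column. Let $\mathfrak{v}=[v_{kl}]\in\tilde{\mathbf{Q}}^{g,\infty}(V)$. It suffices to show $v_{kl}\in\tilde{O}_{g,k,l}(V)$ for every pair $(k,l)$: then $\mathfrak{v}=\sum_{k,l}[v_{kl}]_{kl}$ lies in $\tilde{\mathbf{O}}^{g,\infty}(V)$, each pair occurring once. Fix $l$ and take $W=M_g^{(l)}(\tilde{A}_{g,l}(V))=\bigoplus_{k}\tilde{A}_{g,k,l}(V)$. This is a $\frac{1}{T}\mathbb{N}$-graded $g$-twisted $\phi$-coordinated $V$-module by Theorem~\ref{shunthm7.5}, and it can be identified with $\bigoplus_k\tilde{A}_{g,k,l}(V)$ because $\tilde{A}_{g,l,l}(V)\otimes_{\tilde{A}_{g,l}(V)}\tilde{A}_{g,l}(V)\cong\tilde{A}_{g,l,l}(V)$ via the bimodule structure of Theorem~\ref{thm2.15}. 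Apply $\tilde{\vartheta}_W(\mathfrak{v})$ to $w=\mathbf{1}+\tilde{O}_{g,l,l}(V)\in W(l)$. Only column $l$ contributes, so
\[
0=\tilde{\vartheta}_W(\mathfrak{v})w=\sum_{k}(v_{kl})_{l-k-1}w=\sum_k\bigl(v_{kl}\bullet_{g,l,l}^{k}\mathbf{1}+\tilde{O}_{g,k,l}(V)\bigr),
\]
where the second equality uses the definition of the vertex operators on $M_g^{(l)}$.

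The summands lie in different graded pieces $W(k)$, so each one vanishes separately. Since $\bullet_{g,l,l}^k=\bullet_{g,l}^k$ is the right action of the identity of $\tilde{A}_{g,l}(V)$ (Theorem~\ref{thm2.15}), we get $v_{kl}\bullet_{g,l}^k\mathbf{1}\equiv v_{kl}$ modulo $\tilde{O}_{g,k,l}(V)$. Hence $v_{kl}\in\tilde{O}_{g,k,l}(V)$ for every $k$. This is exactly the argument of Theorem~\ref{shunthm7.7}, carried out one column at a time.

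The main obstacle is the identification step in the second inclusion. We must confirm that $\mathbf{1}\otimes\mathbf{1}$ is the correct generator in degree $l$, and that $u_{l-k-1}$ sends it to $u\bullet_{g,l,l}^k\mathbf{1}$ in $\tilde{A}_{g,k,l}(V)\otimes\tilde{A}_{g,l}(V)\cong\tilde{A}_{g,k,l}(V)$. This requires checking that the isomorphism is compatible with the left-action formula, which follows from the definitions. A secondary point is bookkeeping for infinite combinations: column-finiteness guarantees that only finitely many $k$ appear for each fixed $l$.
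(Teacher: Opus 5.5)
Your proposal is correct and is essentially the paper's proof: $\tilde{\mathbf{O}}^{g,\infty}(V)\subseteq\tilde{\mathbf{Q}}^{g,\infty}(V)$ follows from $W(l)\subseteq\tilde{\Omega}_l(W)$ and Lemma~\ref{lem3.6}, and the reverse inclusion comes from applying $\tilde{\vartheta}$ to $\mathbf{1}+\tilde{O}_{g,l,l}(V)$ in the test module $M_g^{(l)}(\tilde{A}_{g,l}(V))$, using that $\mathbf{1}$ acts as a right identity on each $\tilde{A}_{g,k,l}(V)$. One small slip: the identification you need is $\tilde{A}_{g,k,l}(V)\otimes_{\tilde{A}_{g,l}(V)}\tilde{A}_{g,l}(V)\cong\tilde{A}_{g,k,l}(V)$ for every $k$, not only for $k=l$.
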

\begin{proof}
	For any $\frac{1}{T}\mathbb{N}$-graded $g$-twisted $\phi$-coordinated $V$-module $W = \bigoplus_{n \in \frac{1}{T}\mathbb{N}} W(n)$, we have $W(n) \subseteq \tilde{\Omega}_n(W)$. Following the argument in Proposition~\ref{prop4.3}, we obtain $\tilde{\mathbf{O}}^{g,\infty}(V) \subseteq \tilde{\mathbf{Q}}^{g,\infty}(V)$. 
	
	Next, we prove that $\tilde{\mathbf{Q}}^{g,\infty}(V) \subseteq \tilde{\mathbf{O}}^{g,\infty}(V)$. Consider the $\frac{1}{T}$-graded $g$-twisted $\phi$-coordinated $V$-module ${M}^{(l)}_g(\tilde{A}_{g,l}(V)) = \bigoplus_{k \in \frac{1}{T}\mathbb{N}} \tilde{A}_{g,k,l}(V)$ constructed in Section~\ref{sec3} for $l \in \frac{1}{T}\mathbb{N}$. For any $\mathfrak{v} = [v_{kl}] \in \tilde{\mathbf{Q}}^{g,\infty}(V)$, where $v_{kl} \in V$ and $k, l \in \frac{1}{T}\mathbb{N}$, we have
	\begin{align*}
		0 &= \tilde{\vartheta}_{{M}^{(l)}_g(\tilde{A}_{g,l}(V))}(\mathfrak{v}) (\mathbf{1} + \tilde{O}_{g,l,l}(V)) \\
		&= \sum_{k \in \frac{1}{T}\mathbb{N}} \operatorname{Res}_x x^{l-k-1} Y_{{M}^{(l)}_g(\tilde{A}_{g,l}(V))}\left( v_{kl}, x\right) (\mathbf{1} + \tilde{O}_{g,l,l}(V)) \\
		&= \sum_{k \in \frac{1}{T}\mathbb{N}} \left(v_{kl} \bullet_{g,l}^{k} \mathbf{1} + \tilde{O}_{g,k,l}(V)\right) \\
		&= \sum_{k \in \frac{1}{T}\mathbb{N}} \left(v_{kl} + \tilde{O}_{g,k,l}(V)\right),
	\end{align*}
	which implies that $v_{kl} \in \tilde{O}_{g,k,l}(V)$. We conclude that $\tilde{\mathbf{Q}}^{g,\infty}(V) \subseteq \tilde{\mathbf{O}}^{g,\infty}(V)$, which completes the proof.
\end{proof}

For a $\frac{1}{T}\mathbb{N}$-graded $g$-twisted $\phi$-coordinated $V$-module $W = \bigoplus_{n \in \frac{1}{T}\mathbb{N}} W(n)$, we have $W(n) \subseteq \tilde{\Omega}_n(W)$ for $n\in\frac{1}{T}\mathbb{N}$. Thus, by arguments similar to those in Theorems~\ref{thm4.5} and~\ref{thm4.6}, we obtain Theorem~\ref{thm4.8} and~\ref{thm4.9}.

\begin{thm}\label{thm4.8}
  Let $W$ be a $\frac{1}{T}\mathbb{N}$-graded $g$-twisted $\phi$-coordinated $V$-module. Then the linear map
  \[
    \tilde{\vartheta}_{W} \colon U^{g,\infty}(V) \to \operatorname{End} W
  \]
  endows $W$ with a $U^{g,\infty}(V)$-module structure; that is, $\tilde{\vartheta}_{W}$ is a homomorphism of (nonassociative) algebras from $U^{g,\infty}(V)$ to $\operatorname{End} W$. In particular, $U^{g,\infty}(V) / \ker \tilde{\vartheta}_{W}$ is an associative algebra isomorphic to a subalgebra of $\operatorname{End} W$.
\end{thm}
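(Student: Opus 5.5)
The plan is to imitate the proof of Theorem~\ref{thm4.5}, replacing $\tilde{\operatorname{Gr}}(W)$ by $W$ itself and using the grading in place of the filtration. Since both sides of the homomorphism identity are bilinear, and $\circledast_g$ is computed entrywise from products of matrix units, it suffices to check the identity on matrix units $[u]_{kn}$, $[v]_{n'l}$ acting on a homogeneous vector $w\in W(p)$. The one point needing care is column-finiteness: for a general $\mathfrak{v}$ and fixed $w\in W(p)$, only the column $l=p$ contributes, and it has finitely many nonzero entries. So $\tilde{\vartheta}_W(\mathfrak{v})w$ is a finite sum and $\tilde{\vartheta}_W$ is well defined and linear. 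The same observation reduces the general identity to matrix units.

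The key steps are as follows.

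\textbf{Step 1.} Record that for $v\in V$ homogeneous of $g$-degree and $w\in W(l)$, the element $\operatorname{Res}_x x^{l-k-1}Y_W(v,x)w = v_{l-k-1}w$ lies in $W(k)$. This follows from the grading axiom $v_mW(l)\subseteq W(l-m-1)$. Hence $\tilde{\vartheta}_W([v]_{kl})$ maps $W(l)$ into $W(k)$ and kills $W(n)$ for $n\neq l$.

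\textbf{Step 2.} Handle the mismatched case. If $n\neq n'$, then $[u]_{kn}\circledast_g[v]_{n'l}=0$. On the operator side, $\tilde{\vartheta}_W([v]_{n'l})w$ lies in $W(n')$, which $\tilde{\vartheta}_W([u]_{kn})$ annihilates. So both sides vanish.

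\textbf{Step 3.} Handle the matched case $n=n'$. Here we need
\[
\tilde{\vartheta}_W\bigl([u\bullet_{g,l,n}^k v]_{kl}\bigr)w=\tilde{\vartheta}_W([u]_{kn})\tilde{\vartheta}_W([v]_{nl})w
\]
for $w\in W(l)$. Both sides are zero unless $w\in W(l)$. Since $W(l)\subseteq\tilde{\Omega}_l(W)$, Lemma~\ref{lem4.1} applies with $m=l$, $p=n$, $n=k$ and gives exactly
\[
\operatorname{Res}_x x^{l-k-1}Y_W(u\bullet^k_{g,l,n}v,x)w=\operatorname{Res}_{x_1}x_1^{n-k-1}Y_W(u,x_1)\operatorname{Res}_{x_2}x_2^{l-n-1}Y_W(v,x_2)w .
\]
By Step 1 the inner factor lands in $W(n)$, so it is the same vector that $\tilde{\vartheta}_W([v]_{nl})$ produces. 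Thus $\tilde{\vartheta}_W$ is an algebra homomorphism.

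\textbf{Step 4.} Draw the consequence. $\ker\tilde{\vartheta}_W$ is a two-sided ideal, and the quotient embeds into the associative algebra $\operatorname{End}W$. It is therefore associative, as a subalgebra of $\operatorname{End}W$.

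The main obstacle is Step 3. The task there is to make sure Lemma~\ref{lem4.1}, which is stated on $\tilde{\Omega}_m(W)$, is applied to genuine graded vectors rather than cosets. This works because $W(l)\subseteq\tilde{\Omega}_l(W)$: every $v_jw$ with $-j-1<-l$ would have to lie in $W(l-j-1)$ with negative index, which is zero.
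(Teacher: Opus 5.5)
Your proposal is correct and takes the same route as the paper, which simply invokes the argument of Theorem~\ref{thm4.5}: use the inclusion $W(n)\subseteq\tilde{\Omega}_n(W)$ and then Lemma~\ref{lem4.1}. Beyond that, you spell out details the paper leaves implicit: well-definedness via column-finiteness, the mismatched-index case, and the check that the inner factor lands in $W(n)$, so that it coincides with $\tilde{\vartheta}_W([v]_{nl})w$.
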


\begin{thm}\label{thm4.9}
  The product $\circledast_{g}$ on $U^{g,\infty}(V)$ induces a product, still denoted by $\circledast_{g}$, on
  \[
    \tilde{\mathbf{A}}^{g,\infty}(V) = U^{g,\infty}(V) / \tilde{\mathbf{Q}}^{g,\infty}(V),
  \]
  such that $\tilde{\mathbf{A}}^{g,\infty}(V)$ equipped with $\circledast_{g}$ is an associative algebra with identity $\mathbf{1}^{\infty} + \tilde{\mathbf{Q}}^{g,\infty}(V)$. Moreover, for any $\frac{1}{T}\mathbb{N}$-graded $g$-twisted $\phi$-coordinated $V$-module $W$, it is an $\tilde{\mathbf{A}}^{g,\infty}(V)$-module.
\end{thm}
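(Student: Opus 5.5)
The plan follows the proof of Theorem~\ref{thm4.6}, with Theorem~\ref{thm4.8} in place of Theorem~\ref{thm4.5} and $\tilde{\mathbf{Q}}^{g,\infty}(V)$ in place of $\mathcal{Q}_{\tilde{\operatorname{Gr}}}^{g,\infty}(V)$.

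\emph{Step 1: $\tilde{\mathbf{Q}}^{g,\infty}(V)$ is a two-sided ideal.} By Theorem~\ref{thm4.8}, for every $\frac{1}{T}\mathbb{N}$-graded $g$-twisted $\phi$-coordinated $V$-module $W$, the map $\tilde{\vartheta}_W$ is a homomorphism of nonassociative algebras. So $\ker\tilde{\vartheta}_W$ is a two-sided ideal of $(U^{g,\infty}(V),\circledast_g)$. Indeed, if $\tilde{\vartheta}_W(\mathfrak{v})=0$, then $\tilde{\vartheta}_W(\mathfrak{u}\circledast_g\mathfrak{v})=\tilde{\vartheta}_W(\mathfrak{u})\tilde{\vartheta}_W(\mathfrak{v})=0$, and similarly on the other side. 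An intersection of two-sided ideals is again a two-sided ideal, so $\circledast_g$ descends to $\tilde{\mathbf{A}}^{g,\infty}(V)$.

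\emph{Step 2: associativity.} For each $W$, the quotient $U^{g,\infty}(V)/\ker\tilde{\vartheta}_W$ embeds as a subalgebra of $\operatorname{End}W$, so it is associative. Hence every associator $(\mathfrak{v}_1\circledast_g\mathfrak{v}_2)\circledast_g\mathfrak{v}_3-\mathfrak{v}_1\circledast_g(\mathfrak{v}_2\circledast_g\mathfrak{v}_3)$ lies in $\ker\tilde{\vartheta}_W$ for all $W$, and therefore in $\tilde{\mathbf{Q}}^{g,\infty}(V)$.

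\emph{Step 3: the identity.} The defining formula $(\mathfrak{u}\circledast_g\mathfrak{v})_{kl}=\sum_n u_{kn}\bullet_{g,l,n}^k v_{nl}$ shows that $\mathbf{1}^\infty$ is a left identity. For the right identity, Proposition~\ref{prop4.4} gives $[v]_{kl}\circledast_g\mathbf{1}^\infty-[v]_{kl}\in\tilde{\mathbf{O}}^{g,\infty}(V)$. The preceding proposition gives $\tilde{\mathbf{O}}^{g,\infty}(V)=\tilde{\mathbf{Q}}^{g,\infty}(V)$; only the inclusion $\subseteq$ is needed, and that inclusion comes from Lemma~\ref{lem3.6} together with $W(n)\subseteq\tilde{\Omega}_n(W)$.

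To pass from matrix units to a general column-finite matrix $\mathfrak{v}=[v_{kl}]$, note that right multiplication by $\mathbf{1}^\infty$ acts entrywise: $(\mathfrak{v}\circledast_g\mathbf{1}^\infty)_{kl}=v_{kl}\bullet_{g,l}^k\mathbf{1}$. So $\mathfrak{v}\circledast_g\mathbf{1}^\infty-\mathfrak{v}$ is an infinite combination of elements $[u]_{kl}$ with $u\in\tilde{O}_{g,k,l}(V)$, each pair $(k,l)$ occurring once. This is exactly the kind of infinite sum allowed in $\tilde{\mathbf{O}}^{g,\infty}(V)$. Alternatively, one can check directly that $\tilde{\vartheta}_W(\mathbf{1}^\infty)=\mathrm{id}_W$: on $W(n)$ it equals $\mathbf{1}_{-1}=\mathrm{id}$. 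Then $\tilde{\vartheta}_W(\mathfrak{v}\circledast_g\mathbf{1}^\infty-\mathfrak{v})=0$ for every $W$, which gives the right-identity statement at once and avoids the infinite-sum bookkeeping. I would use this second argument.

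\emph{Step 4: the module structure.} By Theorem~\ref{thm4.8}, $W$ is a module over $U^{g,\infty}(V)/\ker\tilde{\vartheta}_W$. Since $\tilde{\mathbf{Q}}^{g,\infty}(V)\subseteq\ker\tilde{\vartheta}_W$, the action factors through $\tilde{\mathbf{A}}^{g,\infty}(V)$, and the identity acts as $\mathrm{id}_W$.

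The only delicate point is Step 3, namely making sure that infinite column-finite matrices are handled correctly. Checking $\tilde{\vartheta}_W(\mathbf{1}^\infty)=\mathrm{id}_W$ directly settles it. Everything else is formal, given Theorem~\ref{thm4.8}.
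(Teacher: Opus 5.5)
Your proposal is correct and essentially follows the paper's argument: the paper proves Theorem~\ref{thm4.9} by transcribing the proof of Theorem~\ref{thm4.6}. There, the kernels of the homomorphisms $\tilde{\vartheta}_W$ from Theorem~\ref{thm4.8} are two-sided ideals, associators lie in every kernel, and the action factors through the quotient, exactly as in your Steps 1, 2 and 4. The only deviation is Step 3: the paper gets the right identity from Proposition~\ref{prop4.4} and $\tilde{\mathbf{O}}^{g,\infty}(V)\subseteq\tilde{\mathbf{Q}}^{g,\infty}(V)$, checked only on matrix units $[v]_{kl}$, whereas your direct check that $\tilde{\vartheta}_W(\mathbf{1}^{\infty})=\mathrm{id}_W$, combined with the homomorphism property, is equally valid and covers arbitrary column-finite matrices without extra bookkeeping.
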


\begin{prop}\label{prop4.10}
  Let $u \in V^r$, $v \in V^s$, $l \in \mathbb{Z}$, $m \in \frac{r}{T}+\mathbb{Z}$, $n \in \frac{s}{T}+\mathbb{Z}$, and $q \in \frac{1}{T}\mathbb{N}$ satisfy $K\geq0$, where
  \[
    K=q - 2 - n - m - l.
  \]
  Then the following element belongs to $\tilde{\mathbf{Q}}^{g,\infty}(V)$:
  \begin{align*}
    &\sum_{\substack{i \in \mathbb{N} \\ q-1-n-i \geq 0}} (-1)^i \binom{l}{i}
      [u]_{K,\, q-1-n-i} \circledast_{g} [v]_{q-1-n-i,\, q} \\
    &\quad - \sum_{\substack{i \in \mathbb{N} \\ q-1-m-i \geq 0}} (-1)^{i+l} \binom{l}{i}
      [v]_{K,\, q-1-m-i} \circledast_{g} [u]_{q-1-m-i,\, q} \\
    &\quad - \sum_{i,j \geq 0} (-1)^i \binom{l}{i} \frac{(l + m - i + 1)^j}{j!}
      [u_{j} v]_{K,\, q}.
  \end{align*}
\end{prop}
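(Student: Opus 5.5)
By definition of $\tilde{\mathbf{Q}}^{g,\infty}(V)$, it suffices to fix an arbitrary $\frac{1}{T}\mathbb{N}$-graded $g$-twisted $\phi$-coordinated $V$-module $W$ and show that $\tilde{\vartheta}_W$ kills the displayed element. By Theorem~\ref{thm4.8}, $\tilde{\vartheta}_W$ is an algebra homomorphism, so
\[
\tilde{\vartheta}_W\bigl([a]_{kp}\circledast_g[b]_{pq}\bigr)=\tilde{\vartheta}_W([a]_{kp})\,\tilde{\vartheta}_W([b]_{pq}).
\]
Every summand of the element has column index $q$. Hence all three sums act by zero on $W(n')$ for $n'\neq q$, and we only need to test on $w\in W(q)$.

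On $W(q)$ we have $\tilde{\vartheta}_W([v]_{pq})w=v_{q-p-1}w$. With $p=q-1-n-i$ this gives $v_{n+i}w$. Then $\tilde{\vartheta}_W([u]_{K,p})$ applied to $v_{n+i}w\in W(p)$ gives $u_{p-K-1}v_{n+i}w$. Since $p-K-1=(q-1-n-i)-(q-2-n-m-l)-1=l+m-i$, the first sum becomes
\[
\sum_i(-1)^i\binom{l}{i}u_{l+m-i}v_{n+i}w.
\]
Symmetrically, the second sum becomes
\[
\sum_i(-1)^{i+l}\binom{l}{i}v_{l+n-i}u_{m+i}w.
\]
The third term $[u_jv]_{K,q}$ acts as $(u_jv)_{q-K-1}=(u_jv)_{l+m+n+1}$. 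The combination therefore reproduces exactly the two sides of the component form \eqref{eq2.2} of the $g$-twisted $\phi$-Jacobi identity, applied to $w$, and so it vanishes.

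The main point needing care is the index restrictions $q-1-n-i\ge0$ and $q-1-m-i\ge0$. These restrict the sums compared with \eqref{eq2.2}, so we must check that the omitted terms are zero anyway. If $q-1-n-i<0$, then $v_{n+i}w\in W(q-n-i-1)=0$ by the grading condition $v_kW(q)\subseteq W(q-k-1)$. So the missing terms of the first sum vanish, and the same argument handles the second sum.

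We also need the eigenspace and index compatibility. For the bracket terms with $p\notin\frac1T\mathbb{N}$ nonsense, $p\in\frac1T\mathbb{Z}$ holds automatically because $n\in\frac{s}{T}+\mathbb{Z}$. Both $\tilde{\vartheta}_W$ and \eqref{eq2.2} are defined with $\operatorname{Res}_x x^{l-k-1}Y_W$, so the modes match the parametrization directly. The hypothesis $K\ge0$ ensures the matrix units $[\,\cdot\,]_{K,\cdot}$ lie in $U^{g,\infty}(V)$.

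The sums over $i$ are finite in the module. In the matrix expression they are also finite, because each column index $q-1-n-i$ is nonnegative, which bounds $i$. The double sum over $i,j$ is finite because $u_jv=0$ for large $j$. Thus the element genuinely lies in $U^{g,\infty}(V)$, and it maps to zero under every $\tilde{\vartheta}_W$, as required.
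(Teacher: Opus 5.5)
Your proof is correct and follows the paper's argument: evaluate $\tilde{\vartheta}_W$ on $w\in W(q)$, use the homomorphism property (Theorem~\ref{thm4.8}) to turn each $\circledast_g$-product into a composite of modes, extend the truncated sums using the grading, and conclude from \eqref{eq2.2}. One caveat: your finiteness remark for the third sum only bounds $j$, and for $l<0$ the $i$-sum there is infinite and its coefficients diverge (e.g.\ $l=-1$, $j=0$ gives $\sum_{i\ge 0}1$). This defect comes from the statement itself and from \eqref{eq2.2}, which as printed is justified only for $l\ge 0$, so it affects the paper's proof equally.
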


\begin{proof}
  Let $W$ be a $\frac{1}{T}\mathbb{N}$-graded $g$-twisted $\phi$-coordinated $V$-module, let $q \in \frac{1}{T}\mathbb{N}$, and let $w \in W(q)$. Applying $\tilde{\vartheta}_W$ to the expression in Proposition~\ref{prop4.10}, we obtain
  \begin{align*}
    &\tilde{\vartheta}_W\Biggl(\,
      \sum_{\substack{i \in \mathbb{N} \\ q-1-n-i \geq 0}} (-1)^i \binom{l}{i}
        [u]_{K,\, q-1-n-i} \circledast_{g} [v]_{q-1-n-i,\, q} \\
    &\quad - \sum_{\substack{i \in \mathbb{N} \\ q-1-m-i \geq 0}} (-1)^{i+l} \binom{l}{i}
        [v]_{K,\, q-1-m-i} \circledast_{g} [u]_{q-1-m-i,\, q} \\
    &\quad - \sum_{i,j \geq 0} (-1)^i \binom{l}{i} \frac{(l + m - i + 1)^j}{j!}
        [u_{j} v]_{K,\, q}
    \Biggr) w \\[6pt]
    &= \sum_{\substack{i \in \mathbb{N} \\ q-1-n-i \geq 0}} (-1)^i \binom{l}{i}
        \tilde{\vartheta}_W\bigl([u]_{K,\, q-1-n-i}\bigr)
        \tilde{\vartheta}_W\bigl([v]_{q-1-n-i,\, q}\bigr) w \\
    &\quad - \sum_{\substack{i \in \mathbb{N} \\ q-1-m-i \geq 0}} (-1)^{i+l} \binom{l}{i}
        \tilde{\vartheta}_W\bigl([v]_{K,\, q-1-m-i}\bigr)
        \tilde{\vartheta}_W\bigl([u]_{q-1-m-i,\, q}\bigr) w \\
    &\quad - \sum_{i,j \geq 0} (-1)^i \binom{l}{i} \frac{(l + m - i + 1)^j}{j!}
        \tilde{\vartheta}_W\bigl([u_{j} v]_{K,\, q}\bigr) w \\[6pt]
    &= \sum_{\substack{i \in \mathbb{N} \\ q-1-n-i \geq 0}} (-1)^i \binom{l}{i} u_{m+l-i} v_{n+i} w - \sum_{\substack{i \in \mathbb{N} \\ q-1-m-i \geq 0}} (-1)^{i+l} \binom{l}{i} v_{n+l-i} u_{m+i} w \\
    &\quad - \sum_{i,j \geq 0} (-1)^i \binom{l}{i} \frac{(l + m - i + 1)^j}{j!} (u_{j} v)_{l+m+n+1} w \\[6pt]
    &= \sum_{i \in \mathbb{N}} (-1)^i \binom{l}{i}
        \bigl( u_{m+l-i} v_{n+i} - (-1)^l v_{n+l-i} u_{m+i} \bigr) w \\
    &\quad - \sum_{i,j \geq 0} (-1)^i \binom{l}{i} \frac{(l + m - i + 1)^j}{j!} (u_{j} v)_{l+m+n+1} w \\
    &= 0,
  \end{align*}
  where the last equality follows from \eqref{eq2.2}. Since $W$, $q$, and $w$ are arbitrary, the definition of $\tilde{\mathbf{Q}}^{g,\infty}(V)$ implies that the expression in Proposition~\ref{prop4.10} lies in $\tilde{\mathbf{Q}}^{g,\infty}(V)$. This completes the proof.
\end{proof}

\begin{defi}
  Let $G$ be an $\tilde{\mathbf{A}}^{g,\infty}(V)$-module with module structure given by a homomorphism $\tilde{\vartheta}_G \colon \tilde{\mathbf{A}}^{g,\infty}(V) \to \operatorname{End} G$ of associative algebras. We say that $G$ is a \emph{graded} $\tilde{\mathbf{A}}^{g,\infty}(V)$-module if $G$ is $\frac{1}{T}\mathbb{N}$-graded, i.e., $G = \bigoplus_{n \in \frac{1}{T}\mathbb{N}} G(n)$, and for all $v \in V$, $k, l \in \frac{1}{T}\mathbb{N}$, the operator $\tilde{\vartheta}_G\bigl([v]_{kl} + \tilde{\mathbf{Q}}^{g,\infty}(V)\bigr)$ maps $G(n)$ to $0$ when $n \neq l$ and to $G(k)$ when $n = l$.
\end{defi}

\begin{thm}\label{thm4.12}
  The functor from the category of $\frac{1}{T}\mathbb{N}$-graded $g$-twisted $\phi$-coordinated $V$-modules to the category of graded $\tilde{\mathbf{A}}^{g,\infty}(V)$-modules is an isomorphism of categories.
\end{thm}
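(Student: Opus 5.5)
We construct the functor in both directions and check that the two composites are the identity on objects and on morphisms. In the forward direction, a $\frac{1}{T}\mathbb{N}$-graded $g$-twisted $\phi$-coordinated $V$-module $W=\bigoplus_n W(n)$ is sent to the same graded space with the action $\tilde{\vartheta}_W$. By Theorem~\ref{thm4.9} this is an $\tilde{\mathbf{A}}^{g,\infty}(V)$-module. It is graded because $\tilde{\vartheta}_W([v]_{kl})$ vanishes on $W(n)$ for $n\neq l$. On $W(l)$ it acts by $v_{l-k-1}$, which maps $W(l)$ into $W(l-(l-k-1)-1)=W(k)$. A $V$-module map $f$ commutes with every $v_p$, hence with $\tilde{\vartheta}$. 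Grading preservation is part of the morphisms on both sides, so $f$ is sent to itself.

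For the inverse, let $G=\bigoplus_n G(n)$ be a graded $\tilde{\mathbf{A}}^{g,\infty}(V)$-module. For $v\in V$, $p\in\frac{1}{T}\mathbb{Z}$ and $w\in G(n)$ we set
\[
v_p w=\tilde{\vartheta}_G\bigl([v]_{n-p-1,\,n}+\tilde{\mathbf{Q}}^{g,\infty}(V)\bigr)w \quad\text{if } n-p-1\ge 0,
\]
and $v_pw=0$ otherwise. We then put $Y_G(v,x)=\sum_p v_px^{-p-1}$ and verify the axioms in turn.

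\begin{itemize}
\item \textbf{Lower truncation and the $g$-twist.} Truncation holds by construction. For the twist, take $v\in V^s$ and $p\notin \frac{s}{T}+\mathbb{Z}$. Then $\bar{k}-\bar{l}\not\equiv s$, so $v\in\tilde{O}_{g,k,l}(V)$ by Lemma~\ref{lem3.9}(1) and Theorem~\ref{shunref-bim-0}. Since $\tilde{\mathbf{O}}^{g,\infty}(V)=\tilde{\mathbf{Q}}^{g,\infty}(V)$, we get $[v]_{kl}\in\tilde{\mathbf{Q}}^{g,\infty}(V)$ and hence $v_p=0$. This gives axiom (2), via the remark after Definition~\ref{defi_coor_module}.
\item \textbf{Vacuum.} We need $\mathbf{1}_p=\delta_{p,-1}$. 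For $p=-1$ the operator is $[\mathbf{1}]_{nn}$. Since $\mathbf{1}^\infty$ acts as the identity, $[\mathbf{1}]_{nn}$ acts as the identity on $G(n)$. For $p\ne -1$ we use $\mathcal{D}\mathbf{1}=0$: this gives $(n-k)\mathbf{1}\in\tilde L_{k,n}(V)\subseteq \tilde{O}_{g,k,n}(V)$ for $k\neq n$, so $[\mathbf{1}]_{kn}\in\tilde{\mathbf{Q}}^{g,\infty}(V)$.
\item \textbf{Jacobi identity.} By Theorem~\ref{jacobi} it suffices to verify the component identity \eqref{eq2.2} on each $G(q)$. This is exactly what Proposition~\ref{prop4.10} provides. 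Apply $\tilde{\vartheta}_G$ to that element, use that $\tilde{\vartheta}_G$ is an algebra homomorphism, so that $[u]_{K,a}\circledast_g[v]_{a,q}$ acts as $u_{m+l-i}v_{n+i}$. The vanishing of the element gives \eqref{eq2.2} in the range $K\ge0$.
\end{itemize}

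\textbf{The main obstacle} is the remaining cases of \eqref{eq2.2}. When $K<0$ the left side maps $G(q)$ into a negative degree. When some index $q-1-n-i<0$, the terms dropped from the sums in Proposition~\ref{prop4.10} are zero in $G$ by the truncation convention. Together these show \eqref{eq2.2} holds for all indices, and I expect this bookkeeping to be the delicate point. Also, a module $G$ over the quotient algebra must be checked to be well defined. It is, because $\tilde{\vartheta}_G$ is defined on $\tilde{\mathbf{A}}^{g,\infty}(V)$ itself.

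Finally we check that the two composites are identities. Starting from $W$, the reconstructed $v_p$ on $W(n)$ is $\operatorname{Res}_x x^{n-p-1-n+n-1}$… which simplifies to $v_p$, so we recover $Y_W$. For $p>n-1$ the original $v_p$ also vanishes by the grading condition. Starting from $G$, the action of the matrix unit $[v]_{kl}$ is recovered as $v_{l-k-1}$ on $G(l)$. Since $\tilde{\vartheta}$ is additive and every element of $U^{g,\infty}(V)$ acts on $G(l)$ through its finitely many nonzero entries in column $l$ (column-finiteness), the full action is recovered. Morphisms correspond identically in both directions, so the functors are mutually inverse.
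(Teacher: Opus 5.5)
Your proposal is correct and follows the paper's proof essentially step for step. It uses the same mode definition $v_{l-k-1}=\tilde{\vartheta}_G([v]_{kl}+\tilde{\mathbf{Q}}^{g,\infty}(V))$ on $G(l)$, the same appeal to Lemma~\ref{lem3.9}(1) for the twist, the vanishing case $K<0$ together with Proposition~\ref{prop4.10} for $K\ge 0$, and the same check that the two constructions are mutually inverse. In the twist step the condition should read $\bar{l}-\bar{k}\not\equiv s$, not $\bar{k}-\bar{l}\not\equiv s$.

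The one genuine variation is your vacuum argument for $p\neq -1$. You use $(n-k)\mathbf{1}=(\mathcal{D}+n-k)\mathbf{1}\in\tilde{L}_{k,n}(V)\subseteq\tilde{O}_{g,k,n}(V)$, whereas the paper tests $[\mathbf{1}]_{kl}-\delta_{kl}[\mathbf{1}]_{ll}$ against all graded modules; both give $[\mathbf{1}]_{kn}\in\tilde{\mathbf{Q}}^{g,\infty}(V)$ for $k\neq n$. You and the paper also share one tacit step: that $[\mathbf{1}]_{nn}$ acts as the identity on $G(n)$, equivalently that an element of $U^{g,\infty}(V)$ acts on $G(l)$ only through its $l$-th column. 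This does not follow from additivity and the stated definition of a graded module; it is an implicit convention the paper relies on as well.
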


\begin{proof}
  Given a graded $\tilde{\mathbf{A}}^{g,\infty}(V)$-module $W$, we construct a $\frac{1}{T}\mathbb{N}$-graded $g$-twisted $\phi$-coordinated $V$-module structure on $W$. For homogeneous $v \in V$, $k, l \in \frac{1}{T}\mathbb{N}$, and $w \in W(l)$, define
  \[
    v_{l-k-1} w =
    \begin{cases}
      \tilde{\vartheta}_W\bigl([v]_{kl} + \tilde{\mathbf{Q}}^{g,\infty}(V)\bigr) w, & k \geq 0, \\
      0, & k < 0,
    \end{cases}
  \]
  and set
  \[
    Y_W(v, x) w = \sum_{k \in \frac{1}{T}\mathbb{N}} v_{l-k-1} w \, x^{-l+k}.
  \]
  It is clear that $Y_W(v, x) w$ is lower truncated. Let $v \in V^r$. If $\bar{l} - \bar{k} \not\equiv r \pmod{T}$, then $v \in O'_{g,k,l}(V)$ by Lemma~\ref{lem3.9}, so $[v]_{kl} \in \tilde{\mathbf{O}}^{g,\infty}(V) \subseteq \tilde{\mathbf{Q}}^{g,\infty}(V)$ for all $k \in \frac{1}{T}\mathbb{N}$. Thus $Y_W(v, x) w \in x^{-r/T} W((x))$.

  For any $\frac{1}{T}\mathbb{N}$-graded $g$-twisted $\phi$-coordinated $V$-module $W_0$, by definition,
  \[
    \tilde{\vartheta}_{W_0}\bigl([\mathbf{1}]_{kl}\bigr) w_0
    = \operatorname{Res}_x x^{l-k-1} w_0
    = \delta_{kl} w_0
    = \delta_{kl} \tilde{\vartheta}_{W_0}\bigl([\mathbf{1}]_{ll}\bigr) w_0
  \]
  for $k, l \in \frac{1}{T}\mathbb{N}$ and $w_0 \in W_0$. Hence $[\mathbf{1}]_{kl} - \delta_{kl} [\mathbf{1}]_{ll} \in \ker \tilde{\vartheta}_{W_0}$. Since $W_0$ is arbitrary, it follows that $[\mathbf{1}]_{kl} - \delta_{kl} [\mathbf{1}]_{ll} \in \tilde{\mathbf{Q}}^{g,\infty}(V)$. In particular, for $k, l \in \frac{1}{T}\mathbb{N}$ and $w \in W(l)$,
  \[
    \tilde{\vartheta}_W\bigl([\mathbf{1}]_{kl} + \tilde{\mathbf{Q}}^{g,\infty}(V)\bigr) w
    = \delta_{kl} \tilde{\vartheta}_W\bigl([\mathbf{1}]_{ll} + \tilde{\mathbf{Q}}^{g,\infty}(V)\bigr) w=\delta_{kl} \tilde{\vartheta}_W\bigl({\mathbf{1}}^{\infty} + \tilde{\mathbf{Q}}^{g,\infty}(V)\bigr) w
    = \delta_{kl} w,
  \]
  which yields $Y_W(\mathbf{1}, x) w = w$. The only remaining axiom to verify is the twisted Jacobi identity.

  We now prove the twisted Jacobi identity. Let $u \in V^r$, $v \in V^s$, $w \in W(q)$, and let $l \in \mathbb{Z}$, $m \in \frac{r}{T} + \mathbb{Z}$, $n \in \frac{s}{T} + \mathbb{Z}$, $q \in \frac{1}{T}\mathbb{N}$. Set $K=q - 2 - n - m - l$.

  \textbf{Case 1:} $K < 0$. By the definition of $Y_W$,
  \begin{align*}
    &\sum_{i \in \mathbb{N}} (-1)^i \binom{l}{i}
      \bigl( u_{m+l-i} v_{n+i} - (-1)^l v_{n+l-i} u_{m+i} \bigr) w \\
    &\quad - \sum_{i,j \geq 0} (-1)^i \binom{l}{i} \frac{(l + m - i + 1)^j}{j!} (u_j v)_{l+m+n+1} w = 0.
  \end{align*}

  \textbf{Case 2:} $K \geq 0$. Then
  \begin{align*}
    &\quad\sum_{i \in \mathbb{N}} (-1)^i \binom{l}{i}
      \bigl( u_{m+l-i} v_{n+i} - (-1)^l v_{n+l-i} u_{m+i} \bigr) w \\
    &\quad - \sum_{i,j \geq 0} (-1)^i \binom{l}{i} \frac{(l + m - i + 1)^j}{j!} (u_j v)_{l+m+n+1} w \\[6pt]
    &= \sum_{\substack{i \in \mathbb{N} \\ q-1-n-i \geq 0}} (-1)^i \binom{l}{i} u_{m+l-i} v_{n+i} w  - \sum_{\substack{i \in \mathbb{N} \\ q-1-m-i \geq 0}} (-1)^{i+l} \binom{l}{i} v_{n+l-i} u_{m+i} w \\
    &\quad - \sum_{i,j \geq 0} (-1)^i \binom{l}{i} \frac{(l + m - i + 1)^j}{j!} (u_j v)_{l+m+n+1} w \\[6pt]
    &= \sum_{\substack{i \in \mathbb{N} \\ q-1-n-i \geq 0}} (-1)^i \binom{l}{i}
        \tilde{\vartheta}_W\bigl([u]_{K,\, q-1-n-i} + \tilde{\mathbf{Q}}^{g,\infty}(V)\bigr)
        \tilde{\vartheta}_W\bigl([v]_{q-1-n-i,\, q} + \tilde{\mathbf{Q}}^{g,\infty}(V)\bigr) w \\
    &\quad - \sum_{\substack{i \in \mathbb{N} \\ q-1-m-i \geq 0}} (-1)^{i+l} \binom{l}{i}
        \tilde{\vartheta}_W\bigl([v]_{K,\, q-1-m-i} + \tilde{\mathbf{Q}}^{g,\infty}(V)\bigr)
        \tilde{\vartheta}_W\bigl([u]_{q-1-m-i,\, q} + \tilde{\mathbf{Q}}^{g,\infty}(V)\bigr) w \\
    &\quad - \sum_{i,j \geq 0} (-1)^i \binom{l}{i} \frac{(l + m - i + 1)^j}{j!}
        \tilde{\vartheta}_W\bigl([u_j v]_{K,\, q} + \tilde{\mathbf{Q}}^{g,\infty}(V)\bigr) w \\[6pt]
    &= \tilde{\vartheta}_W\Biggl(\,
        \sum_{\substack{i \in \mathbb{N} \\ q-1-n-i \geq 0}} (-1)^i \binom{l}{i}
          [u]_{K,\, q-1-n-i} \circledast_g [v]_{q-1-n-i,\, q} \\
    &\quad - \sum_{\substack{i \in \mathbb{N} \\ q-1-m-i \geq 0}} (-1)^{i+l} \binom{l}{i}
          [v]_{K,\, q-1-m-i} \circledast_g [u]_{q-1-m-i,\, q} \\
    &\quad - \sum_{i,j \geq 0} (-1)^i \binom{l}{i} \frac{(l + m - i + 1)^j}{j!}
          [u_j v]_{K,\, q}
        + \tilde{\mathbf{Q}}^{g,\infty}(V)
      \Biggr) w \\
    &= 0,
  \end{align*}
  where the last equality follows from Proposition~\ref{prop4.10}.

  This construction defines a functor $\Psi$ from the category of graded $\tilde{\mathbf{A}}^{g,\infty}(V)$-modules to the category of $\frac{1}{T}\mathbb{N}$-graded $g$-twisted $\phi$-coordinated $V$-modules. It is straightforward to verify that the graded $\tilde{\mathbf{A}}^{g,\infty}(V)$-module structure recovered from $Y_W$ coincides with the original one, and conversely, applying $\Psi$ to the module obtained from a $V$-module returns the same $V$-module. Therefore $\Psi$ is the inverse of the functor in the statement, and the two categories are isomorphic.
\end{proof}

\section{Twisted Associative Algebra and $V$-Modules}

In this section, we construct the associative algebras ${\tilde{\mathbf{A}}}^{g,\infty}(V)$ and $\tilde{A}^{g,\infty}(V)$ and establish analogous category isomorphisms for admissible and ordinary $g$-twisted $V$-modules, respectively. Furthermore, we prove that $\tilde{A}^{g,\infty}(V) \cong \mathbf{A}^{g,\infty}(V)$, thereby obtaining an equivalence between the category of $\frac{1}{T}\mathbb{N}$-graded $g$-twisted $\phi$-coordinated $V$-modules and that of admissible $g$-twisted $V$-modules.

Let $V$ be a vertex operator algebra and let $g$ be a finite automorphism of $V$ such that $g^T = 1$ for some $T \in \mathbb{N}$. We define a product $\diamond_{g}$ on $U^{g,\infty}(V)$ by
\[
  \mathfrak{u} \diamond_{g} \mathfrak{v} = \bigl[ (\mathfrak{u} \diamond_{g} \mathfrak{v})_{kl} \bigr]
\]
for $\mathfrak{u} = [u_{kl}], \mathfrak{v} = [v_{kl}] \in U^{g,\infty}(V)$, where
\[
  (\mathfrak{u} \diamond_{g} \mathfrak{v})_{kl} = \sum_{n=k}^{l} u_{kn} *_{g,l,n}^{k} v_{nl}
\]
for $k, l \in \frac{1}{T}\mathbb{N}$. Then $U^{g,\infty}(V)$ equipped with $\diamond_{g}$ is an algebra, though it is not associative in general.
For $u, v \in V$ and $k, m, n, l \in \frac{1}{T}\mathbb{N}$, we have by definition
\[
  [u]_{km} \diamond_{g} [v]_{nl} = 0 \quad \text{when } m \neq n,
\]
and
\[
  [u]_{kn} \diamond_{g} [v]_{nl} = \bigl[ u *_{g,l,n}^{k} v \bigr]_{kl}.
\]

Let $\mathbf{O}^{g,\infty}(V)$ be the subspace of $U^{g,\infty}(V)$ spanned by infinite linear combinations of elements of the form $[u]_{kl}$ with $u \in O_{g,k,l}^{\prime}(V)$, subject to the condition that each pair $(k, l)$ appears only finitely many times in any given linear combination.

\subsection{The Associative Algebras $\mathcal{A}_{{\operatorname{Gr}}}^{g,\infty}(V)$, $\mathbf{A}_{{\operatorname{Gr}}}^{g,\infty}(V)$ and ${A}_{{\operatorname{Gr}}}^{g,\infty}(V)$ and Their Modules}

Let $W$ be a weak $g$-twisted $V$-module. For $n \in \frac{1}{T}\mathbb{Z}$, define
\[
  \Omega_n(W) = \bigl\{ w \in W \mid v_k w = 0 \text{ for all } v \in V \text{ with } \operatorname{wt} v - k - 1 < -n \bigr\}.
\]
If $n < 0$ and $n \in \frac{1}{T}\mathbb{Z}$, then $\Omega_n(W) = 0$; indeed, for any $w \in \Omega_n(W)$, we have $w = \mathbf{1}_{-1} w = 0$ since $\operatorname{wt} \mathbf{1}_{-1} = 0$. Moreover, $\Omega_{n_1}(W) \subset \Omega_{n_2}(W)$ whenever $n_1 \leq n_2$. If $W$ is an admissible $g$-twisted $V$-module, then
\[
  W = \bigcup_{n \in \frac{1}{T}\mathbb{N}} \Omega_n(W),
\]
so $\{ \Omega_n(W) \}_{n \in \frac{1}{T}\mathbb{N}}$ is an ascending filtration of $W$. Let
\[
  \operatorname{Gr}(W) = \bigoplus_{n \in \frac{1}{T}\mathbb{N}} \operatorname{Gr}_n(W)
\]
be the associated graded space, where
\[
  \operatorname{Gr}_n(W) = \Omega_n(W) / \Omega_{n-\frac{1}{T}}(W).
\]
We set $\operatorname{Gr}_n(W) = 0$ for $n < 0$ with $n \in \frac{1}{T}\mathbb{Z}$. We shall sometimes use $[w]_n$ to denote the coset $w + \Omega_{n-\frac{1}{T}}(W)$ in $\operatorname{Gr}_n(W)$ for $w \in \Omega_n(W)$.

Let $W$ be a weak $g$-twisted $V$-module. The following three lemmas are established in \cite{HXX1}.

\begin{lem}\label{lem5.1}
  For $w \in \Omega_n(W)$ and $l \in \frac{1}{T}\mathbb{Z}$,
  \[
    \operatorname{Res}_x x^{l-1} Y_W\bigl( x^{L_V(0)} v, x \bigr) w \in \Omega_{n-l}(W).
  \]
\end{lem}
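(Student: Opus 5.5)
We argue exactly as in the proof of Lemma~\ref{lem4.2}, now with the twisted Jacobi identity \eqref{eq2.3} and with the weight grading taken into account. By linearity we may take $v \in V^s$ homogeneous. Then
\[
\operatorname{Res}_x x^{l-1} Y_W\bigl(x^{L_V(0)}v,x\bigr)w = v_{\operatorname{wt} v + l - 1}\, w .
\]
We must show that $w' := v_{\operatorname{wt} v+l-1}w$ lies in $\Omega_{n-l}(W)$. That is, $u_m w' = 0$ for every homogeneous $u \in V^r$ and every $m$ with $\operatorname{wt} u - m - 1 < -(n-l)$. Only $l \in \frac{s}{T}+\mathbb{Z}$ matters, since otherwise the mode vanishes.

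\textbf{Base case.} If $l > n$, then $\operatorname{wt} v - (\operatorname{wt} v+l-1) - 1 = -l < -n$. Hence $w'=0$ because $w \in \Omega_n(W)$, and $0 \in \Omega_{n-l}(W)$.

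\textbf{Inductive step.} We use descending induction on $l$ in $\frac{s}{T}+\mathbb{Z}$. Assume the claim holds for all $l' > l$ in this coset; the statement is for all $v$, so the hypothesis may also be applied to other vectors. Fix $u$ and $m$ with $m > \operatorname{wt} u + n - l - 1$. Apply \eqref{eq2.3} with the roles of $u$ and $v$ as there. The index playing the role of ``$l$'' in \eqref{eq2.3} is an integer $p$, chosen very negative; set $m_0 = m - p$ and $n_0 = \operatorname{wt} v + l - 1$. The term $i=0$ on the left side is then $u_m v_{n_0} w$. The remaining terms are of three kinds.

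The terms $u_{m-i}v_{n_0+i}w$ with $i \ge 1$ vanish by the inductive hypothesis. Indeed $v_{n_0+i}w = v_{\operatorname{wt} v+(l+i)-1}w \in \Omega_{n-l-i}(W)$, and the condition $\operatorname{wt} u - (m-i) - 1 < -(n-l-i)$ is equivalent to our assumption on $m$.

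The terms $v_{n_0+p-i}u_{m_0+i}w$ vanish once $p$ is negative enough. Then $m_0+i = m-p+i$ is large enough that $u_{m_0+i}w = 0$, because $w \in \Omega_n(W)$.

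On the right side each term is $(u_{p+i}v)_{m_0+n_0-i}w$. Here $u_{p+i}v$ is homogeneous of weight $\operatorname{wt} u + \operatorname{wt} v - p - i - 1$, and
\[
\operatorname{wt}(u_{p+i}v) - (m_0+n_0-i) - 1 = \operatorname{wt} u - m - 1 + 1 - l < -n + 1 .
\]

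\textbf{Main obstacle.} This last estimate is off by one from what we need, namely a value $< -n$, so that the right side vanishes because $w \in \Omega_n(W)$. To repair it, we expect to have to choose the Jacobi parameters more carefully. One option is to take $n_0$ shifted and move one factor of $p$ into the $m$-slot, as in the proof of Lemma~\ref{lem4.2}. Another is to use the $L(-1)$-derivative and weight bookkeeping, noting that $\operatorname{wt}(u_jv) = \operatorname{wt} u + \operatorname{wt} v - j - 1$, so the shift is absorbed exactly. Once the right side is shown to vanish, the identity \eqref{eq2.3} gives $u_m v_{n_0}w = 0$, which closes the induction.
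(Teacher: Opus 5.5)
Your setup is the right one and matches the argument the paper gives for the $\phi$-coordinated analogue, Lemma~\ref{lem4.2}; Lemma~\ref{lem5.1} itself is quoted from \cite{HXX1} without proof. The structure is: descending induction on $l$, the component identity \eqref{eq2.3} with a very negative integer parameter $p$, the terms $u_{m-i}v_{n_0+i}w$ ($i\ge 1$) killed by induction, the terms $v_{n_0+p-i}u_{m_0+i}w$ killed by $w\in\Omega_n(W)$, and the right-hand side killed by $w\in\Omega_n(W)$.

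As written, however, the proof is not finished. You stop at a ``main obstacle'' and only gesture at repairs. That obstacle does not exist. It comes from dropping the final $-1$ in your weight count. With $n_0=\operatorname{wt}v+l-1$ and $m_0=m-p$,
\begin{align*}
\operatorname{wt}(u_{p+i}v)-(m_0+n_0-i)-1
&=(\operatorname{wt}u+\operatorname{wt}v-p-i-1)-(m-p+\operatorname{wt}v+l-1-i)-1\\
&=\operatorname{wt}u-m-1-l,
\end{align*}
not $\operatorname{wt}u-m-l$. Your standing assumption $\operatorname{wt}u-m-1<-(n-l)$ is literally $\operatorname{wt}u-m-1-l<-n$. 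So every term $(u_{p+i}v)_{m_0+n_0-i}w$ vanishes because $w\in\Omega_n(W)$, and \eqref{eq2.3} gives $u_m v_{n_0}w=0$ with your original choice of parameters.

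A quick sanity check confirms this: every term of \eqref{eq2.3} shifts degree by the same amount, $\operatorname{wt}u-m-1-l$. Hence the right-hand side must vanish under exactly the same inequality as the leading term $u_mv_{n_0}w$.

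No reshuffling of parameters and no $L(-1)$-derivative argument is needed. The only thing to make explicit is what ``$p$ negative enough'' means. Any integer $p\le -l$ works, since then $\operatorname{wt}u-(m-p+i)-1<-n$ for all $i\ge 0$.
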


\begin{lem}\label{lem5.2}
  For $k, l \in \frac{1}{T}\mathbb{N}$ and $v \in O_{g,k,l}^{\prime}(V)$,
  \[
    \operatorname{Res}_x x^{l-k-1} Y_W\bigl( x^{L_V(0)} v, x \bigr) = 0
  \]
  on $\Omega_l(W)$.
\end{lem}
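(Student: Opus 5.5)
The plan is to argue by the standard argument for the generalized Frenkel--Zhu bimodule (as in \cite{DJ1}), working in the $g$-twisted setting with the operators $o$ replaced by residues. Write $\mathcal{D}_{k,l}(v)=\operatorname{Res}_x x^{l-k-1}Y_W(x^{L_V(0)}v,x)$. Since $O'_{g,k,l}(V)$ is spanned by $L_{k,l}(V)$ and by the elements $u\circ_{g,l}^{k}v$, and since by \eqref{eq2.6} every $V^r$ with $\bar l-\bar k\not\equiv r \pmod T$ lies in it, I will treat these three kinds of generators separately. By Lemma~\ref{lem5.1}, on $\Omega_l(W)$ the operator $\mathcal{D}_{k,l}(v)$ maps into $\Omega_k(W)$ for homogeneous $v$. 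Concretely it equals $v_{\operatorname{wt}v+l-k-1}$.

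For the degree-mismatch generators, the mode $\operatorname{wt}v+l-k-1$ lies in $\frac{\bar l-\bar k}{T}+\mathbb{Z}$. For $v\in V^r$ with $r\not\equiv\bar l-\bar k$, this index is not in $\frac{r}{T}+\mathbb{Z}$, so the mode is zero by definition of a weak $g$-twisted module. For $L_{k,l}(V)$ I would use $Y_W(L(-1)u,x)=\frac{d}{dx}Y_W(u,x)$. A residue of a derivative vanishes, so $\operatorname{Res}_x x^{l-k}\frac{d}{dx}\bigl(x^{\operatorname{wt}u}Y_W(u,x)\bigr)=0$. This gives
\[
\operatorname{Res}_x x^{l-k-1}Y_W\bigl(x^{L(0)}(L(-1)+L(0)+l-k)u,x\bigr)=0
\]
identically, once the weight shift of $L(-1)$ is accounted for.

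The main step, and the expected obstacle, is $u\circ_{g,l}^{k}v$ with $u\in V^r$ homogeneous. Expanding the definition, $\mathcal{D}_{k,l}(u\circ_{g,l}^kv)$ is a finite combination $\sum_i\binom{\operatorname{wt}u-1+\delta_{\bar l}(r)+\lfloor l\rfloor+r/T}{i}(u_{i-N}v)_{\text{mode}}$ with $N=\lfloor l\rfloor+\lfloor k\rfloor+\delta_{\bar l}(r)+\delta_{\bar k}(T-r)+1$. I will match this with the right-hand side of the twisted Jacobi identity \eqref{eq2.3}. I choose the parameter $l_0=-N$ and take $m=\operatorname{wt}u-1+\delta_{\bar l}(r)+\lfloor l\rfloor+r/T$ in the role of $m$, which lies in $\frac rT+\mathbb{Z}$ as required. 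The remaining mode is fixed by the target degree.

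The left side of \eqref{eq2.3} then consists of the terms $u_{m+l_0-i}v_{n+i}w$ and $v_{n+l_0-i}u_{m+i}w$ on $w\in\Omega_l(W)$. I must show each of these vanishes, which is the delicate bookkeeping with $\delta$'s and floors.
\begin{itemize}
\item The second family vanishes because $u_{m+i}$ has degree $\operatorname{wt}u-m-i-1<-l$. That is, $\lfloor l\rfloor+\delta_{\bar l}(r)+r/T>l$, which holds precisely by the definition of $\delta_{\bar l}(r)$.
\item For the first family, $v_{n+i}w\in\Omega_{l'}(W)$ for suitable $l'$ by Lemma~\ref{lem5.1}. Then $u_{m+l_0-i}$ lowers this to a level below $k$'s allowed range, using $\delta_{\bar k}(T-r)$ and the $+1$ in $N$. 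The output therefore lands in $\Omega_{k-1/T}$-type components, which are killed when projecting to level $k$.
\end{itemize}

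Since the statement is an identity of operators $\Omega_l(W)\to W$ and not on $\operatorname{Gr}$, I will instead aim to show the terms are genuinely zero. If this exact vanishing fails, I would fall back to the proof in \cite{DJ1}, Lemma 5.2 (the twisted $A_{g,n,m}$ case), which performs exactly this check. In that case I would cite it, with \cite{HXX1} where the lemma is stated.
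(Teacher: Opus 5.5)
\textbf{Verdict: genuine gap.} Your overall architecture is the standard one; the paper gives no argument here and just cites \cite{HXX1}. You handle the generators in $L_{k,l}(V)$ via the $L(-1)$-derivative property, dispose of degree-mismatched eigenvectors by the mode-class argument, and treat $u\circ_{g,l}^k v$ through the component identity \eqref{eq2.3} with $l_0=-N$ and $m=\operatorname{wt}u-1+\delta_{\bar l}(r)+\lfloor l\rfloor+\frac{r}{T}$. Your vanishing argument for the family $v_{n+l_0-i}u_{m+i}w$ is correct.

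The gap is in the other family, $u_{m+l_0-i}v_{n+i}w$. You argue that $u_{m+l_0-i}$ pushes the output into $\Omega_{k-\frac1T}(W)$, where it is ``killed when projecting to level $k$.'' That proves nothing: the lemma is an identity of operators on $\Omega_l(W)$ with values in $W$, not in $\operatorname{Gr}(W)$. The claim is also false in general. Each composite $u_{m+l_0-i}v_{n+i}$ shifts degree by exactly $k-l$, so there is no reason for its image to lie in $\Omega_{k-\frac1T}(W)$. You notice the first problem but give no replacement, so the central step is missing.

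The missing observation is that the \emph{inner} mode already kills $w$. Matching modes forces
$n=\operatorname{wt}v+l+\delta_{\bar k}(T-r)-\frac{\bar k+r}{T}$, hence
$\operatorname{wt}v-(n+i)-1=-l-1-i-\delta_{\bar k}(T-r)+\frac{\bar k+r}{T}<-l$ for every $i\ge 0$. To check the inequality, split into three cases:
\begin{itemize}
\item $r=0$: here $\delta_{\bar k}(T)=0$ and $\frac{\bar k}{T}<1$.
\item $r\ge 1$ and $\bar k+r\ge T$: here $\delta_{\bar k}(T-r)=1$ and $\frac{\bar k+r}{T}<2$.
\item $r\ge1$ and $\bar k+r<T$: here $\delta_{\bar k}(T-r)=0$ and $\frac{\bar k+r}{T}<1$.
\end{itemize}
Thus $v_{n+i}w=0$ for $w\in\Omega_l(W)$, just as $u_{m+i}w=0$. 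This is where $\delta_{\bar k}(T-r)$ and the $+1$ in $N$ are used: they act through $v$, not through $u$.

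With both families annihilating $\Omega_l(W)$, the right side of \eqref{eq2.3} vanishes. That right side is exactly $\operatorname{Res}_x x^{l-k-1}Y_W\bigl(x^{L_V(0)}(u\circ_{g,l}^k v),x\bigr)w$.

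If $r+s\not\equiv \bar l-\bar k \pmod T$, then $n\notin\frac{s}{T}+\mathbb{Z}$. That case should instead be sent to your degree-mismatch argument, since $u\circ_{g,l}^k v\in V^{r+s}$.

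There is also a minor slip in the $L_{k,l}$ part. $\operatorname{Res}_x x^{l-k}\frac{d}{dx}\bigl(x^{\operatorname{wt}u}Y_W(u,x)\bigr)$ equals $-(l-k)\operatorname{Res}_x x^{l-k-1+\operatorname{wt}u}Y_W(u,x)$, which is not zero in general. The total derivative you want is $\frac{d}{dx}\bigl(x^{l-k+\operatorname{wt}u}Y_W(u,x)\bigr)$; its residue gives your displayed identity.
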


\begin{lem}\label{lem5.3}
  For $u, v \in V$ and $m, n, p \in \frac{1}{T}\mathbb{N}$,
  \begin{align*}
    &\operatorname{Res}_x x^{m-n-1} Y_W\bigl( x^{L_V(0)} (u *_{g,m,p}^{n} v), x \bigr) \\
    &= \operatorname{Res}_{x_1} x_1^{p-n-1} Y_W\bigl( x_1^{L_V(0)} u, x_1 \bigr)
       \operatorname{Res}_{x_2} x_2^{m-p-1} Y_W\bigl( x_2^{L_V(0)} v, x_2 \bigr)
  \end{align*}
  on $\Omega_m(W)$.
\end{lem}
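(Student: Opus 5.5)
We may assume $u \in V^r$ and $v \in V^s$ are homogeneous and $w \in \Omega_m(W)$. Then $Y_W(x^{L_V(0)}u, x) = x^{\operatorname{wt} u}Y_W(u,x)$, so both sides reduce to single modes:
\begin{itemize}
\item the right-hand side becomes $u_{\operatorname{wt} u + p - n - 1}\, v_{\operatorname{wt} v + m - p - 1}\, w$;
\item the left-hand side becomes the appropriate mode of $u *_{g,m,p}^{n} v$, a finite combination of vectors $u_j v$.
\end{itemize}
If $\bar p - \bar n \not\equiv r \pmod T$, both sides vanish: the left by definition of $*^n_{g,m,p}$, the right by the $x^{-r/T}W((x))$ condition combined with $\Omega_m$-truncation. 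So assume $\bar p - \bar n \equiv r$.

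Expanding $(1+z)^{L_V(0)}$ writes $u *_{g,m,p}^{n} v$ as
\[
\sum_{i=0}^{\lfloor p\rfloor} c_i\, \operatorname{Res}_z \frac{(1+z)^{\operatorname{wt} u - 1 + \lfloor m\rfloor + \delta_{\bar m}(r) + r/T}}{z^{N+i}}\, Y(u,z)v,
\qquad N = \lfloor m\rfloor + \lfloor n\rfloor - \lfloor p\rfloor + \delta_{\bar m}(r) + \delta_{\bar n}(T-r),
\]
where the $c_i$ are the binomial coefficients in the definition. Each summand is a finite sum $\sum_j \binom{\alpha}{j} u_{j-N-i} v$.

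Next I would apply the component twisted Jacobi identity \eqref{eq2.3} to express $(u_{l+t}v)_{\mathrm{mode}}\, w$ through products. The parameters are chosen as follows:
\begin{itemize}
\item the index $m$ in \eqref{eq2.3} is chosen so that $\sum_t \binom{\cdot}{t}(u_{l+t}v)$ matches the $(1+z)^{\alpha}$ expansion;
\item the index $l$ is taken to be $-N-i$.
\end{itemize}
The left side of \eqref{eq2.3} then contains two families of terms:
\begin{itemize}
\item terms $u_{\cdot}\, v_{\cdot+t}\, w$ with $t\ge 0$;
\item terms $v_{\cdot}\, u_{\cdot+t}\, w$.
\end{itemize}
Because $w\in\Omega_m(W)$, every term $v\,u_{\cdot+t}w$ vanishes: the weight shift of $u_{\cdot+t}$ is below $-m$, precisely because of the chosen exponent $\lfloor m\rfloor+\delta_{\bar m}(r)$ (this is the reason that exponent appears). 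In the first family, the truncation $\Omega_m$ kills all $v$-modes lowering past level $0$. What remains is a sum of terms $u_{\cdot}\, v_{\operatorname{wt} v + m - p' - 1}\, w$ over intermediate levels $p'$ with $0\le p'$.

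Finally, I would sum over $i=0,\dots,\lfloor p\rfloor$ with the coefficients $c_i$. The resulting double sum over $(i,t)$ has, for each intermediate level $p'$, a coefficient of the form $\sum_{i}(-1)^i\binom{\cdot+i}{i}\binom{-N-i}{t}$. An identity of the type in Lemma~\ref{shuncomb lemm} (a Vandermonde-type collapse $\sum_i (-1)^i\binom{a+i}{i}\binom{-a-1-i}{k-i}=\delta_{k,0}$) should show that this coefficient is $1$ when $p'=p$ and $0$ for $p'<p$. Levels $p'>p$ are cut off by the upper limit $\lfloor p\rfloor$ together with $\Omega_m$-truncation. This leaves exactly $u_{\operatorname{wt} u+p-n-1}\, v_{\operatorname{wt} v+m-p-1}\, w$.

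The main obstacle is this last bookkeeping step. The floors, the $\delta_{\bar m}(r)$, $\delta_{\bar n}(T-r)$ corrections and the fractional shifts $r/T$, $s/T$ must combine so that the exponent ranges match precisely, and the binomial collapse must hold uniformly in the residue classes. I would first verify it in the untwisted case $T=1$, where it reduces to the known Dong--Li--Mason argument for $A_{n,m}(V)$, and then track the $\delta$-shifts case by case according to whether $\bar m\ge r$ and $\bar n\ge T-r$.
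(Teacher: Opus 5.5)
Your approach is correct, and it is the standard argument for this kind of statement. The paper gives no proof of this lemma (it quotes it from \cite{HXX1}), so there is nothing different to compare against. The plan goes through, but two points in your sketch need fixing, and one case is missing. The bookkeeping you call the main obstacle is in fact immediate.

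\textbf{The collapse identity.} As you wrote it, it is false: $\sum_i(-1)^i\binom{a+i}{i}\binom{-a-1-i}{k-i}=2^k\binom{-a-1}{k}$, which equals $-2$ for $a=0$, $k=1$. You dropped the sign in the factor $(-1)^j\binom{l}{j}$ of \eqref{eq2.3}. With $l=-N-i$ that factor is $\binom{N-1+i+j}{j}$, and the identity you actually need is
\[
\sum_{i=0}^{t}(-1)^i\binom{N-1+i}{i}\binom{N-1+t}{t-i}=\sum_{i=0}^{t}\binom{-N}{i}\binom{N-1+t}{t-i}=\binom{t-1}{t}=\delta_{t,0}.
\]
It holds for every integer $N$, which matters because $N\le 0$ does occur when $p$ is large.

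\textbf{The exponents, uniformly in all residue cases.} Take $m'=\operatorname{wt}u-1+\lfloor m\rfloor+\delta_{\bar m}(r)+\frac{r}{T}$ as the $u$-index in \eqref{eq2.3} and $l=-N-i$. Matching the mode $m-n-1$ forces the $v$-index to be $n'=\operatorname{wt}v+m-p-1+i$. Since $\bar p=\bar n+r-T\delta_{\bar n}(T-r)$, one gets $m'-N=\operatorname{wt}u+p-n-1$. Hence the $i$-th summand of $u*^n_{g,m,p}v$ contributes, on $w\in\Omega_m(W)$,
\[
\sum_{j\ge 0}\binom{N-1+i+j}{j}\,u_{\operatorname{wt}u+p-n-1-(i+j)}\,v_{\operatorname{wt}v+m-p-1+(i+j)}\,w .
\]
No case split on $\bar m\ge r$ or $\bar n\ge T-r$ is needed.

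\textbf{The truncation.} Write $t=i+j\ge 0$; the intermediate level is then $p-t$.
\begin{itemize}
\item Levels above $p$ never arise, so your remark about cutting them off is vacuous.
\item Terms with $t>\lfloor p\rfloor$ vanish directly from the definition of $\Omega_m(W)$, because $p-t<0$.
\item For $t\le\lfloor p\rfloor$, the full range $0\le i\le t$ lies inside the summation range $0\le i\le\lfloor p\rfloor$, so the identity above leaves only the $t=0$ term.
\end{itemize}

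\textbf{The missing case.} You must treat separately the case $\bar p-\bar n\equiv r$ but $\bar m-\bar p\not\equiv s \pmod T$. There $n'\notin\frac{s}{T}+\mathbb{Z}$, so \eqref{eq2.3} is not available. Both sides still vanish for residue-class reasons:
\begin{itemize}
\item $u*^n_{g,m,p}v$ lies in the $g$-eigenspace with eigenvalue $e^{2\pi\sqrt{-1}(r+s)/T}$, while $m-n\notin\frac{r+s}{T}+\mathbb{Z}$, so the left side is zero.
\item $m-p\notin\frac{s}{T}+\mathbb{Z}$, so the right side is zero.
\end{itemize}
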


By Lemma~\ref{lem5.1}, the operator $\operatorname{Res}_x x^{l-1} Y_W\bigl( x^{L_V(0)} v, x \bigr)$ induces a well-defined operator on $\operatorname{Gr}(W)$, which we denote by the same symbol. This induced operator maps $\operatorname{Gr}_n(W)$ to $\operatorname{Gr}_{n-l}(W)$.

For $\mathfrak{v} = [v_{kl}] \in U^{g,\infty}(V)$ with $v_{kl} \in V$ and $k, l \in \frac{1}{T}\mathbb{N}$, we define an operator $\vartheta_{\operatorname{Gr}(W)}(\mathfrak{v})$ on $\operatorname{Gr}(W)$ as follows. For $\mathfrak{w} \in \operatorname{Gr}(W)$, set
\[
  \vartheta_{\operatorname{Gr}(W)}(\mathfrak{v}) \mathfrak{w}
  = \sum_{k, l \in \frac{1}{T}\mathbb{N}}
    \operatorname{Res}_x x^{l-k-1} Y_W\bigl( x^{L_V(0)} v_{kl}, x \bigr)
    \pi_{\operatorname{Gr}_l(W)} \mathfrak{w},
\]
where $\pi_{\operatorname{Gr}_l(W)} \colon \operatorname{Gr}(W) \to \operatorname{Gr}_l(W)$ is the canonical projection. Since $\mathfrak{w}$ involves only finitely many graded components and $v_{kl} \neq 0$ for only finitely many pairs $(k, l)$, the above sum is finite. Thus $\vartheta_{\operatorname{Gr}(W)}(\mathfrak{v}) \mathfrak{w}$ is a well-defined element of $\operatorname{Gr}(W)$.
In the special case where $\mathfrak{v} = [v]_{kl}$ and $\mathfrak{w} = [w]_n$ with $v \in V$, $w \in \Omega_n(W)$, and $k, l, n \in \frac{1}{T}\mathbb{N}$, we have
\[
  \vartheta_{\operatorname{Gr}(W)}\bigl( [v]_{kl} \bigr) [w]_n
  = \delta_{ln} \left[ \operatorname{Res}_x x^{l-k-1} Y_W\bigl( x^{L_V(0)} v, x \bigr) w \right]_k.
\]
If, in addition, $v$ is homogeneous and $w \in \operatorname{Gr}_l(W)$, then
\[
  \vartheta_{\operatorname{Gr}(W)}\bigl( [v]_{kl} \bigr) [w]_l
  = \bigl[ v_{\operatorname{wt} v + l - k - 1} w \bigr]_k.
\]
We thus obtain a linear map
\[
  \vartheta_{\operatorname{Gr}(W)} \colon U^{g,\infty}(V) \to \operatorname{End} \operatorname{Gr}(W),
  \quad
  \mathfrak{v} \mapsto \vartheta_{\operatorname{Gr}(W)}(\mathfrak{v}).
\]
Define the following subspaces of $U^{g,\infty}(V)$:
\begin{align*}
  \mathcal{Q}_{\operatorname{Gr}}^{g,\infty}(V)
    &= \bigcap_{W} \ker \vartheta_{\operatorname{Gr}(W)},
      && \text{$W$ runs over all weak $g$-twisted $V$-modules}, \\
  \mathbf{Q}_{\operatorname{Gr}}^{g,\infty}(V)
    &= \bigcap_{W} \ker \vartheta_{\operatorname{Gr}(W)},
      && \text{$W$ runs over all admissible $g$-twisted $V$-modules}, \\
  Q_{\operatorname{Gr}}^{g,\infty}(V)
    &= \bigcap_{W} \ker \vartheta_{\operatorname{Gr}(W)},
      && \text{$W$ runs over all ordinary $g$-twisted $V$-modules}.
\end{align*}
The corresponding quotient algebras are defined by
\begin{align*}
  \mathcal{A}_{\operatorname{Gr}}^{g,\infty}(V) &= U^{g,\infty}(V) / \mathcal{Q}_{\operatorname{Gr}}^{g,\infty}(V), \\
  \mathbf{A}_{\operatorname{Gr}}^{g,\infty}(V) &= U^{g,\infty}(V) / \mathbf{Q}_{\operatorname{Gr}}^{g,\infty}(V), \\
  A_{\operatorname{Gr}}^{g,\infty}(V) &= U^{g,\infty}(V) / Q_{\operatorname{Gr}}^{g,\infty}(V).
\end{align*}

\begin{prop}\label{prop5.4}
  $\mathbf{O}^{g,\infty}(V) \subseteq \mathcal{Q}_{\operatorname{Gr}}^{g,\infty}(V) \subseteq \mathbf{Q}_{\operatorname{Gr}}^{g,\infty}(V) \subseteq Q_{\operatorname{Gr}}^{g,\infty}(V)$.
\end{prop}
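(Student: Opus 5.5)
The plan mirrors the proof of Proposition~\ref{prop4.3}, with two additional inclusions coming from the inclusions among the classes of modules.

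\emph{First inclusion.} To show $\mathbf{O}^{g,\infty}(V) \subseteq \mathcal{Q}_{\operatorname{Gr}}^{g,\infty}(V)$, fix a weak $g$-twisted $V$-module $W$. Since $\vartheta_{\operatorname{Gr}(W)}$ is linear, and a (possibly infinite) combination of matrix units acts on each $\operatorname{Gr}_l(W)$ through only finitely many terms, it suffices to treat a single generator $[v]_{kl}$ with $v \in O^{\prime}_{g,k,l}(V)$. For $\mathfrak{w} = [w]_n$ with $w \in \Omega_n(W)$, the definition gives
\[
\vartheta_{\operatorname{Gr}(W)}([v]_{kl})[w]_n = \delta_{ln}\bigl[\operatorname{Res}_x x^{l-k-1} Y_W\bigl(x^{L_V(0)} v, x\bigr) w\bigr]_k .
\]
When $n \neq l$ this is zero. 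When $n = l$, we have $w \in \Omega_l(W)$, and Lemma~\ref{lem5.2} makes the residue vanish. So $[v]_{kl} \in \ker \vartheta_{\operatorname{Gr}(W)}$ for every weak $W$, and the first inclusion follows.

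\emph{Remaining inclusions.} Every admissible $g$-twisted $V$-module is in particular a weak $g$-twisted $V$-module. Hence $\mathbf{Q}_{\operatorname{Gr}}^{g,\infty}(V)$ is an intersection of kernels over a subfamily of the modules used for $\mathcal{Q}_{\operatorname{Gr}}^{g,\infty}(V)$, which gives $\mathcal{Q}_{\operatorname{Gr}}^{g,\infty}(V) \subseteq \mathbf{Q}_{\operatorname{Gr}}^{g,\infty}(V)$. Likewise, Section~\ref{subsec2.1} shows that every ordinary $g$-twisted $V$-module becomes admissible under its canonical $\frac{1}{T}\mathbb{N}$-grading. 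This gives $\mathbf{Q}_{\operatorname{Gr}}^{g,\infty}(V) \subseteq Q_{\operatorname{Gr}}^{g,\infty}(V)$.

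\emph{Point to check.} The only delicate step is that $\vartheta_{\operatorname{Gr}(W)}$ depends only on the weak module structure, via the filtration $\Omega_n(W)$. It must not depend on the chosen admissible grading, so that the kernels for an ordinary module regarded as admissible are literally the same. This holds by definition, since the $\Omega_n(W)$ are defined intrinsically in terms of the vertex operators.
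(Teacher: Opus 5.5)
Your proposal is correct and follows the paper's proof. The first inclusion comes from Lemma~\ref{lem5.2} applied to each generator $[v]_{kl}$ with $v \in O'_{g,k,l}(V)$. The other two inclusions hold because the kernels are intersected over successively smaller classes of modules: weak, then admissible, then ordinary. Your remark that $\Omega_n(W)$ is defined intrinsically is the implicit content of the paper's ``by definition.''
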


\begin{proof}
  By definition, we have the inclusions
  \[
    \mathcal{Q}_{\operatorname{Gr}}^{g,\infty}(V) \subseteq \mathbf{Q}_{\operatorname{Gr}}^{g,\infty}(V) \subseteq Q_{\operatorname{Gr}}^{g,\infty}(V).
  \]
  It remains to show that $\mathbf{O}^{g,\infty}(V) \subseteq \mathcal{Q}_{\operatorname{Gr}}^{g,\infty}(V)$. Let $k, l \in \frac{1}{T}\mathbb{N}$, $v \in O_{g,k,l}^{\prime}(V)$, and let $W$ be a weak $g$-twisted $V$-module. For any $w \in \Omega_l(W)$, it follows from Lemma~\ref{lem5.2} that
  \[
    \vartheta_{\operatorname{Gr}(W)}\bigl([v]_{kl}\bigr) [w]_l
    = \left[ \operatorname{Res}_x x^{l-k-1} Y_W\bigl( x^{L_V(0)} v, x \bigr) w \right]_k = 0.
  \]
  Hence $[v]_{kl} \in \ker \vartheta_{\operatorname{Gr}(W)}$ for every weak $g$-twisted $V$-module $W$, which implies $\mathbf{O}^{g,\infty}(V) \subseteq \mathcal{Q}_{\operatorname{Gr}}^{g,\infty}(V)$.
\end{proof}

\begin{prop}\label{prop5.5}
  For $v \in V$ and $k, l \in \frac{1}{T}\mathbb{N}$,
  \[
    [v]_{kl} \diamond_{g} \mathbf{1}^{\infty} - [v]_{kl} \in \mathbf{O}^{g,\infty}(V).
  \]
\end{prop}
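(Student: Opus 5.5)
The plan is to mirror the proof of Proposition~\ref{prop4.4}, now using the product $\diamond_g$ and the subspace $\mathbf{O}^{g,\infty}(V)$, which is built from $O'_{g,k,l}(V)$.

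First we compute the product entrywise. Since $\mathbf{1}^\infty$ has $\mathbf{1}$ on the diagonal and $0$ elsewhere, only the term $n=l$ survives in $(\mathfrak{u}\diamond_g\mathfrak{v})_{kl}=\sum_n u_{kn}*_{g,l,n}^k v_{nl}$. This gives
\[
  [v]_{kl}\diamond_g \mathbf{1}^\infty=[v]_{kl}\diamond_g[\mathbf{1}]_{ll}=\bigl[v *_{g,l,l}^{k}\mathbf{1}\bigr]_{kl}=\bigl[v *_{g,l}^{k}\mathbf{1}\bigr]_{kl}.
\]
Hence the element in question is $[v*_{g,l}^k\mathbf{1}-v]_{kl}$. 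It therefore suffices to show that $v*_{g,l}^k\mathbf{1}-v\in O'_{g,k,l}(V)$ for all $v\in V$.

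Next we split by eigenspaces. Write $v=\sum_r v^r$ with $v^r\in V^r$ and treat each $v^r$ separately.

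If $\bar l-\bar k\not\equiv r \pmod T$, then $v^r*_{g,l}^k\mathbf{1}=0$ directly from the definition of $*_{g,m,p}^n$, since the congruence condition $\bar p-\bar n\equiv r$ fails with $p=l$, $n=k$. In addition, $v^r\in O'_{g,k,l}(V)$ by \eqref{eq2.6}. So the difference lies in $O'_{g,k,l}(V)$.

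If $\bar l-\bar k\equiv r\pmod T$, we use the untwisted analogue of Lemma~\ref{lem3.9}(2), namely $v*_{g,l}^k\mathbf{1}-v\in L_{k,l}(V)\subseteq O'_{g,k,l}(V)$. This is the identity established in \cite{DJ1}, which also underlies the fact in Theorem~\ref{thm2.16} that $A_{g,k,l}(V)$ is a unital right $A_{g,l}(V)$-module.

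The main point requiring care is this last congruent case, if it has to be verified by hand rather than cited. In that case we would expand $Y((1+z)^{L(0)}v,z)\mathbf{1}=e^{zL(-1)}(1+z)^{\operatorname{wt} v}v$, taking $v$ homogeneous. The residue then becomes a finite combination of the vectors $L(-1)^j v$ with binomial coefficients. We would then reduce modulo $(L(-1)+L(0)+l-k)u$, which lets us replace $L(-1)$ by the scalar $-(\operatorname{wt} v+l-k)$ on $v$. The result is a binomial identity equal to $1$, of the same type as the computation in Lemma~\ref{shun3lemma}(2), which should yield exactly $v$ modulo $L_{k,l}(V)$.

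Alternatively, one can argue via Remark~\ref{rmk2.17} and Theorem~\ref{thm2.16}. Since $O_{g,k,l}(V)=O'_{g,k,l}(V)$, it is enough to know that $\mathbf{1}+O_{g,l}(V)$ acts as the identity on $A_{g,k,l}(V)$ under $*_{g,l}^k$. Together with $v*_{g,l}^k O_{g,l}(V)\subseteq O_{g,k,l}(V)$, this gives $v*_{g,l}^k\mathbf{1}-v\in O'_{g,k,l}(V)$ directly. This route is cleaner, and we would take it.
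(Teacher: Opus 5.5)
Your proposal is correct, and the route you say you would take is the paper's own. The paper computes $[v]_{kl}\diamond_g\mathbf{1}^\infty=[v*_{g,l}^{k}\mathbf{1}]_{kl}$ entrywise, then concludes $v*_{g,l}^{k}\mathbf{1}-v\in O_{g,k,l}(V)=O'_{g,k,l}(V)$ from the unital right action in Theorem~\ref{thm2.16} together with Remark~\ref{rmk2.17}. Your eigenspace-by-eigenspace alternative is also valid, and it has the small advantage of landing directly in $O'_{g,k,l}(V)$ without Remark~\ref{rmk2.17}: in the non-congruent case use \eqref{eq2.6}; in the congruent case reduce $e^{zL(-1)}v\equiv(1+z)^{-(\operatorname{wt} v+l-k)}v$ modulo $L_{k,l}(V)$, after which only the $i=0$ term survives.
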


\begin{proof}
  By Theorem~\ref{thm2.16} and Remark~\ref{rmk2.17},
  \[
    [v]_{kl} \diamond_{g} \mathbf{1}^{\infty} - [v]_{kl}
    = \bigl[ u *_{g,l}^{k} \mathbf{1} - v \bigr]_{kl}
    \in \mathbf{O}^{g,\infty}(V).
  \]
\end{proof}

\begin{thm}\label{thm5.6}
  Let $W$ be a weak $g$-twisted $V$-module. Then the linear map
  \[
    \vartheta_{\operatorname{Gr}(W)} \colon U^{g,\infty}(V) \to \operatorname{End} \operatorname{Gr}(W)
  \]
  endows $\operatorname{Gr}(W)$ with a $U^{g,\infty}(V)$-module structure; that is, $\vartheta_{\operatorname{Gr}(W)}$ is a homomorphism of (nonassociative) algebras from $U^{g,\infty}(V)$ to $\operatorname{End} \operatorname{Gr}(W)$. In particular, $U^{g,\infty}(V) / \ker \vartheta_{\operatorname{Gr}(W)}$ is an associative algebra isomorphic to a subalgebra of $\operatorname{End} \operatorname{Gr}(W)$.
\end{thm}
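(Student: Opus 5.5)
The argument follows the proof of Theorem~\ref{thm4.5}, with Lemma~\ref{lem5.3} taking the role of Lemma~\ref{lem4.1} and Lemma~\ref{lem5.1} that of Lemma~\ref{lem4.2}. By Lemma~\ref{lem5.1}, each $\vartheta_{\operatorname{Gr}(W)}(\mathfrak{v})$ is a well-defined endomorphism of $\operatorname{Gr}(W)$. Both $\diamond_g$ and $\vartheta_{\operatorname{Gr}(W)}$ are bilinear, resp. linear, and compatible with the column-finite sums defining elements of $U^{g,\infty}(V)$. Acting on a fixed homogeneous $[w]_l$ only sees the finitely many entries of column $l$. So it suffices to check the multiplicativity
\[
\vartheta_{\operatorname{Gr}(W)}\bigl([u]_{km}\diamond_g[v]_{nl}\bigr)=\vartheta_{\operatorname{Gr}(W)}\bigl([u]_{km}\bigr)\vartheta_{\operatorname{Gr}(W)}\bigl([v]_{nl}\bigr)
\]
on elements $[w]_{p}$ with $w\in\Omega_p(W)$.

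If $m\neq n$, the left side vanishes because $[u]_{km}\diamond_g[v]_{nl}=0$. The right side also vanishes: $\vartheta([v]_{nl})[w]_p$ lies in $\operatorname{Gr}_n(W)$ and is nonzero only if $p=l$, and $\vartheta([u]_{km})$ kills $\operatorname{Gr}_n(W)$ when $n\neq m$. Similarly, if $p\neq l$, both sides are zero. This leaves the case $m=n$, $p=l$. Here
\[
\vartheta_{\operatorname{Gr}(W)}\bigl([u *_{g,l,n}^k v]_{kl}\bigr)[w]_l=\bigl[\operatorname{Res}_x x^{l-k-1}Y_W(x^{L_V(0)}(u*_{g,l,n}^k v),x)w\bigr]_k .
\]
Lemma~\ref{lem5.3}, applied with $(m,n,p)$ replaced by $(l,k,n)$, rewrites this as
\[
\bigl[\operatorname{Res}_{x_1}x_1^{n-k-1}Y_W(x_1^{L_V(0)}u,x_1)\operatorname{Res}_{x_2}x_2^{l-n-1}Y_W(x_2^{L_V(0)}v,x_2)w\bigr]_k .
\]
Lemma~\ref{lem5.1} shows that the inner vector lies in $\Omega_n(W)$, so this equals $\vartheta([u]_{kn})\vartheta([v]_{nl})[w]_l$, as required.

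The main point to check carefully is that the identity of Lemma~\ref{lem5.3}, which holds on $\Omega_l(W)$, passes correctly to the quotients. If $w\in\Omega_{l-\frac1T}(W)$, Lemma~\ref{lem5.1} shows that both sides land in $\Omega_{k-\frac1T}(W)$, so everything is well defined on $\operatorname{Gr}$. One should also confirm that the sum over $n$ in $(\mathfrak{u}\diamond_g\mathfrak{v})_{kl}$, which runs under the convention of Notation~\ref{shunnota}(2), reproduces exactly the composition $\sum_n \vartheta(u_{kn})\vartheta(v_{nl})$. Terms with $n$ outside the summation range must act as zero. Since $*^{k}_{g,l,n}$ vanishes when the congruence condition fails or an index is negative, those terms contribute nothing, and $\operatorname{Gr}_n(W)=0$ for $n<0$.

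Finally, $\vartheta_{\operatorname{Gr}(W)}$ is then an algebra homomorphism into the associative algebra $\operatorname{End}\operatorname{Gr}(W)$. Its kernel is therefore a two-sided ideal, and the quotient $U^{g,\infty}(V)/\ker\vartheta_{\operatorname{Gr}(W)}$ is isomorphic to the image, an associative subalgebra. This gives the last assertion.
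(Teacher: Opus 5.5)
Your core argument is the paper's own. You reduce to matrix units, apply Lemma~\ref{lem5.3} with $(m,n,p)$ replaced by $(l,k,n)$, and use Lemma~\ref{lem5.1} to see that everything descends to $\operatorname{Gr}(W)$. Your handling of the mismatched-index cases and of the passage to the quotients is correct, and somewhat more careful than the paper's.

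The paragraph on the summation range, however, makes a false claim and should be deleted rather than patched. Suppose $\sum_{n=k}^{l}$ in the definition of $\diamond_g$ really meant Notation~\ref{shunnota}(2), i.e.\ a sum over $\mathbb{Z}\cap[k,l]$. Then the composition terms $\vartheta_{\operatorname{Gr}(W)}([u_{kn}]_{kn})\,\vartheta_{\operatorname{Gr}(W)}([v_{nl}]_{nl})$ with $n$ outside that range do \emph{not} act as zero. The vanishing of $*^{k}_{g,l,n}$ under the congruence or sign conditions is a statement about the product side, not about these compositions.

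For a concrete failure, take $w$ a lowest-weight vector of weight $h\neq 0$ in an ordinary $g$-twisted module. Then $\vartheta_{\operatorname{Gr}(W)}([\omega]_{01})\,\vartheta_{\operatorname{Gr}(W)}([\omega]_{10})[w]_0=[L(1)L(-1)w]_0=2h[w]_0\neq 0$. Under that reading, though, $[\omega]_{01}\diamond_g[\omega]_{10}=0$, so the theorem itself would be false. Likewise $\mathbf{1}^\infty$ would fail to be a left identity on $[v]_{kl}$ whenever $k\notin\mathbb{Z}$.

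The only consistent reading is $(\mathfrak{u}\diamond_g\mathfrak{v})_{kl}=\sum_{n\in\frac{1}{T}\mathbb{N}}u_{kn}*^{k}_{g,l,n}v_{nl}$, which is a finite sum by column-finiteness. This is exactly what the formula $[u]_{kn}\diamond_g[v]_{nl}=[u*^{k}_{g,l,n}v]_{kl}$ for all $k,n,l$ presupposes, and both you and the paper use that formula. With this reading, your matrix-unit computation plus linearity already gives the full homomorphism property.
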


\begin{proof}
  Let $u, v \in V$, $k, n, l \in \frac{1}{T}\mathbb{N}$, and $w \in \operatorname{Gr}_l(W)$. From Lemma~\ref{lem5.3}, we obtain
  \begin{align*}
    &\quad\vartheta_{\operatorname{Gr}(W)}\bigl( [u]_{kn} \diamond_{g} [v]_{nl} \bigr) [w]_l\\
    &= \vartheta_{\operatorname{Gr}(W)}\bigl( [u *_{g,l,n}^{k} v]_{kl} \bigr) [w]_l \\
    &= \left[ \operatorname{Res}_x x^{l-k-1} Y_W\bigl( x^{L_V(0)} (u *_{g,l,n}^{k} v), x \bigr) w \right]_l \\
    &= \left[ \operatorname{Res}_{x_1} x_1^{n-k-1} Y_W\bigl( x_1^{L_V(0)} u, x_1 \bigr)
              \operatorname{Res}_{x_2} x_2^{l-n-1} Y_W\bigl( x_2^{L_V(0)} v, x_2 \bigr) w \right]_l \\
    &= \vartheta_{\operatorname{Gr}(W)}\bigl( [u]_{kn} \bigr)
       \vartheta_{\operatorname{Gr}(W)}\bigl( [v]_{nl} \bigr) [w]_l.
  \end{align*}
Thus $\vartheta_{\operatorname{Gr}(W)}$ is an algebra homomorphism, and hence endows $\operatorname{Gr}(W)$ with a $U^{g,\infty}(V)$-module structure.
\end{proof}

\begin{thm}\label{thm5.7}
  {\rm(1)} The product $\diamond_{g}$ on $U^{g,\infty}(V)$ induces a well-defined product on $\mathcal{A}_{\operatorname{Gr}}^{g,\infty}(V) = U^{g,\infty}(V) / \mathcal{Q}_{\operatorname{Gr}}^{g,\infty}(V)$, still denoted by $\diamond_{g}$, such that $\mathcal{A}_{\operatorname{Gr}}^{g,\infty}(V)$ is an associative algebra with identity $\mathbf{1}^{\infty} + \mathcal{Q}_{\operatorname{Gr}}^{g,\infty}(V)$. Moreover, for any weak $g$-twisted $V$-module $W$,  $\operatorname{Gr}(W)$ is an $\mathcal{A}_{\operatorname{Gr}}^{g,\infty}(V)$-module.

     {\rm(2)} The product $\diamond_{g}$ on $U^{g,\infty}(V)$ induces a well-defined product on $\mathbf{A}_{\operatorname{Gr}}^{g,\infty}(V) = U^{g,\infty}(V) / \mathbf{Q}_{\operatorname{Gr}}^{g,\infty}(V)$, still denoted by $\diamond_{g}$, such that $\mathbf{A}_{\operatorname{Gr}}^{g,\infty}(V)$ is an associative algebra with identity $\mathbf{1}^{\infty} + \mathbf{Q}_{\operatorname{Gr}}^{g,\infty}(V)$. Moreover, for any admissible $g$-twisted $V$-module $W$, $\operatorname{Gr}(W)$ is an $\mathbf{A}_{\operatorname{Gr}}^{g,\infty}(V)$-module.

     {\rm(3)} The product $\diamond_{g}$ on $U^{g,\infty}(V)$ induces a well-defined product on $A_{\operatorname{Gr}}^{g,\infty}(V) = U^{g,\infty}(V) / Q_{\operatorname{Gr}}^{g,\infty}(V)$, still denoted by $\diamond_{g}$, such that $A_{\operatorname{Gr}}^{g,\infty}(V)$ is an associative algebra with identity $\mathbf{1}^{\infty} + Q_{\operatorname{Gr}}^{g,\infty}(V)$. Moreover, for any ordinary $g$-twisted $V$-module $W$,  $\operatorname{Gr}(W)$ is an $A_{\operatorname{Gr}}^{g,\infty}(V)$-module.

\end{thm}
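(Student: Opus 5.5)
This is the direct analogue of Theorem~\ref{thm4.6}, and I would copy that argument almost verbatim, with $\circledast_g$ replaced by $\diamond_g$, Theorem~\ref{thm4.5} by Theorem~\ref{thm5.6}, and Propositions~\ref{prop4.3}, \ref{prop4.4} by Propositions~\ref{prop5.4}, \ref{prop5.5}. I write out (1); parts (2) and (3) differ only in the class of modules over which the intersection of kernels is taken.

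The first step is to show that $\mathcal{Q}_{\operatorname{Gr}}^{g,\infty}(V)$ is a two-sided ideal for $\diamond_g$. By Theorem~\ref{thm5.6}, for every weak $g$-twisted $V$-module $W$ the map $\vartheta_{\operatorname{Gr}(W)}$ is an algebra homomorphism into the associative algebra $\operatorname{End}\operatorname{Gr}(W)$. Hence $\ker\vartheta_{\operatorname{Gr}(W)}$ is a two-sided ideal of $(U^{g,\infty}(V),\diamond_g)$. An intersection of two-sided ideals is again a two-sided ideal, so $\mathcal{Q}_{\operatorname{Gr}}^{g,\infty}(V)$ is one, and $\diamond_g$ descends to the quotient $\mathcal{A}_{\operatorname{Gr}}^{g,\infty}(V)$.

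Next comes associativity. Fix $\mathfrak v_1,\mathfrak v_2,\mathfrak v_3 \in U^{g,\infty}(V)$. Their associator is sent by $\vartheta_{\operatorname{Gr}(W)}$ to
\[
(\vartheta(\mathfrak v_1)\vartheta(\mathfrak v_2))\vartheta(\mathfrak v_3)-\vartheta(\mathfrak v_1)(\vartheta(\mathfrak v_2)\vartheta(\mathfrak v_3)),
\]
which is $0$. So the associator lies in every kernel, and therefore in $\mathcal{Q}_{\operatorname{Gr}}^{g,\infty}(V)$.

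For the identity, I first check that $\mathbf{1}^\infty$ is a left identity. One computes $\mathbf{1}*_{g,l,k}^{k}v = v$: the term $Y((1+z)^{L_V(0)}\mathbf 1,z)=\mathrm{id}$ reduces the residue to a binomial coefficient, exactly as in the computation of Lemma~\ref{shun3lemma}(2). This gives $\mathbf 1^\infty\diamond_g[v]_{kl}=[v]_{kl}$. If that exact equality should fail, the fallback is to note that $\vartheta(\mathbf 1^\infty)$ acts as the identity on each $\operatorname{Gr}(W)$, since $\mathbf 1_{-1}=\mathrm{id}$ and $[\mathbf 1]_{ll}$ acts as the identity on $\operatorname{Gr}_l$. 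Then $\mathbf 1^\infty\diamond_g\mathfrak v-\mathfrak v$ lies in all kernels. In fact this kernel argument gives both one-sided identities at once, because $\vartheta(\mathbf 1^\infty)=\mathrm{id}$. Alternatively, for the right identity one uses Propositions~\ref{prop5.5} and~\ref{prop5.4}: $[v]_{kl}\diamond_g\mathbf 1^\infty-[v]_{kl}\in\mathbf O^{g,\infty}(V)\subseteq\mathcal Q_{\operatorname{Gr}}^{g,\infty}(V)$. Since $\diamond_g$ is bilinear and compatible with column-finite sums, this extends from matrix units to general elements.

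Finally, the module statement: $\operatorname{Gr}(W)$ is a module over $U^{g,\infty}(V)/\ker\vartheta_{\operatorname{Gr}(W)}$, and $\mathcal Q_{\operatorname{Gr}}^{g,\infty}(V)\subseteq\ker\vartheta_{\operatorname{Gr}(W)}$, so the action factors through $\mathcal{A}_{\operatorname{Gr}}^{g,\infty}(V)$.

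For (2) and (3) the same steps apply, intersecting over admissible and ordinary $g$-twisted modules respectively. Since both classes consist of weak $g$-twisted modules, Theorem~\ref{thm5.6} and Proposition~\ref{prop5.4} apply unchanged.

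The only mildly delicate step is the identity element, specifically the passage from matrix units to infinite column-finite matrices. This is handled because each column involves only finitely many entries and $\vartheta$ is defined columnwise. I expect no genuine obstacle.
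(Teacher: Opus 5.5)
Your proposal is correct and follows the paper's proof essentially step for step. The shared steps are: kernels of $\vartheta_{\operatorname{Gr}(W)}$ are two-sided ideals, associators lie in every kernel by Theorem~\ref{thm5.6}, $\mathbf{1}^{\infty}$ is a left identity by direct computation, it is a right identity via Propositions~\ref{prop5.5} and~\ref{prop5.4}, and the action on $\operatorname{Gr}(W)$ factors through the quotient. Your extra observation that $\vartheta_{\operatorname{Gr}(W)}(\mathbf{1}^{\infty})=\mathrm{id}$ is also valid, and it gives both one-sided identities directly from the homomorphism property without needing Propositions~\ref{prop5.4}--\ref{prop5.5}.
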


\begin{proof}
  We prove only~(1); the proofs of~(2) and~(3) are analogous.

  For any weak $g$-twisted $V$-module $W$, the kernel $\ker \vartheta_{\operatorname{Gr}(W)}$ is a two-sided ideal of $U^{g,\infty}(V)$. Since $\mathcal{Q}_{\operatorname{Gr}}^{g,\infty}(V)$ is the intersection of such ideals, it is also a two-sided ideal of $U^{g,\infty}(V)$. Hence $\diamond_{g}$ induces a well-defined product on the quotient $\mathcal{A}_{\operatorname{Gr}}^{g,\infty}(V)$.

  To verify associativity, let $\mathfrak{v}_1, \mathfrak{v}_2, \mathfrak{v}_3 \in U^{g,\infty}(V)$. By Theorem~\ref{thm5.6}, the quotient $U^{g,\infty}(V) / \ker \vartheta_{\operatorname{Gr}(W)}$ is associative for each weak $g$-twisted $V$-module $W$. Therefore,
  \[
    \mathfrak{v}_1 \diamond_{g} (\mathfrak{v}_2 \diamond_{g} \mathfrak{v}_3)
    - (\mathfrak{v}_1 \diamond_{g} \mathfrak{v}_2) \diamond_{g} \mathfrak{v}_3
    \in \ker \vartheta_{\operatorname{Gr}(W)}.
  \]
  Taking the intersection over all weak $g$-twisted $V$-modules $W$, we obtain
  \[
    \mathfrak{v}_1 \diamond_{g} (\mathfrak{v}_2 \diamond_{g} \mathfrak{v}_3)
    - (\mathfrak{v}_1 \diamond_{g} \mathfrak{v}_2) \diamond_{g} \mathfrak{v}_3
    \in \bigcap_{W} \ker \vartheta_{\operatorname{Gr}(W)}
    = \mathcal{Q}_{\operatorname{Gr}}^{g,\infty}(V),
  \]
  which shows that $\mathcal{A}_{\operatorname{Gr}}^{g,\infty}(V)$ is associative.

  Next we identify the identity element. By definition, $\mathbf{1}^{\infty} \diamond_{g} [v]_{kl} = [v]_{kl}$ for all $v \in V$ and $k, l \in \frac{1}{T}\mathbb{N}$, so $\mathbf{1}^{\infty}$ is a left identity of $U^{g,\infty}(V)$. By Proposition~\ref{prop5.4}, $\mathbf{O}^{g,\infty}(V) \subseteq \mathcal{Q}_{\operatorname{Gr}}^{g,\infty}(V)$. Together with Proposition~\ref{prop5.5}, this yields
  \[
    \bigl( [v]_{kl} + \mathcal{Q}_{\operatorname{Gr}}^{g,\infty}(V) \bigr)
    \diamond_{g}
    \bigl( \mathbf{1}^{\infty} + \mathcal{Q}_{\operatorname{Gr}}^{g,\infty}(V) \bigr)
    = [v]_{kl} + \mathcal{Q}_{\operatorname{Gr}}^{g,\infty}(V).
  \]
  Thus $\mathbf{1}^{\infty} + \mathcal{Q}_{\operatorname{Gr}}^{g,\infty}(V)$ is a right identity in $\mathcal{A}_{\operatorname{Gr}}^{g,\infty}(V)$. Since it is already a left identity, it is the two-sided identity.

  Finally, let $W$ be a weak $g$-twisted $V$-module. By Theorem~\ref{thm5.6}, $\operatorname{Gr}(W)$ is a module over $U^{g,\infty}(V) / \ker \vartheta_{\operatorname{Gr}(W)}$. Since $\mathcal{Q}_{\operatorname{Gr}}^{g,\infty}(V) \subseteq \ker \vartheta_{\operatorname{Gr}(W)}$, the action factors through $\mathcal{A}_{\operatorname{Gr}}^{g,\infty}(V)$, making $\operatorname{Gr}(W)$ an $\mathcal{A}_{\operatorname{Gr}}^{g,\infty}(V)$-module.
\end{proof}

\subsection{The Associative Algebras $\mathbf{A}^{g,\infty}(V)$ and ${A}^{g,\infty}(V)$ and Their Modules}

Let $W$ be an admissible $g$-twisted $V$-module. We define a linear map $\vartheta_W \colon U^{g,\infty}(V) \to \operatorname{End} W$ by
\[
  \vartheta_W(\mathfrak{v}) w
  = \sum_{k, l \in \frac{1}{T}\mathbb{N}}
    \operatorname{Res}_x x^{l-k-1} Y_W\bigl( x^{L_V(0)} v_{kl}, x \bigr)
    \pi_{W(l)} w,
\]
where $\mathfrak{v} = [v_{kl}] \in U^{g,\infty}(V)$ with $v_{kl} \in V$, and $\pi_{W(l)} \colon W \to W(l)$ is the canonical projection onto the $l$-th graded component of $W$. For $k, l, n \in \frac{1}{T}\mathbb{N}$, $v \in V$, and $w \in W(n)$, we have
\[
  \vartheta_W\bigl( [v]_{kl} \bigr) w
  = \delta_{ln} \operatorname{Res}_x x^{l-k-1} Y_W\bigl( x^{L_V(0)} v, x \bigr) w.
\]
Define the following subspaces of $U^{g,\infty}(V)$:
\begin{align*}
  \mathbf{Q}^{g,\infty}(V)
    &= \bigcap_{W} \ker \vartheta_W,
      && \text{$W$ runs over all admissible $g$-twisted $V$-modules}, \\
  Q^{g,\infty}(V)
    &= \bigcap_{W} \ker \vartheta_W,
      && \text{$W$ runs over all ordinary $g$-twisted $V$-modules}.
\end{align*}
The corresponding quotient algebras are given by
\begin{align*}
  \mathbf{A}^{g,\infty}(V) &= U^{g,\infty}(V) / \mathbf{Q}^{g,\infty}(V), \\
  A^{g,\infty}(V) &= U^{g,\infty}(V) / Q^{g,\infty}(V).
\end{align*}

\begin{prop}\label{prop5.8}
	$\mathbf{O}^{g,\infty}(V) = \mathbf{Q}^{g,\infty}(V) \subseteq Q^{g,\infty}(V)$.
\end{prop}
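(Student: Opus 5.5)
The proof mirrors the argument given for the vertex-algebra analogue $\tilde{\mathbf{O}}^{g,\infty}(V)=\tilde{\mathbf{Q}}^{g,\infty}(V)$. We establish three inclusions in turn: $\mathbf{O}^{g,\infty}(V)\subseteq\mathbf{Q}^{g,\infty}(V)$, then $\mathbf{Q}^{g,\infty}(V)\subseteq\mathbf{O}^{g,\infty}(V)$, and finally $\mathbf{Q}^{g,\infty}(V)\subseteq Q^{g,\infty}(V)$.

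\emph{First inclusion.} Let $W$ be an admissible $g$-twisted $V$-module. The grading condition $v_kW(n)\subseteq W(n+\operatorname{wt}v-k-1)$ gives $W(n)\subseteq\Omega_n(W)$. Let $k,l\in\frac{1}{T}\mathbb{N}$ and $v\in O'_{g,k,l}(V)$. By Lemma~\ref{lem5.2}, the operator $\operatorname{Res}_x x^{l-k-1}Y_W(x^{L_V(0)}v,x)$ vanishes on $\Omega_l(W)\supseteq W(l)$. Hence $\vartheta_W([v]_{kl})=0$. The operator $\vartheta_W$ acts entrywise on a column-finite matrix, and on each $W(l)$ only finitely many entries contribute. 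So every admissible infinite combination of such matrix units also lies in $\ker\vartheta_W$. This gives $\mathbf{O}^{g,\infty}(V)\subseteq\mathbf{Q}^{g,\infty}(V)$.

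\emph{Second inclusion.} For each $l\in\frac{1}{T}\mathbb{N}$ we use the admissible $g$-twisted $V$-module $M(A_{g,l}(V))=\bigoplus_{k}A_{g,k,l}(V)$ recalled at the end of Subsection~\ref{subsec2.3}. By Remark~\ref{rmk2.17}, $O_{g,k,l}(V)=O'_{g,k,l}(V)$, so $M(A_{g,l}(V))(k)=V/O'_{g,k,l}(V)$. Take $\mathfrak v=[v_{kl}]\in\mathbf{Q}^{g,\infty}(V)$ and apply $\vartheta_{M(A_{g,l}(V))}(\mathfrak v)$ to $\mathbf 1+O_{g,l,l}(V)\in M(A_{g,l}(V))(l)$. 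The main computation is to check that, for homogeneous $v$,
\[
\operatorname{Res}_x x^{l-k-1}Y\bigl(x^{L_V(0)}v,x\bigr)(\mathbf 1+O_{g,l,l}(V)) = v_{\operatorname{wt}v+l-k-1}(\mathbf 1+O_{g,l,l}(V)) = v*_{g,l,l}^{k}\mathbf 1+O_{g,k,l}(V).
\]
This follows by matching the index $p=\operatorname{wt}v+l-k-1$ with the definition of $u_p$ on $M(U)$, which gives $\operatorname{wt}v-p-1+l=k$. Now $*_{g,l,l}^k=*_{g,l}^k$, and $\mathbf 1$ acts as the right identity in the bimodule $A_{g,k,l}(V)$ (Theorem~\ref{thm2.16}). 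So this element equals $v+O_{g,k,l}(V)$. The images for distinct $k$ land in distinct graded pieces. Therefore the vanishing of the sum forces $v_{kl}\in O_{g,k,l}(V)=O'_{g,k,l}(V)$ for every $k$, and hence $\mathfrak v\in\mathbf{O}^{g,\infty}(V)$.

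\emph{Third inclusion.} Every ordinary $g$-twisted $V$-module is admissible with its canonical $\frac{1}{T}\mathbb{N}$-grading $W(n)=\bigoplus_\mu W_{h^\mu+n}$, as explained in Subsection~\ref{subsec2.1}. Therefore the intersection defining $\mathbf{Q}^{g,\infty}(V)$ runs over a larger family than the one defining $Q^{g,\infty}(V)$. It follows that $\mathbf{Q}^{g,\infty}(V)\subseteq Q^{g,\infty}(V)$.

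\emph{Main obstacle.} The delicate step is the identification of $\vartheta_{M(A_{g,l}(V))}([v]_{kl})$ with $v*_{g,l}^k\mathbf 1$. This requires correctly matching the $L_V(0)$-twisted residue with the mode $v_p$, including the fractional parts $\bar k$, $\bar l$ and the eigenvalue $r$. When $\bar l-\bar k\not\equiv r \pmod T$, both sides are zero: on one side by \eqref{eq2.6}, on the other because the mode shift is incompatible with the grading. The case $\bar l-\bar k\equiv r \pmod T$ then reduces to the right-identity property of $\mathbf 1$.
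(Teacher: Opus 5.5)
Your proposal is correct and follows essentially the same route as the paper: you get $\mathbf{O}^{g,\infty}(V)\subseteq\mathbf{Q}^{g,\infty}(V)$ from $W(n)\subseteq\Omega_n(W)$ and Lemma~\ref{lem5.2}; you get the reverse inclusion by evaluating $\vartheta_{M(A_{g,l}(V))}(\mathfrak v)$ on $\mathbf 1+O_{g,l,l}(V)$, using $v*_{g,l,l}^{k}\mathbf 1\equiv v$ and Remark~\ref{rmk2.17}; and you get $\mathbf{Q}^{g,\infty}(V)\subseteq Q^{g,\infty}(V)$ because ordinary modules are admissible with their canonical grading. Your extra bookkeeping (matching $p=\operatorname{wt}v+l-k-1$ to degree $k$ in $M(U)$, and checking the case $\bar l-\bar k\not\equiv r \pmod T$) only makes explicit what the paper's computation takes for granted.
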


\begin{proof}
	By definition, it is clear that $\mathbf{Q}^{g,\infty}(V) \subseteq Q^{g,\infty}(V)$. 
	
	For any admissible $g$-twisted $V$-module $W = \bigoplus_{n \in \frac{1}{T}\mathbb{N}} W(n)$, we have $W(n) \subseteq \Omega_n(W)$. Following the argument in Proposition~\ref{prop5.4}, we obtain $\mathbf{O}^{g,\infty}(V) \subseteq \mathbf{Q}^{g,\infty}(V)$. 
	
	Next, we prove that $\mathbf{Q}^{g,\infty}(V) \subseteq \mathbf{O}^{g,\infty}(V)$. Consider the admissible $g$-twisted $V$-module $M(A_{g,l}(V)) = \bigoplus_{k \in \frac{1}{T}\mathbb{N}} A_{g,k,l}(V)$ constructed in Subsection~\ref{subsec2.3} for $l \in \frac{1}{T}\mathbb{N}$. For any $\mathfrak{v} = [v_{kl}] \in \mathbf{Q}^{g,\infty}(V)$, where $v_{kl} \in V$ and $k, l \in \frac{1}{T}\mathbb{N}$, we have
	\begin{align*}
		0 &= \vartheta_{M(A_{g,l}(V))}(\mathfrak{v}) (\mathbf{1} + O_{g,l,l}(V)) \\
		&= \sum_{k \in \frac{1}{T}\mathbb{N}} \operatorname{Res}_x x^{l-k-1} Y_{M(A_{g,l}(V))}\left(x^{L_V(0)} v_{kl}, x\right) (\mathbf{1} + O_{g,l,l}(V)) \\
		&= \sum_{k \in \frac{1}{T}\mathbb{N}} \left(v_{kl} *_{g,l,l}^{k} \mathbf{1} + O_{g,k,l}(V)\right) \\
		&= \sum_{k \in \frac{1}{T}\mathbb{N}} \left(v_{kl} + O_{g,k,l}(V)\right).
	\end{align*}
	By Remark~\ref{rmk2.17}, this implies that $v_{kl} \in O_{g,k,l}(V) = O'_{g,k,l}(V)$. We conclude that $\mathbf{Q}^{g,\infty}(V) \subseteq \mathbf{O}^{g,\infty}(V)$, which completes the proof.
\end{proof}

For an admissible $g$-twisted $V$-module $W = \bigoplus_{n \in \frac{1}{T}\mathbb{N}} W(n)$, we have $W(n) \subseteq \Omega_n(W)$. Thus, by arguments similar to those in Theorems~\ref{thm5.6} and~\ref{thm5.7}, we obtain Theorems~\ref{thm5.9} and~\ref{thm5.10}.
\begin{thm}\label{thm5.9}
  Let $W$ be an admissible $g$-twisted $V$-module. Then the linear map
  \[
    \vartheta_W \colon U^{g,\infty}(V) \to \operatorname{End} W
  \]
  endows $W$ with a $U^{g,\infty}(V)$-module structure; that is, $\vartheta_W$ is a homomorphism of (nonassociative) algebras from $U^{g,\infty}(V)$ to $\operatorname{End} W$. In particular, $U^{g,\infty}(V) / \ker \vartheta_W$ is an associative algebra isomorphic to a subalgebra of $\operatorname{End} W$.
\end{thm}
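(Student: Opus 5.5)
The plan follows the proof of Theorem~\ref{thm5.6}, replacing $\operatorname{Gr}(W)$ by $W$ itself and the filtration pieces $\Omega_l(W)$ by the graded pieces $W(l)$. Let $W=\bigoplus_{n\in\frac{1}{T}\mathbb{N}}W(n)$ be admissible. The first step is to note that $W(n)\subseteq\Omega_n(W)$ for every $n$. Indeed, for homogeneous $v$ and $w\in W(n)$ we have $v_kw\in W(n+\operatorname{wt}v-k-1)$, and this space is $0$ whenever $\operatorname{wt}v-k-1<-n$.

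Next we check that $\vartheta_W([v]_{kl})$ maps $W(l)$ into $W(k)$. For homogeneous $v$,
\[
\operatorname{Res}_x x^{l-k-1}Y_W(x^{L_V(0)}v,x)=v_{\operatorname{wt}v+l-k-1},
\]
which sends $W(l)$ to $W(k)$ by the admissibility grading condition; general $v$ follows by linearity. The map $\vartheta_W$ is well defined on column-finite matrices, since for each $l$ only finitely many $v_{kl}$ are nonzero.

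The key step is the homomorphism property, which by bilinearity and column-finiteness reduces to matrix units. If $m\neq n$, then $[u]_{km}\diamond_g[v]_{nl}=0$. On the other side, $\vartheta_W([v]_{nl})$ lands in $W(n)$, which $\vartheta_W([u]_{km})$ kills, so both sides vanish. Now take $m=n$ and $w\in W(l)\subseteq\Omega_l(W)$. Lemma~\ref{lem5.3}, with $(m,n,p)$ there replaced by $(l,k,n)$, gives
\[
\vartheta_W([u]_{kn}\diamond_g[v]_{nl})w=\operatorname{Res}_{x_1}x_1^{n-k-1}Y_W(x_1^{L_V(0)}u,x_1)\operatorname{Res}_{x_2}x_2^{l-n-1}Y_W(x_2^{L_V(0)}v,x_2)w .
\]
The right side equals $\vartheta_W([u]_{kn})\vartheta_W([v]_{nl})w$, because the inner operator maps $w$ into $W(n)$, which is exactly the component on which $\vartheta_W([u]_{kn})$ acts. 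For $w\in W(l')$ with $l'\neq l$, both sides are $0$. Hence $\vartheta_W(\mathfrak{u}\diamond_g\mathfrak{v})=\vartheta_W(\mathfrak{u})\vartheta_W(\mathfrak{v})$.

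Finally, since $\vartheta_W$ is multiplicative, $\ker\vartheta_W$ is a two-sided ideal. The induced injective map $U^{g,\infty}(V)/\ker\vartheta_W\to\operatorname{End}W$ is multiplicative, so the quotient is isomorphic to its image, a subalgebra of the associative algebra $\operatorname{End}W$. The quotient is therefore associative.

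The only real subtlety is the case $m=n$. There one must make sure that Lemma~\ref{lem5.3}, stated on $\Omega_m(W)$, applies: this is where $W(l)\subseteq\Omega_l(W)$ is used. One must also check that the intermediate vector lies in $W(n)$ and not merely in $\Omega_n(W)$, so that the projection $\pi_{W(n)}$ in the definition of $\vartheta_W$ acts as the identity on it. This second point holds because the grading condition in the definition of an admissible module places the intermediate vector in $W(n)$.
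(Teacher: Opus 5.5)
Your proposal is correct and takes essentially the paper's route. The paper also observes $W(n)\subseteq\Omega_n(W)$, repeats the matrix-unit computation of Theorem~\ref{thm5.6} using Lemma~\ref{lem5.3}, and gets associativity of $U^{g,\infty}(V)/\ker\vartheta_W$ from its embedding in $\operatorname{End}W$. You additionally spell out the case $m\neq n$ and check that the intermediate vector lies in $W(n)$ rather than merely in $\Omega_n(W)$, which the paper leaves implicit.
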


\begin{thm}\label{thm5.10}
  {\rm(1)} The product $\diamond_g$ on $U^{g,\infty}(V)$ induces a well-defined product on $\mathbf{A}^{g,\infty}(V) = U^{g,\infty}(V) / \mathbf{Q}^{g,\infty}(V)$, still denoted by $\diamond_g$, such that $\mathbf{A}^{g,\infty}(V)$ is an associative algebra with identity $\mathbf{1}^{\infty} + \mathbf{Q}^{g,\infty}(V)$. Moreover, for any admissible $g$-twisted $V$-module $W$, $W$ is an $\mathbf{A}^{g,\infty}(V)$-module.

    {\rm(2)} The product $\diamond_g$ on $U^{g,\infty}(V)$ induces a well-defined product on $A^{g,\infty}(V) = U^{g,\infty}(V) / Q^{g,\infty}(V)$, still denoted by $\diamond_g$, such that $A^{g,\infty}(V)$ is an associative algebra with identity $\mathbf{1}^{\infty} + Q^{g,\infty}(V)$. Moreover, for any ordinary $g$-twisted $V$-module $W$, $W$ is an $A^{g,\infty}(V)$-module.

\end{thm}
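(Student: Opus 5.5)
The plan is to mirror the proofs of Theorems~\ref{thm5.6} and~\ref{thm5.7}, replacing $\operatorname{Gr}(W)$ by $W$ itself and using the inclusion $W(n)\subseteq\Omega_n(W)$ for admissible $W$. For an ordinary $g$-twisted module $W$ we use the canonical $\frac{1}{T}\mathbb{N}$-grading $W=\bigoplus_n W(n)$ described in Subsection~\ref{subsec2.1}, which makes $W$ admissible. Hence both parts (1) and (2) reduce to the admissible case, with $\vartheta_W$ given by the same formula.

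\textbf{Step 1: multiplicativity of $\vartheta_W$.} I first establish Theorem~\ref{thm5.9}, namely that $\vartheta_W$ is multiplicative. Take $u,v\in V$, $k,n,l\in\frac{1}{T}\mathbb{N}$ and $w\in W(l)\subseteq\Omega_l(W)$. Then
\[
\vartheta_W([u]_{kn}\diamond_g[v]_{nl})w=\operatorname{Res}_x x^{l-k-1}Y_W\bigl(x^{L_V(0)}(u*_{g,l,n}^k v),x\bigr)w .
\]
Lemma~\ref{lem5.3} (with $m=l$, $p=n$) converts the right-hand side into
\[
\operatorname{Res}_{x_1}x_1^{n-k-1}Y_W(x_1^{L_V(0)}u,x_1)\,\operatorname{Res}_{x_2}x_2^{l-n-1}Y_W(x_2^{L_V(0)}v,x_2)\,w .
\]
For homogeneous $v$, the inner factor is $v_{\operatorname{wt}v+l-n-1}w$, which lies in $W(n)$ by the admissible grading axiom. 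Hence it equals $\vartheta_W([v]_{nl})w$, and applying the outer operator gives $\vartheta_W([u]_{kn})\vartheta_W([v]_{nl})w$.

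The products $[u]_{km}\diamond_g[v]_{nl}$ with $m\ne n$ vanish, and $\vartheta_W([u]_{km})$ kills $W(n)$ for $n\neq m$, so both sides agree in that case as well. Bilinearity and the column-finiteness of the matrices then extend the identity to all $\mathfrak{u},\mathfrak{v}\in U^{g,\infty}(V)$, since only finitely many terms act on a given homogeneous $w$. It follows that $U^{g,\infty}(V)/\ker\vartheta_W$ embeds into the associative algebra $\operatorname{End}W$, and so this quotient is associative.

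\textbf{Step 2: the quotient algebra.} Since each $\ker\vartheta_W$ is a two-sided ideal, the intersections $\mathbf{Q}^{g,\infty}(V)$ and $Q^{g,\infty}(V)$ are two-sided ideals, and $\diamond_g$ descends to the quotients. Each associator lies in every $\ker\vartheta_W$, hence in the intersection, so the quotients are associative.

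\textbf{Step 3: the identity.} By definition $\mathbf{1}^\infty$ is a left identity. For the right identity I use Proposition~\ref{prop5.5}, which gives $[v]_{kl}\diamond_g\mathbf{1}^\infty-[v]_{kl}\in\mathbf{O}^{g,\infty}(V)$. Proposition~\ref{prop5.8} gives $\mathbf{O}^{g,\infty}(V)=\mathbf{Q}^{g,\infty}(V)\subseteq Q^{g,\infty}(V)$. This extends linearly to column-finite matrices, because the differences are again column-finite combinations of elements $[u]_{kl}$ with $u\in O'_{g,k,l}(V)$.

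\textbf{Step 4: the module structure.} Finally, $W$ is a module over $U^{g,\infty}(V)/\ker\vartheta_W$, and the action factors through $\mathbf{A}^{g,\infty}(V)$, respectively $A^{g,\infty}(V)$, because the defining ideals are contained in $\ker\vartheta_W$.

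The main obstacle I expect is Step 1, specifically checking that $\vartheta_W$ genuinely respects the grading on $W$ rather than only the filtration $\Omega$. In the $\operatorname{Gr}$ version this was automatic from Lemma~\ref{lem5.1}. Here one needs the intermediate vector to lie exactly in $W(n)$, so that the projection $\pi_{W(n)}$ in the definition of $\vartheta_W([u]_{kn})$ does not discard anything. This follows from the admissible grading condition $v_kW(n)\subseteq W(n+\operatorname{wt}v-k-1)$, together with the convention that $W(n)=0$ for $n<0$, which is where Lemma~\ref{lem5.3} applies on $W(l)\subseteq\Omega_l(W)$.
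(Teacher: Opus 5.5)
Your proposal is correct and is essentially the paper's argument. The paper says only that, since $W(n)\subseteq\Omega_n(W)$ for admissible $W$, the proofs of Theorems~\ref{thm5.6} and~\ref{thm5.7} carry over with $W$ in place of $\operatorname{Gr}(W)$; you carry this out explicitly via Lemma~\ref{lem5.3}, the admissible grading axiom (which puts $v_{\operatorname{wt}v+l-n-1}w$ in $W(n)$), Proposition~\ref{prop5.5}, and $\mathbf{O}^{g,\infty}(V)\subseteq\mathbf{Q}^{g,\infty}(V)\subseteq Q^{g,\infty}(V)$, which supplies exactly the details the paper leaves implicit.
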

\begin{prop}\label{prop5.11}
  Let $u \in V^r$, $v \in V^s$, $l \in \mathbb{Z}$, $m \in \frac{r}{T}+\mathbb{Z}$, $n \in \frac{s}{T}+\mathbb{Z}$, and $q \in \frac{1}{T}\mathbb{N}$ satisfy
  \[
    q + \operatorname{wt} u + \operatorname{wt} v - 2 - n - m - l \geq 0.
  \]
  Set $K = q + \operatorname{wt} u + \operatorname{wt} v - 2 - n - m - l$. Then the element
  \begin{align*}
    &\sum_{\substack{i \in \mathbb{N} \\ q+\operatorname{wt} v-1-n-i \geq 0}}
      (-1)^i \binom{l}{i}
      [u]_{K,\, q+\operatorname{wt} v-1-n-i} \diamond_g [v]_{q+\operatorname{wt} v-1-n-i,\, q} \\
    &\quad - \sum_{\substack{i \in \mathbb{N} \\ \operatorname{wt} u-1-m-i+q \geq 0}}
      (-1)^{i+l} \binom{l}{i}
      [v]_{K,\, \operatorname{wt} u-1-m-i+q} \diamond_g [u]_{\operatorname{wt} u-1-m-i+q,\, q} \\
    &\quad - \sum_{i \in \mathbb{N}} \binom{m}{i} [u_{l+i} v]_{K,\, q}
  \end{align*}
  belongs to $\mathbf{Q}^{g,\infty}(V)$.
\end{prop}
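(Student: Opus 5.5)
The plan is to copy the proof of Proposition~\ref{prop4.10}, with the $\phi$-coordinated Jacobi identity \eqref{eq2.2} replaced by the twisted Jacobi identity \eqref{eq2.3}. Since $\mathbf{Q}^{g,\infty}(V)=\bigcap_W\ker\vartheta_W$, it suffices to fix an admissible $g$-twisted $V$-module $W$, some $q\in\frac{1}{T}\mathbb{N}$ and $w\in W(q)$, and to show that $\vartheta_W$ of the displayed element kills $w$. We may assume $u,v$ homogeneous, since $\operatorname{wt}u$ and $\operatorname{wt}v$ enter the statement. Every matrix unit occurring has column index $q$, or is composed with one whose column index is $q$, so the action on the other graded pieces is trivially zero.

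\textbf{Step 1: translate matrix units into modes.} Theorem~\ref{thm5.9} says $\vartheta_W$ is an algebra homomorphism for $\diamond_g$. Hence $\vartheta_W([a]_{kn}\diamond_g[b]_{nl})w=\vartheta_W([a]_{kn})\vartheta_W([b]_{nl})w$. For homogeneous $a$ and $w'\in W(l)$ we have
\[
\vartheta_W([a]_{kl})w'=\operatorname{Res}_x x^{l-k-1}Y_W(x^{L_V(0)}a,x)w'=a_{\operatorname{wt}a+l-k-1}w'.
\]
With $p=q+\operatorname{wt}v-1-n-i$, the first sum becomes $\sum (-1)^i\binom{l}{i}u_{\operatorname{wt}u+p-K-1}v_{n+i}w$. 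Substituting $K$ gives $\operatorname{wt}u+p-K-1=m+l-i$, so the first sum is $\sum(-1)^i\binom{l}{i}u_{m+l-i}v_{n+i}w$. In the same way the second sum is $\sum(-1)^{i+l}\binom{l}{i}v_{n+l-i}u_{m+i}w$. For the third sum, $u_{l+i}v$ is homogeneous of weight $\operatorname{wt}u+\operatorname{wt}v-l-i-1$. Therefore $\vartheta_W([u_{l+i}v]_{K,q})w=(u_{l+i}v)_{\operatorname{wt}u+\operatorname{wt}v-l-i-1+q-K-1}w=(u_{l+i}v)_{m+n-i}w$.

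\textbf{Step 2: remove the index restrictions.} The restrictions $q+\operatorname{wt}v-1-n-i\ge 0$ and $\operatorname{wt}u-1-m-i+q\ge 0$ cut off only terms that vanish anyway. If $q+\operatorname{wt}v-1-n-i<0$, then $v_{n+i}w$ lies in $W(q+\operatorname{wt}v-n-i-1)$, which is zero. The second sum is handled the same way. Consequently the restricted sums equal the full sums over $i\in\mathbb{N}$. The resulting expression is exactly the left side minus the right side of \eqref{eq2.3} applied to $w$, and so it vanishes. All sums are finite because the modes kill $w$ for large index.

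\textbf{Main obstacle.} The real work is the index bookkeeping in Step 1. One must check that $u_{l+i}v$ is homogeneous of the stated weight, and that the mode index of the composed operators matches \eqref{eq2.3} exactly, including the twisted parts $m\in\frac{r}{T}+\mathbb{Z}$. One should also confirm that the degree shifts are consistent with the hypothesis $K\ge0$, so that all the matrix units are legitimate elements of $U^{g,\infty}(V)$. Once this is done, arbitrariness of $W$, $q$ and $w$ gives membership in $\mathbf{Q}^{g,\infty}(V)$.
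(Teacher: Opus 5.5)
Your proposal is correct and follows the paper's own argument. As in the paper, you evaluate $\vartheta_W$ on $w\in W(q)$ for an arbitrary admissible $g$-twisted module and use the homomorphism property from Theorem~\ref{thm5.9} to turn the matrix units into the modes $u_{m+l-i}v_{n+i}$, $v_{n+l-i}u_{m+i}$ and $(u_{l+i}v)_{m+n-i}$. You then drop the index restrictions using the $\frac{1}{T}\mathbb{N}$-grading and conclude with \eqref{eq2.3}. Your index bookkeeping checks out, and your remark that the element annihilates $W(n)$ for $n\neq q$ makes explicit a point the paper leaves implicit.
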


\begin{proof}
  Let $W$ be an admissible $g$-twisted $V$-module, $q \in \frac{1}{T}\mathbb{N}$, and $w \in W(q)$. Applying $\vartheta_W$ to the above element and using the homomorphism property from Theorem~\ref{thm5.9}, we obtain
  \begin{align*}
    &\sum_{\substack{i \in \mathbb{N} \\ q+\operatorname{wt} v-1-n-i \geq 0}}
      (-1)^i \binom{l}{i}
      \vartheta_W\bigl([u]_{K,\, q+\operatorname{wt} v-1-n-i}\bigr)
      \vartheta_W\bigl([v]_{q+\operatorname{wt} v-1-n-i,\, q}\bigr) w \\
    &\quad - \sum_{\substack{i \in \mathbb{N} \\ \operatorname{wt} u-1-m-i+q \geq 0}}
      (-1)^{i+l} \binom{l}{i}
      \vartheta_W\bigl([v]_{K,\, \operatorname{wt} u-1-m-i+q}\bigr)
      \vartheta_W\bigl([u]_{\operatorname{wt} u-1-m-i+q,\, q}\bigr) w \\
    &\quad - \sum_{i \in \mathbb{N}} \binom{m}{i}
      \vartheta_W\bigl([u_{l+i} v]_{K,\, q}\bigr) w.
  \end{align*}
  By the definition of $\vartheta_W$ on homogeneous elements, this simplifies to
  \begin{align*}
    &\sum_{\substack{i \in \mathbb{N} \\ q+\operatorname{wt} v-1-n-i \geq 0}}
      (-1)^i \binom{l}{i} u_{m+l-i} v_{n+i} w \\
    &\quad - \sum_{\substack{i \in \mathbb{N} \\ \operatorname{wt} u-1-m-i+q \geq 0}}
      (-1)^{i+l} \binom{l}{i} v_{n+l-i} u_{m+i} w \\
    &\quad - \sum_{i \in \mathbb{N}} \binom{m}{i} (u_{l+i} v)_{m+n-i} w.
  \end{align*}
  Since $w \in W(q)$, we have $v_{n+i} w = 0$ whenever $q + \operatorname{wt} v - 1 - n - i < 0$, and $u_{m+i} w = 0$ whenever $\operatorname{wt} u - 1 - m - i + q < 0$, we may therefore extend all sums to $i \in \mathbb{N}$ and combine the first two:
  \[
    \sum_{i \in \mathbb{N}} (-1)^i \binom{l}{i}
    \bigl( u_{m+l-i} v_{n+i} - (-1)^l v_{n+l-i} u_{m+i} \bigr) w
    - \sum_{i \in \mathbb{N}} \binom{m}{i} (u_{l+i} v)_{m+n-i} w.
  \]
  This expression vanishes by the twisted Jacobi identity~\eqref{eq2.3}. Since $W$, $q$, and $w$ are arbitrary, the given element lies in $\ker \vartheta_W$ for every admissible $g$-twisted $V$-module $W$. By the definition of $\mathbf{Q}^{g,\infty}(V)$, the proof is complete.
\end{proof}

\begin{defi}
  Let $G$ be an $\mathbf{A}^{g,\infty}(V)$-module with module structure given by an associative algebra homomorphism $\vartheta_G \colon \mathbf{A}^{g,\infty}(V) \to \operatorname{End} G$. We say that $G$ is a \emph{graded $\mathbf{A}^{g,\infty}(V)$-module} if the following conditions hold:
    $G$ is graded by $\frac{1}{T}\mathbb{N}$, i.e., $G = \bigoplus_{n \in \frac{1}{T}\mathbb{N}} G(n)$.
    For any $v \in V$ and $k, l \in \frac{1}{T}\mathbb{N}$, the operator $\vartheta_G\bigl([v]_{kl} + \mathbf{Q}^{g,\infty}(V)\bigr)$ maps $G(l)$ into $G(k)$ and annihilates $G(n)$ for all $n \neq l$.

\end{defi}

\begin{defi}
  Let $G$ be an $A^{g,\infty}(V)$-module with module structure given by an associative algebra homomorphism $\vartheta_G \colon A^{g,\infty}(V) \to \operatorname{End} G$. We say that $G$ is a \emph{graded $A^{g,\infty}(V)$-module} if the following conditions hold:

  {\rm(1)} $G$ is graded by $\frac{1}{T}\mathbb{N}$, i.e., $G = \bigoplus_{n \in \frac{1}{T}\mathbb{N}} G(n)$. For any $v \in V$ and $k, l \in \frac{1}{T}\mathbb{N}$, the operator $\vartheta_G\bigl([v]_{kl} + Q^{g,\infty}(V)\bigr)$ maps $G(l)$ into $G(k)$ and annihilates $G(n)$ for all $n \neq l$.

  {\rm(2)} There exists a linear operator $L_G(0)$ on $G$ such that $G$ decomposes as a direct sum of eigenspaces $G = \bigoplus_{\lambda \in \mathbb{C}} G_\lambda$, where $L_G(0)|_{G_\lambda} = \lambda \cdot \operatorname{id}_{G_\lambda}$. Each subspace $G(n)$ for $n \in \frac{1}{T}\mathbb{N}$ is invariant under $L_G(0)$, and
      \[
        L_G(0) w = \vartheta_G\bigl([\omega]_{nn} + Q^{g,\infty}(V)\bigr) w
        \quad \text{for all } n \in \tfrac{1}{T}\mathbb{N} \text{ and } w \in G(n).
      \]
      Furthermore, $G_{\lambda + \frac{k}{T}} = 0$ for all $\lambda \in \mathbb{C}$ and all sufficiently small integers $k$.

\end{defi}

\begin{thm}\label{thm5.14}
  {\rm(1)} The functor from the category of admissible $g$-twisted $V$-modules to the category of graded $\mathbf{A}^{g,\infty}(V)$-modules is an isomorphism of categories.
    
  {\rm(2)} The functor from the category of ordinary $g$-twisted $V$-modules to the category of graded $A^{g,\infty}(V)$-modules is an isomorphism of categories.

\end{thm}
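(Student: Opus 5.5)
The proof follows the template of Theorem~\ref{thm4.12}, with Proposition~\ref{prop5.11} in place of Proposition~\ref{prop4.10} and the twisted Jacobi identity \eqref{eq2.3} in place of \eqref{eq2.2}.

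\textbf{Forward functor.} By Theorem~\ref{thm5.10}(1), an admissible $g$-twisted $V$-module $W=\bigoplus_n W(n)$ is an $\mathbf{A}^{g,\infty}(V)$-module via $\vartheta_W$. For homogeneous $v$,
\[
\vartheta_W([v]_{kl})w=\delta_{ln}\,v_{\operatorname{wt}v+l-k-1}w \quad (w\in W(n)),
\]
and this lies in $W(k)$ by the admissible grading, so the module is graded. A $V$-module homomorphism preserving gradings commutes with every $\vartheta_W([v]_{kl})$, so the construction is functorial. For ordinary modules in part (2) we use the canonical $\frac1T\mathbb{N}$-grading. We take $L_G(0)=L_W(0)$; then $\vartheta_W([\omega]_{nn})=\omega_{1}=L_W(0)$ on $W(n)$. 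The eigenspace decomposition and the lower truncation condition are inherited directly.

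\textbf{Inverse functor.} Let $G$ be a graded $\mathbf{A}^{g,\infty}(V)$-module. For homogeneous $v$ and $w\in G(l)$, define
\[
v_{\operatorname{wt}v+l-k-1}w=\vartheta_G([v]_{kl}+\mathbf{Q}^{g,\infty}(V))w \ \text{ for } k\ge 0,
\]
and set it to $0$ for $k<0$. Then put $Y_G(v,x)w=\sum_k v_{\operatorname{wt}v+l-k-1}w\,x^{-\operatorname{wt}v-l+k}$ and extend linearly. The axioms are checked as follows.

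\begin{itemize}
\item \emph{Lower truncation and grading.} Both are immediate from the definition.
\item \emph{Twist condition.} Let $v\in V^r$. If $\bar l-\bar k\not\equiv r$, then \eqref{eq2.6} gives $[v]_{kl}\in\mathbf{O}^{g,\infty}(V)=\mathbf{Q}^{g,\infty}(V)$ by Proposition~\ref{prop5.8}. Hence only modes in $\frac rT+\mathbb{Z}$ survive.
\item \emph{Vacuum.} As in Theorem~\ref{thm4.12}, $[\mathbf{1}]_{kl}-\delta_{kl}[\mathbf{1}]_{ll}$ acts as zero on every admissible module, so it lies in $\mathbf{Q}^{g,\infty}(V)$. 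Together with $\mathbf{1}^\infty$ acting as the identity, this gives $Y_G(\mathbf{1},x)=\mathrm{id}$.
\item \emph{Twisted Jacobi identity.} We use the component form \eqref{eq2.3} on $w\in G(q)$ and split by the sign of $K=q+\operatorname{wt}u+\operatorname{wt}v-2-n-m-l$.
 \begin{itemize}
 \item If $K<0$, every term lands in negative degree and vanishes.
 \item If $K\ge0$, truncate the sums exactly as in the proof of Proposition~\ref{prop5.11}, rewrite each composite of modes via $\vartheta_G$ of a $\diamond_g$-product, and invoke Proposition~\ref{prop5.11}.
 \end{itemize}
\end{itemize}

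The main obstacle is that weak twisted modules require the full Jacobi identity, including the $L(-1)$-derivative property if it is taken as part of the definition. I expect the components \eqref{eq2.3} for all $l,m,n$ to suffice, since they are equivalent to the formal Jacobi identity. For the $L(-1)$-property, note that $(L(-1)+L(0)+m-n)u\in L_{n,m}(V)\subset O'_{g,n,m}(V)$, so $[(L(-1)+L(0)+k-l)u]_{kl}\in\mathbf{Q}^{g,\infty}(V)$. This yields $(L(-1)u)_{p}=-p\,u_{p-1}$ on $G$.

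For part (2), we also need $L_G(0)$ to coincide with the mode $\omega_1$ defined from the module structure. This holds by condition (2) in the definition of graded $A^{g,\infty}(V)$-modules, which gives semisimplicity and the truncation needed for an ordinary module. Finite dimensionality of weight spaces is presumably implicit in that definition (otherwise it must be added). Replacing $\mathbf{Q}^{g,\infty}(V)$ by $Q^{g,\infty}(V)$ requires an analogue of Proposition~\ref{prop5.11} for $Q^{g,\infty}(V)$. This follows from the same computation, since every ordinary module is admissible, which gives $\mathbf{Q}^{g,\infty}(V)\subseteq Q^{g,\infty}(V)$.

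\textbf{The two functors are mutually inverse.} Starting from $W$, the modes recovered from $\vartheta_W$ are the original modes. Starting from $G$, $\vartheta_G$ is recovered on the matrix units $[v]_{kl}$. Column-finite matrices act through finitely many units on each graded piece, so $\vartheta_G$ is recovered on all of $\mathbf{A}^{g,\infty}(V)$. Finally, morphisms correspond in both directions because both structures are determined by the actions of the $[v]_{kl}$.
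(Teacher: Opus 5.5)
Your route is the paper's own: modes from $\vartheta_G([v]_{kl})$, the twist condition from \eqref{eq2.6}, the vacuum from $[\mathbf{1}]_{kl}-\delta_{kl}[\mathbf{1}]_{ll}\in\mathbf{Q}^{g,\infty}(V)$, and the Jacobi identity via the split $K<0$ / $K\ge 0$ and Proposition~\ref{prop5.11}. Your remark that finite-dimensionality of weight spaces is missing from the definition of a graded $A^{g,\infty}(V)$-module is correct. However, two steps do not follow from the definitions as written. The paper's proof makes the same moves, so these holes are inherited from it, but they are real.

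First, the definition of a graded module only constrains matrix units. Nothing forces $\vartheta_G([\mathbf{1}]_{ll}+\mathbf{Q}^{g,\infty}(V))$ to be the identity on $G(l)$: $\mathbf{1}^\infty$ is an infinite sum of the $[\mathbf{1}]_{ll}$, and an abstract module need not act on it by the sum of their actions. So your steps ``together with $\mathbf{1}^\infty$ acting as the identity'' and ``column-finite matrices act through finitely many units'' are unjustified, and they can actually fail. Let $J$ be the two-sided ideal generated by the matrix units; each of its elements is represented by a matrix supported on finitely many rows. On the other hand, $\mathbf{1}^\infty$ is not congruent to such a matrix modulo $\mathbf{O}^{g,\infty}(V)=\mathbf{Q}^{g,\infty}(V)$ as soon as $V$ has a nonzero admissible $g$-twisted module: shifting its grading and using Lemma~\ref{lem5.2} gives $\mathbf{1}\notin O'_{g,k,k}(V)$ for infinitely many $k$. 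So $G=\mathbf{A}^{g,\infty}(V)/J$, placed in degree $0$, satisfies the definition, every $[v]_{kl}$ acts by $0$, and $Y_G(\mathbf{1},x)=0$. The fix is to add the axiom $\vartheta_G([\mathbf{1}]_{nn}+\mathbf{Q}^{g,\infty}(V))|_{G(n)}=\mathrm{id}_{G(n)}$. With it, for $w\in G(l)$ you get $\vartheta_G(\mathfrak{v})w=\vartheta_G(\mathfrak{v}\diamond_g[\mathbf{1}]_{ll})w=\sum_k\vartheta_G([v_{kl}]_{kl})w$ by Proposition~\ref{prop5.5}, which is exactly what your vacuum and recovery steps need.

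Second, in part (2) the round trip starting from $G$ breaks. The forward functor equips $\Psi(G)$ with its canonical grading, and nothing forces this to equal the given grading of $G$. Take $V=M(1)$, the rank-one Heisenberg vertex operator algebra, with $g=1$. The ideal $Q^{1,\infty}(V)$ annihilates $M(1,0)\oplus M(1,\sqrt{2})$ with its canonical grading, in which $M(1,\sqrt{2})$ sits in degrees $\ge 1$. Restricting to that summand, $G=M(1,\sqrt{2})$ with $G(n)=M(1,\sqrt{2})_n$ and $L_G(0)=L(0)$ is a graded $A^{1,\infty}(V)$-module, even with finite-dimensional weight spaces. But $G(0)=0$, whereas every nonzero ordinary module has nonzero degree-$0$ part in its canonical grading. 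Since $[\mathbf{1}]_{00}$ acts as the projection onto degree $0$, $G$ is not isomorphic to the image of any ordinary module. Part (2) therefore also needs the condition that $G(n)=\bigoplus_{\mu}G_{h^\mu+n}$ is the canonical grading determined by $L_G(0)$, in addition to the finite-dimensionality you noted. (Minor: the relevant element of $L_{k,l}(V)$ is $(L(-1)+L(0)+l-k)u$, not $k-l$. Also, the $L(-1)$-derivative property is not an axiom of weak twisted modules here, so that paragraph is unnecessary.)
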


\begin{proof}
  We prove (1) only; the proof of (2) is analogous.

  Let $W$ be a graded $\mathbf{A}^{g,\infty}(V)$-module with structure map $\vartheta_W$. For homogeneous $v \in V$, $w \in W(l)$ with $l \in \frac{1}{T}\mathbb{N}$, and $k \in \frac{1}{T}\mathbb{N}$, define
  \[
    v_{\operatorname{wt} v + l - k - 1} w
    =
    \begin{cases}
      \vartheta_W\bigl([v]_{kl} + \mathbf{Q}^{g,\infty}(V)\bigr) w, & k \geq 0, \\
      0, & k < 0.
    \end{cases}
  \]
  Set
  \[
    Y_W(v, x) w = \sum_{k \in \frac{1}{T}\mathbb{N}} v_{\operatorname{wt} v + l - k - 1} w \, x^{-\operatorname{wt} v - l + k}.
  \]
  By definition, $Y_W(v, x) w$ is lower truncated. If $v \in V^r$ and $\bar{l} - \bar{k} \not\equiv r \pmod{T}$, then $v \in O'_{g,k,l}(V)$ by~\eqref{eq2.6}, so $[v]_{kl} \in \mathbf{O}^{g,\infty}(V) \subseteq \mathbf{Q}^{g,\infty}(V)$. Hence $Y_W(v, x) w \in x^{-\frac{r}{T}} W((x))$.
  For any admissible $g$-twisted $V$-module $W_0$ and $w_0 \in W_0$, we have
  \[
    \vartheta_{W_0}\bigl([\mathbf{1}]_{kl}\bigr) w_0
    = \operatorname{Res}_x x^{l-k-1} w_0
    = \delta_{kl} w_0
    = \delta_{kl} \vartheta_{W_0}\bigl([\mathbf{1}]_{ll}\bigr) w_0
  \]
  for all $k, l \in \frac{1}{T}\mathbb{N}$. Thus $[\mathbf{1}]_{kl} - \delta_{kl}[\mathbf{1}]_{ll} \in \ker \vartheta_{W_0}$. Since $W_0$ is arbitrary, this element lies in $\mathbf{Q}^{g,\infty}(V)$. Consequently, for $w \in W(l)$,
  \[
    \vartheta_W\bigl([\mathbf{1}]_{kl} + \mathbf{Q}^{g,\infty}(V)\bigr) w
    = \delta_{kl} \vartheta_W\bigl(\mathbf{1}^{\infty} + \mathbf{Q}^{g,\infty}(V)\bigr) w
    = \delta_{kl} w,
  \]
  which yields $Y_W(\mathbf{1}, x) w = w$.

  Let $u \in V^r$, $v \in V^s$, $w \in W(q)$ with $q \in \frac{1}{T}\mathbb{N}$, and let $l \in \mathbb{Z}$, $m \in \frac{r}{T}+\mathbb{Z}$, $n \in \frac{s}{T}+\mathbb{Z}$. Set $K = q + \operatorname{wt} u + \operatorname{wt} v - 2 - n - m - l$.

  \emph{Case 1:} $K < 0$. By the definition of $Y_W$ and the grading of $W$, both sides of the twisted Jacobi identity vanish identically.

  \emph{Case 2:} $K \geq 0$. Using the definition of the mode operators and the homomorphism property of $\vartheta_W$, we compute
  \begin{align*}
    &\sum_{i \in \mathbb{N}} (-1)^i \binom{l}{i}
      \bigl( u_{m+l-i} v_{n+i} - (-1)^l v_{n+l-i} u_{m+i} \bigr) w
      - \sum_{i \in \mathbb{N}} \binom{m}{i} (u_{l+i} v)_{m+n-i} w \\
    &= \sum_{\substack{i \in \mathbb{N} \\ q+\operatorname{wt} v-1-n-i \geq 0}}
       (-1)^i \binom{l}{i}
       \vartheta_W\bigl([u]_{K,\, q+\operatorname{wt} v-1-n-i} + \mathbf{Q}^{g,\infty}(V)\bigr) \\
    &\quad \times \vartheta_W\bigl([v]_{q+\operatorname{wt} v-1-n-i,\, q} + \mathbf{Q}^{g,\infty}(V)\bigr) w \\
    &\quad - \sum_{\substack{i \in \mathbb{N} \\ \operatorname{wt} u-1-m-i+q \geq 0}}
       (-1)^{i+l} \binom{l}{i}
       \vartheta_W\bigl([v]_{K,\, \operatorname{wt} u-1-m-i+q} + \mathbf{Q}^{g,\infty}(V)\bigr) \\
    &\quad \times \vartheta_W\bigl([u]_{\operatorname{wt} u-1-m-i+q,\, q} + \mathbf{Q}^{g,\infty}(V)\bigr) w \\
    &\quad - \sum_{i \in \mathbb{N}} \binom{m}{i}
       \vartheta_W\bigl([u_{l+i} v]_{K,\, q} + \mathbf{Q}^{g,\infty}(V)\bigr) w.
  \end{align*}
  Since $\vartheta_W$ is an algebra homomorphism, this equals
  \begin{align*}
    &\vartheta_W\Biggl(
      \left(\sum_{\substack{i \in \mathbb{N} \\ q+\operatorname{wt} v-1-n-i \geq 0}}
        (-1)^i \binom{l}{i}
        [u]_{K,\, q+\operatorname{wt} v-1-n-i} \diamond_g [v]_{q+\operatorname{wt} v-1-n-i,\, q}\right.\\
    & - \sum_{\substack{i \in \mathbb{N} \\ \operatorname{wt} u-1-m-i+q \geq 0}}
        (-1)^{i+l} \binom{l}{i}
        [v]_{K,\, \operatorname{wt} u-1-m-i+q} \diamond_g [u]_{\operatorname{wt} u-1-m-i+q,\, q}\\
    & \left.- \sum_{i \in \mathbb{N}} \binom{m}{i} [u_{l+i} v]_{K,\, q}\right)
      + \mathbf{Q}^{g,\infty}(V)
    \Biggr) w,
  \end{align*}
  which vanishes by Proposition~\ref{prop5.11}. This establishes the twisted Jacobi identity.

  The above construction defines a functor $\Psi$ from graded $\mathbf{A}^{g,\infty}(V)$-modules to admissible $g$-twisted $V$-modules. For any graded $\mathbf{A}^{g,\infty}(V)$-module $W$, the induced $\mathbf{A}^{g,\infty}(V)$-module structure on $\Psi(W)$ coincides with the original one by construction. Hence $\Psi$ is the inverse of the canonical functor from admissible $g$-twisted $V$-modules to graded $\mathbf{A}^{g,\infty}(V)$-modules, completing the proof.
\end{proof}

For a vertex operator algebra $(V, \omega)$ of central charge $c$, there is a second vertex operator algebra structure on the same underlying vector space, denoted $\exp(V, \omega)$, with vertex operators defined by
\[
Y[u, z] := Y\bigl(e^{z L(0)} u, e^z - 1\bigr), \quad u \in V,
\]
vacuum vector $\mathbf{1}$, and new conformal vector $\tilde{\omega} = \omega - \frac{c}{24} \mathbf{1}$ (see~\cite{ZYC1}). 
By \cite[Lemma 8.7]{Shun2} and Remark \ref{rmk2.17}, we have:
\begin{lem}\label{lem5.15}
    $\mathbf{A}^{g,\infty}(V,\omega)={\tilde{\mathbf{A}}}^{g,\infty}(\operatorname{exp}(V,\omega))$.
\end{lem}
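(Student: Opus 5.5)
Both algebras are quotients of the same space $U^{g,\infty}(V)$, so I would prove the lemma by showing that the products and the ideals coincide. On the left, $\mathbf{A}^{g,\infty}(V,\omega)=U^{g,\infty}(V)/\mathbf{Q}^{g,\infty}(V)$ carries the product $\diamond_g$, built from the operations $*_{g,m,p}^{n}$ of $(V,\omega)$. On the right, $\tilde{\mathbf{A}}^{g,\infty}(\exp(V,\omega))=U^{g,\infty}(V)/\tilde{\mathbf{Q}}^{g,\infty}(V)$ carries the product $\circledast_g$, built from the operations $\bullet_{g,m,p}^{n}$ of the vertex algebra $(V,Y[\cdot,z],\mathbf{1})$. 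Note that $g$ fixes $\omega$, hence commutes with $L(0)$, so $g$ is also an automorphism of $\exp(V,\omega)$ with the same eigenspaces $V^r$. The statement is thus an identification of the underlying vector spaces together with their products.

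\textbf{Step 1 (products).} I would compare $u\bullet_{g,m,p}^{n}v$, computed in $\exp(V,\omega)$, with $u*_{g,m,p}^{n}v$. In $\exp(V,\omega)$ we have
\[
Y[u,\log(1+y)]v = Y\bigl(e^{\log(1+y)L(0)}u,\,y\bigr)v = Y\bigl((1+y)^{L(0)}u,\,y\bigr)v .
\]
Substituting this into the definition of $\bullet_{g,m,p}^{n}$ reproduces the residue formula defining $*_{g,m,p}^{n}$ term by term, with the same binomial coefficients and exponents. Hence $\bullet_{g,m,p}^{n}=*_{g,m,p}^{n}$ for all $m,n,p$, and so $\circledast_g=\diamond_g$ on $U^{g,\infty}(V)$.

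Similarly, $u\diamond_{g,m}^{n}v=u\circ_{g,m}^{n}v$. For the other generators, the $\mathcal D$-operator of $\exp(V,\omega)$ is read off from $Y[u,z]\mathbf{1}=e^{(e^z-1)L(-1)}e^{zL(0)}u$. Its $z^1$ coefficient gives $\mathcal D u=(L(-1)+L(0))u$, so $\tilde L_{n,m}(\exp V)=L_{n,m}(V)$. It follows that $\tilde O'_{g,n,m}(\exp V)=O'_{g,n,m}(V)$.

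\textbf{Step 2 (ideals).} The previous subsection already gives $\tilde{\mathbf{Q}}^{g,\infty}=\tilde{\mathbf{O}}^{g,\infty}$, and Proposition~\ref{prop5.8} gives $\mathbf{Q}^{g,\infty}=\mathbf{O}^{g,\infty}$. It therefore suffices to show $\tilde O_{g,k,l}(\exp V)=O'_{g,k,l}(V)$ for all $k,l$.

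By Step 1, the spaces $\tilde O''_{g,k,l}(\exp V)$ and $\tilde O'''_{g,k,l}(\exp V)$ are literally $O''_{g,k,l}(V)$ and $O'''_{g,k,l}(V)$. Hence $\tilde O_{g,k,l}(\exp V)=O_{g,k,l}(V)$, and this equals $O'_{g,k,l}(V)$ by Remark~\ref{rmk2.17}. This is precisely the content cited from \cite[Lemma 8.7]{Shun2}, which I would invoke for the componentwise equality $\tilde O_{g,k,l}(\exp V)=O_{g,k,l}(V)$. Taking infinite column-finite combinations entrywise then yields $\tilde{\mathbf{O}}^{g,\infty}(\exp V)=\mathbf{O}^{g,\infty}(V)$, and consequently $\mathbf{Q}^{g,\infty}(V)=\tilde{\mathbf{Q}}^{g,\infty}(\exp V)$.

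\textbf{Main obstacle.} The delicate point is Step 1's identification of $\bullet$ with $*$, including the branch of $(1+y)^{r/T}$ and the twisted exponents. One must check that the eigenspace label $r$, and hence the $\delta$-terms, are unchanged. This holds because $g$ commutes with $L(0)$ and $L(0)$ preserves each $V^r$. Once this is settled, the two quotients agree as vector spaces with identical products and identical unit $\mathbf{1}^\infty$, which gives the equality asserted in the lemma.
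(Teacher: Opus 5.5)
Your proposal is correct and follows essentially the same route as the paper, whose proof consists only of citing Lemma~8.7 of \cite{Shun2} together with Remark~\ref{rmk2.17}. Your Step~1 (the identity $Y[u,\log(1+y)]v=Y\bigl((1+y)^{L(0)}u,y\bigr)v$ giving $\bullet_{g,m,p}^{n}=*_{g,m,p}^{n}$, and $\mathcal{D}=L(-1)+L(0)$ on $\exp(V,\omega)$) spells out what that citation supplies; combining it with $\tilde{\mathbf{Q}}^{g,\infty}=\tilde{\mathbf{O}}^{g,\infty}$ and Proposition~\ref{prop5.8} identifies both the ideals and the products, as the lemma requires.
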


\begin{lem}\label{lem5.16}
     $\mathbf{A}^{g,\infty}(V,\omega)\cong{\tilde{\mathbf{A}}}^{g,\infty}(V,\omega)$.
\end{lem}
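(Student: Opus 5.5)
The plan is to combine Lemma~\ref{lem5.15} with a transport of structure along an isomorphism of vertex algebras. By Lemma~\ref{lem5.15}, $\mathbf{A}^{g,\infty}(V,\omega)={\tilde{\mathbf{A}}}^{g,\infty}(\exp(V,\omega))$. It therefore suffices to prove
\[
{\tilde{\mathbf{A}}}^{g,\infty}(\exp(V,\omega))\cong{\tilde{\mathbf{A}}}^{g,\infty}(V,\omega).
\]
Note that ${\tilde{\mathbf{A}}}^{g,\infty}$ depends only on the underlying vertex algebra $(V,Y,\mathbf{1})$ and on $g$. Indeed, it is defined from $U^{g,\infty}(V)$, the products $\bullet_{g,m,p}^{\,n}$ (built from $Y$ and $\log(1+y)$), and the ideal $\tilde{\mathbf{Q}}^{g,\infty}(V)=\tilde{\mathbf{O}}^{g,\infty}(V)$, which is defined through $\frac{1}{T}\mathbb{N}$-graded $g$-twisted $\phi$-coordinated modules. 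So the key input is an isomorphism of vertex algebras $\exp(V,\omega)\cong(V,Y,\mathbf{1})$ commuting with $g$.

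Such an isomorphism is supplied by Zhu's change of coordinates. Zhu \cite{ZYC1} (see also Huang's work) shows that the vertex algebra $(V,Y[\cdot,z],\mathbf{1})$ is isomorphic to $(V,Y,\mathbf{1})$ via a linear map $\Phi=e^{\sum_{j\ge1}c_jL(j)}\,$(possibly composed with a scaling) built from the operators $L(j)$, $j\geq 1$. Concretely, $\Phi$ is the operator implementing the change of variable $z\mapsto e^z-1$. It satisfies $\Phi Y(u,z)\Phi^{-1}=Y[\Phi u,z]$, or the inverse relation depending on conventions. Since $g$ fixes $\omega$, it commutes with every $L(j)$ and hence with $\Phi$.

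Given $\Phi$, I would define $\hat\Phi\colon U^{g,\infty}(V)\to U^{g,\infty}(V)$ entrywise by $[v_{kl}]\mapsto[\Phi v_{kl}]$. This map is bijective, preserves column-finiteness, and preserves the eigenspaces $V^r$. Because $\Phi$ intertwines the two vertex operators, it intertwines the products $\bullet^{\,n}_{g,m,p}$ computed in $\exp(V,\omega)$ and in $V$. It likewise intertwines $\mathcal{D}$, since $\mathcal{D}v=v_{-2}\mathbf{1}$ is defined from $Y$, and $\Phi\mathbf{1}=\mathbf{1}$. Hence $\hat\Phi$ is an isomorphism of the nonassociative algebras $(U^{g,\infty},\circledast_g)$.

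It remains to check that $\hat\Phi$ maps the ideals onto each other. I would check this at the level of modules. If $(W,Y_W)$ is a $\frac{1}{T}\mathbb{N}$-graded $g$-twisted $\phi$-coordinated module for one algebra, then $(W,Y_W(\Phi^{\pm1}\cdot,x))$ is one for the other. The twisting condition holds because $\Phi g=g\Phi$, and the $\phi$-Jacobi identity \eqref{phi_jacobi} holds because $\Phi$ is a vertex algebra isomorphism. Thus $\tilde\vartheta_W(\hat\Phi\mathfrak v)$ equals the action of $\mathfrak v$ on the transported module. This gives $\hat\Phi(\tilde{\mathbf{Q}}^{g,\infty}(\exp V))=\tilde{\mathbf{Q}}^{g,\infty}(V)$, and the induced map on quotients is the desired algebra isomorphism. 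Alternatively, one can check directly that $\Phi$ maps $\tilde O_{g,k,l}(\exp V)$ onto $\tilde O_{g,k,l}(V)$ and use the description $\tilde{\mathbf{Q}}=\tilde{\mathbf{O}}$.

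The main obstacle is pinning down the precise isomorphism $\Phi\colon \exp(V,\omega)\to(V,Y,\mathbf{1})$ with the correct conventions. In particular, one needs $\Phi\mathbf 1=\mathbf 1$ and the exact intertwining identity, which is a nontrivial theorem of Zhu and Huang on the geometric change of coordinates. If $\Phi$ is not literally available in that form, I would instead compare the two graded-module categories. Lemma~\ref{lem5.15} and Theorem~\ref{thm4.12} reduce the question to showing that $\frac{1}{T}\mathbb{N}$-graded $g$-twisted $\phi$-coordinated modules for $\exp(V,\omega)$ and for $V$ are identified compatibly with the gradings, and that identification rests on the same coordinate change.
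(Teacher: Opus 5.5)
Your proposal is correct and follows essentially the same route as the paper. The paper likewise uses Lemma~\ref{lem5.15} to reduce the claim to $\tilde{\mathbf{A}}^{g,\infty}(V,\omega)\cong\tilde{\mathbf{A}}^{g,\infty}(\exp(V,\omega))$, which it deduces from Huang's change-of-variable formula $Y[u,z]=e^{-L_+(B)}Y(e^{L_+(B)}u,z)e^{L_+(B)}$ with $L_+(B)=\sum_{j\ge1}B_jL(j)$; this $e^{L_+(B)}$ is exactly your $\Phi$, and no scaling is needed. Your extra checks that $\Phi$ commutes with $g$ and transports the ideal $\tilde{\mathbf{Q}}^{g,\infty}$ are details the paper leaves implicit.
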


\begin{proof}
    Let $B_j$ ($j \in \mathbb{N}$) be the rational numbers defined by
\[
\log(1 + y) = \left( \exp\left( \sum_{j=1}^\infty B_j y^{j+1} \frac{d}{dy} \right) \right) y.
\]
Set $L_+(B) := \sum_{j=1}^\infty B_j L(j)$. By the change-of-variable formula in~\cite{HYZ5},
\[
Y[u, z] = e^{-L_+(B)} Y\bigl(e^{L_+(B)} u, z\bigr) e^{L_+(B)},
\]
which implies that the vertex algebras $(V, Y, \mathbf{1})$ and $\exp(V, \omega)$ are isomorphic via the map $e^{L_+(B)}$. Consequently,
\[
\tilde{\mathbf{A}}^{g,\infty}(V,\omega)\cong{\tilde{\mathbf{A}}}^{g,\infty}(\operatorname{exp}(V,\omega)).
\]
By Lemma~\ref{lem5.15}, the right-hand side equals ${{\mathbf{A}}}^{g,\infty}(V,\omega)$. Hence the desired isomorphisms follow.
\end{proof}

From Theorems~\ref{thm4.12},~\ref{thm5.14} and Lemma~\ref{lem5.16}, we have
\begin{cor}
    There exists a bijection between  admissible $g$-twisted $V$-modules and  $\frac{1}{T}\mathbb{N}$-graded $g$-twisted $\phi$-coordinated $V$-modules.
\end{cor}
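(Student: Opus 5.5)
The plan is to chain together three results already proved: the category isomorphism of Theorem~\ref{thm4.12}, the category isomorphism of Theorem~\ref{thm5.14}(1), and the algebra isomorphism of Lemma~\ref{lem5.16}. Composing them should give the bijection. The idea is to pass from admissible $g$-twisted $V$-modules to graded $\mathbf{A}^{g,\infty}(V)$-modules, then transport along the isomorphism $\mathbf{A}^{g,\infty}(V)\cong\tilde{\mathbf{A}}^{g,\infty}(V)$, and finally return to $\frac{1}{T}\mathbb{N}$-graded $g$-twisted $\phi$-coordinated $V$-modules.

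The steps, in order, are as follows.

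First, by Theorem~\ref{thm5.14}(1), an admissible $g$-twisted module $W$ is the same as a graded $\mathbf{A}^{g,\infty}(V)$-module, with the grading $W=\bigoplus W(n)$ unchanged.

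Second, I make the isomorphism of Lemma~\ref{lem5.16} explicit. It is the composite of the equality $\mathbf{A}^{g,\infty}(V,\omega)=\tilde{\mathbf{A}}^{g,\infty}(\exp(V,\omega))$ from Lemma~\ref{lem5.15} with the isomorphism induced by the vertex algebra isomorphism $e^{L_+(B)}\colon \exp(V,\omega)\to (V,Y,\mathbf{1})$. This map acts entrywise on matrices, $[v_{kl}]\mapsto[e^{\pm L_+(B)}v_{kl}]$, and commutes with $g$ because $g$ fixes $\omega$. Consequently it preserves matrix positions $(k,l)$. The point to verify is that it carries $\tilde{\mathbf{Q}}^{g,\infty}$ of one vertex algebra onto $\tilde{\mathbf{Q}}^{g,\infty}$ of the other. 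This holds because a vertex algebra isomorphism commuting with $g$ identifies their categories of graded twisted $\phi$-coordinated modules by pullback, and it also intertwines the products $\circledast_g$, since those are defined purely from $Y$.

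Third, because the isomorphism preserves the index pair $(k,l)$, pulling back a graded $\tilde{\mathbf{A}}^{g,\infty}(V)$-module along it produces a graded $\mathbf{A}^{g,\infty}(V)$-module, and conversely. The grading condition ``$[v]_{kl}$ maps $G(l)$ to $G(k)$ and kills $G(n)$ for $n\neq l$'' is therefore preserved in both directions. The result is an isomorphism between the two categories of graded modules.

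Fourth, Theorem~\ref{thm4.12} identifies graded $\tilde{\mathbf{A}}^{g,\infty}(V)$-modules with $\frac{1}{T}\mathbb{N}$-graded $g$-twisted $\phi$-coordinated $V$-modules. Composing the three isomorphisms of categories gives an isomorphism of categories, which in particular induces the asserted bijection on objects. The inverse is the composite of the inverse functors.

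I expect the second step to be the main obstacle, because Lemma~\ref{lem5.16} is stated only as an abstract algebra isomorphism. One has to check that it respects the matrix-unit grading, and that $e^{L_+(B)}$ commutes with $g$ and sends $V^r$ to $V^r$. Both facts follow from $g(\omega)=\omega$, which gives $gL(j)g^{-1}=L(j)$. Without this compatibility, the grading condition in the definition of graded modules could fail to transfer. Once compatibility is established, the rest of the argument is formal.
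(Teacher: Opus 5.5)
Your proposal is correct and follows the paper's own argument, which simply composes Theorem~\ref{thm4.12}, Theorem~\ref{thm5.14}(1) and Lemma~\ref{lem5.16}. Your second step supplies a detail the paper leaves implicit: the isomorphism of Lemma~\ref{lem5.16} acts entrywise as $[v]_{kl}\mapsto[e^{\pm L_+(B)}v]_{kl}$, because Lemma~\ref{lem5.15} is a literal equality of quotients of $U^{g,\infty}(V)$ and $e^{L_+(B)}$ commutes with $g$, so it carries graded modules to graded modules, and this check is sound.
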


\section{Twisted Intertwining Operators and $A^{g_3,\infty}(V)$-$A^{g_2,\infty}(V)$-Bimodules}

Let $V$ be a vertex operator algebra, and let $g_1, g_2, g_3$ be commuting automorphisms of $V$ of finite order such that $g_1 g_2 = g_3$ and $g_i^T = 1$ for $i = 1, 2, 3$ and some fixed $T \in \mathbb{N}$. In this section, for a given $g_1$-twisted $V$-module $W$, we construct an $A^{g_3,\infty}(V)$-$A^{g_2,\infty}(V)$-bimodule $A^{g_3,g_2,\infty}(W)$ using intertwining operators of type $\binom{W_3}{W\; W_2}$, where $W_2$ and $W_3$ range over all $g_2$-twisted and $g_3$-twisted $V$-modules, respectively.

Let $U^{g_3,g_2,\infty}(W)$ denote the space of all column-finite infinite matrices with entries in $W$, doubly indexed by $\frac{1}{T}\mathbb{N}$. For $w \in W$ and $k, l \in \frac{1}{T}\mathbb{N}$, let $[w]_{kl}$ denote the matrix in $U^{g_3,g_2,\infty}(W)$ whose $(k,l)$-entry is $w$ and all other entries are zero. Every element of $U^{g_3,g_2,\infty}(W)$ can be expressed as an infinite linear combination of such matrices $[w]_{kl}$, subject to the column-finiteness condition. Hence it suffices to study $U^{g_3,g_2,\infty}(W)$ via the generators $[w]_{kl}$ with $w \in W$ and $k, l \in \frac{1}{T}\mathbb{N}$.

For $x \in \frac{1}{T}\mathbb{Z}$ and $r \in \{0, 1, \dots, T-1\}$, define
\[
  \lambda(x, r) = -1 + \lfloor x \rfloor + \delta_{\bar{x}}(r) + \frac{r}{T}.
\]
For $p, n, m \in \frac{1}{T}\mathbb{N}$, $a \in V^{(j_1,j_2)}$, and $v \in W$, define a bilinear map $V^{(j_1,j_2)} \times W \to W$ by
\begin{align*}
  a \mathbin{\bar{*}_{g_3,m,p}^n} v
  &= \operatorname{Res}_z \sum_{i=0}^{\lfloor p \rfloor} (-1)^i \binom{\lambda(m, j_2) + n - p + i}{i} \\
  &\quad \times \frac{(1+z)^{\lambda(m,j_2)}}{z^{\lambda(m,j_2)+n-p+i+1}} Y_W\!\bigl((1+z)^{L_V(0)} a, z\bigr) v.
\end{align*}
Extend this map linearly to $V \times W \to W$. Similarly, define a bilinear map $W \times V^{(j_1,j_2)} \to W$ by
\begin{align*}
  v \mathbin{\underline{*}_{g_2,m,p}^n} a
  &= \operatorname{Res}_z \sum_{i=0}^{\lfloor p \rfloor} (-1)^{p-m-\lambda(n,j_3^\vee)} \binom{\lambda(n,j_3^\vee) + m - p + i}{i} \\
  &\quad \times \frac{(1+z)^{\lambda(m,j_2)-\lfloor p \rfloor+i-1}}{z^{\lambda(n,j_3^\vee)+m-p+i+1}} Y_W\!\bigl((1+z)^{L_V(0)} g_1^{-1} a, z\bigr) v,
\end{align*}
where $j_3^\vee \in \{0, 1, \dots, T-1\}$ satisfies $j_3^\vee \equiv -j_1 - j_2 \bmod T $. Extend this map linearly to $W \times V \to W$.

The left action of $U^{g_3,\infty}(V)$ on $U^{g_3,g_2,\infty}(W)$ is defined as follows. For $v \in V$, $w \in W$, and $k, m, n, l \in \frac{1}{T}\mathbb{N}$, set
\[
  [v]_{km} \diamond_{g_3} [w]_{nl} = 0 \quad \text{if } m \neq n,
\]
and
\[
  [v]_{kn} \diamond_{g_3} [w]_{nl} = [v \mathbin{\bar{*}_{g_3,l,n}^k} w]_{kl}.
\]
The right action of $U^{g_2,\infty}(V)$ on $U^{g_3,g_2,\infty}(W)$ is defined similarly. For $v \in V$, $w \in W$, and $k, m, n, l \in \frac{1}{T}\mathbb{N}$, set
\[
  [w]_{km} \diamond_{g_2} [v]_{nl} = 0 \quad \text{if } m \neq n,
\]
and
\[
  [w]_{kn} \diamond_{g_2} [v]_{nl} = [w \mathbin{\underline{*}_{g_2,l,n}^k} v]_{kl}.
\]
Equipped with these left and right actions, $U^{g_3,g_2,\infty}(W)$ becomes a $U^{g_3,\infty}(V)$-$U^{g_2,\infty}(V)$-bimodule.

Let $W_2$ and $W_3$ be $g_2$-twisted and $g_3$-twisted $V$-modules, respectively, and let $\mathcal{Y}$ be an intertwining operator of type $\binom{W_3}{W\; W_2}$. We further require that for any $w \in W$ and $w_2 \in W_2$, the powers of the formal variable $x$ appearing in $\mathcal{Y}(w, x) w_2$ belong to only finitely many congruence classes modulo $\frac{1}{T}\mathbb{Z}$. An intertwining operator satisfying this condition is said to have \emph{locally finite power congruence classes}. Throughout this paper, the term ``intertwining operator'' always refers to one with locally finite power congruence classes. Accordingly, any space of intertwining operators mentioned herein consists exclusively of such operators.

As discussed in Subsection~\ref{subsec2.1}, for each $\mu \in \Gamma(W_2)$ and $\nu \in \Gamma(W_3)$, there exist complex numbers $h_2^\mu, h_3^\nu \in \mathbb{C}$ such that
\[
  W_2 = \bigoplus_{n \in \frac{1}{T}\mathbb{N}} W_2(n)
      = \bigoplus_{n \in \frac{1}{T}\mathbb{N}} \bigoplus_{\mu \in \Gamma(W_2)} (W_2)_{h_2^\mu + n},
  \qquad
  W_3 = \bigoplus_{n \in \frac{1}{T}\mathbb{N}} W_3(n)
      = \bigoplus_{n \in \frac{1}{T}\mathbb{N}} \bigoplus_{\nu \in \Gamma(W_3)} (W_3)_{h_3^\nu + n}.
\]
For $w \in W$, the intertwining operator admits the expansion
\[
  \mathcal{Y}(w, x) = \sum_{m \in \mathbb{C}} w_m \, x^{-m-1},
\]
where for homogeneous $w \in W$ and $m \in \mathbb{C}$, the component map $w_m \colon W_2 \to W_3$ is homogeneous of weight $\operatorname{wt} w - m - 1$.

For $k, l \in \frac{1}{T}\mathbb{N}$, $\mu \in \Gamma(W_2)$, $w \in W$, and $w_2 \in (W_2)_{h_2^\mu + l} \subset W_2(l)$, define
$$
  \vartheta_{\mathcal{Y}}\bigl([w]_{kl}\bigr) w_2
  = \sum_{\nu \in \Gamma(W_3)} \operatorname{Res}_x \,
    x^{h_2^\mu - h_3^\nu + l - k - 1}
    \mathcal{Y}\!\bigl(x^{L_W(0)} w, x\bigr) w_2
  \;\in\; (W_3)_{h_3^\nu + k} \subset W_3(k).
$$
The sum on the right-hand side is finite because $\mathcal{Y}(x^{L_W(0)} w, x) w_2$ involves only finitely many congruence classes of powers of $x$. This yields a well-defined linear map
\[
  \vartheta_{\mathcal{Y}}\bigl([w]_{kl}\bigr) \in \operatorname{Hom}(W_2, W_3),
\]
where $w \in W$ and $k, l \in \frac{1}{T}\mathbb{N}$. We obtain a linear map
\[
  \vartheta_{\mathcal{Y}} \colon U^{g_3,g_2,\infty}(W) \to \operatorname{Hom}(W_2, W_3).
\]

Since $W_2$ is a left $A^{g_2,\infty}(V)$-module and $W_3$ is a left $A^{g_3,\infty}(V)$-module, the space $\operatorname{Hom}(W_2, W_3)$ carries a natural $A^{g_3,\infty}(V)$-$A^{g_2,\infty}(V)$-bimodule structure. In particular, the induced left action of $U^{g_3,\infty}(V)$ and right action of $U^{g_2,\infty}(V)$ on $\operatorname{Hom}(W_2, W_3)$ satisfy
\[
  Q^{g_3,\infty}(V) \cdot \operatorname{Hom}(W_2, W_3) = 0
  \quad\text{and}\quad
  \operatorname{Hom}(W_2, W_3) \cdot Q^{g_2,\infty}(V) = 0.
\]

The following results follow from Propositions 3.1 and 3.2 in \cite{ZYY1}. Although the authors assumed therein that $W$ is an irreducible $g_1$-twisted module and $W_i$ is an irreducible $g_i$-twisted module for $i = 2, 3$, their proofs remain valid for general twisted modules without the irreducibility assumption.

\begin{prop}
  Let $v \in V$, $w \in W$, $m, p, n \in \frac{1}{T}\mathbb{N}$, $\mu \in \Gamma(W_2)$, $\nu \in \Gamma(W_3)$, and $w^{(2)} \in (W_2)_{h_2^\mu + m}$. Then the following identities hold:
  \begin{align}
    &\operatorname{Res}_x \, x^{h_2^\mu - h_3^\nu + m - n - 1}
      \mathcal{Y}\!\bigl(x^{L_W(0)}(v \mathbin{\bar{*}_{g_3,m,p}^n} w), x\bigr) w^{(2)} \notag \\
    &\quad = \operatorname{Res}_{x_1} \, x_1^{p-n-1} Y_{W_3}\!\bigl(x_1^{L_V(0)} v, x_1\bigr)
      \operatorname{Res}_{x_2} \, x_2^{h_2^\mu - h_3^\nu + m - p - 1}
      \mathcal{Y}\!\bigl(x_2^{L_W(0)} w, x_2\bigr) w^{(2)}, \label{eq6.1} \\[8pt]
    &\operatorname{Res}_x \, x^{h_2^\mu - h_3^\nu + m - n - 1}
      \mathcal{Y}\!\bigl(x^{L_W(0)}(w \mathbin{\underline{*}_{g_2,m,p}^n} v), x\bigr) w^{(2)} \notag \\
    &\quad = \operatorname{Res}_{x_1} \, x_1^{h_2^\mu - h_3^\nu + p - n - 1}
      \mathcal{Y}\!\bigl(x_1^{L_W(0)} w, x_1\bigr)
      \operatorname{Res}_{x_2} \, x_2^{m-p-1} Y_{W_2}\!\bigl(x_2^{L_V(0)} v, x_2\bigr) w^{(2)}. \label{eq6.2}
  \end{align}
\end{prop}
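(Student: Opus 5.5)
The plan is to prove \eqref{eq6.1} and \eqref{eq6.2} separately, in each case by a residue computation based on the component form \eqref{eq2.4} of the generalized Jacobi identity. Throughout, $w^{(2)}$ is homogeneous of weight $h_2^\mu+m$ and $a=v\in V^{(j_1,j_2)}$ is homogeneous; the general case then follows by linearity. The first step is to rewrite both sides in components. Since $x^{L_W(0)}w$ inserts $x^{\operatorname{wt}w}$, the left side of \eqref{eq6.1} is a single mode of the form $(v\,\bar{*}\,w)_{N}w^{(2)}$ with $N=\operatorname{wt}w+h_3^\nu-h_2^\mu+n-m-1$ plus the weight shift coming from $(1+z)^{L_V(0)}$. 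The right side is $v_{\operatorname{wt}v+p-n-1}\,w_{M}\,w^{(2)}$ with $M=\operatorname{wt}w+h_3^\nu-h_2^\mu+p-m-1$, where the intermediate vector lies in $W_3(p)$ in the $\nu$-sector. Expanding $\operatorname{Res}_z z^{-a}(1+z)^{b}Y_W((1+z)^{L_V(0)}v,z)w$ turns the left side into a finite combination of modes $(v_{j}w)_{N'}w^{(2)}$ with binomial coefficients. This is exactly the shape of the right-hand side of \eqref{eq2.4} with $m\in\frac{j_2}{T}+\mathbb{Z}$ chosen as $\lambda(m,j_2)$-shifted.

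For \eqref{eq6.1} I would follow the argument of \cite{DJ1} (cf.\ Lemma~\ref{lem5.3}) and apply \eqref{eq2.4} with the index $l$ ranging over negative values. The term $\sum_i(-1)^i\binom{l}{i}w_{n_1+l-i}v_{m+i}w^{(2)}$ vanishes because $v_{m+i}w^{(2)}=0$: the mode $v_{m+i}$ lowers the level below $0$, using $\lambda(m,j_2)$ and the $\delta_{\bar m}(j_2)$ bookkeeping. This leaves a combination of $v_{\cdot}w_{\cdot}w^{(2)}$ terms on the left. The alternating sum $\sum_{i=0}^{\lfloor p\rfloor}(-1)^i\binom{\cdot+i}{i}$ in the definition of $\bar{*}_{g_3,m,p}^n$ is designed, via Lemma~\ref{shuncomb lemm}, to invert the binomial triangle so that exactly one term survives, namely the one where the intermediate level equals $p$. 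The surviving contributions from levels other than $p$ are killed by the projection onto $W_3(n)$, or vanish because the intermediate vector would sit at negative level.

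For \eqref{eq6.2} I would use the skew-symmetry-type rearrangement. One applies \eqref{eq2.4} in the form that isolates $w_{\cdot}v_{\cdot}w^{(2)}$, now using $g_1^{-1}a$ and the $(-1)^{p-m-\lambda(n,j_3^\vee)}$ branch conventions fixed in Subsection~\ref{subsec2.1}. Here the vanishing condition is on the left-hand product, $v_{\cdot}\,w_{\cdot}w^{(2)}$ projected to level $n$, and it is why $j_3^\vee$ and $\lambda(n,j_3^\vee)$ appear. Alternatively, one can pass to the contragredient/transpose intertwining operator of type $\binom{W_2'}{W\,W_3'}$, which converts \eqref{eq6.2} into an instance of \eqref{eq6.1} for $g_2^{-1}$ and $g_3^{-1}$. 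I expect this to be cleaner, and I would try it first if the direct computation becomes unwieldy.

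The main obstacle will be the exponent bookkeeping. One must check that the fractional parts $\frac{j_1}{T},\frac{j_2}{T}$ and the shifts $h_2^\mu-h_3^\nu$ make the chosen $(l,m,n_1)$ legitimate in \eqref{eq2.4}, and that the truncation $i\le\lfloor p\rfloor$ is exactly where the relevant modes vanish. Since the statement says the proofs of \cite[Prop.~3.1, 3.2]{ZYY1} carry over, I would verify that no step there uses irreducibility. Irreducibility was only used to take $W_2(0)$ and $W_3(0)$ as single weight spaces, and the $\nu$-sector decomposition above replaces that.
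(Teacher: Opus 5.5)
Your proposal is correct and follows essentially the paper's route: the paper gives no independent argument and simply invokes \cite[Props.~3.1, 3.2]{ZYY1}, observing that irreducibility is never used. Your sketch is the residue computation from \eqref{eq2.4} behind those results. You choose the $(1+z)$-exponent so that the $\mathcal{Y}(w)Y_{W_2}(v)$ terms vanish for \eqref{eq6.1} (respectively, via $\lambda(n,j_3^\vee)$, the $Y_{W_3}(v)\mathcal{Y}(w)$ terms vanish for \eqref{eq6.2}, where the $g_1^{-1}$ absorbs the fractional phase $(-1)^l$). You then cancel the intermediate levels $p-s$, $s\geq 1$, by the binomial identity.

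There are a few minor slips. The mode indices should be $\operatorname{wt}w+h_2^\mu-h_3^\nu+m-n-1$ and $\operatorname{wt}w+h_2^\mu-h_3^\nu+m-p-1$; your signs are reversed. Nothing is ``killed by the projection onto $W_3(n)$'', since every term already lies in $(W_3)_{h_3^\nu+n}$. Finally, the contragredient shortcut is not literally an instance of \eqref{eq6.1}, because of the $e^{xL(1)}(-x^{-2})^{L(0)}$ twist, so the direct computation is the one to carry out.
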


\begin{prop}\label{prop6.2}
    
The linear map \({\vartheta }_{\mathcal{Y}}\) commutes with the left and right actions of \({U}^{g_3,\infty }\left( V\right)\) and \({U}^{g_2,\infty }\left( V\right)\). In particular, \({U}^{g_3,g_2,\infty }\left( W\right) /\ker {\vartheta }_{\mathcal{Y}}\) is an ${A}^{g_3,\infty }\left( V\right)$-${A}^{g_2,\infty }\left( V\right)$-bimodule.
\end{prop}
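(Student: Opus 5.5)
The plan is to check the intertwining property on the generators $[v]_{kn}$, $[w]_{nl}$, $[w]_{km}$, $[v]_{ml}$, extend it by linearity and column-finiteness, and then pass to quotients by ideals that act trivially. First, fix what ``commutes'' means. For $\mathfrak{v}\in U^{g_3,\infty}(V)$, $\mathfrak{w}\in U^{g_3,g_2,\infty}(W)$ and $\mathfrak{u}\in U^{g_2,\infty}(V)$, we want
\[
\vartheta_{\mathcal{Y}}(\mathfrak{v}\diamond_{g_3}\mathfrak{w})=\vartheta_{W_3}(\mathfrak{v})\circ\vartheta_{\mathcal{Y}}(\mathfrak{w}),\qquad
\vartheta_{\mathcal{Y}}(\mathfrak{w}\diamond_{g_2}\mathfrak{u})=\vartheta_{\mathcal{Y}}(\mathfrak{w})\circ\vartheta_{W_2}(\mathfrak{u}).
\]
Here $\vartheta_{W_i}$ is the action from Theorem~\ref{thm5.9}; recall that $W_i$ is an admissible $g_i$-twisted module via its canonical grading. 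Every matrix is a column-finite combination of matrix units, and for each $w_2$ only finitely many terms contribute. Hence it suffices to treat $\mathfrak{v}=[v]_{kn'}$, $\mathfrak{w}=[w]_{nl}$ and $\mathfrak{u}=[v]_{ml}$, with $v$ homogeneous in some $V^{(j_1,j_2)}$ and $w$ homogeneous.

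For the left action, consider $w_2\in (W_2)_{h_2^\mu+l'}$. If $n'\neq n$ or $l'\neq l$, both sides vanish: the left side by the definition of $\diamond_{g_3}$ and of $\vartheta_{\mathcal{Y}}$ on $W_2(l')$, and the right side because $\vartheta_{\mathcal{Y}}([w]_{nl})w_2\in W_3(n)$ is killed by $\vartheta_{W_3}([v]_{kn'})$ when $n'\neq n$. If $n'=n$ and $l'=l$, then
\[
\vartheta_{\mathcal{Y}}\bigl([v\mathbin{\bar{*}_{g_3,l,n}^k}w]_{kl}\bigr)w_2=\sum_\nu \operatorname{Res}_x x^{h_2^\mu-h_3^\nu+l-k-1}\mathcal{Y}\bigl(x^{L_W(0)}(v\mathbin{\bar{*}_{g_3,l,n}^k}w),x\bigr)w_2 .
\]
Apply \eqref{eq6.1} with $(m,p,n)\to(l,n,k)$. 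This rewrites each $\nu$-summand as $\operatorname{Res}_{x_1}x_1^{n-k-1}Y_{W_3}(x_1^{L_V(0)}v,x_1)$ applied to the $\nu$-component of $\vartheta_{\mathcal{Y}}([w]_{nl})w_2$. That component lies in $(W_3)_{h_3^\nu+n}\subset W_3(n)$, and this operator is exactly $\vartheta_{W_3}([v]_{kn})$ restricted to $W_3(n)$. Summing over $\nu$ gives the left identity.

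The right action is symmetric, using \eqref{eq6.2} with $(m,p,n)\to(l,m,k)$. Take $w_2\in (W_2)_{h_2^\mu+l}$. The operator $\operatorname{Res}_{x_2}x_2^{l-m-1}Y_{W_2}(x_2^{L_V(0)}v,x_2)w_2=\vartheta_{W_2}([v]_{ml})w_2$ lands in $(W_2)_{h_2^\mu+m}$. Here we use that $V$-modes preserve the congruence class $\mu$, so the same $h_2^\mu$ appears. The outer factor $\operatorname{Res}_{x_1}x_1^{h_2^\mu-h_3^\nu+m-k-1}\mathcal{Y}(\cdots)$ is then exactly $\vartheta_{\mathcal{Y}}([w]_{km})$ on that space. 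For general $w_2\in W_2(l)$, decompose over $\mu$.

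The bimodule statement follows formally from these identities. Theorem~\ref{thm5.10}(2) gives $\vartheta_{W_3}(Q^{g_3,\infty}(V))=0$ and $\vartheta_{W_2}(Q^{g_2,\infty}(V))=0$. Therefore $Q^{g_3,\infty}(V)\diamond_{g_3}U^{g_3,g_2,\infty}(W)$ and $U^{g_3,g_2,\infty}(W)\diamond_{g_2}Q^{g_2,\infty}(V)$ lie in $\ker\vartheta_{\mathcal{Y}}$. The same identities show that $\ker\vartheta_{\mathcal{Y}}$ is stable under both actions. So the actions descend to $U^{g_3,g_2,\infty}(W)/\ker\vartheta_{\mathcal{Y}}$, which $\vartheta_{\mathcal{Y}}$ embeds into the genuine $A^{g_3,\infty}(V)$-$A^{g_2,\infty}(V)$-bimodule $\operatorname{Hom}(W_2,W_3)$. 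Associativity, unitality and the compatibility of the left and right actions are then inherited from this injection, since $\mathbf{1}^\infty$ acts as the identity on $W_2$ and $W_3$.

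The main obstacle is the index bookkeeping with the congruence classes $h^\mu$ and $h^\nu$. One has to check that summing over $\nu$ in the definition of $\vartheta_{\mathcal{Y}}$ matches the grading $W_3(k)=\bigoplus_\nu(W_3)_{h_3^\nu+k}$, and that \eqref{eq6.1} and \eqref{eq6.2} hold for each $\nu$ separately. Local finiteness of power congruence classes ensures that all these sums are finite.
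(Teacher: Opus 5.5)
Your proposal is correct and follows essentially the same route as the paper. You verify both intertwining identities on matrix units via \eqref{eq6.1} and \eqref{eq6.2}, with the same index substitutions, and then give the quotient $U^{g_3,g_2,\infty}(W)/\ker\vartheta_{\mathcal{Y}}$ the bimodule structure of its image, a sub-bimodule of $\operatorname{Hom}(W_2,W_3)$. Your explicit handling of the $\nu$-sum, of the preservation of the class $\mu$ in the right action, and of the column-finite extension fills in bookkeeping that the paper leaves implicit.
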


\begin{proof}

Let $v \in V$ be homogeneous, and let $p, n, k, l \in \frac{1}{T}\mathbb{N}$. By definition, the left action of $[v]_{pn} \in U^{g_3,\infty}(V)$ on $[w]_{kl} \in U^{g_3,g_2,\infty}(W)$ is given by the product $\bar{*}_{g_3,l,k}^p$, namely
\[
    [v]_{pn} \diamond_{g_3} [w]_{kl} =\delta_{nk} [v \;\bar{*}_{g_3,l,k}^p\; w]_{pl}.
\]
When $n\not=k$, 
$
    \vartheta_{\mathcal{Y}}([v]_{pn} \diamond_{g_3} [w]_{kl})=0=\vartheta_{{W_3}}([v]_{pn} ) \vartheta_{\mathcal{Y}}([w]_{kl}).
$
So we consider the case $n=k$,
applying $\vartheta_{\mathcal{Y}}$ to this element and evaluating at $w_2 \in (W_2)_{h_2^\mu + l}$, we obtain
\[
    \vartheta_{\mathcal{Y}}\bigl([v \;\bar{*}_{g_3,l,k}^p\; w]_{pl}\bigr) w_2
    = \operatorname{Res}_x x^{h_2^\mu - h_3^\nu + l - p - 1}
      \mathcal{Y}\!\left(x^{L_W(0)}(v \;\bar{*}_{g_3,l,k}^p\; w), x\right) w_2.
\]
By \eqref{eq6.1}, this residue equals
\[
    \operatorname{Res}_{x_1} x_1^{k-p-1} Y_{W_3}(x_1^{L_V(0)} v, x_1)
    \operatorname{Res}_{x_2} x_2^{h_2^\mu - h_3^\nu + l - k - 1}
    \mathcal{Y}(x_2^{L_W(0)} w, x_2) w_2.
\]
Hence
\[
    \vartheta_{\mathcal{Y}}([v]_{pk} \diamond_{g_3} [w]_{kl}) = \vartheta_{{W_3}}([v]_{pk} ) \vartheta_{\mathcal{Y}}([w]_{kl}).
\]
Similarly, the right action of $[v]_{pn} \in U^{g_2,\infty}(V)$ on $[w]_{kl}$ is defined via $\underline{*}_{g_2,n,l}^k$:
\[
    [w]_{kl} \diamond_{g_2} [v]_{pn} = \delta_{lp}[w \;\underline{*}_{g_2,n,l}^k\; v]_{kn}.
\]
When $p\not=l$, $\vartheta_{\mathcal{Y}}([w]_{kl} \diamond_{g_2} [v]_{pn}) =0= \vartheta_{\mathcal{Y}}([w]_{kl}) \vartheta_{{W_2}}( [v]_{pn})$. So we consider the case $p=l$,
applying $\vartheta_{\mathcal{Y}}$ and evaluating at $w_2 \in (W_2)_{h_2^\mu + n}$ yields
\[
    \vartheta_{\mathcal{Y}}\bigl([w \;\underline{*}_{g_2,n,l}^k\; v]_{kn}\bigr) w_2
    = \operatorname{Res}_x x^{h_2^\mu - h_3^\nu + n - k - 1}
      \mathcal{Y}\!\left(x^{L_W(0)}(w \;\underline{*}_{g_2,n,l}^k\; v), x\right) w_2.
\]
By \eqref{eq6.2}, this equals
\[
    \operatorname{Res}_{x_1} x_1^{h_2^\mu - h_3^\nu + l - k - 1}
    \mathcal{Y}(x_1^{L_W(0)} w, x_1)
    \operatorname{Res}_{x_2} x_2^{n - l - 1} Y_{W_2}(x_2^{L_V(0)} v, x_2) w_2.
\]
Thus
\[
    \vartheta_{\mathcal{Y}}([w]_{kl} \diamond_{g_2} [v]_{ln}) = \vartheta_{\mathcal{Y}}([w]_{kl}) \vartheta_{{W_2}}( [v]_{ln}).
\]
We observe that the quotient $U^{g_3,g_2,\infty}(W) / \ker \vartheta_{\mathcal{Y}}$ is linearly isomorphic to the image $\vartheta_{\mathcal{Y}}\bigl(U^{g_3,g_2,\infty}(W)\bigr)$ in $\operatorname{Hom}(W_2, W_3)$. Since $\vartheta_{\mathcal{Y}}$ commutes with both the left action of $U^{g_3,\infty}(V)$ and the right action of $U^{g_2,\infty}(V)$, this quotient is in fact isomorphic to $\vartheta_{\mathcal{Y}}\bigl(U^{g_3,g_2,\infty}(W)\bigr)$ as a $U^{g_3,\infty}(V)$-$U^{g_2,\infty}(V)$-bimodule. Moreover, $\operatorname{Hom}(W_2, W_3)$ carries an $A^{g_3,\infty}(V)$-$A^{g_2,\infty}(V)$-bimodule structure, and $\vartheta_{\mathcal{Y}}\bigl(U^{g_3,g_2,\infty}(W)\bigr)$ is an $A^{g_3,\infty}(V)$-$A^{g_2,\infty}(V)$-sub-bimodule of $\operatorname{Hom}(W_2, W_3)$. Consequently, $U^{g_3,g_2,\infty}(W) / \ker \vartheta_{\mathcal{Y}}$ inherits an $A^{g_3,\infty}(V)$-$A^{g_2,\infty}(V)$-bimodule structure and is isomorphic to $\vartheta_{\mathcal{Y}}\bigl(U^{g_3,g_2,\infty}(W)\bigr)$ as $A^{g_3,\infty}(V)$-$A^{g_2,\infty}(V)$-bimodules.
\end{proof}

Let $Q^{g_3,g_2,\infty}(W)$ denote the intersection of $\ker \vartheta_{\mathcal{Y}}$ over all $g_2$-twisted $V$-modules $W_2$, all $g_3$-twisted $V$-modules $W_3$, and all intertwining operators $\mathcal{Y}$ of type $\binom{W_3}{W\; W_2}$.

\begin{thm}
    The quotient \({A}^{g_3,g_2,\infty }\left( W\right)  = {U}^{g_3,g_2,\infty }\left( W\right) /{Q}^{g_3,g_2,\infty }\left( W\right)\) is an \({A}^{g_3,\infty }\left( V\right)\)-\({A}^{g_2,\infty }\left( V\right)\)-bimodule.
\end{thm}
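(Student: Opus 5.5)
The plan is to deduce the theorem from Proposition~\ref{prop6.2} by the same intersection argument used in Theorems~\ref{thm4.6} and~\ref{thm5.7}. For each triple $(W_2,W_3,\mathcal{Y})$, Proposition~\ref{prop6.2} says that $\vartheta_{\mathcal{Y}}$ intertwines the left $U^{g_3,\infty}(V)$-action and the right $U^{g_2,\infty}(V)$-action on $U^{g_3,g_2,\infty}(W)$ with the corresponding actions on $\operatorname{Hom}(W_2,W_3)$. These actions are given by $\vartheta_{W_3}$ and $\vartheta_{W_2}$. The first step is therefore to observe that $\ker\vartheta_{\mathcal{Y}}$ is a $U^{g_3,\infty}(V)$-$U^{g_2,\infty}(V)$-sub-bimodule of $U^{g_3,g_2,\infty}(W)$. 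Indeed, if $\vartheta_{\mathcal{Y}}(\mathfrak{w})=0$, then
\[
\vartheta_{\mathcal{Y}}(\mathfrak{v}\diamond_{g_3}\mathfrak{w})=\vartheta_{W_3}(\mathfrak{v})\vartheta_{\mathcal{Y}}(\mathfrak{w})=0,
\qquad
\vartheta_{\mathcal{Y}}(\mathfrak{w}\diamond_{g_2}\mathfrak{v})=\vartheta_{\mathcal{Y}}(\mathfrak{w})\vartheta_{W_2}(\mathfrak{v})=0.
\]
Here one first checks this on matrix units $[v]_{pn}$ and $[w]_{kl}$. It then extends to column-finite infinite combinations, because on each homogeneous vector $w_2\in W_2(l)$ only finitely many entries contribute.

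The second step is to intersect over all $(W_2,W_3,\mathcal{Y})$. An intersection of sub-bimodules is a sub-bimodule, so $Q^{g_3,g_2,\infty}(W)$ is a sub-bimodule. It follows that the quotient $A^{g_3,g_2,\infty}(W)$ is a $U^{g_3,\infty}(V)$-$U^{g_2,\infty}(V)$-bimodule.

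The third step is to show that this structure descends to $A^{g_3,\infty}(V)$ and $A^{g_2,\infty}(V)$ and satisfies the associativity and unit axioms. Two things must be checked.
\begin{enumerate}
\item If $\mathfrak{q}\in Q^{g_3,\infty}(V)$, then $\mathfrak{q}\diamond_{g_3}\mathfrak{w}\in Q^{g_3,g_2,\infty}(W)$. This holds because $\vartheta_{\mathcal{Y}}(\mathfrak{q}\diamond_{g_3}\mathfrak{w})=\vartheta_{W_3}(\mathfrak{q})\vartheta_{\mathcal{Y}}(\mathfrak{w})$, and $\vartheta_{W_3}(\mathfrak{q})=0$ since $W_3$ is an ordinary $g_3$-twisted module. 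The right action is handled symmetrically using $W_2$.
\item Associativity and the unit. The differences
\[
(\mathfrak{u}\diamond_{g_3}\mathfrak{v})\diamond_{g_3}\mathfrak{w}-\mathfrak{u}\diamond_{g_3}(\mathfrak{v}\diamond_{g_3}\mathfrak{w}),\qquad (\mathfrak{u}\diamond_{g_3}\mathfrak{w})\diamond_{g_2}\mathfrak{v}-\mathfrak{u}\diamond_{g_3}(\mathfrak{w}\diamond_{g_2}\mathfrak{v}),
\]
the analogous right-action difference, and $\mathbf{1}^\infty\diamond_{g_3}\mathfrak{w}-\mathfrak{w}$ and $\mathfrak{w}\diamond_{g_2}\mathbf{1}^\infty-\mathfrak{w}$ are all mapped to $0$ by every $\vartheta_{\mathcal{Y}}$. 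This follows from the homomorphism property together with associativity of composition in $\operatorname{End}W_3$, $\operatorname{Hom}(W_2,W_3)$ and $\operatorname{End}W_2$. For the unit, one uses that $\vartheta_{W_i}(\mathbf{1}^\infty)=\mathrm{id}$, since $\mathbf{1}_{-1}$ acts as the identity.
\end{enumerate}
Hence all these differences lie in $Q^{g_3,g_2,\infty}(W)$. Equivalently, $A^{g_3,g_2,\infty}(W)$ embeds in $\prod_{\mathcal{Y}}\vartheta_{\mathcal{Y}}(U^{g_3,g_2,\infty}(W))$, and by Proposition~\ref{prop6.2} each factor is an honest $A^{g_3,\infty}(V)$-$A^{g_2,\infty}(V)$-bimodule.

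The main obstacle I expect is the bookkeeping needed to pass from matrix units to arbitrary column-finite matrices. The question is whether $\vartheta_{\mathcal{Y}}$ of an infinite combination equals the (pointwise finite) sum of the images of its terms. This requires that, for a fixed $w_2\in W_2(l)$, only column $l$ of $\mathfrak{w}$ matters, and that this column has finitely many nonzero entries. The same finiteness must be verified for products in which $\mathfrak{v}$ is also infinite. I would handle this exactly as in the well-definedness discussion preceding Theorem~\ref{thm5.9}, using that $\diamond_{g_3}$ and $\diamond_{g_2}$ are defined entrywise by finite sums.
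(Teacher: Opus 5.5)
Your proposal is correct and follows the same route as the paper. Both arguments use Proposition~\ref{prop6.2} to show that each $\ker\vartheta_{\mathcal{Y}}$ is a sub-bimodule, then pass to the intersection $Q^{g_3,g_2,\infty}(W)$ and let the quotient inherit the structure; you additionally make explicit the descent through $Q^{g_3,\infty}(V)$ and $Q^{g_2,\infty}(V)$, the associativity and unit checks, and the column-finiteness bookkeeping, all of which the paper leaves implicit.
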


\begin{proof}
For any $g_2$-twisted $V$-module $W_2$, any $g_3$-twisted $V$-module $W_3$, and any intertwining operator $\mathcal{Y}$ of type $\binom{W_3}{W\; W_2}$, the quotient $U^{g_3,g_2,\infty}(W) / \ker \vartheta_{\mathcal{Y}}$ carries a well-defined $A^{g_3,\infty}(V)$-$A^{g_2,\infty}(V)$-bimodule structure. Since
\[
    Q^{g_3,g_2,\infty}(W) = \bigcap_{\substack{W_2, W_3 \\ \mathcal{Y}}} \ker \vartheta_{\mathcal{Y}},
\]
where the intersection ranges over all such twisted modules $W_2, W_3$ and all intertwining operators $\mathcal{Y}$ of type $\binom{W_3}{W\; W_2}$, and each $\ker \vartheta_{\mathcal{Y}}$ is an $A^{g_3,\infty}(V)$-$A^{g_2,\infty}(V)$-sub-bimodule of $U^{g_3,g_2,\infty}(W)$, their intersection $Q^{g_3,g_2,\infty}(W)$ is also a sub-bimodule. Consequently, the quotient
\[
    A^{g_3,g_2,\infty}(W) = U^{g_3,g_2,\infty}(W) \big/ Q^{g_3,g_2,\infty}(W)
\]
inherits a well-defined $A^{g_3,\infty}(V)$-$A^{g_2,\infty}(V)$-bimodule structure, as desired.
\end{proof}

\section{Isomorphisms Between Spaces of Twisted Intertwining Operators and $A^{g_3,\infty}(V)$-Module
Maps}

Let $V$ be a vertex operator algebra, and let $g_1, g_2, g_3$ be commuting automorphisms of $V$ of finite order such that $g_1 g_2 = g_3$ and $g_i^T = 1$ for $i = 1, 2, 3$ and some fixed $T \in \mathbb{N}$. Let $W_i$ be a  $g_i$-twisted $V$-module for $i=1,2,3$.
In this section, we prove that the space of intertwining operators of type $\binom{W_3}{W_1 \; W_2}$ is linearly isomorphic to
$
    \operatorname{Hom}_{A^{g_3,\infty}(V)}\!\left(
        A^{g_3,g_2,\infty}(W_1) \otimes_{A^{g_2,\infty}(V)} W_2, \, W_3
    \right).
$

\begin{prop}\label{prop7.1}
  Let $v \in V^{(j_1, j_2)}$ and $w^{(1)} \in W_1$. Let $q \in \frac{1}{T}\mathbb{N}$, $l \in \frac{j_1}{T} + \mathbb{Z}$, $m \in \frac{j_2}{T} + \mathbb{Z}$, and $n \in \frac{1}{T}\mathbb{Z}$ satisfy $\operatorname{wt} v + q - n - m - l - 2 \geq 0$. Then
  \begin{align*}
    &\sum_{\substack{i \geq 0 \\ q-n-i-1 \geq 0}} (-1)^i \binom{l}{i}
      [v]_{\operatorname{wt} v+q-n-m-l-2,\, q-n-i-1} \diamond_{g_3} [w^{(1)}]_{q-n-i-1,\, q} \\
    &\quad - \sum_{\substack{i \geq 0 \\ \operatorname{wt} v+q-m-i-1 \geq 0}} (-1)^{i+l} \binom{l}{i}
      [w^{(1)}]_{\operatorname{wt} v+q-n-m-l-2,\, \operatorname{wt} v+q-m-i-1} \diamond_{g_2} [v]_{\operatorname{wt} v+q-m-i-1,\, q} \\
    &\quad - \sum_{i \geq 0} \binom{m}{i}
      [v_{l+i} w^{(1)}]_{\operatorname{wt} v+q-n-m-l-2,\, q}
  \end{align*}
  belongs to $Q^{g_3,g_2,\infty}(W_1)$.
\end{prop}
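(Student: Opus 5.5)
The plan mirrors the proofs of Propositions~\ref{prop4.10} and~\ref{prop5.11}. By definition, $Q^{g_3,g_2,\infty}(W_1)$ is the intersection of $\ker\vartheta_{\mathcal{Y}}$ over all $g_2$-twisted $W_2$, $g_3$-twisted $W_3$ and intertwining operators $\mathcal{Y}$ of type $\binom{W_3}{W_1\,W_2}$. So we fix such data and show that $\vartheta_{\mathcal{Y}}$ kills the displayed element. Set $K=\operatorname{wt} v+q-n-m-l-2$. It suffices to evaluate on $w^{(2)}\in (W_2)_{h_2^\mu+q}$ with $\mu\in\Gamma(W_2)$, since every column of the element has index $q$ and $\vartheta_{\mathcal{Y}}$ kills the other graded pieces. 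We may also take $w^{(1)}$ homogeneous of weight $h_1$.

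First apply $\vartheta_{\mathcal{Y}}$ termwise. For the first sum, Proposition~\ref{prop6.2} gives $\vartheta_{\mathcal{Y}}([v]_{K,a}\diamond_{g_3}[w^{(1)}]_{a,q})=\vartheta_{W_3}([v]_{K,a})\,\vartheta_{\mathcal{Y}}([w^{(1)}]_{a,q})$ with $a=q-n-i-1$. For the second sum it gives $\vartheta_{\mathcal{Y}}([w^{(1)}]_{K,b})\,\vartheta_{W_2}([v]_{b,q})$ with $b=\operatorname{wt} v+q-m-i-1$. Next, unwind these using the definitions of $\vartheta_{W_i}$ and $\vartheta_{\mathcal{Y}}$ on homogeneous vectors. $\vartheta_{W_2}([v]_{b,q})w^{(2)}=v_{m+i}w^{(2)}$, and $\vartheta_{\mathcal{Y}}([w^{(1)}]_{a,q})w^{(2)}$ is the sum over $\nu\in\Gamma(W_3)$ of the components $w^{(1)}_{n_1+i}w^{(2)}$ landing in $(W_3)_{h_3^\nu+a}$. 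Here $n_1=h_1+h_2^\mu-h_3^\nu+n$, as in the notation of \eqref{eq2.4}. Applying $\vartheta_{W_3}([v]_{K,a})$ then produces $v_{m+l-i}w^{(1)}_{n_1+i}w^{(2)}$. The second term similarly becomes $w^{(1)}_{n_1+l-i}v_{m+i}w^{(2)}$, and the third becomes $(v_{l+i}w^{(1)})_{m+n_1-i}w^{(2)}$. The key point is that all three pieces land in the same component $(W_3)_{h_3^\nu+K}$ for each $\nu$, so the $\nu$-sums can be matched term by term.

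The restrictions $q-n-i-1\ge 0$ and $\operatorname{wt}v+q-m-i-1\ge0$ can then be dropped. In the omitted terms, $w^{(1)}_{n_1+i}w^{(2)}$ has $W_3$-level $q-n-i-1<0$, and $v_{m+i}w^{(2)}$ has level $\operatorname{wt}v+q-m-i-1<0$; both vanish because $W_3$ and $W_2$ are $\frac1T\mathbb{N}$-graded. The resulting expression, in each $\nu$-component, is exactly the difference of the two sides of the component Jacobi identity \eqref{eq2.4}, so it vanishes. Since $W_2,W_3,\mathcal{Y}$ and $w^{(2)}$ are arbitrary, the element lies in $Q^{g_3,g_2,\infty}(W_1)$.

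The main obstacle is the index bookkeeping. We need $\vartheta_{\mathcal{Y}}([w^{(1)}]_{kl})$ with its $x^{L_W(0)}$ twist and $h_2^\mu-h_3^\nu$ shift to pick out exactly the modes $w^{(1)}_{n_1+i}$ and $w^{(1)}_{n_1+l-i}$ for the right $\nu$. We must also check that the $\nu$-sums on all three terms align. The second term needs particular care: it uses $\underline{*}_{g_2}$, so we need the ordering $\vartheta_{\mathcal{Y}}(\cdot)\vartheta_{W_2}(\cdot)$ from Proposition~\ref{prop6.2}, and the weight of $v_{m+i}w^{(2)}$ must be compatible with $\mu$ in the conventions for $h_2^\mu$. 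I would verify these exponent identities first on a single homogeneous component.
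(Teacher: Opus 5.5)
Your proposal is correct and is essentially the paper's proof run in the opposite direction. The paper starts from the component Jacobi identity \eqref{eq2.4}, truncates the two sums using the gradings of $W_3$ and $W_2$, sums over $\nu\in\Gamma(W_3)$, rewrites the terms via $\vartheta_{W_3}$, $\vartheta_{W_2}$, $\vartheta_{\mathcal{Y}}$, and invokes Proposition~\ref{prop6.2}; you apply $\vartheta_{\mathcal{Y}}$ first and unwind down to \eqref{eq2.4}, your index checks (modes $v_{m+l-i}$, $w^{(1)}_{n_1+i}$, $w^{(1)}_{n_1+l-i}$, $v_{m+i}$, $(v_{l+i}w^{(1)})_{m+n_1-i}$, all landing in $(W_3)_{h_3^\nu+K}$) are exactly the needed ones, and your remark that only $W_2(q)$ needs testing makes explicit a point the paper leaves implicit.
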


\begin{proof}
  Let $W_2$ be a $g_2$-twisted $V$-module and $W_3$ a $g_3$-twisted $V$-module. Let $\mathcal{Y}$ be an intertwining operator of type $\binom{W_3}{W_1\;W_2}$. The component form of the Jacobi identity \eqref{eq2.4} for $\mathcal{Y}$ yields
  \begin{equation}\label{eq:jacobi-component}
    \begin{split}
      &\sum_{i \geq 0} (-1)^i \binom{l}{i}
        \bigl( v_{m+l-i} w^{(1)}_{h_1+h_2^\mu-h_3^\nu+n+i}
             - (-1)^l w^{(1)}_{h_1+h_2^\mu-h_3^\nu+n+l-i} v_{m+i} \bigr) w^{(2)} \\
      &= \sum_{i \geq 0} \binom{m}{i}
        (v_{l+i} w^{(1)})_{m+h_1+h_2^\mu-h_3^\nu+n-i} w^{(2)},
    \end{split}
  \end{equation}
  where $v \in V^{(j_1,j_2)}$, $w^{(1)} \in {(W_1)}_{h_1}$ with $h_1 \in \mathbb{C}$, and $w^{(2)} \in (W_2)_{h_2^\mu+q}$ with $\mu \in \Gamma(W_2)$ and $q \in \frac{1}{T}\mathbb{N}$ are homogeneous elements, $j_1, j_2 \in \mathbb{Z}$, and
  \[
    l \in \tfrac{j_1}{T} + \mathbb{Z}, \quad
    m \in \tfrac{j_2}{T} + \mathbb{Z}, \quad
    n \in \tfrac{1}{T}\mathbb{Z}, \quad
    \nu \in \Gamma(W_3).
  \]
  Imposing the truncation conditions $q-n-i-1 \geq 0$ and $\operatorname{wt} v+q-m-i-1 \geq 0$ on the respective sums, we obtain
  \begin{align*}
    &\sum_{\substack{i \geq 0 \\ q-n-i-1 \geq 0}} (-1)^i \binom{l}{i}
      v_{m+l-i} w^{(1)}_{h_1+h_2^\mu-h_3^\nu+n+i} w^{(2)} \\
    &\quad - \sum_{\substack{i \geq 0 \\ \operatorname{wt} v+q-m-i-1 \geq 0}} (-1)^{i+l} \binom{l}{i}
      w^{(1)}_{h_1+h_2^\mu-h_3^\nu+n+l-i} v_{m+i} w^{(2)} \\
    &= \sum_{i \geq 0} \binom{m}{i}
      (v_{l+i} w^{(1)})_{m+h_1+h_2^\mu-h_3^\nu+n-i} w^{(2)}.
  \end{align*}
  Summing over $\nu \in \Gamma(W_3)$ and rearranging terms yields
  \begin{align*}
    0 &= \sum_{\substack{i \geq 0 \\ q-n-i-1 \geq 0}} (-1)^i \binom{l}{i}
      v_{m+l-i} \sum_{\nu \in \Gamma(W_3)} w^{(1)}_{h_1+h_2^\mu-h_3^\nu+n+i} w^{(2)} \\
    &\quad - \sum_{\substack{i \geq 0 \\ \operatorname{wt} v+q-m-i-1 \geq 0}} (-1)^{i+l} \binom{l}{i}
      \sum_{\nu \in \Gamma(W_3)} w^{(1)}_{h_1+h_2^\mu-h_3^\nu+n+l-i} v_{m+i} w^{(2)} \\
    &\quad - \sum_{i \geq 0} \binom{m}{i}
      \sum_{\nu \in \Gamma(W_3)} (v_{l+i} w^{(1)})_{m+h_1+h_2^\mu-h_3^\nu+n-i} w^{(2)}.
  \end{align*}
  By the definitions of $\vartheta_{W_3}$, $\vartheta_{W_2}$, $\vartheta_{\mathcal{Y}}$, this can be rewritten as
  \begin{align*}
    0 &= \sum_{\substack{i \geq 0 \\ q-n-i-1 \geq 0}} (-1)^i \binom{l}{i}
      \vartheta_{W_3}\bigl([v]_{K,\, q-n-i-1}\bigr)
      \vartheta_{\mathcal{Y}}\bigl([w^{(1)}]_{q-n-i-1,\, q}\bigr) w^{(2)} \\
    &\quad - \sum_{\substack{i \geq 0 \\ \operatorname{wt} v+q-m-i-1 \geq 0}} (-1)^{i+l} \binom{l}{i}
      \vartheta_{\mathcal{Y}}\bigl([w^{(1)}]_{K,\, \operatorname{wt} v+q-m-i-1}\bigr)
      \vartheta_{W_2}\bigl([v]_{\operatorname{wt} v+q-m-i-1,\, q}\bigr) w^{(2)} \\
    &\quad - \sum_{i \geq 0} \binom{m}{i}
      \vartheta_{\mathcal{Y}}\bigl([v_{l+i} w^{(1)}]_{K,\, q}\bigr) w^{(2)},
  \end{align*}
  where $K = \operatorname{wt} v + q - n - m - l - 2$. Using the compatibility between $\vartheta_{W_i}$ and $\diamond_{g_i}$, we further have
  \begin{align*}
    0 &= \sum_{\substack{i \geq 0 \\ q-n-i-1 \geq 0}} (-1)^i \binom{l}{i}
      \vartheta_{\mathcal{Y}}\bigl([v]_{K,\, q-n-i-1} \diamond_{g_3} [w^{(1)}]_{q-n-i-1,\, q}\bigr) w^{(2)} \\
    &\quad - \sum_{\substack{i \geq 0 \\ \operatorname{wt} v+q-m-i-1 \geq 0}} (-1)^{i+l} \binom{l}{i}
      \vartheta_{\mathcal{Y}}\bigl([w^{(1)}]_{K,\, \operatorname{wt} v+q-m-i-1} \diamond_{g_2} [v]_{\operatorname{wt} v+q-m-i-1,\, q}\bigr) w^{(2)} \\
    &\quad - \sum_{i \geq 0} \binom{m}{i}
      \vartheta_{\mathcal{Y}}\bigl([v_{l+i} w^{(1)}]_{K,\, q}\bigr) w^{(2)}.
  \end{align*}
  Since $W_2$ and $w^{(2)} \in W_2$ are arbitrary, the expression inside $\vartheta_{\mathcal{Y}}$ lies in $Q^{g_3,g_2,\infty}(W_1)$, which completes the proof.
\end{proof}

We first establish several lemmas that will be needed later.

\begin{lem}\label{lem:spanning-set}
  The $A^{g_3,\infty}(V)$-module $A^{g_3,g_2,\infty}(W_1) \otimes_{A^{g_2,\infty}(V)} W_2$ is spanned by elements of the form
  \[
    \bigl([w_1]_{kl} + Q^{g_3,g_2,\infty}(W_1)\bigr) \otimes_{A^{g_2,\infty}(V)} w_2,
  \]
  where $k, l \in \frac{1}{T}\mathbb{N}$, $w_1 \in W_1$, and $w_2 \in W_2(l)$.
\end{lem}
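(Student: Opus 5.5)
The plan is to start from an arbitrary element of the tensor product and decompose it into the stated generators. We use the column-finite matrix structure of $U^{g_3,g_2,\infty}(W_1)$, the grading of $W_2$, and the identity $\mathbf{1}^{\infty}+Q^{g_2,\infty}(V)$ of $A^{g_2,\infty}(V)$, which acts on each $W_2(l)$ through the diagonal idempotents $[\mathbf{1}]_{ll}$.

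First, by the definition of the tensor product over $A^{g_2,\infty}(V)$, the space is spanned by $(\mathfrak{w}+Q^{g_3,g_2,\infty}(W_1))\otimes w_2$ with $\mathfrak{w}\in U^{g_3,g_2,\infty}(W_1)$ and $w_2\in W_2$. Since $W_2=\bigoplus_{l}W_2(l)$, we may take $w_2\in W_2(l)$ for a single $l$.

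Next, we insert the identity. Let $w_2\in W_2(l)$. We have
\[
w_2=\vartheta_{W_2}\bigl(\mathbf{1}^{\infty}+Q^{g_2,\infty}(V)\bigr)w_2=\vartheta_{W_2}\bigl([\mathbf{1}]_{ll}+Q^{g_2,\infty}(V)\bigr)w_2,
\]
because $[\mathbf{1}]_{nn}$ annihilates $W_2(l)$ for $n\neq l$ by the graded-module property. Hence
\[
\mathfrak{w}\otimes w_2=\bigl(\mathfrak{w}\diamond_{g_2}[\mathbf{1}]_{ll}\bigr)\otimes w_2 .
\]
By the definition of the right action, $\mathfrak{w}\diamond_{g_2}[\mathbf{1}]_{ll}$ involves only the $l$-th column of $\mathfrak{w}$, namely
\[
\sum_k [w_{kl}\,\underline{*}_{g_2,l,l}^{k}\,\mathbf{1}]_{kl}.
\]
This sum is finite by column-finiteness. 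The element is therefore a finite sum of matrices $[w_1]_{kl}$ in column $l$, which is exactly the required form.

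The main obstacle is justifying the step $\mathfrak{w}\otimes w_2=(\mathfrak{w}\diamond_{g_2}[\mathbf{1}]_{ll})\otimes w_2$ when $\mathfrak{w}$ is an infinite matrix. Tensoring is balanced only with respect to elements of $A^{g_2,\infty}(V)$, so we first need $\mathfrak{w}\diamond_{g_2}\mathbf{1}^{\infty}\equiv\mathfrak{w}$ modulo $Q^{g_3,g_2,\infty}(W_1)$.

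We verify this through $\vartheta_{\mathcal Y}$. By Proposition~\ref{prop6.2}, we have
\[
\vartheta_{\mathcal Y}(\mathfrak{w}\diamond_{g_2}\mathbf{1}^{\infty})=\vartheta_{\mathcal Y}(\mathfrak{w})\,\vartheta_{W_2}(\mathbf{1}^{\infty})=\vartheta_{\mathcal Y}(\mathfrak{w})
\]
for every $\mathcal{Y}$, so the difference lies in every $\ker\vartheta_{\mathcal Y}$ and hence in $Q^{g_3,g_2,\infty}(W_1)$.

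Then
\[
\mathfrak{w}\otimes w_2=(\mathfrak{w}\diamond_{g_2}\mathbf{1}^{\infty})\otimes w_2=\mathfrak{w}\otimes(\mathbf{1}^{\infty}\cdot w_2),
\]
and we write $\mathbf{1}^{\infty}\cdot w_2=[\mathbf{1}]_{ll}\cdot w_2$. Moving $[\mathbf{1}]_{ll}$ back across the tensor sign gives the column-$l$ finite sum above, which completes the argument.
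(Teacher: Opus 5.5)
Your proposal is correct and uses the same mechanism as the paper: $[\mathbf{1}]_{ll}+Q^{g_2,\infty}(V)$ acts as the identity on $W_2(l)$ and can be moved across the balanced tensor sign. The paper uses this to kill matrix units $[w_1]_{kn}$ with $n\neq l$, whereas you apply it to an arbitrary column-finite $\mathfrak{w}$ and project onto its (finite) $l$-th column. Your version thereby also covers matrices with infinitely many nonzero columns, a case the paper's ``by definition'' reduction to matrix units passes over.

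Your ``main obstacle'' detour through $\mathfrak{w}\diamond_{g_2}\mathbf{1}^{\infty}\equiv\mathfrak{w}$ and Proposition~\ref{prop6.2} is harmless but unnecessary. Since $[\mathbf{1}]_{ll}+Q^{g_2,\infty}(V)$ is itself an element of $A^{g_2,\infty}(V)$, balancing against it is legitimate directly.
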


\begin{proof}
  By definition, elements of the form
  \[
    \bigl([w_1]_{kn} + Q^{g_3,g_2,\infty}(W_1)\bigr) \otimes_{A^{g_2,\infty}(V)} w_2,
  \]
  with $k, n \in \frac{1}{T}\mathbb{N}$, $w_1 \in W_1$, and $w_2 \in W_2$, span $A^{g_3,g_2,\infty}(W_1) \otimes_{A^{g_2,\infty}(V)} W_2$. For $l \in \frac{1}{T}\mathbb{N}$ and $w_2 \in W_2(l)$, we have $\vartheta_{W_2}\bigl([\mathbf{1}]_{ll} + Q^{g_2,\infty}(V)\bigr) w_2 = w_2$ by the definition of $\vartheta_{W_2}$. Moreover, if $n \neq l$, then $[w_1]_{kn} \diamond_{g_2} [\mathbf{1}]_{ll} = 0$ by definition. Hence, for $k, l, n \in \frac{1}{T}\mathbb{N}$, $w_1 \in W_1$, and $w_2 \in W_2(l)$ with $n \neq l$,
  \begin{align*}
    &\bigl([w_1]_{kn} + Q^{g_3,g_2,\infty}(W_1)\bigr) \otimes_{A^{g_2,\infty}(V)} w_2 \\
    &= \bigl([w_1]_{kn} + Q^{g_3,g_2,\infty}(W_1)\bigr) \otimes_{A^{g_2,\infty}(V)} \vartheta_{W_2}\bigl([\mathbf{1}]_{ll} + Q^{g_2,\infty}(V)\bigr) w_2 \\
    &= \bigl([w_1]_{kn} \diamond_{g_2} [\mathbf{1}]_{ll} + Q^{g_3,g_2,\infty}(W_1)\bigr) \otimes_{A^{g_2,\infty}(V)} w_2 \\
    &= 0.
  \end{align*}
  Therefore, only terms with $n = l$ contribute, and the lemma follows.
\end{proof}

\begin{lem}\label{lem7.3}
  Let
  \[
    f \in \operatorname{Hom}_{A^{g_3,\infty}(V)}\!\left( A^{g_3,g_2,\infty}(W_1) \otimes_{A^{g_2,\infty}(V)} W_2,\; W_3 \right).
  \]
  Then for all $k, l \in \frac{1}{T}\mathbb{N}$, $w_1 \in W_1$, and $w_2 \in W_2(l)$,
  \[
    f\!\left( \bigl([w_1]_{kl} + Q^{g_3,g_2,\infty}(W_1)\bigr) \otimes_{A^{g_2,\infty}(V)} w_2 \right) \in W_3(k).
  \]
\end{lem}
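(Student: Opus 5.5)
The plan is to use the idempotent $[\mathbf{1}]_{kk}$ of $A^{g_3,\infty}(V)$, which acts on $W_3$ as the projection onto $W_3(k)$. The claim then follows from $A^{g_3,\infty}(V)$-linearity of $f$, once we know that $[\mathbf{1}]_{kk}$ acts trivially on $[w_1]_{kl}$ from the left.

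\textbf{Step 1: $[\mathbf{1}]_{kk}$ acts on $W_3$ as the projection onto $W_3(k)$.} For $w_3\in W_3(n)$, the definition of $\vartheta_{W_3}$ gives
\[
\vartheta_{W_3}\bigl([\mathbf{1}]_{kk}+Q^{g_3,\infty}(V)\bigr)w_3=\delta_{kn}\operatorname{Res}_x x^{-1}Y_{W_3}(\mathbf{1},x)w_3=\delta_{kn}w_3 ,
\]
since $\mathbf{1}$ has weight $0$ and $Y_{W_3}(\mathbf{1},x)=\mathrm{id}$. Hence $\vartheta_{W_3}([\mathbf{1}]_{kk}+Q^{g_3,\infty}(V))$ is exactly the projection $\pi_{W_3(k)}$.

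\textbf{Step 2: $[\mathbf{1}]_{kk}\diamond_{g_3}[w_1]_{kl}\equiv[w_1]_{kl}$ modulo $Q^{g_3,g_2,\infty}(W_1)$.} By definition, $[\mathbf{1}]_{kk}\diamond_{g_3}[w_1]_{kl}=[\mathbf{1}\,\bar{*}_{g_3,l,k}^{k}\,w_1]_{kl}$. Rather than expanding the product $\bar{*}$ directly, apply $\vartheta_{\mathcal Y}$ for an arbitrary intertwining operator $\mathcal Y$ and use Proposition~\ref{prop6.2}:
\[
\vartheta_{\mathcal Y}\bigl([\mathbf{1}]_{kk}\diamond_{g_3}[w_1]_{kl}\bigr)=\vartheta_{W_3}\bigl([\mathbf{1}]_{kk}\bigr)\vartheta_{\mathcal Y}\bigl([w_1]_{kl}\bigr).
\]
By construction, $\vartheta_{\mathcal Y}([w_1]_{kl})$ maps $W_2(l)$ into $W_3(k)$ and kills $W_2(n)$ for $n\neq l$. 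By Step 1, the projection onto $W_3(k)$ therefore fixes its image, so the right-hand side equals $\vartheta_{\mathcal Y}([w_1]_{kl})$. As $\mathcal Y$, $W_2$, $W_3$ are arbitrary, the difference $[\mathbf{1}]_{kk}\diamond_{g_3}[w_1]_{kl}-[w_1]_{kl}$ lies in $Q^{g_3,g_2,\infty}(W_1)$.

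\textbf{Step 3: conclude by linearity of $f$.} Using Step 2 and the $A^{g_3,\infty}(V)$-linearity of $f$,
\begin{align*}
f\bigl(([w_1]_{kl}+Q^{g_3,g_2,\infty}(W_1))\otimes w_2\bigr)&=f\bigl(([\mathbf{1}]_{kk}+Q^{g_3,\infty}(V))\cdot(([w_1]_{kl}+Q^{g_3,g_2,\infty}(W_1))\otimes w_2)\bigr)\\
&=\vartheta_{W_3}\bigl([\mathbf{1}]_{kk}+Q^{g_3,\infty}(V)\bigr)\,f\bigl(([w_1]_{kl}+Q^{g_3,g_2,\infty}(W_1))\otimes w_2\bigr).
\end{align*}
By Step 1, the last expression lies in $W_3(k)$, which proves the lemma.

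The only delicate point is Step 2. It requires that the left action on the tensor product is induced from the left action on $A^{g_3,g_2,\infty}(W_1)$. It also requires the grading property of $\vartheta_{\mathcal Y}$: the image of $W_2(l)$ lies in $\bigoplus_\nu (W_3)_{h_3^\nu+k}=W_3(k)$. Both facts hold by the definitions given above.
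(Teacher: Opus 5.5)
Your proof is correct and follows the paper's argument. Both insert the idempotent $[\mathbf{1}]_{kk}$, use the $A^{g_3,\infty}(V)$-linearity of $f$, and observe that $\vartheta_{W_3}([\mathbf{1}]_{kk}+Q^{g_3,\infty}(V))$ is the projection onto $W_3(k)$. The only difference is that the paper writes $[w_1]_{kl}=[\mathbf{1}]_{kk}\diamond_{g_3}[w_1]_{kl}$ without comment; this holds exactly, since $\mathbf{1}\,\bar{*}_{g_3,l,k}^{k}\,w_1=w_1$ by a direct computation with $\lambda(l,0)=\lfloor l\rfloor$. You instead justify it modulo $Q^{g_3,g_2,\infty}(W_1)$ via Proposition~\ref{prop6.2}, which is equally valid.
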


\begin{proof}
  Since $f$ is an $A^{g_3,\infty}(V)$-module map, for $k, l \in \frac{1}{T}\mathbb{N}$, $w_1 \in W_1$, and $w_2 \in W_2(l)$, we have
  \begin{align*}
    &f\!\left( \bigl([w_1]_{kl} + Q^{g_3,g_2,\infty}(W_1)\bigr) \otimes_{A^{g_2,\infty}(V)} w_2 \right) \\
    &= f\!\left( \bigl([\mathbf{1}]_{kk} \diamond_{g_3} [w_1]_{kl} + Q^{g_3,g_2,\infty}(W_1)\bigr) \otimes_{A^{g_2,\infty}(V)} w_2 \right) \\
    &= \vartheta_{W_3}\!\left( [\mathbf{1}]_{kk} + Q^{g_3,\infty}(V) \right) f\!\left( \bigl([w_1]_{kl} + Q^{g_3,g_2,\infty}(W_1)\bigr) \otimes_{A^{g_2,\infty}(V)} w_2 \right).
  \end{align*}
  By the definition of $\vartheta_{W_3}$, for any $w_3 \in W_3(n)$,
  \[
    \vartheta_{W_3}\!\left( [\mathbf{1}]_{kk} + Q^{g_3,\infty}(V) \right) w_3 = \delta_{k n} \, w_3.
  \]
  Thus the image of $f$ lies in $W_3(k)$, which completes the proof.
\end{proof}

Let $\mathcal{Y}$ be an intertwining operator of type $\binom{W_3}{W_1\;W_2}$. We define a linear map
\[
  \rho(\mathcal{Y}) : A^{g_3,g_2,\infty}(W_1) \otimes_{A^{g_2,\infty}(V)} W_2 \longrightarrow W_3
\]
by
\[
  \rho(\mathcal{Y})\!\left( (\mathfrak{w}_1 + Q^{g_3,g_2,\infty}(W_1)) \otimes w_2 \right) = \vartheta_{\mathcal{Y}}(\mathfrak{w}_1) \, w_2
\]
for $\mathfrak{w}_1 \in U^{g_3,g_2,\infty}(W_1)$ and $w_2 \in W_2$. Since $Q^{g_3,g_2,\infty}(W_1) \subset \ker \vartheta_{\mathcal{Y}}$, the map $\rho(\mathcal{Y})$ is well-defined.

\begin{prop}\label{prop:rho-module-map}
  We have
  \[
    \rho(\mathcal{Y}) \in \operatorname{Hom}_{A^{g_3,\infty}(V)}\!\left( A^{g_3,g_2,\infty}(W_1) \otimes_{A^{g_2,\infty}(V)} W_2,\; W_3 \right).
  \]
\end{prop}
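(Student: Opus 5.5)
The plan is to check two things: that $\rho(\mathcal{Y})$ is well defined on the balanced tensor product over $A^{g_2,\infty}(V)$, and that it intertwines the left $A^{g_3,\infty}(V)$-actions. Both facts should follow from Proposition~\ref{prop6.2}, which says that $\vartheta_{\mathcal{Y}}$ commutes with the left action of $U^{g_3,\infty}(V)$ and the right action of $U^{g_2,\infty}(V)$. We also use that $Q^{g_3,g_2,\infty}(W_1)\subseteq\ker\vartheta_{\mathcal{Y}}$.

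\textbf{Step 1: $\rho(\mathcal{Y})$ is well defined.} First consider the bilinear map
\[
U^{g_3,g_2,\infty}(W_1)\times W_2\to W_3,\qquad (\mathfrak{w}_1,w_2)\mapsto\vartheta_{\mathcal{Y}}(\mathfrak{w}_1)w_2 .
\]
It factors through $A^{g_3,g_2,\infty}(W_1)\times W_2$ because $Q^{g_3,g_2,\infty}(W_1)\subseteq\ker\vartheta_{\mathcal{Y}}$. To descend to the tensor product over $A^{g_2,\infty}(V)$, the map must be balanced. For $\mathfrak{v}\in U^{g_2,\infty}(V)$, Proposition~\ref{prop6.2} gives
\[
\vartheta_{\mathcal{Y}}(\mathfrak{w}_1\diamond_{g_2}\mathfrak{v})w_2=\vartheta_{\mathcal{Y}}(\mathfrak{w}_1)\,\vartheta_{W_2}(\mathfrak{v})w_2 .
\]
Since $\vartheta_{W_2}$ kills $Q^{g_2,\infty}(V)$, the right-hand side depends only on $\mathfrak{v}+Q^{g_2,\infty}(V)$. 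This is exactly the balancing condition.

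It suffices to check these identities on generators $[w]_{kl}$, $[v]_{pn}$ and homogeneous $w_2\in(W_2)_{h_2^\mu+n}$. One point needs care. Column-finite matrices are infinite sums, but $\vartheta_{\mathcal{Y}}$ and $\diamond_{g_2}$ act columnwise. A fixed $w_2$ has only finitely many graded components, so only finitely many entries contribute, and linearity extends the identities from generators.

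\textbf{Step 2: $A^{g_3,\infty}(V)$-linearity.} For $\mathfrak{u}\in U^{g_3,\infty}(V)$, Proposition~\ref{prop6.2} gives
\[
\rho(\mathcal{Y})\bigl((\mathfrak{u}\diamond_{g_3}\mathfrak{w}_1+Q)\otimes w_2\bigr)=\vartheta_{\mathcal{Y}}(\mathfrak{u}\diamond_{g_3}\mathfrak{w}_1)w_2=\vartheta_{W_3}(\mathfrak{u})\,\vartheta_{\mathcal{Y}}(\mathfrak{w}_1)w_2 .
\]
The right-hand side is the action of $\mathfrak{u}+Q^{g_3,\infty}(V)$ on $\rho(\mathcal{Y})\bigl((\mathfrak{w}_1+Q)\otimes w_2\bigr)$, where $W_3$ is a module via Theorem~\ref{thm5.10}(2). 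The left $A^{g_3,\infty}(V)$-action on the tensor product is the one induced from $A^{g_3,g_2,\infty}(W_1)$, so $\rho(\mathcal{Y})$ commutes with it.

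\textbf{Main obstacle.} The delicate point is hidden in Proposition~\ref{prop6.2}: the identities \eqref{eq6.1}--\eqref{eq6.2} are stated for each fixed $\nu\in\Gamma(W_3)$, while $\vartheta_{\mathcal{Y}}$ sums over $\nu$. We need the sum over $\nu$ to commute with $\vartheta_{W_3}([v]_{pk})$. This holds because $Y_{W_3}$ preserves each congruence class $W_3^\nu$, and because of the locally finite power congruence class assumption. With that in hand, the rest is formal.
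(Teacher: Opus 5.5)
Your proposal is correct and follows the paper's proof: $A^{g_3,\infty}(V)$-linearity is read off directly from the left-action half of Proposition~\ref{prop6.2}, exactly as the paper does. Your Step~1 goes further by verifying, via the right-action half of Proposition~\ref{prop6.2}, that the pairing is balanced over $A^{g_2,\infty}(V)$, which the paper's well-definedness remark (citing only $Q^{g_3,g_2,\infty}(W_1)\subset\ker\vartheta_{\mathcal{Y}}$) leaves implicit; the ``main obstacle'' you flag is harmless, since the sum over $\nu$ is finite and $\vartheta_{W_3}([v]_{pk})$ is linear.
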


\begin{proof}
  By Proposition~\ref{prop6.2}, for $\mathfrak{v} \in U^{g_3,\infty}(V)$, $\mathfrak{w}_1 \in U^{g_3,g_2,\infty}(W_1)$, and $w_2 \in W_2$,
  \begin{align*}
    \rho(\mathcal{Y})&\!\left( \bigl(\mathfrak{v} + Q^{g_3,\infty}(V)\bigr) \diamond_{g_3} \bigl(\mathfrak{w}_1 + Q^{g_3,g_2,\infty}(W_1)\bigr) \otimes w_2 \right) \\
    &= \rho(\mathcal{Y})\!\left( \bigl(\mathfrak{v} \diamond_{g_3} \mathfrak{w}_1 + Q^{g_3,g_2,\infty}(W_1)\bigr) \otimes w_2 \right) \\
    &= \vartheta_{\mathcal{Y}}(\mathfrak{v} \diamond_{g_3} \mathfrak{w}_1) \, w_2 \\
    &= \vartheta_{W_3}(\mathfrak{v}) \, \vartheta_{\mathcal{Y}}(\mathfrak{w}_1) \, w_2 \\
    &= \vartheta_{W_3}(\mathfrak{v}) \, \rho(\mathcal{Y})\!\left( \bigl(\mathfrak{w}_1 + Q^{g_3,g_2,\infty}(W_1)\bigr) \otimes w_2 \right).
  \end{align*}
  This shows that $\rho(\mathcal{Y})$ is an $A^{g_3,\infty}(V)$-module map from $A^{g_3,g_2,\infty}(W_1) \otimes_{A^{g_2,\infty}(V)} W_2$ to $W_3$.
\end{proof}

We thus obtain a linear map
\begin{align*}
  \rho : \mathcal{V}_{W_1 W_2}^{W_3} &\longrightarrow \operatorname{Hom}_{A^{g_3,\infty}(V)}\!\left( A^{g_3,g_2,\infty}(W_1) \otimes_{A^{g_2,\infty}(V)} W_2,\; W_3 \right), \\
  \mathcal{Y} &\longmapsto \rho(\mathcal{Y}).
\end{align*}
\begin{thm}\label{thm:rho-isomorphism}
  The linear map $\rho$ is an isomorphism.
\end{thm}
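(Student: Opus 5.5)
Injectivity and surjectivity are handled separately; surjectivity requires building an intertwining operator from a module map.

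\emph{Injectivity.} Suppose $\rho(\mathcal{Y})=0$. Then $\vartheta_{\mathcal{Y}}([w_1]_{kl})w_2=0$ for all $k,l\in\frac{1}{T}\mathbb{N}$, homogeneous $w_1\in W_1$, and $w_2\in(W_2)_{h_2^\mu+l}$. For homogeneous $w_1$, $\vartheta_{\mathcal{Y}}([w_1]_{kl})w_2$ is the sum over $\nu\in\Gamma(W_3)$ of the components
\[
(w_1)_{\operatorname{wt}w_1+h_2^\mu-h_3^\nu+l-k-1}\,w_2\in (W_3)_{h_3^\nu+k}.
\]
These lie in distinct graded pieces, so each component vanishes separately. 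Letting $k$ range over $\frac{1}{T}\mathbb{N}$ and $\nu$ over $\Gamma(W_3)$ exhausts every mode $(w_1)_h w_2$ with nonzero target. Hence $\mathcal{Y}(w_1,x)w_2=0$, and therefore $\mathcal{Y}=0$.

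\emph{Surjectivity.} Fix $f$ in the Hom space. For homogeneous $w_1\in W_1$, $w_2\in(W_2)_{h_2^\mu+l}$ and $k\in\frac{1}{T}\mathbb{N}$, Lemma \ref{lem7.3} places $f(([w_1]_{kl}+Q^{g_3,g_2,\infty}(W_1))\otimes w_2)$ in $W_3(k)$. Write $\pi_\nu$ for the projection onto $(W_3)_{h_3^\nu+k}$. Define
\[
\mathcal{Y}_f(w_1,x)w_2=\sum_{k\in\frac{1}{T}\mathbb{N}}\sum_{\nu\in\Gamma(W_3)}\pi_\nu f\bigl(([w_1]_{kl}+Q^{g_3,g_2,\infty}(W_1))\otimes w_2\bigr)\,x^{-\operatorname{wt}w_1-h_2^\mu+h_3^\nu+k-l},
\]
and extend linearly. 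The powers of $x$ lie in finitely many congruence classes, provided each such $f$-value has finitely many nonzero $\nu$-components; this holds because every element of $W_3(k)$ is a finite sum. Lower truncation follows from $k\ge0$. By construction $\vartheta_{\mathcal{Y}_f}=f$ on the spanning set of Lemma \ref{lem:spanning-set}, so $\rho(\mathcal{Y}_f)=f$ once $\mathcal{Y}_f$ is shown to be an intertwining operator.

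The Jacobi identity will be checked in the component form \eqref{eq2.4}, imitating the proof of Theorem \ref{thm5.14}. Set $K=\operatorname{wt}v+q-n-m-l-2$.
\begin{itemize}
\item If $K<0$, both sides of the component identity vanish for grading reasons.
\item If $K\ge0$, each term of \eqref{eq2.4} is rewritten through $f$ and the actions of $\vartheta_{W_3}$ and $\vartheta_{W_2}$. The $Y_{W_3}$-term is moved inside $f$ using $A^{g_3,\infty}(V)$-linearity. The $Y_{W_2}$-term is moved across the tensor product using the balanced relation $\mathfrak{a}\diamond_{g_2}\mathfrak{b}\otimes w=\mathfrak{a}\otimes\vartheta_{W_2}(\mathfrak{b})w$.
\end{itemize}
The whole expression then becomes $f$ applied to the element of Proposition \ref{prop7.1} tensored with $w_2$. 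That element lies in $Q^{g_3,g_2,\infty}(W_1)$, so the expression vanishes.

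The $L(-1)$-derivative property must be verified separately; I expect it to be the main obstacle. The element $[L_{W_1}(-1)w_1]_{kl}-c\,[w_1]_{kl}$, with $c$ the scalar making the $x$-exponents match, should lie in $Q^{g_3,g_2,\infty}(W_1)$. This is because every genuine intertwining operator satisfies the derivative identity, and $\vartheta_{\mathcal{Y}}$ uses the $x^{L_W(0)}$ normalization. With that membership, the $L(-1)$-property of $\mathcal{Y}_f$ follows from $f$ being well defined on the quotient.

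Care is needed in one further place. The entries of Proposition \ref{prop7.1} are indexed by $\frac{1}{T}\mathbb{N}$, while the mode indices carry the shifts $h_2^\mu-h_3^\nu$. The sum over $\nu$ and the projections $\pi_\nu$ must be matched consistently; I would do this by applying $\pi_\nu$ to both sides before comparing coefficients.

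Finally, I would check that $\mathcal{Y}\mapsto\rho(\mathcal{Y})$ and $f\mapsto\mathcal{Y}_f$ are mutually inverse. By the injectivity argument above, it suffices to observe that $\rho(\mathcal{Y}_f)=f$.
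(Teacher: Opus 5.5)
Your plan is the paper's plan. Injectivity separates graded pieces. Surjectivity defines the modes through $f$ and kills the Jacobi defect with Proposition~\ref{prop7.1}; there your explicit use of the balanced relation for the $Y_{W_2}$-term is actually more careful than the paper. The step that fails is the $L(-1)$-derivative property. In general, no element $[L_{W_1}(-1)w_1]_{kl}-c\,[w_1]_{kl}$ with $c$ a scalar lies in $Q^{g_3,g_2,\infty}(W_1)$. For any $\mathcal{Z}$ of type $\binom{W_3'}{W_1\;W_2'}$ and $w_2'\in(W_2')_{h_2^{\mu'}+l}$, the derivative identity gives
\[
\vartheta_{\mathcal{Z}}\bigl([L_{W_1}(-1)w_1]_{kl}\bigr)w_2'=-\sum_{\nu'}\bigl(\operatorname{wt}w_1+h_2^{\mu'}+l-h_3^{\nu'}-k\bigr)\,\pi_{\nu'}\,\vartheta_{\mathcal{Z}}\bigl([w_1]_{kl}\bigr)w_2'.
\]
The factor is a difference of $L(0)$-eigenvalues that varies with $\mathcal{Z}$ and with $\nu'$. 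But $Q^{g_3,g_2,\infty}(W_1)$ is an intersection over all $W_2',W_3',\mathcal{Z}$, and the $x^{L_W(0)}$ normalization only absorbs $\operatorname{wt}w_1$.

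Concretely, take $g_1=1$, $W_1=V$, $w_1=\mathbf{1}$, $k=l=0$. Your element is $(h_2^\mu-h_3^\nu)[\mathbf{1}]_{00}$. However, $\vartheta_{Y_W}([\mathbf{1}]_{00})=\operatorname{id}_{W(0)}$ for $Y_W$ viewed as an intertwining operator of type $\binom{W}{V\;W}$. So the element is not in $Q$ once $h_2^\mu\neq h_3^\nu$, for example $W_3=W_2\oplus N$ with $N$ in the same class but of lower lowest weight. The paper's proof asserts the same membership, so following it does not close this gap.

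The repair is to encode the eigenvalues by $[\omega]$. By Proposition~\ref{prop6.2} and $\vartheta_W([\omega]_{nn})=L_W(0)$ on $W(n)$,
\[
[L_{W_1}(-1)w_1+L_{W_1}(0)w_1]_{kl}+[w_1]_{kl}\diamond_{g_2}[\omega]_{ll}-[\omega]_{kk}\diamond_{g_3}[w_1]_{kl}\in Q^{g_3,g_2,\infty}(W_1).
\]
Apply $f$, using $A^{g_3,\infty}(V)$-linearity on the last term and balancedness on the middle one, and then apply $\pi_\nu$. With cosets suppressed, this gives $\pi_\nu f([L_{W_1}(-1)w_1]_{kl}\otimes w_2)=-(\operatorname{wt}w_1+h_2^\mu+l-h_3^\nu-k)\,\pi_\nu f([w_1]_{kl}\otimes w_2)$. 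That is exactly $(L_{W_1}(-1)w_1)_{c+1}=-(c+1)(w_1)_c$, mode by mode.

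Second, your justification of locally finite power congruence classes is a non sequitur, and your lower-truncation claim tacitly uses it. Finiteness of each $f$-value in $W_3(k)$ bounds the classes $\nu$ for each fixed $k$, but not uniformly in $k\in\frac{1}{T}\mathbb{N}$. This is automatic if $\Gamma(W_3)$ is finite, but otherwise it can fail. Suppose $\mathcal{Y}_j$ ($j\geq 1$) are intertwining operators into summands $Z_j$ of $W_3$ lying in pairwise distinct classes $\nu_j$. Suppose further that $W_3^{\nu_j}=X_j\oplus Z_j$, where the lowest weight of $Z_j$ exceeds $h_3^{\nu_j}$ by $j$, and that $\mathcal{Y}_j(w_1,x)w_2\neq 0$ for one fixed pair $(w_1,w_2)$. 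Then $\vartheta_{\mathcal{Y}_j}([w]_{kl})=0$ for $k<j$, so $\sum_j\rho(\mathcal{Y}_j)$ is a locally finite sum and lies in the Hom space. Yet by your injectivity argument its only candidate preimage is $\sum_j\mathcal{Y}_j$, which has infinitely many classes. Such families arise from continuous fusion rules, e.g.\ for the universal Virasoro vertex operator algebra at generic central charge. So you must either assume $\Gamma(W_3)$ finite, or weaken local finiteness to the degreewise condition your argument actually proves; that condition still makes $\vartheta_{\mathcal{Y}}$ well defined. The paper's ``direct summation over $\mu$ and $\nu$'' passes over the same point.
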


\begin{proof}
  We prove that $\rho$ is both injective and surjective.

  Let $\mathcal{Y} \in \mathcal{V}_{W_1 W_2}^{W_3}$ and assume $\rho(\mathcal{Y}) = 0$. Then for any $k, l \in \frac{1}{T}\mathbb{N}$, $\mu \in \Gamma(W_2)$, $\nu \in \Gamma(W_3)$, $w_1 \in W_1$, and $w_2 \in (W_2)_{h_2^\mu + l}$, we have
  \begin{align*}
    &\quad\sum_{\nu \in \Gamma(W_3)} \operatorname{Res}_x x^{h_2^\mu - h_3^\nu + l - k - 1} \mathcal{Y}\bigl(x^{L_{W_1}(0)} w_1, x\bigr) w_2\\
    &= \vartheta_{\mathcal{Y}}\bigl([w_1]_{kl}\bigr) w_2= \rho(\mathcal{Y})\!\left( \bigl([w_1]_{kl} + Q^{g_3,g_2,\infty}(W_1)\bigr) \otimes w_2 \right) = 0.
  \end{align*}
  Since the summands corresponding to distinct $\nu \in \Gamma(W_3)$ lie in different homogeneous subspaces of $W_3$, each term must vanish individually:
  \[
    \operatorname{Res}_x x^{h_2^\mu - h_3^\nu + l - k - 1} \mathcal{Y}\bigl(x^{L_{W_1}(0)} w_1, x\bigr) w_2 = 0.
  \]
  As $w_1$, $w_2$, $k$, $l$, and $\nu$ are arbitrary, we conclude $\mathcal{Y} = 0$.

  Let
  \[
    f \in \operatorname{Hom}_{A^{g_3,\infty}(V)}\!\left( A^{g_3,g_2,\infty}(W_1) \otimes_{A^{g_2,\infty}(V)} W_2,\; W_3 \right).
  \]
  We construct an intertwining operator $\mathcal{Y}^f$ of type $\binom{W_3}{W_1\;W_2}$ such that $\rho(\mathcal{Y}^f) = f$. For simplicity, we treat the case where $W_2 = W_2^\mu \neq 0$ and $W_3 = W_3^\nu \neq 0$ for fixed $\mu, \nu \in \mathbb{C}/\frac{1}{T}\mathbb{Z}$; the general case follows by direct summation over $\mu$ and $\nu$.

  For homogeneous $w^{(1)} \in W_1$ and $w^{(2)} \in (W_2)_{h_2^\mu + l}$ with $k \in \frac{1}{T}\mathbb{Z}$ and $l \in \frac{1}{T}\mathbb{N}$, define
  \[
    w^{(1)}_{h_2^\mu - h_3^\nu + l - k + \operatorname{wt} w^{(1)} - 1} \, w^{(2)}
    = \begin{cases}
        f\!\left( \bigl([w^{(1)}]_{kl} + Q^{g_3,g_2,\infty}(W_1)\bigr) \otimes_{A^{g_2,\infty}(V)} w^{(2)} \right), & k \geq 0, \\[4pt]
        0, & k < 0.
      \end{cases}
  \]
  For $c \in \mathbb{C}$ with $c \notin \operatorname{wt} w^{(1)} + h_2^\mu - h_3^\nu + \frac{1}{T}\mathbb{Z}$, set $w^{(1)}_c \, w^{(2)} = 0$. By Lemma~\ref{lem7.3},
  \[
    w^{(1)}_{h_2^\mu - h_3^\nu + l - k + \operatorname{wt} w^{(1)} - 1} \, w^{(2)} \in W_3(k) = (W_3)_{h_3^\nu + k}.
  \]
  Define $\mathcal{Y}^f : W_1 \otimes W_2 \to W_3\{x\}$ by
  \[
    \mathcal{Y}^f(w^{(1)}, x) \, w^{(2)} = \sum_{c \in \mathbb{C}} w^{(1)}_c \, w^{(2)} \, x^{-c-1}
  \]
  for $w^{(1)} \in W_1$ and $w^{(2)} \in (W_2)_{h_2^\mu + l}$ with $l \in \frac{1}{T}\mathbb{N}$.

  We now verify that $\mathcal{Y}^f$ satisfies the Jacobi identity. Let $v \in V^{(j_1,j_2)}$, $w^{(1)} \in (W_1)_{h_1}$, and $w^{(2)} \in (W_2)_{h_2^\mu + q}$ with $\mu \in \Gamma(W_2)$ and $q \in \frac{1}{T}\mathbb{N}$. Let
  \[
    l \in \tfrac{j_1}{T} + \mathbb{Z}, \quad m \in \tfrac{j_2}{T} + \mathbb{Z}, \quad n_1 = h_1 + h_2^\mu - h_3^\nu + n, \quad n \in \tfrac{1}{T}\mathbb{Z}, \quad \nu \in \Gamma(W_3).
  \]
  If $\operatorname{wt} v + q - n - m - l - 2 < 0$, then by definition of the modes,
  \begin{equation*}
    \begin{split}
      &\sum_{i \geq 0} (-1)^i \binom{l}{i} \bigl( v_{m+l-i} \, w^{(1)}_{n_1+i} - (-1)^l w^{(1)}_{n_1+l-i} \, v_{m+i} \bigr) w^{(2)} \\
      &= \sum_{i \geq 0} \binom{m}{i} (v_{l+i} \, w^{(1)})_{m+n_1-i} \, w^{(2)}.
    \end{split}
  \end{equation*}
  If $\operatorname{wt} v + q - n - m - l - 2 \geq 0$, then
  \begin{align*}
    &\sum_{i \geq 0} (-1)^i \binom{l}{i} \bigl( v_{m+l-i} \, w^{(1)}_{n_1+i} - (-1)^l w^{(1)}_{n_1+l-i} \, v_{m+i} \bigr) w^{(2)} \\
    &\quad - \sum_{i \geq 0} \binom{m}{i} (v_{l+i} \, w^{(1)})_{m+n_1-i} \, w^{(2)} \\
    &= \sum_{\substack{i \geq 0 \\ q-n-i-1 \geq 0}} (-1)^i \binom{l}{i} v_{m+l-i} \, w^{(1)}_{n_1+i} \, w^{(2)} \\
    &\quad - \sum_{\substack{i \geq 0 \\ \operatorname{wt} v+q-m-i-1 \geq 0}} (-1)^{i+l} w^{(1)}_{n_1+l-i} \, v_{m+i} \, w^{(2)} \\
    &\quad - \sum_{i \geq 0} \binom{m}{i} (v_{l+i} \, w^{(1)})_{m+n_1-i} \, w^{(2)}.
  \end{align*}
  Substituting the definition of $\mathcal{Y}^f$ via $f$ and using the $A^{g_3,\infty}(V)$-linearity of $f$, this becomes
  \begin{align*}
    & \sum_{\substack{i \geq 0 \\ q-n-i-1 \geq 0}} (-1)^i \binom{l}{i}
      f\!\left( \bigl([v]_{K,\, q-n-i-1} \diamond_{g_3} [w^{(1)}]_{q-n-i-1,\, q} + Q^{g_3,g_2,\infty}(W_1)\bigr) \otimes w^{(2)} \right) \\
    & - \sum_{\substack{i \geq 0 \\ \operatorname{wt} v+q-m-i-1 \geq 0}} (-1)^{i+l}
      f\!\left( \bigl([w^{(1)}]_{K,\, \operatorname{wt} v+q-m-i-1} \diamond_{g_2} [v]_{\operatorname{wt} v+q-m-i-1,\, q} + Q^{g_3,g_2,\infty}(W_1)\bigr) \otimes w^{(2)} \right) \\
    & - \sum_{i \geq 0} \binom{m}{i}
      f\!\left( \bigl([v_{l+i} \, w^{(1)}]_{K,\, q} + Q^{g_3,g_2,\infty}(W_1)\bigr) \otimes w^{(2)} \right),
  \end{align*}
  where $K = \operatorname{wt} v + q - n - m - l - 2$. By Proposition~\ref{prop7.1}, this expression vanishes. Hence $\mathcal{Y}^f$ satisfies the Jacobi identity.

  It remains to verify the $L(-1)$-derivative property. For any intertwining operator $\mathcal{Z}$ of type $\binom{W_3}{W_1\;W_2}$, one computes
  \begin{align*}
    \vartheta_{\mathcal{Z}}\bigl([L_{W_1}(-1)w^{(1)}]_{kl}\bigr) w^{(2)}
    &= (L_{W_1}(-1)w^{(1)})_{h_2^\mu-h_3^\nu+l-k+\operatorname{wt}(L_{W_1}(-1)w^{(1)})-1} \, w^{(2)} \\
    &= -\bigl(h_2^\mu-h_3^\nu+l-k+\operatorname{wt} w^{(1)}\bigr) \, w^{(1)}_{h_2^\mu-h_3^\nu+l-k+\operatorname{wt} w^{(1)}-2} \, w^{(2)} \\
    &= -\bigl(h_2^\mu-h_3^\nu+l-k+\operatorname{wt} w^{(1)}\bigr) \, \vartheta_{\mathcal{Z}}\bigl([w^{(1)}]_{kl}\bigr) w^{(2)},
  \end{align*}
  where we used $\operatorname{wt}(L_{W_1}(-1)w^{(1)}) = \operatorname{wt} w^{(1)} + 1$ and the $L(-1)$-derivative property of $\mathcal{Z}$. Consequently,
  \[
    [L_{W_1}(-1)w^{(1)}]_{kl} + \bigl(h_2^\mu-h_3^\nu+l-k+\operatorname{wt} w^{(1)}\bigr) [w^{(1)}]_{kl} \in Q^{g_3,g_2,\infty}(W_1).
  \]
  Applying $f$ to this relation gives
  \begin{align*}
    &(L_{W_1}(-1)w^{(1)})_{h_2^\mu-h_3^\nu+l-k+\operatorname{wt} w^{(1)}} \, w^{(2)} \\
    &= f\!\left( \bigl([L_{W_1}(-1)w^{(1)}]_{kl} + Q^{g_3,g_2,\infty}(W_1)\bigr) \otimes w^{(2)} \right) \\
    &= -\bigl(h_2^\mu-h_3^\nu+l-k+\operatorname{wt} w^{(1)}\bigr) f\!\left( \bigl([w^{(1)}]_{kl} + Q^{g_3,g_2,\infty}(W_1)\bigr) \otimes w^{(2)} \right) \\
    &= -\bigl(h_2^\mu-h_3^\nu+l-k+\operatorname{wt} w^{(1)}\bigr) \, w^{(1)}_{h_2^\mu-h_3^\nu+l-k+\operatorname{wt} w^{(1)}-2} \, w^{(2)}.
  \end{align*}
  This is precisely the coefficient identity equivalent to $\mathcal{Y}^f(L_{W_1}(-1)w^{(1)}, x) = \frac{d}{dx} \mathcal{Y}^f(w^{(1)}, x)$. Therefore $\mathcal{Y}^f$ is an intertwining operator, and by construction $\rho(\mathcal{Y}^f) = f$, completing the proof of surjectivity.
\end{proof}



\end{document}